\documentclass[11pt]{article}
\usepackage{amsmath, amssymb, amsthm}
\usepackage{mathrsfs}
\usepackage[margin=1in]{geometry}
\usepackage{enumitem}
\usepackage{xcolor}
\usepackage{soul}
\usepackage{authblk}
\usepackage[hypertexnames=false]{hyperref}

\numberwithin{equation}{section}
\newtheorem{theorem}{Theorem}[section]
\newtheorem{proposition}[theorem]{Proposition}
\newtheorem{lemma}[theorem]{Lemma}
\newtheorem{cor}[theorem]{Corollary}
\newtheorem{claim}[theorem]{Claim}

\theoremstyle{definition}
\newtheorem{defn}{Definition}[section]
\newtheorem{example}{Example}[section]
\newtheorem*{remark}{Remark}

\newcommand{\RR}{\mathbb{R}}

\DeclareMathOperator{\dist}{dist}

\DeclareMathOperator{\divg}{div}

\colorlet{darkgreen}{green!45!black} 

\title{Singularity formation and self-shrinkers for the parabolic Bernoulli problem}
\author{Hunter Liu\thanks{{University of California, Los Angeles, Department of Mathematics, Los Angeles CA 90095. Email: \texttt{hunterliu@math.ucla.edu}}} \,
Sebastian Munoz\thanks{{University of California, Los Angeles, Department of Mathematics, Los Angeles CA 90095. Email: \texttt{sebastian@math.ucla.edu}}} \, 
Zihui Zhao\thanks{{Johns Hopkins University, Department of Mathematics, Baltimore MD 21218. Email: \texttt{zhaozh@jhu.edu}}}
}

\date{}

\begin{document}
\maketitle

\begin{abstract} 
We study singularity formation for classical solutions of the parabolic Bernoulli free boundary problem and the associated self-similar profiles. Inspired by Ricci and mean curvature flows, we introduce a Type I assumption on the blow-up rate at a singular free boundary point, under which the parabolic rescalings are compact and every tangent flow is a non-trivial self-shrinking solution of the elliptic profile equation. The proof establishes convergence of the positivity sets and recovers the Bernoulli condition in the limit, despite the absence of a minimizing structure. In the subcritical regime, every tangent flow is a double plane. We next classify all radial self-shrinkers. Besides the known ball and exterior profiles, there is a unique annular profile. We obtain dimension-uniform bounds and sharp asymptotics for its inner and outer radii, including a quantitative outward bias of its midpoint. These delicate estimates yield a complete classification of the signs  of the linear spectrum for the annular solution in every dimension. As a consequence, the ball is dynamically stable modulo ambient symmetries, whereas the annulus is dynamically unstable, with genuine unstable modes of angular degrees $0,1,2,3$.  We finally prove, from the spectral nondegeneracy of the compact radial profiles, that whenever one tangent flow is a ball or an annular solution, the tangent flow is unique. One borderline spectral sign in dimensions $2\leq n \leq 4000$ is verified by a rigorous computer-assisted argument using interval arithmetic.

\end{abstract}

\medskip
\noindent\emph{Keywords:} Bernoulli free boundary problem, singularity formation, tangent flows, self-shrinkers, spectral stability.

\smallskip
\noindent\emph{MSC 2020:} 35R35 (primary); 35B44, 35C06, 35B35, 35P15, 65G20 (secondary).

\setcounter{tocdepth}{1} % sections only in the table of contents
\tableofcontents

\section{Introduction}

We study singularity formation for classical solutions of the parabolic Bernoulli problem
\begin{equation}\tag{B} \label{eq:pBern}
	\begin{cases}
u_t - \Delta u = 0, & \text{ in } \{u>0\}, \\
|\nabla u(t, \cdot)| = 1, & \text{on } \partial\{u(t, \cdot) > 0\}.
\end{cases}
\end{equation}
The problem arises in combustion theory as a model for the propagation of flames with high activation energy, the free boundary $\partial\{u>0\}$ representing the flame front, see \cite{BL} for the physical background. 
While the stationary version of the problem, known as the one-phase or Bernoulli free boundary problem, has been studied extensively since the seminal work of Alt and Caffarelli \cite{AC}, much less is known about the time-dependent problem \eqref{eq:pBern}, partly because it has no underlying minimization principle. The main feature and difficulty of free boundary problems is that the regularity of the solution and that of the free boundary are mutually dependent. 

For the initial value problem, uniqueness under geometric assumptions on the initial data was proved in \cite{Kim03,LVW01,Pet02}, whereas nonuniqueness can occur for singular-perturbation limits \cite{PY08}. 
Weak formulations of \eqref{eq:pBern} were developed in \cite{CV,  Weiss}, where solutions are constructed as singular limits of regularized semilinear equations.
The weak solutions of \cite{CV} have receding supports, which excludes basic examples such as traveling wave solutions (see Example \ref{eg:exp}). 
Weiss introduced a domain-variation formulation in \cite{Weiss99} for solutions that exist globally in time and proved, under a scale-invariant nondegeneracy assumption and mild regularity and growth assumptions, that blow-ups within this class are backward self-similar variational solutions. 
Later, Weiss \cite{Weiss} introduced a more flexible class of solution pairs $(u,\chi)$, which is closed under the blow-up process and hence amenable to geometric measure theory tools. Using these tools, Weiss classified the behavior of almost every free boundary point at almost every time, therefore showing that singularities of the free boundary are rare.  We discuss these formulations in more detail in Subsection \ref{sec:rmk-hp}.

However, singular points are precisely where interesting changes of the free boundary -- extinction, collision, or a change of topology -- take place. In this paper, instead of working with the most general class of solutions, we take a more direct approach and analyze how such singularities form, for solutions that are classical up to the singular time.

Throughout the paper, $u$ denotes a classical solution of \eqref{eq:pBern} on $\RR^n\times[-T,0)$ whose spatial gradient satisfies $|\nabla u|\leq L$\footnote{In Subsection \ref{sec:rmk-hp}, we comment on how the assumptions can be weakened.}, and whose first singular time is $t=0$: we say that a space-time point $(t_0=0,x_0)$ is \emph{singular} if $u$ stops being a classical solution near $x_0$ as $t\to 0-$. By the interior regularity theory for the heat equation, singularities can only occur at the free boundary. 

Formally speaking, by the parabolic rescaling of the problem, self-similar solutions or \emph{self-shrinkers} of the form 
\begin{equation}\label{eq:selfshrinker}
	u(t,x)=\sqrt{-t}\,\phi\!\left(\frac{x}{\sqrt{-t}}\right),\qquad t<0,
\end{equation}
are natural and archetypal examples of such singular behavior, where the profile $\phi$ solves the elliptic equation
\begin{equation}\label{eq:profile}
\left\{\begin{array}{ll}
	\Delta\phi - \tfrac{1}{2}\,y \cdot \nabla\phi + \tfrac{1}{2}\,\phi = 0 & \text{in } \Omega:= \{\phi>0\}, \\
	|\nabla\phi| = 1 & \text{on } \partial\Omega.
\end{array}   \right.\tag{P}
\end{equation}
The positivity set of such a solution is $\{u(t,\cdot)>0\}=\sqrt{-t}\cdot\Omega$, and it becomes extinct as $t\to 0-$ when $\Omega$ is bounded, hence the name \emph{self-shrinkers}. Note that such model of singularity satisfies $|D^2 u| \sim 1/\sqrt{-t}$, namely the blowup rate of the Hessian of the solution is given by $1/\sqrt{-t}$. This motivates the following definition:
\begin{defn}
	We say that $(t_0,x_0)$ is a \emph{Type I singular point} if it is a singular point and there exists $r>0$ such that
\begin{equation}\label{eq:typeI}
    C_s(r):= \sup_{Q^{-}_{r}(t_0,x_0)\cap \{u>0\}} \sqrt{-t}|D^2 u(t,x)| < +\infty,
\end{equation}
where $Q_r^-(t_0,x_0):=[t_0 -r^2,t_0)\times B_r(x_0)$ is an $r$-parabolic cylinder centered at $(t_0,x_0)$ in space-time.
\end{defn}

In Theorems \ref{thm:blowup} and \ref{thm:blowup-twophase}, we will show that the Type I condition identifies the minimal blow-up rate at which nontrivial self-similar behavior can arise: at the scaling $|D^2u|\sim 1/\sqrt{-t}$, the parabolic rescalings of $u$ remain compact without degenerating, while below this rate every tangent flow is necessarily a double plane.
We also remark that in the study of Ricci and mean curvature flows, singularities are classified according to the rate at which the curvature blows up; for the mean curvature flow (Ricci flow resp.), blow-ups at the minimal such rate -- the Type I singularities -- are modeled by self-shrinkers (Ricci solitons resp.), by Huisken's monotonicity formula \cite{Huisken}.
It remains open whether genuinely non-Type I singularities can develop from classical solutions. A possible mechanism for such behavior is suggested in Example \ref{eg:non-type-I}. \footnote{Another possible mechanism is by pinching a radially-symmetric solution supported on a ball along one axis, resembling the degenerate neck pinch or peanut solution of the mean curvature flow studied in \cite{AAG, AV}; but such a singularity is expected to be highly unstable, see \cite{ADS}.} 

To state our first result, assume after a translation $(0,0)$ is a singular point, and for $\lambda>0$ consider the parabolic dilations
\[
	u_\lambda(t,x) := \frac{u(\lambda^2 t, \lambda x)}{\lambda},
\]
which are again classical solutions of \eqref{eq:pBern}. Our first main result shows that at a Type I singular point, the dilations converge, along subsequences, to self-shrinkers.

\begin{theorem}[Existence of tangent flows]\label{thm:blowup}
    Suppose $(0,0)$ is a Type I singular point for $u$.
    For any sequence $\lambda_j \to 0$, modulo passing to subsequences, there exists a non-trivial solution $\phi$ of \eqref{eq:profile} such that
    \[
        u_{\lambda_j}(t,x) \longrightarrow \sqrt{-t}\,\phi\!\left(\tfrac{x}{\sqrt{-t}}\right)
        \qquad \text{locally uniformly in } (-\infty,0)\times\RR^n.
    \]
    Moreover, the limit function $\phi$ satisfies $\|D^2\phi\|_{L^\infty(\{\phi>0\})} \leq C_s$, $\partial \{\phi>0\}$ is locally the union of finitely many $C^{1,1}$ hypersurfaces, and
    \begin{equation}\label{eq:density-tangent}
        \int_{\RR^n} \chi_{\{\phi>0\}} \cdot \frac{1}{(4\pi)^{\frac{n}{2}}} e^{-\frac{|y|^2}{4}} \, dy = \lim_{r\to 0} \theta(r) =: \theta(0+) 
    \end{equation}
    takes values in $(1/2, 1]$, where $\theta(r)$ is the time-averaged free boundary energy of $u$ at $(0,0)$ defined in \eqref{def:theta}.
\end{theorem}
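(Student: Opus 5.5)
The plan is to combine uniform estimates coming from the Type I bound with a Weiss-type monotonicity formula that forces the limit to be self-similar.

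\textbf{Step 1: uniform estimates for the rescalings.} The Type I bound \eqref{eq:typeI} is invariant under the parabolic dilation $u\mapsto u_\lambda$. Hence on $Q^-_{r/\lambda}\cap\{u_\lambda>0\}$ we have
\[
\sqrt{-t}\,|D^2u_\lambda|\le C_s \quad\text{and}\quad |\nabla u_\lambda|\le L .
\]
Since $(0,0)$ lies on the free boundary, $u(0,0)=0$ in the limiting sense, and the gradient bound then gives $u_\lambda(t,x)\le L|x|+C\sqrt{-t}$. The time derivative is controlled by $u_t=\Delta u$ inside the positivity set. Thus on each compact set $K\subset(-\infty,0)\times\RR^n$ the family $u_{\lambda_j}$ is uniformly Lipschitz in $x$ and H\"older in $t$. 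By Arzel\`a--Ascoli we extract a locally uniform limit $u_0$, whose spatial gradient is bounded by $L$.

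\textbf{Step 2: convergence of the positivity sets.} This is the step I expect to be the main obstacle. Fix a time $t$. The Hessian bound $|D^2u_\lambda|\le C_s/\sqrt{-t}$, together with $|\nabla u_\lambda|=1$ on the free boundary, gives the following:
\begin{enumerate}[label=(\roman*)]
\item nondegeneracy, $u_\lambda(x)\gtrsim \dist(x,\partial\{u_\lambda>0\})$ within a distance $c\sqrt{-t}/C_s$ of the free boundary, since $u$ grows linearly in the normal direction;
\item a uniform interior ball condition of radius $\sim\sqrt{-t}/C_s$ for the positivity set;
\item a bound on the number of free boundary sheets in each ball, because distinct sheets are separated by these balls.
\end{enumerate}
From (i) we get Hausdorff convergence of $\overline{\{u_{\lambda_j}>0\}}$ and $\partial\{u_{\lambda_j}>0\}$ to the corresponding sets of $u_0$. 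From (ii) and (iii), each free boundary is locally a union of a bounded number of graphs with uniform $C^{1,1}$ bounds, so these graphs converge in $C^{1,\alpha}$. Sheets may collide in the limit, which produces the double-plane-type multiplicity; this is why the statement says ``finitely many'' sheets. $C^{1,\alpha}$ convergence of the graphs, together with $C^{1,\alpha}$ convergence of $u_{\lambda_j}$ up to the boundary (from the Hessian bounds), lets us pass $|\nabla u|=1$ to the limit. This recovers the Bernoulli condition without any minimality. The heat equation passes to the limit in the interior, so $u_0$ is a classical solution with $\sqrt{-t}\,|D^2u_0|\le C_s$.

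\textbf{Step 3: self-similarity via monotonicity.} For this I use the time-averaged free boundary energy $\theta(r)$ of \eqref{def:theta}, presumably a Weiss/Huisken-type Gaussian-weighted quantity that is monotone in $r$ up to terms controlled by the Type I bound. The limit $\theta(0+)$ then exists, and $\theta_{u_\lambda}(r)=\theta_u(\lambda r)$. So for $u_0$ the quantity is constant in $r$, and the equality case of the monotonicity formula forces $x\cdot\nabla u_0+2t\,\partial_tu_0-u_0=0$. This means $u_0=\sqrt{-t}\,\phi(x/\sqrt{-t})$ with $\phi$ solving \eqref{eq:profile}.

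To pass the energy to the limit, the positivity sets must converge in $L^1_{\mathrm{loc}}$. This follows from Step 2 and from the fact that the free boundaries have zero measure, being unions of $C^{1,1}$ sheets. The density $\theta(0+)$ then equals the Gaussian integral in \eqref{eq:density-tangent}.

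\textbf{Step 4: nontriviality and the range of the density.} Singularity of $(0,0)$ rules out $\phi$ being a single half-plane profile: by an $\varepsilon$-regularity/stability argument, if $\phi$ were a half-plane, the $C^{1,\alpha}$ convergence would show $u$ is classical near $(0,0)$. The density equals $1/2$ exactly for a half-space, and $\theta(0+)>1/2$ should follow from Gaussian density gaps: any other $C^{1,1}$ free boundary self-shrinker has strictly larger Gaussian area. The upper bound $\le1$ is trivial. Similarly, $\phi\not\equiv0$ by nondegeneracy, since the origin stays on the free boundary.
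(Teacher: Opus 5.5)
Your proposal has a genuine gap in Step 2, at exactly the point you flagged as the main obstacle.

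\textbf{What (i)--(iii) actually give.} The linear growth (i) holds only within distance $c\sqrt{-t}/C_s$ of $\Gamma_t=\partial\{u_\lambda(t,\cdot)>0\}$. Together with (ii)--(iii), it yields only one half of the Hausdorff convergence: limits of points of $\Gamma_j$ are free boundary points of $u_0$, with $|\nabla u_0|=1$ there. It does not stop $u_{\lambda_j}(t,\cdot)$ from being positive but tending to $0$ at points of the positivity set that lie farther than $c\sqrt{-t}/C_s$ from $\Gamma_j$.

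On a fixed time slice, the bounds $|D^2u|\le C$ and $|u_t|\le C$ are compatible with an interior dip of the form $\varepsilon+|x-x_*|^2$. For $\Delta u=u_t$ with a right-hand side that is merely bounded, not small, the inhomogeneous Harnack inequality $\sup u\le C(\inf u+\|u_t\|)$ gives no lower bound. If such dips occur, $u_0$ acquires free boundary points that are not limits of $\Gamma_j$. At those points $\nabla u_0=0$, so the Bernoulli condition fails, $\chi_{\{u_j>0\}}\not\to\chi_{\{u_0>0\}}$, and the energy convergence you need in Step 3 is lost.

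The missing ingredient is \emph{uniform non-degeneracy away from the free boundary}: $w_j\ge c_R$ at every point of $\{w_j>0\}\cap B_R$ at distance at least $r_0/2$ from $\Gamma_j$.

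\textbf{How the paper closes it.} The paper obtains this from the monotonicity formula, used earlier than in your plan. Weiss's formula is exactly monotone, with no Type I error terms. In the rescaled variables $w_j(\sigma,y)=v(s_j+\sigma,y)$, its dissipation gives $\int\!\!\int|\partial_s v|^2\rho<\infty$. Consequently:
\begin{itemize}
\item Off a set of times of vanishing measure (the ``good times''), $\|\partial_\sigma w_j(\sigma)\|_{L^2_\rho}$ is small. With the Type I $L^\infty$ bound, it is then also small in $L^p$ for $p>n$.
\item On good times, the interior Harnack inequality for $\Delta w-\frac12 y\cdot\nabla w+\frac12 w=\partial_\sigma w$ carries the collar bound $w\ge r_0/400$ into the interior.
\item The tube-volume and perimeter bounds control $\{0<w_j\le 2\delta_j\}$ on good times, and the bad-time set has vanishing measure. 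Together these give $\chi_{\{w_j>0\}}\to\chi_{\{\phi>0\}}$ in $L^1_{\mathrm{loc}}$.
\item The limiting equation, Hausdorff convergence and the Bernoulli condition are then proved along a sequence of good times $\sigma_j$.
\end{itemize}
The same dissipation makes the limit independent of $\sigma$ directly. So the paper never applies the monotonicity formula and its equality case to $u_0$; in your ordering, that step presupposes the very convergence Step 2 was supposed to deliver.

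A parabolic Harnack chain might be an alternative route to interior non-degeneracy. It would exploit that the free boundary moves with bounded speed and cannot nucleate or annihilate before the singular time. You would have to actually carry this out.

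\textbf{Smaller points.}
\begin{itemize}
\item Non-triviality needs the quantitative bound $\operatorname{dist}(0,\Gamma_t)\le 2C_s\sqrt{-t}$, which comes from the maximum principle plus the speed bound. ``The origin stays on the free boundary'' is not enough.
\item $\theta(0+)>1/2$ is not a free-standing density gap. The paper gets $\theta(0+)\ge 1/2$ from upper semicontinuity of the density at regular free boundary points of the self-similar solution. It then notes that equality forces the half-plane, which Andersson--Weiss $\varepsilon$-regularity excludes at a singular point.
\end{itemize}
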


We call the self-similar solutions $\sqrt{-t}\,\phi\left(\tfrac{x}{\sqrt{-t}}\right)$ arising in Theorem \ref{thm:blowup} \emph{tangent flows of $u$ at $(0,0)$}; a priori, different subsequences may produce different tangent flows. The main difficulty in the theorem is that the class of classical solutions is not closed under parabolic blow-up, and \eqref{eq:pBern} has no minimality property that could be passed to the limit: the dilations $u_{\lambda_j}$ could converge to zero, or their positivity sets could fail to converge to that of the limit, in which case the limit would not attain the free boundary condition. The Type I bound, combined with the Bernoulli condition, rules this out; we sketch the ideas of the proof in Section \ref{ssec:main-ideas} below.

Our second result shows that below the critical rate \eqref{eq:typeI}, the behavior of the solution is completely rigid.

\begin{theorem}[Rigidity below the critical rate]\label{thm:blowup-twophase}
    Under the same assumption as in Theorem \ref{thm:blowup}, assume in addition the blow-up rate in \eqref{eq:typeI} satisfies $C_s(r) \to 0$ as $r\to 0$. Then every tangent flow at $(0,0)$ is of the form $|\langle x, e \rangle|$ for some $e\in \mathbb{S}^{n-1}$. Moreover, if the sequence $\lambda_j\to0$ produces the tangent flow $|\langle x, e \rangle|$, then for every $j$ sufficiently large and every $t\in (-2\lambda_j^2, -\lambda_j^2/2)$, the positive set $\{u(t,\cdot)>0\} \cap B_{\lambda_j}(0)$ consists of two connected components, lying above and below the graphs of two $C^{1,1}$ functions $\eta_t^\pm$ with sufficiently small $C^{1}$ norm over the hyperplane $H= \{x\in \mathbb{R}^n: \langle x, e \rangle = 0\}$. If, moreover, the blow-up rate $C_s(r)$ satisfies the following Dini assumption
    \begin{equation}\label{eq:dini-Cs}
        \int_0^{*}\frac{C_s(r)}{r}\,dr<+\infty,
    \end{equation}
    then $e$ is independent of the subsequence and $u_\lambda \to |\langle x, e \rangle|$ locally uniformly in $(-\infty,0)\times\RR^n$ as $\lambda\to0$.
\end{theorem}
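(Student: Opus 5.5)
We propose to deduce the first assertion from Theorem \ref{thm:blowup}, and the remaining two by a perturbative flatness argument in the spirit of Allard/Simon-type uniqueness.

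\textbf{Step 1 (the tangent flow is a double plane).} Let $\lambda_j\to 0$ and let $\phi$ be the profile produced by Theorem \ref{thm:blowup}. Under the parabolic dilation we have $\sqrt{-t}\,|D^2u_{\lambda_j}(t,x)| = \sqrt{-\lambda_j^2t}\,|D^2u(\lambda_j^2t,\lambda_j x)|$. Hence on any compact subset of $(-\infty,0)\times\RR^n$ this quantity is bounded by $C_s(r_j)$ for some $r_j\to0$. The bound passes to the limit, in the form of the Hessian estimate of Theorem \ref{thm:blowup}, and gives $D^2\phi\equiv 0$ on $\{\phi>0\}$. Thus $\phi$ is affine on each connected component of $\Omega$. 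The profile equation then forces $-\tfrac12 y\cdot\nabla\phi+\tfrac12\phi=0$, so $\phi(y)=a\cdot y$ on each component, with $|a|=1$ by the Bernoulli condition. Each component is therefore a half-space $\{a\cdot y>0\}$; it is convex with boundary a hyperplane through the origin, so components are disjoint half-spaces through $0$. The density identity \eqref{eq:density-tangent} gives a Gaussian density in $(1/2,1]$. One half-space has density $1/2$, which is excluded, and at most two disjoint half-spaces fit. Hence there are exactly two, $\{a\cdot y>0\}$ and $\{b\cdot y>0\}$, which are disjoint only if $b=-a$. Therefore $\phi=|\langle y,e\rangle|$, and the self-similar flow is $|\langle x,e\rangle|$, independent of $t$.

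\textbf{Step 2 (geometric description at scale $\lambda_j$).} The goal is to upgrade convergence of $u_{\lambda_j}$ to convergence of the positivity sets, and then of the free boundaries, in $C^1$. The proof of Theorem \ref{thm:blowup} gives Hausdorff convergence of $\{u_{\lambda_j}(t)>0\}$ to $\{\langle x,e\rangle\neq0\}$ on $t\in[-2,-1/2]$ inside $B_1$. The Hessian bound $\|D^2u_{\lambda_j}\|\le C_s(2\lambda_j)/\sqrt{-t}\to0$ and $|\nabla u|=1$ on the free boundary then make $\nabla u_{\lambda_j}$ uniformly close to $\pm e$ on the two sides. This is first established near the free boundary, and then on all of the positivity set in $B_1$ by integrating along segments within each component. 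By the implicit function theorem, the free boundary on each side is a graph $\eta^\pm_t$ over $H$ with small $C^1$ norm. Its $C^{1,1}$ regularity follows from the Hessian bound, since the level set $\{u=0\}$ has curvature of size $|D^2u|/|\nabla u|$. Rescaling back gives the stated picture in $B_{\lambda_j}$. Connectedness of the two components holds because each is a supergraph or subgraph intersected with a ball.

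\textbf{Step 3 (uniqueness of $e$ under \eqref{eq:dini-Cs}).} This is the main obstacle. The plan is to control the rotation of the normal direction between dyadic scales. Fix a time slice at scale $r$. On $Q^-_r$ the normal $\nu=\nabla u/|\nabla u|$ on the upper component satisfies $|\nabla\nu|\lesssim |D^2u|\lesssim C_s(r)/r$. We then need to pass between scales $r$ and $r/2$ along times $t\sim -r^2$. Using the spatial variation over distances $\sim r$, together with the time derivative $\partial_t\nabla u=\nabla\Delta u$, gives the needed control; the latter requires a third-derivative estimate $|D^3u|\lesssim C_s(2r)/r^2$. We would obtain this from interior parabolic Schauder estimates applied to $\nabla u$, noting that $D^2u$ is small. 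Near the free boundary this relies on boundary Schauder estimates, with the $C^{1,1}$ flat graphs from Step 2 used as the boundary. The resulting bound is $|e(r)-e(r/2)|\lesssim C_s(2r)$, where $e(r)$ is the average normal at scale $r$. Summing over dyadic scales, the Dini condition \eqref{eq:dini-Cs} makes $e(r)$ Cauchy. Every subsequential tangent flow then has direction $\pm\lim e(r)$. Since $|\langle x,e\rangle|=|\langle x,-e\rangle|$, this gives uniqueness of the tangent flow and full convergence $u_\lambda\to|\langle x,e\rangle|$. The delicate point is the boundary higher-regularity near a free boundary that is only known to be $C^{1,1}$. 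If that proves troublesome, we would instead bound the change of the normal directly: along the free boundary, the normal velocity $V=-u_t/|\nabla u|$ equals a Hessian term $\Delta u$. This yields $|\partial_t \nu|\lesssim |\nabla_{\partial}\Delta u|$, which is again bounded via $C_s$.
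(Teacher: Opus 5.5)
Steps 1 and 2 are essentially the paper's argument. The paper rules out the half-space directly through the Andersson--Weiss flatness theorem rather than via the density bound. It gets the graph structure from \eqref{eq:thickness}, namely $\tau-\tfrac{\kappa_j}{2}\tau^2\le w_j(\sigma,p+\tau\nu)\le\tau|\nu_1|+2\delta_j$ at free boundary points, followed by a one-dimensional argument on lines normal to $H$. One inaccuracy: the proof of Theorem~\ref{thm:blowup} gives Hausdorff convergence of free boundaries only along a sequence of good times, not for every $t\in[-2,-\tfrac12]$. You do not need it, because locally uniform convergence at all times, together with the small Hessian and \eqref{eq:thickness}, already gives what you use.

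The genuine gap is in Step 3. Your bound $|e(r)-e(r/2)|\lesssim C_s(2r)$ for an \emph{averaged} normal needs $|D^3u|\lesssim C_s(2r)/r^2$ up to the free boundary. Boundary Schauder theory does not provide this here. Quantitatively, the free boundary is only known to be $C^{1,1}$ in space, with normal velocity bounded by a multiple of $C_s/\sqrt{-t}$. That is far from the parabolic boundary regularity that a third-derivative estimate linear in $C_s$ would require. Your fallback fails as well. Along the free boundary, $\partial_t\nu$ is (up to sign) the tangential gradient of $V=\Delta u$, which is a third derivative of $u$, while the Type I hypothesis controls only $D^2u$.

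The step can be repaired without touching the boundary, in three moves.
\begin{itemize}
\item First, by Step 1 and compactness, for every large $s$ the rescaled solution is uniformly close on a fixed window to some double plane (cf.\ \eqref{eq:Z-trapping-simple}). So Step 2 applies at every small scale, not only along your subsequence.
\item Next, read the direction off $\nabla u$ at an interior point such as $x_r=\tfrac r2e(r)$, which stays at distance $\gtrsim r$ from the free boundary for $t\in[-r^2,-r^2/4]$. There, interior estimates for the heat equation satisfied by $D^2u$ give $|\partial_t\nabla u|\le C|D^3u|\lesssim C_s(2r)/r^2$.
\item Finally, moving to $x_{r/2}$ along a segment in the positive set costs $\lesssim r\sup|D^2u|\lesssim C_s(2r)$. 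Since $C_s$ is nondecreasing, $\sum_kC_s(2^{1-k})\lesssim\int_0^{*}C_s(r)\,r^{-1}\,dr<\infty$, so the directions converge.
\end{itemize}

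The paper is more economical still and uses no derivative estimates beyond the Type I bound. For a segment $[y_1,y_2]$ in the positive set, $\frac{d}{ds}\bigl(v(s,y_2)-v(s,y_1)\bigr)$ is bounded by $C_R\kappa_R(s)$ directly from \eqref{eq:respar} and Taylor's theorem, because $-\tfrac12y\cdot\nabla+\tfrac12$ annihilates linear functions. Hence the difference vector $a(s)=\bigl(v(s,q_i)-v(s,q_0)\bigr)_i$ at the fixed points $q_0=2Re_1$, $q_i=q_0+e_i$ has integrable variation under \eqref{eq:dini-Cs}. A continuity argument keeps these segments in the positive set for all $s\ge s_0$, and $a(s)$ tracks the approximating direction up to $3\delta_R(s)$.
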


Consequently, a free boundary point at which $C_s(r)\to0$ as $r\to0$ is either a regular point, or a point where two pieces of the positivity set, with nearly flat boundaries, collide (see Example \ref{eg:exp}). The Dini condition \eqref{eq:dini-Cs} is used only to fix the direction $e$: without it, the collision direction could conceivably rotate slowly between the admissible time windows.

\medskip

Since singularity formation at Type I singular points is modeled by self-shrinkers, it is desirable to classify the solutions of the profile equation \eqref{eq:profile} that can arise. The second group of results of this paper concerns \emph{radial} profiles, $\phi(y) = \phi(|y|)$---by the usual abuse of notation, the same symbol denotes the profile and its radial representative---for which \eqref{eq:profile} reduces to the ODE
\begin{equation}\label{eq:ode}
    \phi''(r) + \left(\frac{n-1}{r} - \frac{r}{2}\right) \phi'(r) + \frac{1}{2}\,\phi(r) = 0.\tag{RP}
\end{equation}
We classify the radial profiles completely. 
%In the statement, $s_*<s_0$ denote the unique positive zeros of the Kummer functions $U(-\tfrac12,\tfrac n2,\cdot)$ and $M(-\tfrac12,\tfrac n2,\cdot)$, respectively (see Lemma ref{lem:MU}).

\begin{theorem}[Classification of radial profiles]\label{thm:radial}
    In any dimension $n\geq 2$, every nonzero radial solution to~\eqref{eq:profile} with connected positivity set $\Omega$ is one of the following, with dimensional constants $r_- < r_* < r_0 < r_+$:
    \begin{enumerate}[label=(\roman*), topsep=0pt]
        \item $\phi_B$, the unique radial profile supported on a ball centered at the origin with radius $r_0$,
        \item\label{it:annular} $\phi_A$, the unique radial profile supported on an annulus $A=\{r_-<|x|<r_+ \}$;
        \item the radial profile $\phi _r$ supported on the complement of a ball of radius $r$ for any $r \geq r_*$. 
        Moreover, near $\infty$ we have
        \[ \phi_r(x) \sim \left\{\begin{array}{ll}
            |x| & \text{ if } r = r_*, \\
            e ^{\left\lvert x \right\rvert^2/4} \cdot \left\lvert x \right\rvert ^{-(n+1)} & \text{ if } r > r_*. 
        \end{array} \right. \]
    \end{enumerate}
    In dimension $n=1$, the annular solution in \ref{it:annular} does not exist, and the radial self-shrinker with compact positivity set is unique.\footnote{In fact, in 1D the solution to \eqref{eq:profile} with compact support has to be even/radial, and thus unique, by personal communication with Nikola Kamburov and Dennis Kriventsov. See also \cite[Lemma 7.5]{radial}.}
\end{theorem}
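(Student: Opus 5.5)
The plan is to reduce \eqref{eq:ode} to Kummer's equation and read off all radial profiles from two explicit solutions. With $s=r^2/4$ and $\phi(r)=w(s)$, a direct computation turns \eqref{eq:ode} into
\[
s w'' + \left(\tfrac n2 - s\right) w' + \tfrac12 w = 0,
\]
which is Kummer's equation with $a=-\tfrac12$ and $b=\tfrac n2$. A fundamental system is therefore $M(s)=M(-\tfrac12,\tfrac n2,s)$, regular at $0$, and $U(s)=U(-\tfrac12,\tfrac n2,s)$, which is singular at $0$ when $n\ge2$.

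The first step is to record the needed properties of these two functions:
\begin{itemize}[topsep=0pt]
\item $M$ is positive near $0$, has exactly one positive zero $s_0$, and then grows like $-e^s s^{-(n+1)/2}$ (up to a constant), since $a=-\tfrac12$ lies in $(-1,0)$.
\item $U(s)\sim s^{1/2}$ at infinity, $U\to+\infty$ (or is positive) near $0$ for $n\ge2$, and $U$ has exactly one positive zero $s_*$.
\end{itemize}
The key monotonicity fact is $s_*<s_0$. I would prove the zero counts by Sturm comparison, or from the Wronskian identity $(s^{n/2}e^{-s}W)'=0$ applied to $M$ and $U$. The asymptotics come from the standard Kummer expansions: $M\sim \Gamma(b)/\Gamma(a)\,e^s s^{a-b}$, which in terms of $r$ gives $e^{r^2/4}r^{-(n+1)}$.

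The classification then splits according to the shape of the connected positivity set $\Omega$, which is a ball, an annulus, or an exterior domain.
\begin{itemize}[topsep=0pt]
\item \emph{Ball.} Regularity at the origin forces $\phi=cM(r^2/4)$. The vanishing condition gives $r_0=2\sqrt{s_0}$, and $|\phi'(r_0)|=1$ fixes $c>0$ uniquely.
\item \emph{Exterior of $B_r$.} Write $\phi=\alpha M+\beta U$, let $\phi$ vanish at $r$, and normalize $\phi'(r)=1$; this determines $(\alpha,\beta)$ for every $r$. Positivity on $(r,\infty)$ is what restricts $r$. If $\alpha\neq0$, the term $e^{s}s^{-(n+1)/2}$ dominates at infinity, so $\alpha$ must have the sign making this term positive, which is $\alpha<0$ since $M\to-\infty$. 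If $\alpha=0$, then $\phi=\beta U$, so $r=r_*:=2\sqrt{s_*}$ and $\phi\sim|x|$.
\item \emph{Sign condition for exterior profiles.} The condition $\alpha<0$ together with no interior zero should be equivalent to $r>r_*$. To see this, note that $\alpha(r)$ is proportional to the Wronskian-type expression $U(s)$ evaluated at the boundary point, up to a positive factor coming from $W(M,U)<0$. Solving the linear system gives $\alpha\propto -U(s_r)/W$, so $\operatorname{sign}\alpha$ changes exactly at $s_*$. I also need that the solution has no second zero; this follows because a solution with two zeros would have a sign-changing combination, contradicting Sturm separation with $U$ on $(s_*,\infty)$. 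For $r<r_*$ the profile must cross zero again, which excludes those radii.
\item \emph{Annulus.} Here both ends vanish, at $s_-<s_+$, with slopes $+1$ and $-1$ respectively. By Sturm separation applied to $M$ and $U$, such an interval must contain $s_*$ and satisfy $s_-<s_*<s_0<s_+$ up to ordering. The count is three conditions (two slopes and one extra vanishing) for three unknowns ($\alpha$, $\beta$, and $s_-$, with $s_+$ then determined).
\end{itemize}

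Concretely, I would parametrize the annulus by the inner radius. For each $s_-\in(0,s_*)$, let $\psi$ solve the equation with $\psi(s_-)=0$ and $\phi'=1$ there. Let $s_+(s_-)$ be its next zero, which exists because $\alpha<0$ fails for these radii. Set $g(s_-)=|\phi'(r_+)|$. Existence follows from $g\to$ limits of the two sides at the endpoints. As $s_-\to s_*$, $\phi\to$ a multiple of $U$ and $r_+\to\infty$, so $g\to\infty$. As $s_-\to0$, the solution blows up like $U$ and is rescaled down, so $\phi\approx cM$ with $c\to$ small, and $g\to$ a limit below $1$. Uniqueness requires strict monotonicity of $g$, which I would attack by differentiating in $s_-$ and using an energy (Pohozaev or Wronskian-with-weight $s^{n/2}e^{-s}$) identity; this monotonicity is the main obstacle, and I am unsure it is straightforward.

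In dimension one, $U$ is regular, the Kummer system degenerates to even and odd solutions, and a direct sign argument rules out the annulus.
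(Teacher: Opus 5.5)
Your reduction to Kummer's equation, the ball case, and the exterior case are essentially right; in the exterior case, the Cramer's-rule formula for the $M$-coefficient plus Sturm separation against $U$ on $(s_*,\infty)$ is equivalent to the paper's monotonicity of $M/U$. The localization $s_-<s_*<s_0<s_+$ of an annulus and the intermediate-value existence argument are also right in outline.

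The genuine gap is the uniqueness of $\phi_A$, which is the new content of the theorem. You correctly reduce it to strict monotonicity of the shooting map $g$ on $(0,s_*)$, but you leave that unproved, and the route you suggest is not evidently workable. Differentiating the explicit slope $|\phi'(r_+)|=e^{s_+-s_-}(s_-/s_+)^{(n-1)/2}|U(s_-)|/U(s_+)$, with $ds_+/ds_-=W(s_-)U(s_+)^2/\bigl(W(s_+)U(s_-)^2\bigr)$, produces terms of both signs (for instance $U'(s_-)/U(s_-)<0$), and no identity you name controls them.

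The paper instead uses a Pr\"ufer-type phase-plane comparison. In $t=r/\sqrt2$ the equation is $y''-2h(t)y'+y=0$ with $h=\frac12\bigl(t-\frac{n-1}{t}\bigr)$. First, every shooting profile is strictly concave. Indeed, $K=-y''$ solves $K''+\bigl(\frac{n+1}{t}-t\bigr)K'-3K=0$ and is positive at both endpoints because $a<\sqrt{n-1}<b$, so the maximum principle gives $K>0$. (The endpoint inequality needs the special-function bounds $s_*<\frac{n-1}2<\frac n2<s_0$.) Writing $y=\sqrt E\sin\theta$, $y'=\sqrt E\cos\theta$, one then has $\theta'=1-h\sin 2\theta>0$. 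So $t$ becomes a function of $\theta\in[0,\pi]$ solving $dt/d\theta=\bigl(1-h(t)\sin2\theta\bigr)^{-1}$. This is the same scalar ODE for every member of the family; only the initial value $t(0)=a$ varies. Trajectories therefore stay ordered. Since $\log p^2=\int_0^\pi\frac{4h(t(\theta))\cos^2\theta}{1-h(t(\theta))\sin2\theta}\,d\theta$ has integrand strictly increasing in $t$ for $\theta\neq\pi/2$ (because $h'>0$), $p^2$ is strictly increasing in the shooting parameter. Without this, or a substitute, part (ii) is only an existence result.

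Several sign statements also need correcting, because your exterior and annulus arguments depend on them.
\begin{itemize}
\item For $n\geq2$, $U(-\frac12,\frac n2,s)\to-\infty$ as $s\downarrow0$ (recall $\Gamma(-\frac12)=-2\sqrt\pi<0$), so $U<0$ on $(0,s_*)$. With your claimed positivity near $0$ and $U\sim\sqrt s$ at infinity, $U$ could not have exactly one (necessarily simple) zero, and the $M$-coefficient would not change sign at $s_*$.
\item Likewise $W=MU'-M'U=\frac{\Gamma(n/2)}{2\sqrt\pi}e^s s^{-n/2}>0$. The normalization $F(s_r)=0$, $F'(s_r)>0$ then gives $M$-coefficient $-U(s_r)F'(s_r)/W(s_r)$, which is negative exactly for $s_r>s_*$. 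As written, your $W<0$ combined with $\alpha\propto-U(s_r)/W$ would yield $\alpha<0$ for $s_r<s_*$, the opposite of what you need.
\item The limit $g\to\infty$ as $s_-\uparrow s_*$ needs an actual computation, not just $r_+\to\infty$. For example, use $U(s_-)/U(s_+)=M(s_-)/M(s_+)$ and the exponential growth of $M$.
\item For $n=1$ you only exclude the annulus. Uniqueness of the compact profile there requires $\phi'(0)=0$ to eliminate the $U=\sqrt s$ component, which is $|x|/2$: smooth in $r$ but not in $x$. The singularity of $U$ that you use for $n\geq2$ is absent in this case.
\end{itemize}
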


The existence and uniqueness of the radial profile supported on a ball were established in \cite[Proposition 1.1]{CV}, and its stability was proved in \cite[Section 4.2]{stability}. The construction of an annular profile also appears in \cite[Section 5]{radial}; its uniqueness is new. We remark that the annular solutions in dimensions $n\geq 2$ bear a loose analogy with Angenent's torus $\mathbb{S}^1\times \mathbb{S}^{n-1}$ ($n\geq 2$), a closed embedded rotationally symmetric self-shrinker of the mean curvature flow in $\RR^{n+1}$ \cite{Ang} (see also \cite{DN}). In \cite{KM}, embedded rotationally symmetric self-shrinkers of the mean curvature flow are classified as the plane, the round sphere, the round cylinder, or a smooth embedding of $\mathbb{S}^1\times \mathbb{S}^{n-1}$; however, it remains an open question whether the compact genus-one self-shrinker is unique, even under the additional assumption of rotational symmetry. By contrast, in the parabolic Bernoulli setting the radial classification, including the uniqueness of the annular profile, is complete.

The radii $r_*$ and $r_0$ are given explicitly as the roots of some special functions. In comparison, the radii $r_\pm$ of the annular solution are determined only implicitly by the construction in Theorem \ref{thm:radial}. Our next result locates them sharply, in every dimension. The remaining results of the paper depend crucially on these delicate estimates, which we believe to be of independent interest as well. 

\begin{theorem}[Sharp geometry of the annular profile]\label{thm:annulus-geometry}
    Let $[r_-,r_+]$ be the support of the annular profile $\phi_A$.
    \begin{enumerate}[label=(\roman*), topsep=0pt]
        \item\label{it:geom-loc} \emph{Location of the radii:} For $n\geq3$,
        \[
            2\left(\sqrt{n-1}-\kappa\right)^2<r_-^2<2\left(\frac{n-1}{\sqrt{n-1}+\kappa}\right)^2,
            \qquad
            2\left(\sqrt{n-1}+\kappa\right)^2<r_+^2<2\left(\frac{n-1}{\sqrt{n-1}-\kappa}\right)^2,
        \]
        where $\kappa=1.1623\ldots$ is the first positive zero of the even solution of the Hermite-type equation $Y''-2xY'+Y=0$. In particular,
        \begin{equation}\label{eq:fine-radius-estimate}
            r_\pm=\sqrt{2(n-1)}\pm\sqrt2\,\kappa+O(n^{-1/2}).
        \end{equation}
        When $n=2$, one still has $r_-^2<2(1+\kappa)^{-2}$ and $r_+^2>2(1+\kappa)^2$.
        \item\label{it:geom-width} \emph{Width and product:} For every $n\geq2$,
        \[
            r_+-r_-<\sqrt{14} \qquad \text{and} \qquad r_-\,r_+<2(n-1).
        \]
        \item\label{it:geom-bias} \emph{Outward bias:} For every $n\geq2$, with $\delta:=\tfrac12(r_+-r_-)$ the half-width of the annulus,
        \[
            \frac{\int_0^{\delta}s^4e^{-s^2/2}\,ds}{\int_0^{\delta}s^2e^{-s^2/2}\,ds}
            <\left(\frac{r_-+r_+}{2}\right)^2-2(n-1)
            <\delta^2.
        \]
    \end{enumerate}
\end{theorem}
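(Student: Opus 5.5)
Throughout, $\phi=\phi_A$ solves \eqref{eq:ode} on $[r_-,r_+]$ with $\phi(r_\pm)=0$, $\phi'(r_-)=1$, $\phi'(r_+)=-1$, and $\phi>0$ inside; these are exactly the conditions that determine $\phi_A$ in Theorem~\ref{thm:radial}. The plan is to remove the first-order term by a Liouville substitution and work with the self-adjoint form. Write $w=e^{-r^2/4}$. Then \eqref{eq:ode} becomes
\[
(r^{n-1}w\,\phi')' + \tfrac12 r^{n-1}w\,\phi = 0.
\]
Set $\psi = r^{(n-1)/2}e^{-r^2/8}\phi$. This gives $\psi''+q(r)\psi=0$ with
\[
q(r)=\tfrac12 - \frac{(n-1)(n-3)}{4r^2} + \frac{n-1}{4}\cdot(\text{const}) - \frac{r^2}{16}+\cdots,
\]
an inverted-parabola-type potential peaked near $r^2\approx 2(n-1)$.

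Near the peak I change variables $r=\sqrt{2(n-1)}+\sqrt2\,x$. To leading order the equation becomes the Hermite-type equation $Y''-2xY'+Y=0$, rewritten in the original gauge. The key point is that $\phi'(r)+(\tfrac{n-1}{r}-\tfrac r2)\phi$ simplifies there, and $\tfrac{n-1}{r}-\tfrac r2$ vanishes exactly at $r^2=2(n-1)$. This explains the constant $\kappa$: the Bernoulli conditions $|\phi'|=1$ at both ends force the two endpoints to be nearly symmetric zeros of the even Hermite solution.

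\textbf{Step 1 (identities).} Multiply by the weight and integrate over $[r_-,r_+]$ to obtain the Pohozaev/energy-type identities:
\[
r_+^{n-1}e^{-r_+^2/4}+r_-^{n-1}e^{-r_-^2/4}\cdot(\pm) = \tfrac12\int r^{n-1}w\,\phi.
\]
A second identity comes from testing against $r\phi'$ or against $(\tfrac{n-1}{r}-\tfrac r2)$. The radii are then characterized by monotonicity of the shooting map from Theorem~\ref{thm:radial}'s uniqueness proof.

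\textbf{Step 2 (location bounds, part \ref{it:geom-loc}).} Apply Sturm comparison. Compare $\phi$ with explicit sub- and supersolutions built from $Y$. For the lower bounds use $Y\big((r-\sqrt{2(n-1)})/\sqrt2\big)$; the drift $\tfrac{n-1}{r}-\tfrac r2$ is compared with the linearization $-(r-\sqrt{2(n-1)})\cdot(\text{const})$ using convexity of $1/r$. The two-sided nature of the bounds, with $(n-1)/(\sqrt{n-1}\pm\kappa)$ on one side and $\sqrt{n-1}\mp\kappa$ on the other, suggests the change of variable $\rho=(n-1)/r$ versus $r$ itself. In one of these variables the drift is bounded above by the linear one, and in the other it is bounded below. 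I would therefore run the comparison twice, once in $r$ and once in $(n-1)/r$. Sturm's theorem then squeezes the zeros $r_\pm$ between the scaled zeros $\pm\kappa$. The asymptotic expansion \eqref{eq:fine-radius-estimate} follows by Taylor expansion. The $n=2$ claims are the one-sided halves that survive when $(n-1)(n-3)<0$.

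\textbf{Step 3 (parts \ref{it:geom-width} and \ref{it:geom-bias}).} The product bound $r_-r_+<2(n-1)$ should follow from the weighted identity in Step 1 tested against $\tfrac{n-1}{r}-\tfrac r2$, after comparing the boundary terms at $r_\pm$. The width bound $\sqrt{14}$ comes from a Sturm comparison with constant coefficient using $\sup q$, together with the low-$n$ cases. For the outward bias, write $m=(r_++r_-)/2$ and expand in $s=r-m\in(-\delta,\delta)$. Symmetrizing the Step 1 identity under $s\mapsto -s$ isolates the odd part of the drift. Using the Gaussian weight $e^{-s^2/2}$ that appears after recentering, the quantity $m^2-2(n-1)$ is expressed as a weighted average of $s^2$. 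Bounding the test function between $1$ and $s^2$-type profiles then yields the ratio $\int s^4e^{-s^2/2}/\int s^2e^{-s^2/2}$ from below and $\delta^2$ from above.

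\textbf{Main obstacle.} The hardest step is Step 3(iii), the bias. It requires identities that are exact rather than merely leading-order, uniformly in $n\ge2$. If the symmetrization leaves error terms of order $n^{-1/2}$, I would first try to absorb them using the Step 2 bounds on $\delta$. For the smallest dimensions I would fall back on direct numerical verification.
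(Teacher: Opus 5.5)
Your heuristics are sound: recentering at $\sqrt{2(n-1)}$, comparing with the Hermite solution $Y$, and writing the excess as an average of $s^2$. But several steps would fail as written, starting with part (ii), on which the rest depends.

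I use scaled variables: $t=r/\sqrt2$, $a=r_-/\sqrt2$, $b=r_+/\sqrt2$, $\mu=\frac{n-1}2$, $h=\frac t2-\frac\mu t$. Then $y(t)=\phi(\sqrt2\,t)/\sqrt2$ solves $y''-2hy'+y=0$ with $y'(a)=1=-y'(b)$.

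\textbf{Width bound.} Sturm comparison of $\psi''+q\psi=0$ with the constant $\sup q$ shows consecutive zeros are \emph{at least} $\pi/\sqrt{\sup q}$ apart. That is a \emph{lower} bound (about $\pi$) for $r_+-r_-$, not the upper bound $\sqrt{14}$. Using $\inf q$ on the unknown interval instead is circular, and much too weak: near the endpoints $q$ is only about a third of its peak value. What works is that $y$ is the positive Dirichlet eigenfunction, with eigenvalue $1$, of $-(\varrho\eta')'=\lambda\varrho\eta$, $\varrho=t^{2\mu}e^{-t^2/2}$. If $b-a\geq\sqrt7$, a subinterval of width $\sqrt7$ with the same product $ab$ has first eigenvalue $\geq1$ by domain monotonicity. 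An explicit trial function then contradicts this: a cosine for $n\geq3$, which needs the window $2\mu-\kappa^2<ab<2\mu$ coming from part (i), and a two-mode function for $n=2$, where the final margin is $999/1000<1$.

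\textbf{Product bound.} This does not follow from an integrated identity. Multiplying by $y'$ gives only $\int_a^b h\,y'^2\,dt=0$, i.e.\ $E(a)=E(b)$ for $E=y'^2+y^2$, which does not determine the sign of $ab-2\mu$. The missing idea is the phase-plane pairing. Write $y=\sqrt E\sin\theta$, $y'=\sqrt E\cos\theta$. The contributions to $\frac{d}{d\theta}\log E$ at $\theta$ and $\pi-\theta$ combine into a quantity with the sign of $t(\theta)t(\pi-\theta)-2\mu$. A crossing rule for $P(\theta)=t(\theta)t(\pi-\theta)$ then shows that $ab\geq2\mu$ would force $\log E(b)>\log E(a)$.

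\textbf{Part (i).} A Sturm-type comparison between equations with different drifts cannot locate zeros without an anchor. With $\tilde y(x)=y(\sqrt{2\mu}+x)$, $H(x)=h(\sqrt{2\mu}+x)$ and $W=\tilde y'Y-\tilde yY'$, one has $W'-2xW=2(H-x)\tilde y'Y$ and $W'-2HW=2(H-x)\tilde yY'$, with $H-x<0$. The signs close only because the maximum point $c$ of $y$ satisfies $c\geq\sqrt{2\mu}$. This gives $\tilde y'>0$ on $(-\kappa,0)$ and $W(0)=\tilde y'(0)\geq0$. That fact comes from the same crossing rule, and it is absent from your plan.

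Once the lower bounds hold, the upper bounds follow immediately from $ab<2\mu$. No second comparison in $(n-1)/r$ is needed. It would not repeat the first one anyway: after $s=2\mu/t$ the coefficient of $y$ becomes $4\mu^2/s^4$ and the endpoint slopes are no longer $\pm1$. Also, the $n=2$ exception comes from $\sqrt{n-1}<\kappa$, not from the sign of $(n-1)(n-3)$.

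\textbf{Part (iii).} The inequality is exact for every $n\geq2$, so absorbing $O(n^{-1/2})$ errors and checking small $n$ numerically is not a proof; you need an exact identity. It comes from the odd part $\mathcal O(x)=\frac12\bigl(y(m+x)-y(m-x)\bigr)$, which satisfies $(\sigma\mathcal O')'+\sigma\mathcal O=-m\frac{R-x^2}{\Delta}\sigma P$. Here $\sigma=e^{-x^2/2}(1-x^2/m^2)^\mu$, $\Delta=m^2-x^2$, $P(x)=\frac12\int_{m-x}^{m+x}(-y'')$, and the symmetric Bernoulli data give $\mathcal O(0)=\mathcal O(d)=\mathcal O'(d)=0$.

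Test this against the odd solution $\chi$ of $(\sigma\chi')'+\sigma\chi=0$. It stays positive on $(0,d]$ because $d<\sqrt7/2<\pi/2$, so (iii) already uses the width bound. The result is $R=\int_0^dx^2W\big/\int_0^dW$ with $W=\sigma\chi P/\Delta>0$, which gives $R<d^2$ at once.

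The lower bound requires $W/(x^2e^{-x^2})$ to be increasing. That rests on three monotonicities: of $P/x$, which comes from the strict convexity of the curvature $-y''$ (proved by a further maximum-principle argument); of $\chi/x$; and of $(\sigma/\Delta)e^{x^2}$. Chebyshev's integral inequality then gives $R>\gamma_d$. ``Bounding the test function between $1$ and $s^2$-type profiles'' does not supply any of this.
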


Theorem \ref{thm:annulus-geometry} shows that, in every dimension, the annulus concentrates around the sphere of radius $\sqrt{2(n-1)}$ --- the zero of the drift coefficient of \eqref{eq:ode} --- and that its width converges to the dimension-free value $2\sqrt{2}\,\kappa$. The constant $\kappa$ arises from a limiting problem: recentering \eqref{eq:ode} at $\sqrt{2(n-1)}$ and letting $n\to\infty$ yields the Hermite-type equation in \ref{it:geom-loc}. Part \ref{it:geom-bias} captures a finer feature which \ref{it:geom-loc} alone cannot detect: the midpoint of the annulus lies strictly beyond $\sqrt{2(n-1)}$, by an excess bounded above and below in terms of the width alone. This outward bias is precisely the quantity that decides the sign of the borderline eigenvalue in the spectral analysis below, see Theorem \ref{prop:spectrum-annular}.

Theorem \ref{thm:annulus-geometry} is proved in Appendix \ref{sec:radius-est}: part \ref{it:geom-loc} is Proposition \ref{prop:annulus-localization}, and part \ref{it:geom-width} combines Proposition \ref{prop:annulus-width} and Lemma \ref{lem:annular-phase}. Part \ref{it:geom-bias} is Lemma \ref{lem:annular-midpoint}, stated there in rescaled variables.
\medskip

For the stability analysis, self-shrinkers are best viewed as equilibria of a rescaled flow. Define the parabolic rescaling
\begin{equation}\label{def:prescaling}
	v(s,y) := \frac{u(t, y\sqrt{-t})}{\sqrt{-t}}, \qquad \text{where } s=-\log(-t) \in [-\log T,\infty),
\end{equation}
which satisfies the rescaled equation
\begin{equation}\label{eq:respar}
    \left\{\begin{array}{ll}
        v_s = \Delta v -\frac12 y\cdot \nabla v + \frac12 v & \text{ in } \{ v>0\}, \\
        |\nabla v(s, \cdot)| = 1 & \text{on } \partial\{v(s, \cdot)>0\}.
    \end{array} \right.\tag{E}
\end{equation}
The self-similar solutions \eqref{eq:selfshrinker} correspond exactly to the equilibria of \eqref{eq:respar}, and in these variables Theorem \ref{thm:blowup} states that the rescaled flow $v(s,\cdot)$ converges to equilibria along subsequences $s_j=-2\log\lambda_j\to\infty$.

We now turn to the stability of the compact profiles, which determines their relevance for the dynamics: one expects that only stable profiles should arise from generic solutions, in the spirit of the program of Colding and Minicozzi for the mean curvature flow \cite{CM-stable}. In \cite{stability}, Brauner, Hulshof and Lunardi developed a general framework that reduces the nonlinear stability of an equilibrium $\phi$ of \eqref{eq:respar} with bounded, smooth positivity set $\Omega$ to the eigenvalue problem
\begin{equation}\label{eq:spectrum-full}
    \left\{\begin{array}{ll}
       \Delta v - \frac{1}{2} y \cdot \nabla v + \frac{1}{2} v = \lambda v,  & \text{ in } \Omega,  \\[5pt]
       \frac{\partial v}{\partial n} -
    	\frac{\partial^2 \phi}{\partial n^2} v = 0,  & \text{ on } \partial \Omega,
    \end{array} \right.
\end{equation}
whose boundary condition is inherited from the Bernoulli condition. For the annular profile, separating variables into spherical harmonics of degree $\ell$ reduces \eqref{eq:spectrum-full} to a family of Sturm--Liouville problems on $(r_-,r_+)$, see \eqref{eq:spectrum}; for each $\ell\geq0$, the resulting eigenvalues $\lambda_{\ell,1}>\lambda_{\ell,2}>\cdots\to-\infty$ are simple. Our next main result determines the sign of every eigenvalue, in every dimension.

\begin{theorem}[The spectrum of the annular profile]\label{prop:spectrum-annular}
	The spectrum of the linearized problem \eqref{eq:spectrum-full} at the annular self-shrinker $\phi_A$ is classified as follows:
\begin{equation}\label{table:spectrum}
	\begin{array}{c|c|c|c|c|c}
& \ell=0 & \ell=1 & \ell=2 &\ell=3 & \ell\geq4 \\
\hline
k=1 & >1 & 1 & >0 & >0 &  <0 \\
\hline
k=2 & 1 & \frac12 & <0 & <0 &<0 \\
\hline
k\geq 3 & <0 & <0 & <0 & <0 &<0
\end{array}
\end{equation}
\end{theorem}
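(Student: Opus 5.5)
We separate variables $v(y)=f(r)Y_\ell(\theta)$ with $Y_\ell$ a spherical harmonic of degree $\ell$. On $(r_-,r_+)$ this gives the Sturm--Liouville problem
\[
f''+\Big(\tfrac{n-1}{r}-\tfrac r2\Big)f'-\tfrac{\ell(\ell+n-2)}{r^2}f+\tfrac12 f=\lambda f,
\]
with Robin conditions $f'(r_\pm)=\phi_A''(r_\pm)f(r_\pm)$. Here the boundary data follow from \eqref{eq:ode}: since $\phi_A(r_\pm)=0$ and $\phi_A'(r_\pm)=\mp1$, we get $\phi_A''(r_\pm)=\pm\big(\tfrac{n-1}{r_\pm}-\tfrac{r_\pm}{2}\big)$. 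In weighted form, with $w=r^{n-1}e^{-r^2/4}$, the operator is self-adjoint. The eigenvalues $\lambda_{\ell,k}$ are simple, strictly decreasing in $k$, and by the Rayleigh quotient strictly decreasing in $\ell$, since the potential $\ell(\ell+n-2)/r^2$ is monotone in $\ell$. Moreover the $k$-th eigenfunction has exactly $k-1$ interior zeros.

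\emph{Step 1: explicit eigenfunctions from symmetries.}
\begin{itemize}
\item Time translation of the original flow gives $\partial_s$-modes. Differentiating the family $\sqrt{-t+\tau}\,\phi(x/\sqrt{-t+\tau})$ in $\tau$ yields $v=\phi_A-y\cdot\nabla\phi_A=\phi_A-r\phi_A'$ with eigenvalue $1$. This is radial ($\ell=0$) and changes sign once, because it equals $\pm r_\pm$ at the endpoints with opposite signs. Hence it is the $k=2$ mode, $\lambda_{0,2}=1$.
\item Spatial translations give $\partial_i\phi_A$ with eigenvalue $\tfrac12$. Here $f=\phi_A'$ at $\ell=1$, which has exactly one interior zero, so $\lambda_{1,2}=\tfrac12$.
\item Space-time translations $x\mapsto x+c\,t\,e_i$, or equivalently the Galilean-type mode $f=r\phi_A+2\phi_A'$-like combinations, should give the $\ell=1$ eigenvalue $1$. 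I would verify directly that $\partial_i$ of the time-translation combination yields a positive $\ell=1$ eigenfunction with eigenvalue $1$. Concretely, I expect $y_i\phi_A$-type terms arising from $x_i u$-shifts, using $\partial_{x_i}u$ and $t\partial_{x_i}u$. If this fails, I would instead get $\lambda_{1,1}=1$ from a positive eigenfunction built from $\phi_A'$ combined with $r\phi_A$.
\item Then $\lambda_{0,1}>\lambda_{0,2}=1$ is automatic by simplicity.
\end{itemize}

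\emph{Step 2: monotonicity in $\ell$.} This gives $\lambda_{\ell,k}<\lambda_{1,k}$ for $\ell\ge2$. Hence $\lambda_{\ell,2}<\tfrac12$, and $\lambda_{\ell,k}<\lambda_{1,3}$ for $k\ge3$. We still need the signs relative to $0$.
\begin{itemize}
\item For $k\ge3$, compare with $\ell=0$. Apply the Sturm comparison to $\lambda_{0,3}$, using the explicit eigenvalue-$1$ and eigenvalue-$0$ information.
\item At $\lambda=0$ and $\ell=0$, $\phi_A$ itself solves the ODE with boundary derivative $\mp1$. The eigenvalue-zero radial solution $g$ satisfying the left Robin condition can be written via $\phi_A$ and a second solution. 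Counting zeros of $g$ on $(r_-,r_+)$ then gives the number of positive radial eigenvalues via Prüfer oscillation.
\end{itemize}
I would show that exactly two radial eigenvalues are positive, i.e.\ $\lambda_{0,3}<0$, and similarly that $\lambda_{1,3}<0$.

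\emph{Step 3: the sign at $\ell=2,3$ and $\ell\ge4$ for $k=1$ (main obstacle).} I would use the Rayleigh quotient
\[
Q_\ell(f)=\frac{-\int_{r_-}^{r_+}\big(f'^2+\tfrac{\ell(\ell+n-2)}{r^2}f^2-\tfrac12f^2\big)w\,dr+\big[\phi_A''f^2w\big]_{r_-}^{r_+}}{\int f^2w\,dr}.
\]
\begin{itemize}
\item For positivity at $\ell=3$ (hence at $\ell=2$), plug in a test function such as $f=|\phi_A'|$ or the $\ell=1$ ground state.
\item For negativity at $\ell=4$, use the eigenfunction identity. Take the $\ell=1$ ground state $f_1$ with eigenvalue $1$; then $\lambda_{\ell,1}$ is bounded above by $1-\min_r\frac{\ell(\ell+n-2)-(n-1)}{r^2}$ minus gradient corrections.
\end{itemize}
So one needs
\[
\frac{(\ell-1)(\ell+n-1)}{r_+^2}\gtrless 1
\]
at $\ell=4$ and $\ell=3$ respectively, which with $r_+^2\approx2(n-1)+4\kappa\sqrt{2(n-1)}$ is borderline for large $n$. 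This is where Theorem~\ref{thm:annulus-geometry} enters: the localization of $r_\pm$, the width bound, and especially the outward-bias inequality \ref{it:geom-bias}, used with a weighted average of $1/r^2$ against $f_1^2w$ rather than its extremum. The borderline sign, presumably $\lambda_{3,1}>0$, needs the two-sided midpoint bias estimate for large $n$. For $2\le n\le4000$, I would fall back on the computer-assisted interval-arithmetic verification mentioned in the abstract. I expect this step to be the main difficulty.
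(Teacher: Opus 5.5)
Your treatment of $\lambda_{0,2}=1$, $\lambda_{1,2}=\frac12$, the monotonicity in $\ell$, and the row $k\geq3$ (via Sturm comparison against $\phi_A$, which amounts to the paper's domain-monotonicity argument) is fine, apart from a sign slip. The inward normal at $r_+$ is $-\partial_r$, so the Robin condition there is $f'(r_+)=-\phi_A''(r_+)f(r_+)$. Equivalently, $f'+(\frac{n-1}{r}-\frac{r}{2})f=0$ holds at \emph{both} ends. With your version, neither $\phi_A-r\phi_A'$ nor $\phi_A'$ satisfies the condition at $r_+$, and the boundary term in your $Q_\ell$ has the wrong sign there.

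The first real gap is $\lambda_{1,1}=1$. This mode does not come from a symmetry: the shifts you list are not symmetries of \eqref{eq:pBern}. Nor does a combination of $\phi_A'$ and $r\phi_A$ work, since $\mathscr{L}_1(r\phi_A)=2\phi_A'-\frac{r}{2}\phi_A$ and no combination closes up. The eigenfunction is $\langle\omega,a\rangle\,r^{1-n}e^{r^2/4}$. To find it, pass to $\zeta=\rho\psi$ with $\rho=r^{n-1}e^{-r^2/4}$. The Robin problem becomes a Neumann problem with potential $1-\frac{\mu_\ell-(n-1)}{r^2}$, and for $\ell=1$ the constant $\zeta\equiv1$ is a positive solution with $\lambda=1$.

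Second, the entry $\lambda_{2,2}<0$ (and hence $\lambda_{3,2}<0$) is never addressed. Your Step 2 only gives $\lambda_{\ell,2}<\frac12$, and for $\ell=2,3$ the $k=2$ sign does not follow from the $k=1$ row. The paper needs a separate idea here. The function $g=\phi_A+\frac{2n}{r}\phi_A'$ solves $\mathscr{L}_2g=0$ and has a single simple zero. Moreover $V=\rho g$ satisfies $V'(r_\pm)=\mp\rho(r_\pm)(r_\pm^2-2n)/r_\pm^2$, so $VV'<0$ at $r_-$ and $VV'>0$ at $r_+$, precisely because $r_-^2<2n<r_+^2$. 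A ground-state representation on the codimension-one space $\{\zeta(z)=0\}$ then forces $\lambda_{2,2}<0$.

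In Step 3, both test functions you propose for $\lambda_{3,1}>0$ fail.
\begin{itemize}
\item Since $\rho\phi_A'$ is the $\lambda_{1,2}$ eigenfunction, $\rho|\phi_A'|$ has $\ell=3$ Rayleigh quotient $\frac12-2(n+2)\langle r^{-2}\rangle\approx-\frac12$.
\item For the $\ell=1$ ground state $\zeta\equiv1$, the numerator is $\int_{r_-}^{r_+}(r^2-2(n+2))\,r^{-1-n}e^{r^2/4}\,dr$. Expand about the midpoint as in Section~\ref{ssec:l3}: the weight is now $\approx e^{+s^2/2}$ and the $O(\sqrt n)$ odd term cancels. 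The $O(1)$ part is $\int e^{s^2/2}\bigl[(\bar R-6)+(\bar R-3)s^2-\frac{s^4}{3}\bigr]ds$, which is negative because $\bar R<\bar d^2<\frac72$. So this test function also gives a negative quotient for large $n$.
\end{itemize}
Your criterion $(\ell-1)(\ell+n-1)/r_+^2\gtrless1$ comes from the upper bound $\lambda_{\ell,1}\leq1-(\mu_\ell-\mu_1)/r_+^2$, so it cannot certify positivity at all.

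The missing idea is the factorization
$\partial_r^2-(\frac{n-1}{r}-\frac{r}{2})\partial_r+1-\frac{2(n+2)}{r^2}=(\partial_r+\frac{r}{2}-\frac{n+1}{r})(\partial_r+\frac{2}{r})$.
For $\zeta=r^{-2}g$ it turns the $\ell=3$ form into $2[g^2/(r^5\rho)]_{r_-}^{r_+}-\int(g')^2/(r^4\rho)$. The optimal choice $g'=r^4\rho$ then reduces $\lambda_{3,1}>0$ to the moment inequality $\int_{r_-}^{r_+}(r^2-2(n+2))\,r^{n+3}e^{-r^2/4}\,dr>0$. Its weight is $\approx e^{-s^2/2}$, and its $O(1)$ part $\bar R-6+(9-\bar R)s^2+\frac{s^4}{3}$ is positive near the endpoints. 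Without this concrete target, the outward-bias bound $R>\gamma_d$, the monotonicity in $(R,d)$, and the interval-arithmetic check for $n\leq4000$ have nothing to act on.

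Similarly for $\ell\geq4$: your bound $\lambda_{4,1}\leq1-3(n+3)/r_+^2$ needs $r_+^2<3(n+3)$, which the available radius bounds give only for large $n$. For instance, when $n=2$ the width and product bounds only yield $r_+^2<17.8$, whereas $3(n+3)=15$. The paper closes every $n$ with an explicit positive supersolution having zero derivative at both endpoints. Its verification reduces to a polynomial inequality that uses only $r_+-r_-<\sqrt{14}$ and $r_-r_+<2(n-1)$.
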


\begin{cor}[Stability of the ball, instability of the annulus]\label{cor:stability}
    The ball profile $\phi_B$ is dynamically stable, and the annular profile $\phi_A$ is dynamically unstable.
\end{cor}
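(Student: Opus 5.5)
The plan is to read both claims off the spectral information through the Brauner--Hulshof--Lunardi framework \cite{stability}. In that framework an equilibrium of \eqref{eq:respar} with bounded smooth positivity set is stable modulo symmetries when every eigenvalue of \eqref{eq:spectrum-full} is negative, apart from those generated by the ambient symmetries. It is unstable as soon as some eigenvalue with positive real part does not arise from a symmetry. First I will identify the eigenvalues that the symmetries of \eqref{eq:pBern} force on any compact profile. Time translation $t\mapsto t-\tau$ changes the self-similar solution, and differentiating in $\tau$ gives the mode $\lambda=1$ with angular degree $\ell=0$. Spatial translations $x\mapsto x+a$ give $\lambda=\tfrac12$ with $\ell=1$. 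The translated solution of \eqref{eq:pBern} also has the point of singularity shifted, which accounts for a second $\ell=1$ mode at $\lambda=1$. Rotations act trivially on radial profiles. These are exactly the entries $1$, $\tfrac12$, $1$ in \eqref{table:spectrum}. Concretely, $v=\phi-\tfrac12 y\cdot\nabla\phi$ and $v=\partial_i\phi$ solve \eqref{eq:spectrum-full}, which one checks by differentiating \eqref{eq:profile} and using $|\nabla\phi|=1$ on $\partial\Omega$.

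For the ball $\phi_B$, separation of variables gives Sturm--Liouville problems on $(0,r_0)$ for each degree $\ell$. Sturm comparison shows that the top eigenvalue $\lambda_{\ell,1}$ strictly decreases in $\ell$, and that within each $\ell$ the eigenvalues are ordered by the number of zeros of the eigenfunction. The symmetry modes are positive on $(0,r_0)$ for $\ell=0$ and for $\ell=1$ in radial form, so they are the principal eigenfunctions: $\lambda_{0,1}=1$ and $\lambda_{1,1}=\tfrac12$. It remains to show $\lambda_{2,1}<0$, together with $\lambda_{0,2}<0$ and $\lambda_{1,2}<0$. This is the stability computation of \cite[Section 4.2]{stability}, which I will quote. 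The stability part of the framework then says that a perturbation of $\phi_B$ converges to a symmetry-adjusted copy of $\phi_B$ (rescaled extinction time, translated center). This is the dynamical stability modulo ambient symmetries.

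For the annulus $\phi_A$, Theorem \ref{prop:spectrum-annular} gives $\lambda_{0,1}>1$ and $\lambda_{2,1},\lambda_{3,1}>0$. The symmetry-generated eigenvalues are only $1$ for $\ell=0$ and $\tfrac12,1$ for $\ell=1$. Since the eigenvalues are simple within each $\ell$, the eigenvalue $\lambda_{0,1}>1$ is distinct from the time-translation eigenvalue, so it cannot come from a symmetry. The same holds for the $\ell=2,3$ modes, because no symmetry produces degrees $2$ or $3$. These are genuine unstable directions. The instability half of the framework of \cite{stability}, a principle of linearized instability for fully nonlinear problems after the free boundary is flattened, then yields dynamical instability.

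I expect the main obstacle to be checking that the abstract theorem applies as stated. Its hypotheses need the symmetry modes to be either spectrally isolated or removed by modulating the parameters (extinction time and center). For the annulus one has to make sure that the unstable eigenvalue $\lambda_{0,1}>1$ cannot be absorbed by modulating the extinction time. I would handle this by quoting the instability statement of \cite{stability}, which requires only one eigenvalue with positive real part off the symmetry span, after confirming that the symmetry eigenvalues $1$ and $\tfrac12$ are semisimple.
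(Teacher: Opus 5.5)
Your route is the paper's. The corollary is read off from the sign table: Theorem \ref{prop:spectrum-annular} for $\phi_A$, and for $\phi_B$ the table \eqref{table:spectrum} with the row $k=1$ removed, from \cite[Section 4.2]{stability}. The invariant-manifold theory of \cite{stability} then gives stability of $\phi_B$ modulo symmetries and, through the unstable-manifold theorem, instability of $\phi_A$. Two pieces of your symmetry bookkeeping are wrong, although neither affects the conclusion.

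\emph{The time-translation mode.} It is $\phi-y\cdot\nabla\phi$ (radially $\phi-r\phi'$), not $\phi-\tfrac12\,y\cdot\nabla\phi$. Since $[\Delta,y\cdot\nabla]=2\Delta$ and $2\Delta\phi=y\cdot\nabla\phi-\phi$ in $\Omega$, one gets $\mathscr L(y\cdot\nabla\phi)=y\cdot\nabla\phi-\phi$. Hence $\mathscr L(\phi-y\cdot\nabla\phi)=\phi-y\cdot\nabla\phi$, whereas $\phi-\tfrac12\,y\cdot\nabla\phi$ is not an eigenfunction.

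\emph{Spatial translations give only one mode.} Shifting the singular point by $a$ is the spatial translation. In rescaled variables it gives $\phi(y+ae^{s/2})$, whose linearization $e^{s/2}\langle a,\nabla\phi\rangle$ is the $\lambda=\tfrac12$ mode. There is no second $\ell=1$ symmetry mode at $\lambda=1$.

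The annulus eigenfunction $\langle\omega,a\rangle/\rho(r)$ for $\lambda_{1,1}=1$ (that is, $\zeta\equiv1$ in the Neumann form) comes from no symmetry. For the ball it is not even admissible, because $1/\rho$ blows up at the origin. The paper counts it among the genuine unstable modes: it widens the annulus on one side and narrows it on the other. So the genuine unstable angular degrees are $0,1,2,3$, and only $\lambda_{0,2}=1$ and $\lambda_{1,2}=\tfrac12$ in \eqref{table:spectrum} are symmetry modes.

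Your instability argument uses only $\lambda_{0,1}>1$ and the $\ell=2,3$ modes, so the corollary still follows. However, your description of the unstable directions modulo symmetries misses the $n$-dimensional $\ell=1$ eigenspace.
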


Here dynamical stability is understood modulo the symmetries of the problem. The ball is not a stable equilibrium of \eqref{eq:respar}: it possesses the nonnegative modes $\lambda_{0,1}=1$ and $\lambda_{1,1}=\frac12$, but these are generated by translations of the singular point in time and space and do not change the self-similar solution (rotations fix the radial profiles altogether and contribute no modes), while all the remaining eigenvalues are negative (the table for $\phi_B$ is \eqref{table:spectrum} with the row $k=1$ removed). The annulus, by contrast, carries unstable modes with angular frequencies $\ell=0,1,2,3$, and every mode with $\ell\geq4$ is stable. The first eigenfunction for $\ell=0$ is radial and widens or narrows the annulus; this instability is illustrated by the solutions of \cite[Theorem 8.2]{radial}, which are radially symmetric, have initial data supported in an annulus, and become extinct on a sphere in finite time (see Example \ref{eg:extinct-on-sphere}). The mode $\lambda_{1,1}=1$ widens the annulus on one side and narrows it on the opposite side. The deduction of Corollary \ref{cor:stability} from the table, via the invariant manifold theory of \cite{stability}, is discussed in Section \ref{sec:spectrum-applications}. The stability of the ball and the instability of the annulus parallel the situation for compact self-shrinkers of the mean curvature flow, where the round sphere is the only compact self-shrinker that is entropy stable \cite{CM-stable}.

The proof of Theorem \ref{prop:spectrum-annular} occupies Section \ref{sec:instability} and depends essentially on Theorem \ref{thm:annulus-geometry}: the table reduces to three critical signs, each of which is equivalent to a quantitative statement about the location of the annulus (see the discussion in Section \ref{ssec:main-ideas}). In dimensions $2\leq n\leq4000$, the borderline mode $\lambda_{3,1}$ is treated by a computer-assisted proof---the only computer-assisted ingredient of the paper---carried out in rigorous interval arithmetic \cite{arb}; the verifier and its certified output accompany the paper as ancillary files.
\medskip

Our final result ties the spectral analysis back to the singularity formation. The limit produced by Theorem \ref{thm:blowup} may a priori depends on the subsequence, and the uniqueness of blow-up limits is a fundamental question in geometric measure theory; see \cite{AC, DS, ESV20, DSS-AltP, DPSV-twophase, AKN-ACF, ERZ25} etc. for uniqueness results for the elliptic Bernoulli problems. For the mean curvature flow, uniqueness is usually proven using {\L}ojasiewicz--Simon inequalities (see \cite{Schulze, CM-uniqueness, CM-highercodim, Chodosh-Schulze, Lee-Zhao, Zhu} and \cite{Bamler-Lai} etc.). 
We prove the corresponding statement for the two compact radial profiles.

\begin{theorem}[Uniqueness for ball and annular tangent flows]\label{thm:compact-radial-tangent-uniqueness}
    Under the hypotheses of Theorem~\ref{thm:blowup}, suppose that a profile $\phi_*\in\{\phi_B,\phi_A\}$ is the profile of one tangent flow of $u$ at $(0,0)$. Then
    \[
        v(s,\cdot)\longrightarrow \phi_*
        \qquad\text{locally uniformly in $\RR^n$ as $s\to\infty$};
    \]
    equivalently, $u_\lambda(t,x)\to\sqrt{-t}\,\phi_*\!\left(\tfrac{x}{\sqrt{-t}}\right)$ locally uniformly as $\lambda\to0$. In particular, the tangent flow at $(0,0)$ is unique.
\end{theorem}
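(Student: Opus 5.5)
We will argue in the rescaled variables \eqref{def:prescaling}, where tangent flows are exactly the subsequential limits of $v(s,\cdot)$ as $s\to\infty$ (Theorem~\ref{thm:blowup}). The hypothesis gives a sequence $s_j\to\infty$ with $v(s_j,\cdot)\to\phi_*$ locally uniformly. Since $\phi_*$ has bounded support $\overline\Omega_*$ and the Type I bound gives uniform $C^{1,1}$ control on $v$ and its free boundary (Theorem~\ref{thm:blowup}), we expect the convergence $\{v(s_j)>0\}\to\Omega_*$ in the Hausdorff sense. For large $j$, $\partial\{v(s_j)>0\}$ is then a small normal graph over $\partial\Omega_*$: one sphere for $\phi_B$, two for $\phi_A$.

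We will therefore write $v(s,\cdot)$, for $s$ near $s_j$, as a perturbation of $\phi_*$ after the Hanzawa-type transformation used by Brauner--Hulshof--Lunardi \cite{stability}. This turns \eqref{eq:respar} into a fully nonlinear parabolic problem on the fixed domain $\Omega_*$, whose linearization is \eqref{eq:spectrum-full}. By Theorem~\ref{prop:spectrum-annular} (and its analogue for $\phi_B$), no eigenvalue equals $0$: the spectrum is \emph{nondegenerate}.

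The eigenvalues with positive real part split into two groups.
\begin{itemize}
\item \emph{Symmetry modes.} These are $\lambda_{0,\cdot}=1$ and $\lambda_{1,\cdot}=\tfrac12$, coming from translations of the singular point in time and space. For the annulus there are in addition the genuinely unstable $\ell=0,1,2,3$ modes.
\item \emph{Stable modes.} Everything else has real part at most $-\mu<0$.
\end{itemize}

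The core argument is a stable/unstable manifold dichotomy near a hyperbolic equilibrium. Let $W(s)$ be the deviation, measured in a suitable little-H\"older norm. Decompose $W=W^++W^-$ into the projections on the finite-dimensional unstable space and on its stable complement. A standard cone (Conley/Lyapunov) argument gives the following. If $\|W(s_j)\|\le\varepsilon$ and $W$ stays in a small neighborhood, then $W^-$ decays exponentially, while $W^+$ grows like $e^{\lambda s}$ unless it is dominated.

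The decisive input is that $v(s,\cdot)$ is defined for all $s\ge s_j$ and, by Theorem~\ref{thm:blowup}, every later subsequential limit is again a self-shrinker with the same Gaussian density $\theta(0+)$. The compact radial shrinkers are isolated: both $\phi_B$ and $\phi_A$ are nondegenerate, so the only nearby equilibria are their images under the symmetries, and rotations fix them. Hence if the unstable part ever reached size $\varepsilon$, the orbit would have to leave the neighborhood along an unstable direction. We then argue by contradiction, using a "first exit time" $\sigma_j$. Renormalizing $v(s_j+\cdot)$ near $\sigma_j$ and passing to a limit via the compactness of Theorem~\ref{thm:blowup} produces an ancient/eternal solution of \eqref{eq:respar} emanating from $\phi_*$. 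Its limit at the exit time must be another tangent flow at distance exactly $\varepsilon$ from $\phi_*$. This contradicts isolatedness, once $\varepsilon$ is below the isolation radius.

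The symmetry modes need separate treatment, since a translation in time or space of the singular point would not be a tangent flow at $(0,0)$. The fact that $(0,0)$ is the singular point pins these directions: a component along the $\ell=0$ time-translation mode or the $\ell=1$ spatial mode forces extinction at a shifted space-time point. We expect to handle this by modulation, choosing at each time $s$ parameters $(\tau(s),\xi(s))$ that kill these projections. We then show $\tau,\xi$ have summable derivatives, and that their limits vanish because the singularity is at the origin.

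Once the unstable part is controlled, $W^-$ decays exponentially and we obtain $v(s)\to\phi_*$ for all $s\to\infty$. This gives uniqueness.

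The main obstacle is the instability of $\phi_A$. For a genuinely unstable equilibrium one cannot conclude convergence from closeness at a single time. Instead, one needs that the exit mechanism produces a distinct self-shrinker limit, which in turn requires isolatedness of $\phi_A$ among all (not necessarily radial) solutions of \eqref{eq:profile}, modulo rotations. The nondegeneracy $0\notin\sigma$ from Theorem~\ref{prop:spectrum-annular} yields this by an implicit function argument. We expect the delicate part to be the bookkeeping that the exit limit is again a full tangent flow at $(0,0)$ rather than a shifted one.
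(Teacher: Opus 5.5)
Your decisive step is correct, and it is in substance the paper's proof. If $v(s,\cdot)\not\to\phi_*$, continuity of $s\mapsto v(s,\cdot)$ produces times $\sigma_k\to\infty$ at which $v$ is at distance exactly $\varepsilon$ from $\phi_*$. A subsequential limit is then a tangent profile at distance $\varepsilon$, and isolation gives a contradiction. The paper phrases this as: the $\omega$-limit set of $v$ in $C_{\rm loc}(\RR^n)$ is nonempty, compact and connected, consists of tangent profiles, and contains $\phi_*$ as an open-and-closed point.

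Everything you wrap around this step is unnecessary, and as you set it up it would not work. This includes the Hanzawa transform of $v$ itself, the splitting $W=W^++W^-$, the cone argument, the exponential decay of $W^-$, and the modulation of time and space translations.
\begin{itemize}
\item The exit must be measured in the locally uniform topology, the only one in which Theorem~\ref{thm:blowup} gives compactness. If you measure it by a H\"older norm of $W$, the limit at the exit time need not be at distance $\varepsilon$; it could even be $\phi_*$.
\item You do not have the $C^{2,\alpha}$-smallness of $\partial\{v(s_j,\cdot)>0\}$ over $\partial\Omega_*$ needed to enter the fixed-domain framework of \cite{stability}. Theorem~\ref{thm:blowup} supplies only locally uniform convergence, a $C^{1,1}$ bound, and Hausdorff convergence of free boundaries along good times.
\item The limit of $v(\sigma_k+\cdot)$ is not an ancient solution emanating from $\phi_*$. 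The dissipation in the monotonicity formula forces it to be stationary, which is exactly why it is a tangent profile.
\end{itemize}
In particular, the instability of $\phi_A$ is no obstacle at all; only the absence of a zero eigenvalue is used.

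The genuine gap is the isolation statement. You treat it as a one-line implicit function argument, and you formulate it for the wrong class. Among all solutions of \eqref{eq:profile}, nondegeneracy of $(\mathscr L,\mathscr B)$ (Fredholm of index zero and injective by Theorem~\ref{prop:spectrum-annular}) isolates $\phi_*$ only in $C^{2,\alpha}$. The exit-time limit, however, is merely locally uniformly close to $\phi_*$, and for general solutions nothing upgrades this. The paper therefore proves isolation within the set $\mathcal T$ of tangent profiles, using what is special about them:
\begin{itemize}
\item The uniform bound $\|D^2\phi\|\leq C_s$, together with \eqref{eq:nearbd-lower}, \eqref{eq:thickness} and the Hausdorff argument, gives $C^1$ convergence of the free boundaries.
\item An orientation argument shows that near each component of $\partial\Omega_*$ the free boundary is a single normal graph.
\item A partial hodograph transform converts the Bernoulli condition into an oblique derivative problem. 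Its estimates \cite{LT} give $C^{2,\alpha}$ convergence, so the inverse function theorem yields $\phi_j=\phi_*$ on $\Omega_*$.
\item The Gaussian-volume identity $\int\chi_{\{\phi>0\}}(4\pi)^{-n/2}e^{-|y|^2/4}\,dy=\theta(0+)$, common to all tangent profiles, excludes further components of $\{\phi_j>0\}$. Locally uniform convergence alone cannot rule these out, since they may escape to infinity.
\end{itemize}
You mention both the density and the $C^{1,1}$ control, but not how they enter this step, and that is where the proof actually lives.
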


Rather than a {\L}ojasiewicz-type inequality, the proof relies on the spectral information of Theorem \ref{prop:spectrum-annular}: the linearized operator at $\phi_B$ or $\phi_A$ has trivial kernel, and an inverse function theorem argument upgrades this to the isolation of $\phi_B$ and $\phi_A$ among all tangent profiles at the singular point. The connectedness of the set of subsequential limits then yields the convergence of the full rescaled flow.

An intriguing open problem concerning \eqref{eq:profile} is whether the ball profile $\phi_B$ is the only self-shrinker supported on a compact convex domain. The corresponding rigidity statement for mean curvature flow already holds under the weaker assumption of mean convexity: every compact mean-convex self-shrinker is a round sphere \cite{Huisken}. A straightforward modification of the isolation argument above shows that $\phi_B$ is isolated, in the $C^{2,\alpha}$ topology, among all self-shrinkers: a convex self-shrinker other than the ball, if one exists, cannot arise as a small deformation of the ball. Likewise, any non-radial self-shrinker with annular topology must be separated from $\phi_A$. The global convex rigidity problem remains difficult: neither the usual moving-plane method, obstructed by the drift term in \eqref{eq:profile}, nor the curvature-quotient argument of \cite{Huisken} appears to carry over directly.

\subsection{Main ideas of the proofs}\label{ssec:main-ideas}

Section \ref{sec:prelim} collects the preliminary estimates and the model examples, in particular Examples \ref{eg:exp} and \ref{eg:extinct-on-sphere}. Theorems \ref{thm:blowup} and \ref{thm:blowup-twophase} are proved in Section \ref{sec:blowup}. Weiss's monotonicity formula \cite{Weiss} provides dissipation and compactness along the rescaled flow \eqref{eq:respar}. The key point is that the equation and the free boundary condition pass to the limit only where the positivity sets converge. This is where the Type I bound enters: combined with the Bernoulli condition, it yields, among other things, uniform non-degeneracy at a full-measure set of times,
%uniform control of tubular neighborhoods of the free boundary, a uniform upper bound on $\|\partial_\sigma w_j\|_{L^\infty}$, which is used in \eqref{eq:goodtime-lp} to pass the evolution PDE to the stationary PDE
from which the convergence of the positivity sets and of the free boundaries follows. When $C_s(r)\to0$, the limiting profile has vanishing Hessian, hence it is linear on each component of its positivity set. The half-plane is excluded by the $\epsilon$-regularity result of Andersson and Weiss \cite{AW}, which leaves $\phi=|\langle y,e\rangle|$, and under the Dini condition \eqref{eq:dini-Cs} the direction $e$ stabilizes because its drift is integrable in time.

Theorem \ref{thm:radial} is proved in Section \ref{sec:radial}. In radial coordinates, \eqref{eq:profile} becomes a boundary value problem for Kummer's equation, and the annular solutions form a one-parameter shooting family. Uniqueness reduces to the strict monotonicity of the shooting map, which we prove by a phase-plane argument.

The sharp geometric estimates in Theorem \(\ref{thm:annulus-geometry}\) are established in Appendix \(\ref{sec:radius-est}\) and constitute one of the main technical parts of the paper. The product bound in \ref{it:geom-width} is proved first (Lemma \ref{lem:annular-phase}), in the same phase-plane variables as the uniqueness in Theorem \ref{thm:radial} but by a different argument: the Bernoulli conditions force the oscillator energy to return to its initial value across the annulus, and a symmetrization argument shows that this balance fails unless $r_-\,r_+<2(n-1)$. For part \ref{it:geom-loc}, we then compare the annular profile $\phi_A$, recentered at $\sqrt{2(n-1)}$, with the even solution of the limiting Hermite-type equation \eqref{eq:hermite-kappa} through a Wronskian argument (Lemma \ref{lem:annular-barrier}). The comparison yields the lower bounds for both radii, and the product bound converts them into upper bounds, enclosing each radius in a window of length $O(n^{-1/2})$. For the width bound in \ref{it:geom-width} (Proposition \ref{prop:annulus-width}), we observe that the profile is the first Dirichlet eigenfunction, with eigenvalue one, of a weighted Sturm--Liouville operator on $(r_-,r_+)$; if the annulus were wider than $\sqrt{14}$, explicit trial functions on a suitable subinterval would force this eigenvalue below one. Since the bound is dimension-free, the comparison must close in every dimension down to the worst case $n=2$, where the margin is extremely narrow and the final inequality reads $999/1000<1$.

The deepest of the three geometric estimates is the outward bias \ref{it:geom-bias}, proven in Lemma \ref{lem:annular-midpoint}. The idea is to exploit the mismatch between symmetric data and an asymmetric equation: the boundary conditions --- $\phi_A$ vanishes at $r_\pm$, with $\phi_A'(r_-)=1=-\phi_A'(r_+)$ --- are those of a profile even about the midpoint of the annulus, but the drift coefficient of \eqref{eq:ode} breaks the reflection symmetry. The odd part of the profile measures this asymmetry, and it satisfies the equation \eqref{eq:annular-odd-part}, whose forcing is a positive weight times the difference between the squared distance to the midpoint and the excess $\bigl(\tfrac{r_-+r_+}{2}\bigr)^2-2(n-1)$. Testing against a solution of the corresponding homogeneous equation yields a compatibility identity (see \eqref{eq:annular-midpoint}), which represents the excess as the average of the squared distance to the midpoint against a positive weight; the upper bound in \ref{it:geom-bias} follows at once. The lower bound lies deeper: it is the same average computed against the explicit weight $s^2e^{-s^2/2}$, and the comparison of the two averages combines a correlation inequality with the convexity of the curvature of the profile, itself proved by a double application of the maximum principle.

The spectral analysis is carried out in Section \ref{sec:instability}. Three eigenvalues are explicit (Lemma \ref{lem:explicit-modes}): $\lambda_{0,2}=1$ and $\lambda_{1,2}=\frac12$ arise by differentiating the self-similar solution in time and space, and $\lambda_{1,1}=1$ has the explicit eigenfunction $r^{1-n}e^{r^2/4}$ in the $\ell=1$ sector. Since the eigenvalues are nonincreasing in $\ell$, and every eigenvalue with $k\geq3$ is negative by a domain monotonicity argument (Lemma \ref{lem:mode-structure}), the table \eqref{table:spectrum} reduces to three assertions:
\[
    \lambda_{2,2}<0, \qquad \lambda_{4,1}<0, \qquad \lambda_{3,1}>0.
\]
Each of the three is proved by pairing an explicit construction with one of the geometric estimates of Theorem \ref{thm:annulus-geometry}: the sign of each critical eigenvalue is, in effect, a statement about where the annulus sits.

For $\lambda_{2,2}<0$ (Lemma \ref{lem:lambda22}), we construct an explicit solution of the zero-eigenvalue equation in the $\ell=2$ sector, namely $\phi+\frac{2n}{r}\phi'$, and feed it into an oscillation criterion based on the ground-state representation (Lemma \ref{lem:oscillation}, cf. \cite{PT06}). The required boundary signs hold precisely because the annulus contains the sphere of radius $\sqrt{2n}$ (Proposition \ref{prop:annulus-containment}). For $\lambda_{4,1}<0$ (Proposition \ref{prop:lambda41}), we exhibit an explicit supersolution whose verification reduces to a single polynomial inequality, valid by the product and width bounds of Theorem \ref{thm:annulus-geometry} \ref{it:geom-width}. The argument is elementary, but the margins are tight, and the width bound enters at nearly its exact value.

The positivity of $\lambda_{3,1}$ (Proposition \ref{prop:lambda31}) is the borderline case, and the most demanding. On the annulus, the zeroth-order coefficient of the Rayleigh quotient for the $\ell$-th problem equals $\frac{3-\ell}{2}+O(n^{-1/2})$: the formal limiting problem for $\ell=3$ has largest eigenvalue exactly zero, and the sign of $\lambda_{3,1}$ is determined by lower-order information which coarse bounds on the radii cannot detect. A factorization of the $\ell=3$ operator reduces the positivity of $\lambda_{3,1}$ to the moment inequality
\begin{equation}\label{eq:l3-moment-intro}
    \int_{r_-}^{r_+}\bigl(r^2-2(n+2)\bigr)\,r^{n+3}e^{-r^2/4}\,dr>0
\end{equation}
(Lemma \ref{lem:l3-moment}). At leading order the integral vanishes by symmetry about the midpoint of the annulus. The sign emerges at the next order in $n^{-1/2}$, where the outward bias \ref{it:geom-bias} enters (see Section \ref{ssec:l3}). For $n\geq4001$ we verify the inequality analytically, using the fact that the moment is increasing in the midpoint and in the width (Lemma \ref{lem:l3-monotone}), together with part \ref{it:geom-loc}, rational bounds for $\kappa$ (Lemma \ref{lem:kappa-bounds}), and the lower bound in \ref{it:geom-bias}.  For $2\leq n\leq4000$ the margins are too thin for asymptotics, and \eqref{eq:l3-moment-intro} is instead verified by computer, in rigorous interval arithmetic (Lemma \ref{lem:l3-finite}).

Theorem \ref{thm:compact-radial-tangent-uniqueness} is proved in Section \ref{sec:spectrum-applications}. The key step is a local isolation statement (Lemma \ref{lem:compact-radial-local-isolation}): a tangent profile close to $\phi_*\in\{\phi_B,\phi_A\}$ coincides with it. Convergence of tangent profiles is upgraded to $C^{2,\alpha}$ by a partial hodograph transformation, which converts the Bernoulli condition into an oblique derivative problem \cite{LT}. Since the linearized operator has trivial kernel by Theorem \ref{prop:spectrum-annular}, the inverse function theorem forces nearby tangent profiles to agree with $\phi_*$. The set of subsequential limits of the rescaled flow is compact and connected, so isolation implies uniqueness.

\section{Preliminaries and examples}\label{sec:prelim}

\subsection{H\"{o}lder estimate in time}

The next Lemma shows that uniform Lipschitz bound in space implies $\frac12$-H\"older bound in time. In particular, a classical solution to the parabolic Bernoulli problem which attains the initial data $u(0,\cdot) \in C_0^{0,1}(\RR^n)$ and has uniform Lipschitz bound $\|\nabla u\|_{L^\infty(\RR^n)} \leq L$ belongs to the class of weak solutions studied in \cite{Weiss}, see Definition 6.1. 
The class of weak solutions as in Definition 6.1 is closed under the blow-up process. In general, classical solutions may not be closed after taking the parabolic blow up, but we show that assuming a natural control on the blow-up rate near singularity (see \eqref{eq:typeI}) the blow-up process is well understood, see Theorem \ref{thm:blowup}. 

\begin{lemma}\label{lem:holder}  
There exists $C=C(n)>0$ such that:
\begin{enumerate}[label=\textup{(\roman*)}]
\item For every $x_0\in\RR^n$ and $t_0\in(-T,0)$ with $t_0-r^2\geq -T$,
\begin{equation*}
\int_{t_0-r^2}^{t_0}\int_{B_r(x_0)} u_t^2\,dx\,dt\leq C(1+L^2)\,r^n.
\end{equation*}
\item $u$ is $\frac12$-H\"older continuous in time: for every $x\in\RR^n$ and $t,t+h\in[-T,0)$,
\begin{equation}\label{eq:Holderintime}
    |u(x,t+h)-u(x,t)|\leq C(1+L)\sqrt{|h|}.
\end{equation}
\end{enumerate}
\end{lemma}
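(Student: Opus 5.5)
For (i) I will use the standard energy (Caccioppoli-type) estimate for $u_t$, testing the equation against $u_t\eta^2$. Here $\eta\in C_c^\infty(B_{2r}(x_0))$ is a cutoff with $\eta\equiv1$ on $B_r(x_0)$ and $|\nabla\eta|\lesssim 1/r$, possibly times a cutoff in time.

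Since $u$ is classical and $u\equiv 0$ outside $\{u>0\}$, we have $u_t=0$ a.e. outside $\{u>0\}$. Inside, $u_t=\Delta u$. Integrating by parts in space gives
\[
\int u_t^2\eta^2 = \int \Delta u\, u_t\eta^2 = -\int \nabla u\cdot\nabla u_t\,\eta^2 - 2\int \eta u_t\nabla u\cdot\nabla\eta + \int_{\partial\{u>0\}} \partial_\nu u\, u_t\eta^2 .
\]
The boundary term needs care. On the free boundary $u=0$, so differentiating $u(t,X(t))=0$ along the moving boundary gives $u_t=-\nabla u\cdot \dot X = V|\nabla u|$, where $V$ is the normal velocity. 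With $|\nabla u|=1$ the boundary term becomes $\mp\int V\eta^2$.

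I would instead argue via the identity
\[
-\nabla u\cdot\nabla u_t=-\tfrac12\partial_t|\nabla u|^2 .
\]
This gives
\[
\frac{d}{dt}\int_{\{u>0\}}\tfrac12|\nabla u|^2\eta^2 = \int \nabla u\cdot\nabla u_t\,\eta^2 + \int_{\partial\{u>0\}}\tfrac12|\nabla u|^2 V\eta^2 .
\]
Because $|\nabla u|=1$ on the free boundary, the two boundary flux terms combine into $\frac{d}{dt}\int\chi_{\{u>0\}}\eta^2$ times a constant. This is the standard mechanism behind Weiss's energy $\int(|\nabla u|^2+\chi_{\{u>0\}})$.

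So the plan is to show
\[
\int u_t^2\eta^2 + \frac{d}{dt}\int\tfrac12\bigl(|\nabla u|^2+\chi_{\{u>0\}}\bigr)\eta^2 = -2\int\eta u_t\nabla u\cdot\nabla\eta .
\]
Absorbing the right side by Young's inequality (half of it into $\int u_t^2\eta^2$, the rest bounded by $CL^2r^{n-2}$) and integrating over $[t_0-r^2,t_0]$ yields
\[
\int\!\!\int u_t^2\eta^2 \le \sup_t\int_{B_{2r}}\bigl(|\nabla u|^2+1\bigr) + C L^2 r^{n-2}\cdot r^2 \le C(1+L^2)r^n .
\]
The main obstacle is the justification of the boundary bookkeeping, since the free boundary moves and only classical regularity up to it is available. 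I would handle this by the sign convention above, computing $V$ from $u(t,X(t))=0$ and checking that the two fluxes give exactly $\frac{d}{dt}|\{u>0\}\cap\cdot|$ with weight $\eta^2$.

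For (ii), fix $x$, $t$, $h$ and set $r=\sqrt{|h|}$. The idea is to compare point values with space-time averages. By the Lipschitz bound in space,
\[
|u(x,s)-\textstyle\fint_{B_r(x)}u(\cdot,s)|\le Lr \quad\text{for each } s .
\]
For the averages at the two times $t$ and $t+h$, I write
\[
\Bigl|\fint_{B_r}u(\cdot,t+h)-\fint_{B_r}u(\cdot,t)\Bigr| \le \fint_{B_r}\int_t^{t+h}|u_t| .
\]
This is valid since $u$ is Lipschitz in $t$ for $t<0$ on compact time intervals, or in any case absolutely continuous. By Cauchy–Schwarz and (i) on a cylinder of size comparable to $r$, this is at most
\[
|B_r|^{-1}\bigl(|h|\,|B_r|\bigr)^{1/2}\Bigl(\int\!\!\int u_t^2\Bigr)^{1/2} \le C r^{-n}\,r\,r^{n/2}\,(1+L)\,r^{n/2}=C(1+L)r .
\]
Combining the three pieces gives $C(1+L)\sqrt{|h|}$.

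When $t-r^2<-T$ the cylinder in (i) does not fit, so I would use the cylinder $[t,t+h]\times B_r$ directly. Alternatively, (i) can be applied on the cylinder ending at $t+|h|$, which only needs $t\ge -T$ since the cylinder is $[t,t+r^2]$. I will state (i) in that slightly adapted form.
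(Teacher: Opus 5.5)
Your proposal is correct and follows essentially the same route as the paper: test $u_t=\Delta u$ against $u_t\eta^2$, then use the transport formula with normal velocity $V=u_t$ (since $|\nabla u|=1$) to merge the boundary fluxes into $-\frac{d}{dt}\int\frac12(|\nabla u|^2+\chi_{\{u>0\}})\eta^2$, absorb the cross term by Young's inequality, and for (ii) compare point values with averages over $B_{\sqrt{|h|}}(x)$ using Cauchy--Schwarz and (i). No adapted version of (i) is needed, since the paper applies (i) exactly as stated with $t_0=\max\{t,t+h\}$ and $r=\sqrt{|h|}$, for which $t_0-r^2=\min\{t,t+h\}\geq -T$ holds automatically.
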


\begin{proof}
Fix $(x_0,t_0)\in\RR^n\times(-T,0)$ and $r>0$. Let $\eta\in C_c^\infty(B_{2r}(x_0))$ be a smooth cutoff satisfying $0\leq\eta\leq 1$, $\eta\equiv 1$ on $B_r(x_0)$, and $|\nabla\eta|\leq C/r$.

Multiplying the equation $u_t=\Delta u$ by $u_t\eta^2$ and integrating by parts in space gives
    \begin{multline*}
    \int_{\RR^n} u_t^2 \eta^2\,dx = \int_{\RR^n} u_t\Delta u \,\eta^2 \,dx \\
    =\int_{\RR^n} \left(-\frac12 \partial_t|\nabla u|^2\eta^2 - 2 u_t \nabla u \cdot \nabla \eta\, \eta\right)dx + \int_{\partial \{u>0\}}u_t\, \nabla u\cdot \nu\, \eta^2\, dS.
    \end{multline*}
    By the Reynolds transport formula, noting that $V_n=u_t/|\nabla u|=u_t$ is the normal velocity of $\partial\{u>0\}$ (since $|\nabla u|=1$ there) and $\nabla u\cdot \nu=-1$, this yields
\begin{equation*}
    \int_{\RR^n} u_t^2 \eta^2\,dx=-\frac{d}{dt}\int_{\RR^n}\frac12(|\nabla u|^2+\chi_{\{u>0\}})\eta^2\,dx-2\int_{\RR^n}u_t \nabla u \cdot \nabla \eta\, \eta\, dx.
\end{equation*}
Applying Young's inequality to the cross term and integrating over $[t_0-r^2,t_0]$,
\begin{multline} \label{eq:ut estimate}
    \int_{t_0-r^2}^{t_0}\int_{\RR^n} u_t^2 \eta^2\,dx\,dt \leq \int_{\RR^n}(|\nabla u|^2+\chi_{\{u>0\}})(t_0-r^2,x)\eta^2 \,dx \\
    + C\int_{t_0-r^2}^{t_0}\int_{\RR^n}|\nabla u|^2 |\nabla \eta |^2 \,dx\,dt\leq C(1+L^2)r^n.
\end{multline}
Now set $r=\sqrt{h}$. Since $\|\nabla u\|_\infty\leq L$, for any $y\in B_r(x)$ we have $|u(x,t)-u(y,t)|\leq L|x-y|\leq Lr$. Averaging the triangle inequality over $y\in B_r(x)$,
\begin{equation*}
    |u(x,t+h)-u(x,t)|\leq \frac{1}{|B_r|} \int_{B_{r}(x)}|u(y,t+h)-u(y,t)|\,dy+ 2Lr.
\end{equation*}
Writing $|u(y,t+h)-u(y,t)|\leq\int_t^{t+h}|u_t(y,s)|\,ds$ and applying the Cauchy--Schwarz inequality together with \eqref{eq:ut estimate}, the averaged integral is bounded by $C(1+L)r^{-n/2}\cdot r^{n/2}\cdot r$. We deduce $|u(t+h,x)-u(t,x)|\leq C(1+L)r=C(1+L)\sqrt{h}$.
\end{proof}

\subsection{Monotonicity formula}

Weiss proved the following monotonicity formula in \cite[Theorem 3.1]{Weiss99}. (Without loss of generality, we consider the space-time point $(t_0, x_0) = (0,0)$.) Let
\[ G(t,x) := \frac{1}{(4\pi|t|)^{\frac{n}{2}}} \exp\left(-\frac{|x|^2}{4|t|} \right) \]
and
\begin{align}
    \theta(r) & := \frac{1}{3r^2} \int_{-4r^2}^{-r^2} \int_{\RR^n} \left(|\nabla u|^2 + \chi_{\{u>0\}} \right) G(t,x) \, dx\, dt \nonumber \\
	& \qquad \qquad - \frac12 \cdot \frac{1}{3r^2} \int_{-4r^2}^{-r^2} \frac{1}{-t} \int_{\RR^n}  u^2\, G(t,x) \, dx\, dt,\label{def:theta}
\end{align}
which is the time-averaged free boundary energy of $u$ centered at $(0,0)$.
Then
\begin{equation} \label{eq:theta-monotonicity-classical}
	\theta'(r) \geq \frac{1}{3r^3} \int_{-4r^2}^{-r^2} \int_{\RR^n} \frac{1}{-t}\left|\nabla u \cdot x + 2t \partial_t u - u \right|^2 G(t,x) \, dx\,dt \geq 0.
\end{equation}
Thus $r \mapsto \theta(r)$ is monotone non-decreasing, and it is constant if and only if $u$ satisfies the equation $\nabla u \cdot x + 2t \partial_t u - u \equiv 0$, which implies that $u$ is a self-similar solution, i.e.
\[ u(\lambda^2 t, \lambda x) = \lambda u(t,x), \qquad \text{ for every }\lambda \geq 0,\; t<0,\text{ and } x\in \RR^n. \]
In general for any $(t_0,x_0)$ fixed, we define $r \mapsto \theta_{(t_0,x_0)}(r)$ by modifying \eqref{eq:theta-monotonicity-classical} using the translated Gaussian $G(t-t_0, x-x_0)$ and integrating over $[t_0-4r^2, t_0-r^2]\times \mathbb{R}^n$. Standard argument from the monotonicity formula and continuity shows that the map $(t_0,x_0) \mapsto \theta_{(t_0,x_0)}(0+):= \lim_{r\to 0+} \theta_{(t_0,x_0)}(r)$ is upper semi-continuous (see for example \cite[Lemma 8.1 (4)]{Weiss}).

Note that if $\limsup_{t\to 0} u(t,0)>0$, we have $\theta(r) \leq C_1 - C_2 r^{-2}$, with constants $ C_1, C_2$ depending on $L$, and thus $\lim_{r\to 0}\theta(r) = -\infty$; otherwise, by \eqref{eq:Holderintime} we have
\begin{equation*}
    \int_{\RR^n}  u^2\, G(t,x) \, dx \leq C(L) |t|, 
\end{equation*} 
and thus $\theta(r)$ is uniformly bounded from below as $r\to 0$. In the latter case, the limit $\theta(0+):= \lim_{r\to 0} \theta(r)$ is well-defined and finite.

To study the limiting behavior of the parabolic rescaling $v$ in \eqref{def:prescaling}, we define for each $\tau \geq \log 2 - \log T$ the corresponding free boundary energy
\begin{equation}\label{def:monres}
    \tilde{\theta} (\tau) := \frac23 e^{\tau} \cdot \int^{\tau+\log 2}_{\tau-\log 2} \left( \int_{\RR^n} \left( |\nabla v|^2 + \chi_{\{v>0\}} - \frac12 v^2 \right) \rho(y) dy \right) \, e^{-s} ds =  \theta(r),
\end{equation}
with $r^2= \frac12 e^{-\tau}$ and the weight
\begin{equation*}
	\rho(y) = \frac{1}{(4\pi)^{\frac{n}{2}}} e^{-\frac{|y|^2}{4}}. 
\end{equation*} 
Thus, the monotonicity formula becomes
\begin{equation}\label{eq:thetadissip}
\tilde{\theta}'(\tau) =-\frac12 r\theta'(r) \leq -\frac43 e^{\tau} \int_{\tau-\log 2}^{\tau+\log 2} \int_{\RR^n} |\partial_s v|^2 \rho(y)dy \, e^{-s}ds \leq 0.
\end{equation}

\subsection{Movement of the free boundary}

We also note the following observation about the free boundary: under the Type I assumption, the free boundary passes a $O(\sqrt{-t})$-sized ball centered at the origin as $t\nearrow 0$. Let $\Gamma_t:= \partial\{u(t,\cdot)>0\}$.
Since $(0,0)$ is a singular point, for any $\rho>0$ there exists a sequence $t_k \nearrow 0$ such that $\Gamma_{t_k} \cap \overline{B_\rho} \neq \emptyset$: otherwise $\Gamma_t \cap \overline{B_\rho} = \emptyset$ for all $t\in [t_0,0)$ and some $t_0<0$, which together with $u\geq 0$ violates the parabolic maximum principle on $(t_0,0)\times B_\rho$. Pick $x_k \in \Gamma_{t_k} \cap \overline{B_\rho}$, so $|x_k|\leq \rho$. The free boundary condition $u(t,x(t))\equiv 0$ and the assumption \eqref{eq:typeI} imply
\[ |x'(t)| = |\partial_t u(t,x(t))| = |\Delta u(t,x(t))| \leq \frac{C_s}{\sqrt{-t}}, \qquad -r_0^2< t<0, \]
so for any $-r_0^2 < \tau< t_k$,
\[ \dist(0, \Gamma_\tau) \leq |x_k| + \int_\tau^{t_k} \frac{C_s}{\sqrt{-t}}\, dt < \rho + 2C_s \sqrt{-\tau}. \]
Since $\rho>0$ was arbitrary, it follows that
\begin{equation}\label{eq:fbnear0}
    \dist(0, \Gamma_\tau) \leq 2C_s \sqrt{-\tau}, \qquad \text{ for any } -r_0^2 < \tau <0.
\end{equation}

\subsection{Examples and model behaviors}

\begin{example}\label{eg:exp}
Consider
\[ u(t,x) = \chi_{\{x>-t\}} \left(\exp(x+t)-1 \right) + \chi_{\{x<t\}} \left(\exp(-x+t)-1 \right), \]
which is a classical solution of \eqref{eq:pBern} for $t<0$ and $x\in \RR$, even though it is only \emph{locally} Lipschitz in space. The origin is a Type I singular point (in fact $|u_{xx}(t,x)|$ is bounded on $Q_r^-(0,0)$), and the corresponding limit is $\phi=|x|$.
This is usually regarded as a travelling wave solution: indeed, following the free boundary interface $\xi(t)= \pm t$, the solution can be written as $u(t,x) = U(t,x-\xi(t))$ with $U(t,z) = \exp(|z|)-1$. Focusing at the origin, however, the solution exhibits the typical behavior of solutions that are asymptotic to the self-similar profile $\phi=|x|$.
\end{example}

\begin{example}
    Let $u(t,x) = \sqrt{-t}\,\phi \left( \frac{x}{\sqrt{-t}} \right)$, where the profile $\phi$ is a solution of \eqref{eq:profile} such that $\overline{\{\phi>0\}}$ is compact. As we have observed, the positivity set of the solution satisfies $\{u(t,\cdot)>0\} = \sqrt{-t} \cdot\{\phi>0\}$ and it shrinks to the origin as $t\nearrow 0$. The point $(0,0)$ is an archetypical Type I singular point provided that $\sqrt{-t}|D^2 u(t,x)| = |D^2 \phi(x/\sqrt{-t})|$ is bounded.
\end{example}

\begin{example} \label{eg:non-type-I}
    Any solution of the \emph{elliptic} Bernoulli problem is automatically a solution of the parabolic Bernoulli problem. Consider for instance the one-homogeneous axis-symmetric solution $U_{as}(x)$ to the elliptic Bernoulli problem (see \cite[Example 2.7]{AC}), called the De Silva-Jerison solution in the literature \cite{DSJ09}. It is a classical solution everywhere except near the origin. Since $u(x)$ is one-homogeneous, the origin cannot be a Type I singular point.
    
    Although this example is not exactly a classical solution with a singular point that is not of Type I, we expect the following to be true: When $n\geq 7$ and $U_{as}(x)$ minimizes the Alt-Caffarelli energy, there exists a classical solution such that its Type I rescaling converges to $U_{as}(x)$ in $\RR^n \setminus\{0\}$, and an alternative rescaling converges to the smoothed out complete solution $U_{\pm}(x)$ constructed in \cite{DSJS}. 
\end{example}

\begin{example}\label{eg:extinct-on-sphere}
    In \cite[Theorem 8.2]{radial}, the authors construct a radially-symmetric solution with initial data compactly supported in an annulus, such that $\{u(t,\cdot)>0\}$ becomes extinct on a sphere of radius $r_*$ at time $t=T$, with no focusing at the origin. For any $x_*$ with $|x_*|=r_*$, the point $(T,x_*)$ is a Type I singular point, and the blow-up is the one-dimensional ball profile (the $n=1$ case of $\phi_B$ in Theorem \ref{thm:radial}). This bears a loose analogy with the \emph{marriage ring} example in mean curvature flow, which is a thin rotationally symmetric torus in $\mathbb{R}^3$ that shrinks to a circle.
\end{example}

\section{Blow-up analysis at Type I singular points}\label{sec:blowup}

\subsection{Proof of Theorem \ref{thm:blowup}}

We work in the rescaled variables \eqref{def:prescaling}: with $s_j=-2\log\lambda_j$, one has $u_{\lambda_j}(t,x)=\sqrt{-t}\,v(s_j+\sigma,x/\sqrt{-t})$, where $\sigma=-\log(-t)$. Under this change of variables, Theorem \ref{thm:blowup} is equivalent to the following statement, which we prove in this section: for any sequence $s_j\to\infty$, modulo passing to subsequences, $w_j(\sigma,y):=v(s_j+\sigma,y)$ converges locally uniformly to a non-trivial solution $\phi$ of \eqref{eq:profile} satisfying the stated properties.

{\bf Step 1. Compactness of $\{w_j\}$ and $\{\chi_{\{w_j>0\}} \}$.} Firstly, we show that the sequence of pairs $(w_j, \chi_{\{w_j>0\}})$ converges to $(\phi,\chi)$ in an appropriate sense, and the limiting functions $\phi, \chi$ are independent of the $\sigma$-variable.
Given any sequence $s_j\to\infty$, set $w_j(\sigma,y):=v(s_j+\sigma,y)$. Since $(0,0)$ is a free boundary point, the $C^{1/2}$-in-time estimate of Lemma~\ref{lem:holder} gives $|u(-e^{-s},0)|\leq C(1+L)\,e^{-s/2}$, hence $|v(s,0)|\leq C(1+L)$. Together with $|\nabla_y v|=|\nabla_x u|\leq L$, this yields uniform boundedness and equicontinuity in $y$ on compact sets. A routine computation using $|\nabla v|\leq L$ and Lemma~\ref{lem:holder} gives equicontinuity in the $\sigma$-variable as well. By Arzel\`a--Ascoli, a subsequence of $w_j$ converges locally uniformly to a limit $\phi$.

Since $\tilde\theta$ in~\eqref{def:monres} is non-increasing and bounded from below, $\int_{\tau_0+\log 2}^\infty(-\tilde\theta')\,d\tau<\infty$, where $\tau_0: =-\log T$. Combined with~\eqref{eq:thetadissip}, this gives a global dissipation bound
\begin{equation*}
    \int_{\tau_0}^\infty\!\int|\partial_s v|^2\,\rho\,dy\,ds \leq  \int_{\tau_0+\log 2}^\infty(-\tilde\theta')\,d\tau <\infty 
\end{equation*} 
after exchanging the order of integration via Fubini (the exponential weights $e^{\tau}$ and $e^{-s}$ cancel). In particular, for any bounded interval $I\subset\RR$ and $R>0$, $\partial_\sigma w_j\to 0$ in $L^2_\rho(I\times B_R)$. The limit $\phi$ is therefore independent of $\sigma$: for any compact $K\subset\RR^n$ and $\sigma_1,\sigma_2\in I$, by Cauchy--Schwarz,
\[
\int_K|w_j(\sigma_2,y)-w_j(\sigma_1,y)|\,\rho\,dy
\leq C_K\,|\sigma_2-\sigma_1|^{1/2}
\Bigl(\int_{\sigma_1}^{\sigma_2}\!\int_K|\partial_\sigma w_j|^2\,\rho\Bigr)^{\!1/2}\to 0, \quad \text{ as } s_j\to \infty.
\]

After a further subsequence, $\chi_j:=\chi_{\{w_j>0\}}\overset{\ast}{\rightharpoonup} \chi$ in $L^\infty_{\rm loc}$ in space-time. 

{\bf Step 2. Identifying $\chi=\chi_{\{\phi>0\}}$.} To this end, note that the Type I assumption \eqref{eq:typeI} gives a uniform bound for $|D^2_y w_j|$ on compact cylinders:
\begin{equation}\label{eq:HessianUB}
    \sup_{([-R,R]\times B_R)\cap \{w_j>0\}} |D^2_{y} w_j| \leq \sup_{Q_j^- \cap \{u>0\}} \sqrt{-t} |D_x^2 u(t,x)| \leq C_s
\end{equation} 
where $Q_j^-:= \left[-\exp(R-s_j),0 \right) \times B_{R \exp( (R-s_j)/2)}$ is contained in $Q_{r_0}^-$ for sufficiently large $j$.
Since $w_j$ and $\nabla_y w_j$ are already locally uniformly bounded and $w_j$ satisfies \eqref{eq:respar}, this also yields a uniform bound for $|\partial_\sigma w_j|$. Thus, for any
$R>0$ and $j$ sufficiently large,
\begin{equation}\label{eq:rescaledC2}
    \sup_{([-R,R]\times B_R)\cap \{w_j>0\}}\bigl(|D^2_{y} w_j|+|\partial_{\sigma}w_j|\bigr) \leq C_R.
\end{equation}
Letting $\Omega_{j,\sigma}=\{w_j(\sigma,\cdot)>0\}$ and $\Gamma_{j,\sigma}=\partial \Omega_{j,\sigma}$, this Hessian bound plus the Bernoulli boundary condition $|\nabla_{y}w_j|=1$ on $\Gamma_{j,\sigma}$ implies that, for $y\in \Omega_{j,\sigma}\cap B_R$ and some $r_0 >0$,
\begin{equation}\label{eq:nearbd-lower}
w_j(\sigma,y)\geq \frac{1}{2}\operatorname{dist}(y,\Gamma_{j,\sigma}), \qquad \text{whenever } \operatorname{dist}(y,\Gamma_{j,\sigma})<r_0.
\end{equation}

Besides, by \eqref{eq:fbnear0} it follows that 
\begin{equation}\label{eq:fbnear0-v}
    \dist(0, \Gamma_{j,\sigma}) \leq 2C_s
\end{equation}  
after rescaling, for any $\sigma \in [-R,R]$ and any $j\geq j(R,r_0)$ large enough.
Let $z\in \Gamma_{j,\sigma} \cap B_R $ and let $\nu(z)$ be the unit normal vector pointing towards $\Omega_{j,\sigma}$. Consider the function $g(s):= w_j(\sigma, z+s\nu(z))$. Suppose $s_z$ is the first point such that $g(s_z) = 0$, then $g'(s_z)\leq 0$, and the line segment $(z,z+s_z \nu(z)) \subset \Omega_{j,\sigma}$. It follows from the Hessian bound \eqref{eq:HessianUB} that
\[ 1= \partial_{\nu} w_j(\sigma, z) \leq g'(0) - g'(s_z) \leq s_z \cdot \sup_{[z,z+s_z \nu(z)]} | g^{''}(s)| \leq C_s s_z, \]
so $s_z \geq 1/C_s$. Therefore 
\begin{equation}\label{eq:thickness}
    w_j(\sigma, \cdot)>0 \text{ on a $r_0$-tubular neighborhood of $\Gamma_{j,\sigma}$ in the inner normal direction}
\end{equation} 
 with $r_0 = 1/(2C_s)$. In other words, we say the positive set $\Omega_{j,\sigma}$ is at least $r_0$-thick near each connected component of the free boundary $\Gamma_{j,\sigma}$.

Define the normal map
\begin{equation*}
    \Psi(z,r)=z+r \nu(z), \quad (z,r)\in (B_{R+1}\cap \Gamma_{j,\sigma}) \times (-r_0,r_0),
\end{equation*}
which, thanks to the Hessian bound \eqref{eq:rescaledC2} (after possibly shrinking $r_0$), has Jacobian
\begin{equation*}
|J_{\Psi}(z,r)|=\left|\prod_{i=1}^{n-1}(1-r\kappa_i(z))\right|\leq (1+C|r|)^{n-1}\leq 2,
\end{equation*}
where $\kappa_i$ denote the principal curvatures of $\Gamma_{j,\sigma}$. Thus, for $\delta \in (0,r_0)$, noting that the $\delta$-tubular neighborhood of the free boundary satisfies $\mathcal{N}_{\delta}(\Gamma_{j,\sigma})\cap B_R\subset \Psi(\Gamma_{j,\sigma}\cap B_{R+1} \times (-\delta,\delta))$,
\begin{equation}\label{eq:tube-meas}
|\mathcal{N}_{\delta}(\Gamma_{j,\sigma})\cap B_R |\leq  4\delta \mathcal{H}^{n-1}(\Gamma_{j,\sigma}\cap B_{R+1}).
\end{equation}
Now let $\eta \in C^{\infty}_c(B_{R+1})$ be a smooth non-negative bump function in space with $\eta \equiv 1$ on $B_R$. We may rewrite the rescaled equation \eqref{eq:respar} in divergence form as
\begin{equation*}
    \operatorname{div}(\rho \nabla w_j)=\rho (\partial_{\sigma}w_j-w_j/2).
\end{equation*}
Integrating $\operatorname{div}(\eta \rho \nabla w_j)$ over $\Omega_{j,\sigma}$ and using the divergence theorem, we get
\begin{equation*}
    \int_{\Gamma_{j,\sigma}}\eta \rho \, d\mathcal{H}^{n-1}=- \int_{\Omega_{j,\sigma}}\nabla \eta \cdot \nabla w_j \rho\, dy - \int_{\Omega_{j,\sigma}} \eta (\partial_{\sigma}w_j -w_j/2)\rho \,dy,
\end{equation*}
and, thus, 
\begin{equation*}
 \int_{B_R\cap \Gamma_{j,\sigma}} \rho\, d\mathcal{H}^{n-1} \leq C_R(1+ \int_{B_{R+1}}|\partial_{\sigma}w_j|\rho \, dy).
\end{equation*}
Integrating in $\sigma$ and using that $\rho \geq c_R>0$ on $B_R$, we get the perimeter upper bound
\begin{equation}\label{eq:spacetime-area}
   \int_{-R}^R \mathcal{H}^{n-1}(B_R \cap \Gamma_{j,\sigma})d\sigma =\int_{-R}^{R} \int_{B_R\cap \Gamma_{j,\sigma}}  d\mathcal{H}^{n-1} \leq C_R\Bigl(2R+ \int_{-R}^{R}\int_{B_{R+1}}|\partial_{\sigma}w_j|\rho\, dy\, d\sigma\Bigr)\leq C'_R,
\end{equation}
where in the last inequality we used the fact that $\partial_s v \in L^2_{\rho}((\tau_0,\infty)\times \RR^n)$.

Let 
\[ \lambda_j:= \|\partial_{\sigma}w_j\|_{L^2_\rho((-R,R)\times \RR^n)}^{\frac12}.\]
Since $\int_{\tau_0}^\infty\!\int|\partial_s v|^2\,\rho\,dy\,ds<\infty$, we have $\lambda_j \to 0$ as $j\to\infty$.
We define the set of good times by
\begin{equation*}
    G_j=\{\sigma \in [-R,R]: \|\partial_{\sigma} w_j(\sigma,\cdot)\|_{L^2_{\rho}(\RR^n)}\leq \lambda_j\}.
\end{equation*}
Note that, by Chebyshev's inequality,
\begin{equation}\label{eq:bad-times}
|[-R,R]\setminus G_j| \leq \frac{\|\partial_{\sigma}w_j\|_{L^2_{\rho}((-R,R)\times \RR^n)}^2}{\lambda_j^2}=\|\partial_{\sigma}w_j\|_{L^2_{\rho}((-R,R)\times \RR^n)}\to 0 
\end{equation}
as $j\to \infty$. Moreover, by \eqref{eq:rescaledC2}, for any $p>n$ and $K>0$ fixed, and every $\sigma \in G_j$,
\begin{equation}\label{eq:goodtime-lp}
    \|\partial_{\sigma}w_j(\sigma,\cdot)\|_{L^p(B_K)}\leq C_{K,p} \|\partial_{\sigma}w_j(\sigma,\cdot)\|_{L^2_\rho(\RR^n)}^{2/p}\leq C_{K,p}\lambda_j^{2/p} \to 0, 
\end{equation}
as $j\to \infty$.

 We claim that there exists $c_R>0$ such that, for all large $j$, all $\sigma\in G_j$, and all $y\in \Omega_{j,\sigma}\cap B_R$ with $\operatorname{dist}(y,\Gamma_{j,\sigma})\geq r_0/2$, 
\begin{equation}\label{eq:nondeg}
w_j(\sigma,y)\geq c_R.
\end{equation}
In other words $w_j$ is \emph{uniformly non-degenerate}.
To see this, fix such $j, \sigma, y$, and let $d=\operatorname{dist}(y,\Gamma_{j,\sigma})$.
By \eqref{eq:fbnear0-v}, we know $\Gamma_{j,\sigma}\cap \overline{B_{2C_s}} \neq \emptyset$, and thus $d\leq R+2C_s$, so $B_d(y)\subset B_{2R+2C_s}$.  
Moreover, choose $y_0\in \Gamma_{j,\sigma}$ with $|y-y_0|=d$, and let $z\in [y_0,y]\cap \partial B_{d-r_0/200}(y)$. Then $\operatorname{dist}(z,\Gamma_{j,\sigma})=r_0/200<r_0$, and by \eqref{eq:nearbd-lower},
\begin{equation*}
    w_j(\sigma,z)\geq \frac{r_0}{400}.
\end{equation*}
Since $|z-y|=d-r_0/200\leq (1-\frac{r_0}{200(R+C_s)})d$, the interior Harnack inequality for inhomogeneous elliptic PDEs \cite[Theorems 8.17 and 8.18]{GT}, applied to \eqref{eq:respar}, yields
\begin{equation*}
w_j(\sigma,z)\leq C_{p,R}(w_j(\sigma,y)+ d^{2-n/p}\|\partial_{\sigma}w_j(\sigma,\cdot)\|_{L^p(B_d(y))}).
\end{equation*}
Since $p>n$ and $B_d(y)\subset B_{2R+2C_s}$, \eqref{eq:goodtime-lp} (applied with radius $2R+2C_s$) therefore implies \eqref{eq:nondeg} for $j$ sufficiently large.

Now let
\begin{equation*}
    \delta_j:=\|w_j-\phi\|_{L^{\infty}((-R,R)\times B_R)} \to 0, \quad E_j:=\{(\sigma,y)\in [-R,R]\times B_R: 0<w_j(\sigma,y)\leq 2\delta_j\}.
\end{equation*}
Combining \eqref{eq:nearbd-lower}, \eqref{eq:tube-meas}, \eqref{eq:spacetime-area}, and \eqref{eq:nondeg}, we deduce, for large $j$ with $\delta_j \ll \min(c_R,r_0)$,
	\begin{multline}\label{eq:Ej-good-times}
	    |E_j\cap (G_j\times B_R)|\leq \int_{-R}^{R}|\mathcal{N}_{4\delta_j}(\Gamma_{j,\sigma})\cap B_{R}|d\sigma 
	\leq 8\delta_j \int_{-R}^R \mathcal{H}^{n-1}(\Gamma_{j,\sigma}\cap B_{R+1})d\sigma \leq C_R \delta_j,
	\end{multline}
	whereas \eqref{eq:bad-times} implies 
	\begin{equation}\label{eq:Ej-bad-times}
	    |E_j \cap (((-R,R)\setminus G_j)\times B_R)|\leq |B_R|\cdot |(-R,R)\setminus G_j|\to 0.
	\end{equation}
On the other hand, by the definition of $\delta_j$ we have
\begin{equation}\label{eq:indicator-split}
|\chi_{\{w_j>0\}}-\chi_{\{\phi>0\}}|\leq |\chi_{\{w_j>2\delta_j\}}-\chi_{\{\phi>0\}}|+ \chi_{\{0<w_j\leq 2\delta_j\}}\leq \chi_{\{0<\phi \leq 3\delta_j\}}+\chi_{E_j}.    
\end{equation}
	Combining \eqref{eq:indicator-split} with \eqref{eq:Ej-good-times} and \eqref{eq:Ej-bad-times}, and applying the dominated convergence theorem to $\chi_{\{0<\phi\leq 3\delta_j\}}$, we finally conclude that $\chi_{\{w_j>0\}} \to \chi_{\{\phi>0\}}$ in $L_{\operatorname{loc}}^1$ in space-time, and thus $\chi=\chi_{\{\phi>0\}}$, as desired.

{\bf Step 3. Showing the limit function $\phi$ satisfies \eqref{eq:profile}.}
Now, modulo shrinking the set $G_j$ slightly we may choose a sequence $\sigma_j\in G_j$ of good times such that
\begin{equation}\label{eq:periUB}
    \{w_j(\sigma_j, \cdot)>0\} \to \{\phi>0\}, \qquad \text{ and } \qquad \mathcal{H}^{n-1}(B_R \cap \Gamma_{j,\sigma_j})\leq C_R,
\end{equation}
where we used \eqref{eq:spacetime-area} for the second statement.
Passing to the limit in \eqref{eq:respar}, using \eqref{eq:goodtime-lp}, readily implies that $\phi$ solves the equation in \eqref{eq:profile} in $\{\phi>0\}$. 

We now verify that 
\begin{equation}\label{eq:CVHausdorff}
    \Gamma_{j,\sigma_j} \to \partial\{\phi>0\}\cap B_R \qquad \text{ locally in the Hausdorff sense}. 
\end{equation} 
First, we prove that for any $\epsilon>0$,
\begin{equation}
   \partial\{\phi>0\} \cap B_{R} \subset \mathcal{N}_{\varepsilon}( \Gamma_{j,\sigma_j})
\end{equation} 
when $j \geq j(\epsilon)$ is sufficiently large. 
Suppose $y_0\in \partial\{\phi>0\} \cap B_R \neq \emptyset$. Then by the continuity of $\phi$, for any $\epsilon>0$ there exists $y\in B_{\frac{\epsilon}{4}}(y_0)\cap B_R$ such that $0<\phi(y)< \min\{\frac{c_R}{2}, \frac{\epsilon}{4} \}$. By the uniform convergence of $w_j(\sigma_j,\cdot)$ to $\phi$ we have that $w_j(\sigma_j, y)< \frac32 \phi(y)$ for $j$ sufficiently large. Since $w_j(\sigma_j, y) < \frac32 \phi(y) < c_R$, by \eqref{eq:nondeg} $\operatorname{dist}(y,\Gamma_{j,\sigma_j}) < r_0/2$. Thus \eqref{eq:nearbd-lower} implies that $\operatorname{dist}(y,\Gamma_{j,\sigma_j}) \leq 2w_j(\sigma_j,y) < \frac34 \epsilon$, and moreover $\operatorname{dist}(y_0,\Gamma_{j,\sigma_j}) < \epsilon$.

Conversely, we now show that for any $\delta>0$ and any $\epsilon \in (0,\delta R)$,
\begin{equation}
    \Gamma_{j,\sigma_j}\cap B_{(1-\delta)R} \subset \mathcal{N}_{\varepsilon}( \partial\{\phi>0\})
\end{equation}
when $j\geq j(\epsilon,\delta)$ sufficiently large. Suppose by contradiction there exists $x_j \in \Gamma_{j,\sigma_j}\cap B_{(1-\delta)R}$ such that $B_{\varepsilon}(x_j)\cap \partial\{\phi>0\}=\emptyset$.
By considering a segment starting at $x_j$ along the inner normal direction, there exists $\eta \in (0, r_0 )$, depending only  on $R$ and the Hessian bound \eqref{eq:rescaledC2}, such that
\begin{equation}
\sup_{B_{r}(x_j)}w_j(\sigma_j,\cdot)\geq \frac{1}{2}r, \quad r \in (0,\eta).
\end{equation}
Thus, after extracting a subsequence so that $B_r(x_j)$ accumulate, we have $x_j\to x \in \overline{\{\phi>0\}}$. Since $\phi(x)=\lim_{x_j\to x}w_j(\sigma_j,x_j) = 0$, we have $x\in \partial \{\phi>0\} \cap B_{\varepsilon}(x_j)$ for 
$j$ sufficiently large, a contradiction. This finishes the proof of \eqref{eq:CVHausdorff}. We remark that by the nonemptiness of $\Gamma_{j, \sigma_j} \cap B_R(0)$ \eqref{eq:fbnear0-v} and \eqref{eq:CVHausdorff} one must have $\partial \{\phi > 0\} \neq \emptyset$ and $\phi \not\equiv 0$.

By the curvature and perimeter upper bounds \eqref{eq:periUB}, the number of connected components of $\Gamma_{j,\sigma_j} \cap B_R$ is uniformly bounded. By the Arzel\`a-Ascoli theorem, after extracting a subsequence, each connected component $\Gamma_{j,\sigma_j}^k$ converges in $C^{1}$ norm to a $C^1$ hypersurface $\Gamma^k$, and by \eqref{eq:CVHausdorff} $\partial\{\phi>0\} \cap B_R =\bigcup_{k}\Gamma^k$. Note that \eqref{eq:thickness} indicates that the connected components $\Gamma_{j,\sigma_j}^k$ are at least $r_0$-distance apart in the positive direction (and there may not be any sheet separation in the opposite direction, as is shown in Example \ref{eg:exp}). Together with the non-degeneracy \eqref{eq:nondeg} and the uniform convergence $w_j(\sigma_j,\cdot) \to \phi$, this implies that $\Gamma^k$'s are also at least $r_0$-distance apart in the positive direction, but they may even coincide in the opposite direction.

We now check that the Bernoulli condition holds classically, namely $|\nabla \phi(y)|\to 1$ as $y\to \partial\{\phi>0\}$ from within $\{\phi>0\}$. Fix $\delta\in(0,\tfrac12)$, let $y\in \{\phi>0\}\cap B_{(1-2\delta)R}$, with $d:=\operatorname{dist}(y,\partial\{\phi>0\})<\delta R/4$. For $j$ sufficiently large, the Hausdorff convergence of the free boundaries, the uniform convergence of $w_j(\sigma_j,\cdot) \to \phi$ and $B_{3d/4}(y) \subset \{\phi>0\}$ give
\[
B_{d/2}(y)\subset \{w_j(\sigma_j,\cdot)>0\}, \qquad \operatorname{dist}(y,\Gamma_{j,\sigma_j})\leq 2d.
\]
Since $w_j(\sigma_j,\cdot)\to\phi$ uniformly and \eqref{eq:HessianUB} yields a uniform $C^{1,1}$ bound on $w_j(\sigma_j,\cdot)$ in $B_{d/2}(y)$, we have $\nabla w_j(\sigma_j,\cdot)\to \nabla \phi$ uniformly on $B_{d/4}(y)$. 
In fact, we have that $w_j(\sigma_j, \cdot) \to \phi$ in $C^{1,\alpha}$ for any $\alpha<1$, and that $\|D^2\phi\|_\infty \leq C_s$ on $B_{d/4}(y)$.
If $z_j\in \Gamma_{j,\sigma_j}$ satisfies $|y-z_j|=\operatorname{dist}(y,\Gamma_{j,\sigma_j})$, then by the Bernoulli boundary condition and \eqref{eq:HessianUB},
\[
\bigl||\nabla w_j(\sigma_j,y)|-1\bigr|
\leq C_s |y-z_j|
\leq 2C_s d.
\]
Passing to the limit $j\to\infty$, we obtain $||\nabla \phi(y)|-1|\leq 2C_s\,\operatorname{dist}(y,\partial\{\phi>0\})$, which proves the claim.

{\bf Step 4. Energy of the limit function $\phi$.}
By the definition of $w_j$ and a change of variables, we have
\[ \tilde{\theta}(s_j) = \frac23 \int_{-\log 2}^{\log 2} \left[ \int_{\RR^n} \left(|\nabla w_j(\sigma, \cdot)|^2 + \chi_{\{w_j(\sigma,\cdot)>0\} } - \frac12 w_j^2(\sigma,\cdot)  \right) \rho \, dy \right] e^{-\sigma} d\sigma. \]
Passing to an unlabeled subsequence, we may assume that \(\sum _{j=1}^{\infty} \left\lvert [-\log 2, \log 2] \setminus G_j \right\rvert < \infty\) so that \(\bigcup _{j_0 \in \mathbb{N}} \bigcap _{j > j_0} G_j\) has full measure. 
For each $\sigma \in\bigcup _{j_0 \in \mathbb{N}}\bigcap _{j > j_0} G_j$ 
we have
\[ \int_{\RR^n} \left(|\nabla w_j(\sigma, \cdot)|^2 + \chi_{\{w_j(\sigma,\cdot)>0\} } - \frac12 w_j^2(\sigma,\cdot)  \right) \rho dy \to \int_{\RR^n} \left(|\nabla \phi|^2 + \chi_{\{\phi>0\} } - \frac12 \phi^2  \right) \rho dy \]
as $j\to \infty$. Note that by construction, the limit $\phi$ depends on the sequence $s_j \to \infty$ that defines $w_j$, but it is the same for any choice of $\sigma $.
Combined with the uniform bounds on $w_j, |\nabla w_j|$, we conclude that
\[ \tilde{\theta}(s_j) \to \int_{\RR^n} \left(|\nabla \phi|^2 + \chi_{\{\phi>0\} } - \frac12 \phi^2  \right) \rho \, dy, \qquad \text{ as } s_j \to \infty. \]
By the equation \eqref{eq:profile} of $\phi$, written in divergence form, and the divergence theorem, we have
\begin{align*}
    0 = \int_{\RR^n} \left(\divg(\rho \nabla \phi) + \frac12 \rho\phi \right) \phi \, dy = - \int_{\RR^n} \rho|\nabla \phi|^2 + \frac12 \int_{\RR^n} \rho \phi^2.
\end{align*}
Combined with the monotonicity formula, we conclude
\[ \theta(0+) = \lim_{r\to 0} \theta(r) = \lim_{s_j \to \infty} \tilde{\theta}(s_j) = \int_{\RR^n} \chi_{\{\phi>0\} } \rho\, dy \leq 1.  \]

To show $\theta(0+)> 1/2$, we consider the self-similar solution $u_s(t,x) := \sqrt{-t} \phi(x/\sqrt{-t})$. For any $y_0\in \partial \{\phi>0\}$ and any $t_0<0$, the point $(t_0, x_0:=\sqrt{-t_0}y_0)$ is a regular point on the free boundary $\partial\{u_s(t_0,\cdot)>0\}$, with uniform Hessian bound. Thus applying the same blowup process at $(t_0,x_0)$ to the sequence
\[ \tilde{u}_{\lambda_j}(t,x) := \frac{u_s(t_0 + \lambda_j^2 t, x_0 + \lambda_j x)}{\lambda_j}, \qquad \lambda_j \to 0 \]
gives the half-plane solution, and in particular $\theta_{(t_0,x_0)}(0+) = 1/2$. Taking $t_0$ sufficiently close to $0$, the upper semi-continuity of $\theta_{(\cdot, \cdot)}(0+)$ implies that $\theta(0+) = \theta_{(0,0)}(0+) \geq 1/2$. However, $\theta(0+) = 1/2$ only if $\phi$ is the half-plane solution, which is impossible at singular points (by \cite[Theorem 8.4]{AW}), and hence $\theta(0+)> 1/2$.

\subsection{Proof of Theorem \ref{thm:blowup-twophase}}

Suppose in addition that $u$ satisfies \eqref{eq:typeI} with the upper bound $C_s(r)$ tending to $0$ as $r\to 0$. This implies that on each compact set the Hessian bound for $w_j$ in \eqref{eq:HessianUB} is given by $C_s(R r_j)$, with $r_j = e^{-s_j/2} \to 0$. Hence
\[ \sup_{B_R} |D^2\phi| \leq \lim_{j\to \infty} \sup_{B_R} |D^2_y w_j(\sigma_j, \cdot)| \leq \lim_{j\to \infty} C_s(R r_j) = 0. \]
In particular $\phi$ is linear in each connected component of $\{\phi>0\}$. Hence the equation \eqref{eq:profile} implies that $y\cdot \nabla \phi = \phi$, which implies $\phi$ is $1$-homogeneous with respect to the origin, i.e. $\phi(\lambda y) = \lambda \phi(y)$ for every $y\in \{\phi>0\}$ and $\lambda>0$. Thus each connected component of $\{\phi>0\}$ is a cone with vertex at the origin. Since $\phi$ is linear, it can only be a flat cone. Combined with the Bernoulli boundary condition, we have that modulo rotations, either $\phi= (y_1)_+$ or $\phi=|y_1|$. However if $\phi=(y_1)_+$, the point $(0,0)$ can not be a singular point. Indeed, the local uniform convergence in space-time, the Hessian bound tending to zero, and the thickness estimate \eqref{eq:thickness} give local $C^1$ convergence up to the free boundary from within the positivity set and place the parabolic dilations $u_{\lambda_j}$ in the flatness class of \cite[Theorem 8.4]{AW} for all sufficiently large $j$ (see also \cite[Theorem 8.1]{AC}). Thus the theorem implies that $\partial\{u(0,\cdot)>0\}$ is regular. 

From now on assume $\phi=|y_1|$. 
Recall that as $s_j\to\infty$,  $w_j(\sigma , y) = v \left( s_j + \sigma , y \right) \to \phi (y)$ locally uniformly in $\sigma$ and $y$. 
Let $\sigma \in I :=[-\log 2, \log 2]$, and we define
\[ \delta_j:= \|w_j - \phi\|_{L^\infty(I\times B_4 )} \to 0, \qquad \kappa_j:= \sup_{\left( I\times B_4 \right) \cap \{w_j>0\}} |D^2 w_j| \leq C_s(3\sqrt{2} e^{-s_j/2}) \to 0. \]
Let $p\in \Gamma_{j,\sigma} \cap B_3$, and $\nu$ be the unit normal vector at $p$ pointing in the positivity set $\Omega_{j,\sigma}$. Firstly, combining \eqref{eq:fbnear0-v} and the assumption $C_s(r) \to 0$, we know that $\Gamma_{j,\sigma} \cap B_3 \neq \emptyset$. We claim there is a uniform constant $\tau \in (0,1)$ such that 
\begin{equation}\label{eq:twophase-cv}
    |p_1| \leq \delta_j, \qquad |\nu_1| = |\langle \nu, e_1 \rangle| \geq 1- \epsilon_j  \text{ with } \epsilon_j:= \frac{\kappa_j}{2} \tau + \frac{\delta_j}{\tau} \to 0.
\end{equation}
The first statement follows easily from
\[ |p_1|=\phi(p) = |w_j(\sigma, p) - \phi(p)| \leq \delta_j. \]
Recall that \eqref{eq:thickness} implies $p + t \nu \in \Omega_{j,\sigma} \cap B_4$ for all $0<t<\tau$, where $\tau:= \min\{r_0, 1\}$ is independent of $j$. Thus the uniform Hessian bound applies, together with the Bernoulli boundary condition, to show
\[ w_j(\sigma, p+\tau\nu) \geq \tau - \frac{\kappa_j}{2} \tau^2. \]
On the other hand
\[ w_j(\sigma, p+\tau\nu) \leq \phi(p+\tau\nu) + \delta_j = |p_1 + \tau \langle \nu, e_1 \rangle | + \delta_j \leq \tau |\langle \nu, e_1 \rangle | + 2\delta_j. \]
Combined, we conclude that $|\langle \nu, e_1 \rangle| \geq 1- \epsilon_j$.

Next, we show that $\Omega_{j,\sigma} \cap B_2$ has exactly two connected components. More precisely, let $y' \in B_1 \cap \mathbb{R}^{n-1}$ be fixed, we claim
\begin{equation}\label{cl:twophase-2components}
    Z_{y'}:= \{t \in [-2,2]: w_j(\sigma, (t,y'))=0 \} \text{ is a non-empty single interval contained in } [-\delta_j, \delta_j]. 
\end{equation} 
If $Z_{y'}$ contains more than one interval, then there is an non-empty interval $(a,b) \subset [-2,2] \setminus Z_{y'} $ with $a, b \in Z_{y'} $ such that $h:= w_j(\sigma, (\cdot ,y')) >0 $ on $(a,b)$. By \eqref{eq:twophase-cv}, 
\[ h'(a+) = \langle \nabla w_j(\sigma, (a,y')), e_1 \rangle \geq 1-\epsilon_j, \qquad h'(b-) = \langle \nabla w_j(\sigma, (b,y')), e_1 \rangle  \leq -( 1-\epsilon_j).  \] 
On the other hand, the uniform Hessian bound gives $h'(a+) - h'(b-) \leq \kappa_j$, and we get a contradiction since both $\epsilon_j, \kappa_j \to 0$.
To show $Z_{y'}$ is always non-empty, recall we have $\Gamma_{j,\sigma} \cap B_1 \neq \emptyset$ for $j$ sufficiently large. Then $Z_{y'} \neq \emptyset$ for some $y'$ (depending on $j$). Moreover, each $\Gamma_{j,\sigma}$ is locally a $C^{1,1}$ hypersurface satisfying \eqref{eq:twophase-cv}. So the condition $Z_{y'} \neq \emptyset$ is both open and closed, and thus it holds on all of the connected set $B_1 \cap \mathbb{R}^{n-1}$. 
This finishes the claim \eqref{cl:twophase-2components}. In particular, $\Gamma_{j,\sigma}\cap B_2$ consists of the graphs of two $C^1$ functions over the hyperplane $\{y_1=0\}$. After a change of variables, this gives the desired statement, with the sequence $\lambda_j:= e^{-s_j/2} \to 0$.

It remains to prove {\bf the uniqueness of the tangents} under the assumption \eqref{eq:dini-Cs}. 
We showed above for every sequence $s_j \to \infty$ there is a subsequence that converges locally uniformly to a limit function
\[
    \phi \in \mathcal Z:=\{|\langle y,e\rangle|:e\in\mathbb S^{n-1} / \left\lbrace\pm 1\right\rbrace\},
\]
but $\phi$ may depend on the sequence. 
For any $R\geq 10$ fixed, we have
\begin{equation}\label{eq:Z-trapping-simple}
    \delta_R(s):%=\operatorname{dist}_{L^\infty(B_{4R})}(v(s,\cdot),\mathcal Z)
    = \inf_{\phi\in \mathcal{Z} } \|v(s,\cdot) - \phi\|_{L^\infty(B_{4R})} \to 0
    \qquad\text{as } s\to\infty.
\end{equation}
Because if this is not the case, there exist $\epsilon>0$ and a sequence $s_j \to \infty$ such that $\delta_R(s_j) \geq \epsilon$. However, since we can extract a subsequence $\{s_{j_k}\}$ from $s_j$ such that $v(s_{j_k},\cdot)$ converges to some $\phi \in \mathcal{Z}$, it is impossible that $\delta_R(s_{j_k}) \geq \epsilon$.

Recall that
\begin{equation}\label{eq:kappaR-simple}
    \kappa_R(s):=\sup_{\{v(s,\cdot)>0\}\cap B_{4R}} |D_y^2v(s,\cdot)| \leq C_s(4Re^{-s/2}).
\end{equation}
Hence the assumption \eqref{eq:dini-Cs} implies 
\begin{equation}\label{eq:kappa-dini-simple}
    \int_{s_0}^\infty \kappa_R(s)\,ds
    \leq 2\int_0^{4Re^{-s_0/2}}\frac{C_s(r)}{r}\,dr<+\infty,
    \qquad
    \lim_{s_0 \to\infty}\int_{s_0}^\infty \kappa_R(s)\,ds=0.
\end{equation}
Suppose that
\begin{equation}\label{hyp:positive}
    \text{the line segment $[y_1,y_2]$ lies in the positivity set $\{v(s,\cdot)>0\}\cap B_{3R}$}.
\end{equation}
Then by the equation \eqref{eq:respar}, we have
\begin{align*}
 & \frac{d}{ds} \left( v(s,y_2) - v(s,y_1) \right) \\
 & =\Delta v(s, y_2)-\Delta v(s,y_1)
    -\frac12\bigl(y_2\cdot\nabla v(s,y_2)
            - y_1 \cdot\nabla v(s,y_1)\bigr)
        +\frac12\bigl(v(s,y_2)-v(s,y_1)\bigr) \\
  &   =\Delta v(s,y_2)-\Delta v(s,y_1)
    +\frac12\bigl[ v(s,y_2)-v(s,y_1)
            -(y_2-y_1) \cdot\nabla v(s,y_1)\bigr] -\frac12 y_2\cdot
        \bigl(\nabla v(s,y_2)-\nabla v(s,y_1)\bigr).
\end{align*}
By the Hessian bound \eqref{eq:kappaR-simple} and Taylor's theorem, we obtain the estimates
\[
\begin{aligned}
&|\Delta v(s,y_2)|+|\Delta v(s,y_1)|\leq C\kappa_R(s),\\
&\left|
    v(s,y_2)-v(s,y_1)-(y_2 -y_1)\cdot\nabla v(s,y_1)
\right|
    \leq C\kappa_R(s),\\
&\left|
    y_2 \cdot(\nabla v(s,y_2)-\nabla v(s,y_1))
\right|
    \leq C_R\kappa_R(s).
\end{aligned}
\]
Therefore, as long as the hypothesis \eqref{hyp:positive} holds for all $s\in (s_1,s_2)$, it follows that
\begin{equation}\label{eq:spatial-change-v}
    \left|(v(s_2,y_2)-v(s_2,y_1) - (v(s_1,y_2)-v(s_1,y_1) ) \right| \leq C_R \int_{s_1}^{s_2} \kappa_R(s) \, ds.
\end{equation}

Let $s_0 >0$ be a large time to be chosen later. Modulo one fixed rotation, 
we may assume that
\begin{equation}\label{eq:initial-double-plane-close}
    \delta_R(s_0) = \|v(s_0,\cdot)-|y_1|\|_{L^\infty(B_{4R})}.
\end{equation}
Let \(e_1,\ldots,e_n\) denote the standard basis of $\mathbb{R}^n$, set
\begin{equation*}
    q_0:=2Re_1,\qquad q_i:=q_0+e_i \text{ for each } i=1,\ldots,n,
\end{equation*}
and define
\begin{equation*}
    a_i(s):=v(s,q_i)-v(s,q_0),\qquad
    a(s):=(a_1(s),\ldots,a_n(s)).
\end{equation*}
Note that for $s$ large enough, the vector $a(s)$ encodes the information  which $\phi\in \mathcal{Z}$ is $v(s,\cdot)$ close to. We aim to show that $a(s)$ has a unique limit as $s\to \infty$.
By \eqref{eq:initial-double-plane-close}, at the initial time $s_0$ each of the line segments $[q_0,q_i]$ lies in the positivity set of $v(s_0,\cdot)$ far from the free boundary. This is because for any \(y\in [q_0,q_i]\) for some \(i\), we have \(y_1\geq 2R\), and thus $v(s_0,y)\geq y_1-\delta_R(s_0) \geq R > 0$, by choosing $s_0$ large so that $\delta_R(s_0) \leq R$.
Let
\begin{equation}\label{def:extension-in-time}
    \tau_*:=\sup\{\tau \geq s_0:\ [q_0,q_i]\subset\{v(s,\cdot)>0\}
    \text{ for every }s\in[s_0,\tau]\text{ and }i=1,\ldots,n\}.
\end{equation}
The lower bound \(v(s_0,y)\geq R\) on the segments, together with continuity in time, clearly implies \(\tau_*>s_0\). Moreover, the definition of \eqref{def:extension-in-time} implies that each line segment $[q_0,q_i]$ satisfies the hypothesis \eqref{hyp:positive} on the time window $ [s_1,s_2] \subset [s_0,\tau_*)$, and thus \eqref{eq:spatial-change-v} implies that
\begin{equation}\label{eq:change-in-ai}
    |a_i(s_2 ) - a_i(s_1)| \leq C_R \int_{s_1}^{s_2} \kappa_R(s) \, ds. 
\end{equation} 

In particular, recall that at the initial time $s_0$ we have \eqref{eq:initial-double-plane-close}. This implies that
\begin{equation}\label{eq:initial-a}
    |a(s_0) - e_1| \leq C\delta_R(s_0), 
\end{equation} 
where the constant $C$ depends on $R$ and the dimension $n$.
For any $s \in (s_0,\tau_*)$, choose
\(\hat e(s)\in\mathbb S^{n-1}\) such that
\begin{equation*}
    \|v(s,\cdot)-|\langle y,\hat e(s)\rangle|\|_{L^\infty(B_{4R})}
    \leq \frac32 \delta_R(s).
\end{equation*}
Without loss of generality we may assume that $\langle \hat e(s), e_1 \rangle \geq 0$. (If not, simply replace $\hat e(s)$ by $-\hat e(s)$.) Combining
\begin{equation}\label{tmp:small-in-e1}
     \left|a_1(s) - (|\langle q_1, \hat e(s) \rangle| - |\langle q_0, \hat e(s) \rangle| ) \right| \leq 3\delta_R(s), 
\end{equation}
\begin{equation}\label{tmp:proj-in-e1}
    |\langle q_1, \hat e(s) \rangle| - |\langle q_0, \hat e(s) \rangle| = (2R+1)\langle \hat e(s), e_1 \rangle - 2R \langle \hat e(s), e_1 \rangle = \langle \hat e(s), e_1 \rangle,  
\end{equation} 
and \eqref{eq:change-in-ai}, \eqref{eq:initial-a}, we know that
\begin{align}
    |\langle \hat e(s), e_1 \rangle - 1| & \leq |\langle \hat e(s), e_1 \rangle - a_1(s)| + |a_1(s) - a_1(s_0)| + |a_1(s_0)-1| \nonumber \\
    & \leq 3\delta_R(s) + C_R \int_{s_0}^s \kappa_R(s)\, ds + \delta_R(s_0).\label{eq:inner-product-e1} 
\end{align} 
By choosing $s_0$ sufficiently large, we know that $\delta_R(s_0), \delta_R(s)$ are sufficiently small by \eqref{eq:Z-trapping-simple}, and $\int_{s_0}^s \kappa_R(s)\, ds$ is also sufficiently small by \eqref{eq:kappa-dini-simple}. Thus \eqref{eq:inner-product-e1} implies that $\langle \hat e(s), e_1 \rangle \geq \frac{17}{18}$ for all $s\in [s_0,\tau_*)$. In particular
\begin{equation}\label{tmp:new-direction}
    |\hat e(s) - e_1|^2 = 2-2\langle \hat e(s), e_1 \rangle \leq \frac19. 
\end{equation} 
Recall that for any $i$ and $y\in [q_0,q_i]$, we have $y_1\geq 2R$ and $|y|\leq 3R$. Hence \eqref{tmp:new-direction} implies
\[
    \langle y,\hat e(s)\rangle
    =y_1+\langle y,\hat e(s)-e_1\rangle
    \geq 2R-3R|\hat e(s)-e_1|
    \geq R > 0.
\]
As a consequence,
\[ v(s,y) \geq \langle y,\hat e(s)\rangle - \frac32 \delta_R(s) \geq \frac12 R, \]
and $v(\tau_*,y) = \lim_{s\to \tau_* -} v(s,y) \geq R/2 > 0$. 
If $\tau_*< +\infty$, then the continuity in time implies that the defining property for $\tau_*$ holds slightly above $\tau_*$, which is a contradiction. Hence $\tau_* = +\infty$.

Since the estimates \eqref{eq:change-in-ai} hold on $[s_0, +\infty)$, the condition \eqref{eq:kappa-dini-simple} implies that each $a_i(s)$ has a finite limit as \(s\to\infty\). Denote $e:=\lim_{s\to\infty}a(s) \in \RR^n$.
We claim that $\hat e(s)$ also converges to $e$ as $s\to \infty$.
We have seen that \eqref{tmp:small-in-e1} and \eqref{tmp:proj-in-e1} combined gives 
\begin{equation}\label{tmp:CVa1}
    |a_1(s) - \langle \hat e(s), e_1 \rangle |\leq 3\delta_R(s) \to 0. 
\end{equation} 
For each $i\in \{2, \cdots, n\}$, we also have
\[ \langle q_i, \hat e(s) \rangle = 2R \langle \hat e(s) , e_1 \rangle + \langle \hat e(s), e_i \rangle \geq 2R \cdot \frac{17}{18} - 1 > 0, \]
and thus we can remove the absolute values and obtain
\[ |\langle q_i, \hat e(s) \rangle| - |\langle q_0, \hat e(s) \rangle| = \langle q_i, \hat e(s) \rangle - \langle q_0, \hat e(s) \rangle = \langle \hat e(s), e_i \rangle. \]
This immediately implies
\begin{equation}\label{tmp:CVai}
    |a_i(s) - \langle \hat e(s), e_i \rangle | \leq 3\delta_R(s) \to 0, \qquad \text{ for each } i\in \{2, \cdots, n\}.
\end{equation} 
Combining \eqref{tmp:CVa1} and \eqref{tmp:CVai} we conclude that  $\hat e(s) \to e $ as $s\to \infty$. In particular $e\in \mathbb{S}^{n-1}$, $\langle e, e_1 \rangle \geq \frac{17}{18}$ and
\[ |\langle y, \hat e(s) \rangle | \to | \langle y, e \rangle |, \qquad \text{ uniformly in $B_{4R}$ as } s\to \infty.  \]
It follows that
\[ \|v(s,\cdot) - |\langle y, e \rangle | \|_{L^\infty(B_{4R})} \leq  2\delta_R(s) + \| |\langle y, \hat e(s) \rangle |- |\langle y, e \rangle | \|_{L^\infty(B_{4R})} \longrightarrow 0, \qquad \text{ as } s\to \infty.  \]
A priori, the unit vector $e$ depends on $R$; but the limit of $v(s,\cdot)$ in a larger ball clearly agrees with the limit in a smaller ball, so $e$ remains the same for all radii $R\geq 10$. In other words, we have proven that there is a unique $e\in \mathbb{S}^{n-1}/\mathbb{Z}_2$ such that
\[
    v(s,\cdot)\to |\langle y,e\rangle|
    \qquad\text{locally uniformly in $\RR^n$ as }s\to\infty.
\]

\subsection{Remarks on hypotheses}\label{sec:rmk-hp}

We indicate where the assumption that $u$ is classical enters the proof and how it might be weakened.
\begin{itemize}
    \item The global Lipschitz bound on $u$ is only used to deduce the $1/2$-H\"older regularity in time, and both are used to show the density $\theta(r)$ is bounded from below as $r\to 0+$. This can be replaced by some weak growth assumption on $u$ near infinity in space and local $1/2$-H\"older regularity in time.
    
    \item The property \eqref{eq:fbnear0} follows from the continuous movement of the free boundary, but it can be weakened to a parabolic corkscrew condition, such as: There exists a constant $K>0$ such that for any $r$ sufficiently small,
    \[ \partial\{u(t,\cdot)>0\} \cap B_{Kr} \neq \emptyset, \qquad \text{ for some } t\in (-4r^2,-r^2). \]
\end{itemize}

The weak solution theories of \cite{CV, Weiss99, Weiss} begin with the singular limit of the regularized semilinear equations $u_t - \Delta u = -\beta_\epsilon(u)$, where $\beta_\epsilon(s) := \frac{1}{\epsilon} \beta(\frac{s}{\epsilon})$ and $\beta$ is positive on $(0,1)$, vanishes elsewhere, and satisfies $\int \beta(s)\, ds = \frac12$. In \cite{CV} the authors construct weak solutions to \eqref{eq:pBern} assuming the initial condition $u_0$ is strictly mean concave in its support, which implies that the support of the solution $u(t)$ is receding with time; in particular, traveling wave solutions (see Example \ref{eg:exp}) are excluded. 

Under mild regularity and growth assumptions, Weiss \cite{Weiss99} proved the closure under the blow-up process for domain-variation solutions satisfying the scale-invariant clearing condition
\begin{equation}\label{eq:weiss-clearing}
    \sup_{Q_r(t,x)}u\leq cr \quad\Longrightarrow\quad u=0\quad\text{in }Q_{r/2}(t,x)
\end{equation}
for all sufficiently small parabolic cylinders in a fixed compact subset of $(0,\infty) \times \mathbb{R}^n$. 

To expand the class of admissible solutions, Weiss \cite{Weiss} studies pairs $(u,\chi)$, where $u$ is a domain-variation solution that exists globally in time, and $\chi$ satisfies $\chi \in \{0,1\}$ a.e.\ and $\chi_{\{u>0\}} \leq \chi$; see \cite[Definition 6.1]{Weiss} for the precise definition\footnote{We remark that in item 7) a) of the definition, the right hand side of the inequality is missing a square root, see also the proof of \cite[Lemma 5.1]{Weiss}; this is nevertheless harmless for the arguments of \cite{Weiss}.}, and see \cite{KW25} for analogous theory in the elliptic setting. The flexibility of permitting $\chi \neq \chi_{\{u>0\}}$ introduces technicalities to the analysis, but it also allows the author to show that the class of pairs $(u,\chi)$ is closed with respect to the blow-up process, without imposing the clearing condition \eqref{eq:weiss-clearing}. In both works, the closure of the class of weak solutions under blow-up enables the use of geometric measure theory tools, in particular the dimension reduction argument; as a consequence, the author shows that for almost every time $t$, the free boundary $\partial\{u(t)>0\}$ consists of an $(n-1)$-dimensional rectifiable set and a degenerate singular set. 

Technical details aside, one of the main differences between the work of Weiss and our blow-up procedure above lies in whether the goal is to prove \emph{partial regularity} of the free boundary of weak solutions, or to analyze the \emph{formation of singularities}. For example, in \cite{Weiss}, \emph{horizontal points} (points at which the solution's behavior in the time direction is dominant) can be ignored since, on almost every time slice, they belong to a low-dimensional set in space (see \cite[Lemmas 7.2 and 11.3]{Weiss}). However, these are precisely the points where extinction, collision, or a change of topology takes place. Moreover, to study how singularities form and how the solution behaves past them, we need to better understand the possible singularity models -- knowing that the blow-up limits are backward self-similar variational solutions is not enough. In contrast, we have shown above that solutions near Type I singular points are modeled by self-shrinkers whose profile solves the equation in \eqref{eq:profile} on its positivity set and attains the Bernoulli condition classically on each sheet of the free boundary, and we have also identified a critical threshold: $C_s(r)\to0$ forces the profile to be a double plane.

\section{Classification of radial self-shrinkers}\label{sec:radial}

We next prove the classification stated in Theorem \ref{thm:radial}.
Assuming radial symmetry, the profile equation \eqref{eq:profile} reduces to the ODE \eqref{eq:ode}.
After a change of variables $s = r^2/4$, the function $F(s) := \phi(2\sqrt{s})$ satisfies Kummer's (confluent hypergeometric) equation \eqref{eq:K}, which has been studied extensively. We highlight the properties of its solutions below.

\begin{lemma}\label{lem:MU}
The confluent hypergeometric equation
\begin{equation}\label{eq:K}
sF''(s) + \left(\frac{n}{2}-s \right)F'(s) + \frac{1}{2}\,F(s) = 0, \qquad s > 0,\tag{K}
\end{equation}
admits a pair of solutions $M(s)=M(-\tfrac{1}{2},\tfrac{n}{2},s)$ and $U(s)=U(-\tfrac{1}{2},\tfrac{n}{2},s)$ with the following properties:
\begin{enumerate}
\item[\textup{(i)}] $M$ and $U$ are linearly independent, with Wronskian
\[
W(s) := M(s)\,U'(s) - M'(s)\,U(s) = 
\frac{\Gamma(n/2)}{2\sqrt{\pi}}\,\frac{e^s}{s^{n/2}}\,;
\]
\item[\textup{(ii)}] $M'(s) = -\frac1n M(\tfrac{1}{2},\tfrac{n}{2}+1,s) < 0$ and $U'(s) = \frac12 U(\frac12, \frac{n}{2}+1,s) > 0$ for all $s > 0$;
\item[\textup{(iii)}] $U(s) = \sqrt{s}$ when $n=1$; for any $n>1$, the function $U$ has a unique zero $s_* > 0$, with $U < 0$ on $(0,s_*)$ and $U > 0$ on $(s_*,\infty)$;
\item[\textup{(iv)}] $M$ has a unique zero $s_0$, and $s_0 > s_*$ if $n>1$;
\item[\textup{(v)}] $M(0) = 1$ and as $s \downarrow 0$,
\[ U(s) \sim \left\{\begin{array}{ll}
    -\frac{\Gamma(n/2-1)}{2\sqrt{\pi}}\,s^{1-n/2}, & n \neq 2 \\
    \frac{1}{2\sqrt{\pi}}\,\log s, & n=2;
\end{array} 
\right. \]
\item[\textup{(vi)}] as $s \to \infty$, $U(s) \sim \sqrt{s}$ and $M(s) \sim -\frac{\Gamma(n/2)}{2\sqrt{\pi}}\,e^{s}\,s^{-(n+1)/2}$.
\end{enumerate}
\end{lemma}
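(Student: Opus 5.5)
This lemma collects standard facts about Kummer functions with $a=-\tfrac12$, $b=\tfrac n2$, so most items will come from the DLMF (Chapter 13) formulas specialized to these parameters. The remaining work is the sign and zero-counting statements (ii)–(iv), which I plan to prove by elementary ODE arguments.

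\textbf{Standard formulas.} For (i), I will use the standard Wronskian $\mathcal W\{M(a,b,s),U(a,b,s)\}=-\Gamma(b)s^{-b}e^s/\Gamma(a)$ with $\Gamma(-\tfrac12)=-2\sqrt\pi$. For (ii), the derivative formulas $M'(a,b,s)=\tfrac ab M(a+1,b+1,s)$ and $U'(a,b,s)=-aU(a+1,b+1,s)$ give the identities. Positivity of $M(\tfrac12,\tfrac n2+1,\cdot)$ follows from its power series, which has positive coefficients. Positivity of $U(\tfrac12,\tfrac n2+1,\cdot)$ follows from the integral representation $U(a,b,s)=\Gamma(a)^{-1}\int_0^\infty e^{-st}t^{a-1}(1+t)^{b-a-1}dt$, valid for $a>0$. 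For (v) and (vi), I will read off the small-$s$ expansion of $U$ (which has separate cases $b>1$ and $b=1$, the latter giving the logarithm) and the large-$s$ asymptotics $U\sim s^{-a}$ and $M\sim\Gamma(b)e^s s^{a-b}/\Gamma(a)$. For $n=1$, $U(-\tfrac12,\tfrac12,s)=\sqrt s$ is checked directly in \eqref{eq:K}.

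\textbf{Zeros.} For (iii) with $n>1$, $U$ is strictly increasing by (ii). By (v) it is negative near $0$, whether it tends to $-\infty$ or behaves like $\log s$. By (vi) it is positive near $\infty$. Hence it has a unique zero $s_*$ with the stated signs. For (iv), $M(0)=1$, $M$ is strictly decreasing, and $M\to-\infty$ by (vi), so $M$ has a unique zero $s_0$.

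\textbf{Main obstacle: $s_0>s_*$.} I will argue by evaluating the Wronskian at $s_*$, where $U(s_*)=0$. This gives $W(s_*)=M(s_*)U'(s_*)>0$. Since $U'(s_*)>0$ by (ii), it follows that $M(s_*)>0$, and hence $s_*<s_0$ because $M$ is decreasing. I expect the only delicate points to be bookkeeping: keeping the normalization constants straight and handling the $n=2$ logarithmic case in (v).
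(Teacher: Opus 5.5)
Your proposal is correct and follows the paper's proof. Both use the DLMF formulas for (i), (ii), (v), (vi), monotonicity plus endpoint asymptotics for the zero counting in (iii)--(iv), and the sign of the Wronskian at a zero to get $s_*<s_0$; you evaluate at $s_*$ to obtain $M(s_*)>0$, whereas the paper evaluates at $s_0$ to obtain $U(s_0)>0$, which is the same argument. (A minor bookkeeping point: for $n=1$, checking that $\sqrt s$ solves \eqref{eq:K} identifies it with $U$ only together with the normalization $U\sim\sqrt s$ at infinity, or directly via $U(a,a+1,s)=s^{-a}$. This identity also supplies the $n=1$ case of (v), which your split into $b>1$ and $b=1$ does not cover.)
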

\begin{proof}
    Most of the properties are standard, see \cite[{\S13.2, 13.3, 13.7}]{DLMF}; the Wronskian is \cite[(13.2.34)]{DLMF} with $\Gamma(-\tfrac12)=-2\sqrt{\pi}$. The uniqueness of zeros follows from the monotonicity of $M, U$ and their asymptotics at $0$ and $\infty$. Note that $U(s) \to 0$ for $n=1$, and $U(s) \to -\infty$ for any $n>1$ as $s\to 0$. Hence $U(s)$ remains positive when $n=1$, and it has a unique zero when $n>1$. On the other hand, since the Wronskian $W(s)$ is always positive, evaluating at $s=s_0$ we get that $U(s_0)>0$. Thus $s_0>s_*$.
\end{proof}
We now prove some additional properties of the fundamental pair of solutions, which will be used in the proof of uniqueness and spectral analysis of the annular solution.

    \begin{lemma}\label{lem:MUroots}
Let $U(s)=U(-\tfrac12,\tfrac n2,s)$ and $M(s)=M(-\tfrac12,\tfrac n2,s)$ be the fundamental pair from
Lemma~\ref{lem:MU}. Let $s_*$ be the unique zero of $U$, and $s_0>s_*$ be the unique zero of $M$.
Then
\[
s_*<\frac{n-1}{2}<\frac n2<s_0 .
\]
\end{lemma}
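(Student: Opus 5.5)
The inequality $\frac{n-1}{2}<\frac n2$ is trivial, so the lemma reduces to two one-sided bounds: $s_0>n/2$ and $s_*<(n-1)/2$. Throughout we take $n\ge2$, since $s_*$ is only defined for $n>1$. I will prove each bound by a Sturm-type Wronskian comparison of $M$, respectively $U$, against an explicit elementary test function.

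\emph{Set-up.} Write $LF:=sF''+(\tfrac n2-s)F'+\tfrac12F$ and put $p(s)=s^{n/2}e^{-s}$, $w(s)=s^{n/2-1}e^{-s}$. Then for every smooth $F$ we have the self-adjoint identity $(pF')'=w\,(LF-\tfrac12F)$. Consequently, for two functions $F,G$,
\[
\bigl(p\,(F'G-FG')\bigr)'=w\,(G\,LF-F\,LG).
\]

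\emph{Bound for $s_0$.} I compare $M$ with the polynomial $P(s)=1-\tfrac{2s}{n}$. This is the degree-one Kummer polynomial, and it satisfies $sP''+(\tfrac n2-s)P'+P=0$, so $LP=-\tfrac12P$. Since $LM=0$, the identity gives
\[
\bigl(p(M'P-MP')\bigr)'=\tfrac12\,wMP .
\]
Suppose, for contradiction, that $s_0\le n/2$. Then $M>0$ and $P>0$ on $(0,s_0)$, so the bracket is strictly increasing there. Its limit at $s=0$ is zero, because $p\to0$ while $M,M',P,P'$ stay bounded ($M(0)=1$ and $M$ is entire). At $s_0$ we have $M=0$, so the bracket equals $p(s_0)M'(s_0)P(s_0)$. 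This is $\le0$, because $M'<0$ by Lemma~\ref{lem:MU}(ii) and $P(s_0)\ge0$. A quantity that starts at $0$ and strictly increases cannot end at a nonpositive value, which is the contradiction. Hence $s_0>n/2$.

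\emph{Bound for $s_*$.} I set $c=\tfrac{n-1}{2}$ and $G(s)=\sqrt s-\sqrt c$. A direct computation gives $L\sqrt s=\frac{n-1}{4\sqrt s}$, hence
\[
LG=\frac{n-1}{4\sqrt s}-\frac{\sqrt c}{2}.
\]
This is strictly negative exactly when $s>c$. Suppose, for contradiction, that $s_*\ge c$. On $(s_*,\infty)$ we have $U>0$ and $LU=0$, so the identity gives $\bigl(p(U'G-UG')\bigr)'=-wU\,LG>0$; thus $H:=p(U'G-UG')$ is strictly increasing on $(s_*,\infty)$. At infinity, Lemma~\ref{lem:MU}(vi) gives $U\sim\sqrt s$, and Lemma~\ref{lem:MU}(ii) together with the standard asymptotics of $U(\tfrac12,\tfrac n2+1,\cdot)$ gives $U'\sim\tfrac1{2\sqrt s}$. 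So $U'G-UG'$ grows at most polynomially, while $p$ decays like $e^{-s}$, and therefore $H\to0$. Being strictly increasing with limit $0$, $H$ satisfies $H(s_*)<0$. On the other hand $U(s_*)=0$, so $H(s_*)=p(s_*)U'(s_*)G(s_*)\ge0$, because $U'>0$ and $G(s_*)\ge0$ when $s_*\ge c$. This is a contradiction, so $s_*<(n-1)/2$.

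\emph{Main obstacle.} The only delicate point is finding a test function for $U$ that is a strict supersolution beyond its own zero. The function $\sqrt s-\sqrt{(n-1)/2}$ works exactly, with $LG<0$ precisely on $(c,\infty)$. Beyond that, one only needs to check the vanishing of the boundary term at infinity, which is routine given the asymptotics in Lemma~\ref{lem:MU}.
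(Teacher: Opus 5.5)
Your proof is correct, but it takes a different route from the paper.

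\textbf{What the paper does.} It evaluates the two functions directly at the critical points.
\begin{itemize}
\item For $s_*$, it shows $U(\tfrac{n-1}{2})>0$. It writes $U(-\tfrac12,\tfrac n2,\cdot)$ through $U(\tfrac12,\tfrac n2,\cdot)$ and $U(\tfrac32,\tfrac n2,\cdot)$ via a contiguous relation, then uses the Laplace-type integral representation. A separate computation is needed for $n=2$, where one coefficient is negative.
\item For $s_0$, it shows $M(\tfrac n2)>0$. It bounds the hypergeometric series termwise using $(\tfrac n2)_k\geq(\tfrac n2)^k$ and sums the resulting Catalan series.
\end{itemize}

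\textbf{What you do.} You run a Sturm--Wronskian comparison in the self-adjoint form $(pF')'=w(LF-\tfrac12F)$.
\begin{itemize}
\item Your test function $P=1-2s/n$ is exactly the Kummer polynomial $M(-1,\tfrac n2,s)$. So the $s_0$ bound is the classical comparison between the parameters $a=-\tfrac12$ and $a=-1$.
\item The function $\sqrt s-\sqrt{(n-1)/2}$ is a strict supersolution precisely on $s>(n-1)/2$, which is what the $s_*$ bound needs.
\item The computations check out: $LP=-\tfrac12P$, $L\sqrt s=\tfrac{n-1}{4\sqrt s}$, the sign of $LG$, and the vanishing boundary terms at $0$ (since $M$ is entire and $p\to0$) and at $\infty$ (since $p$ decays exponentially).
\end{itemize}

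\textbf{What each route buys.} Yours uses only the ODE plus the sign information already in Lemma~\ref{lem:MU}. It treats all $n\geq2$ uniformly, and the $s_0$ bound even works for $n\geq1$. It avoids contiguous relations and integral representations altogether. Its one extra input is that $U'$ grows at most polynomially at infinity; this follows from Lemma~\ref{lem:MU}(ii) and the standard asymptotic $U(a,b,z)\sim z^{-a}$. Your approach could also be sharpened simply by choosing better test functions. The paper's route, in exchange, needs no boundary-term analysis. It produces the explicit positive quantities $U(\tfrac{n-1}{2})$ and $M(\tfrac n2)$ by direct formulas.
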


\begin{proof}
We first prove that $s_*<\frac{n-1}{2}$ by showing that $U\!\left(\frac{n-1}{2}\right)>0$.
By the contiguous relation \cite[Eq.~13.3.7]{DLMF} (specialized to $a=\tfrac12$, $b=\tfrac n2$),
\[
U\!\left(-\tfrac12,\tfrac n2,s\right)
=\Bigl(s+1-\tfrac n2\Bigr)\,U\!\left(\tfrac12,\tfrac n2,s\right)
+\frac{\tfrac n2-\tfrac32}{2}\,U\!\left(\tfrac32,\tfrac n2,s\right).
\]
Moreover, for $a>0$ we have the integral representation \cite[Eq.~13.4.4]{DLMF}
\[
U(a,\tfrac n2,s)=\frac{1}{\Gamma(a)}\int_{0}^{\infty}e^{-st}\,t^{a-1}(1+t)^{\,\tfrac n2-a-1}\,dt,
\qquad s>0.
\]
Setting $s=\frac{n-1}{2}$ gives $s+1-\tfrac n2=\tfrac12$, hence
\begin{align*}
U\!\left(\tfrac{n-1}{2}\right)
&=\frac{1}{2\Gamma(\tfrac12)}\int_{0}^{\infty}e^{-\frac{n-1}{2}t}\,t^{-1/2}(1+t)^{(n-3)/2}\,dt \\
&\quad+\frac{\tfrac n2-\tfrac32}{2\Gamma(\tfrac32)}
\int_{0}^{\infty}e^{-\frac{n-1}{2}t}\,t^{1/2}(1+t)^{(n-5)/2}\,dt .
\end{align*}
If $n\ge 3$, both coefficients are nonnegative and the first integral is strictly positive, so
$U\!\left(\frac{n-1}{2}\right)>0$. If $n=2$, using $\Gamma(\tfrac32)=\Gamma(\tfrac12)/2$ we obtain
\begin{align*}
    U \left(\tfrac12\right) & = \frac{1}{2\Gamma(\tfrac12)} \int_{0}^{\infty} e^{-t/2} \Bigl( t^{-1/2}(1+t)^{-1/2}-t^{1/2}(1+t)^{-3/2} \Bigr)\,dt \\
    & =\frac{1}{2\Gamma(\tfrac12)} \int_{0}^{\infty} e^{-t/2}\, t^{-1/2}(1+t)^{-3/2} \,dt>0.
\end{align*}
In either case, $U\!\left(\frac{n-1}{2}\right)>0$, and since $U$ changes sign only at $s_*$ with
$U>0$ on $(s_*,\infty)$ (Lemma~\ref{lem:MU}(iii)), it follows that $s_*<\frac{n-1}{2}$.

Next we prove $s_0>\frac n2$ by showing $M(\tfrac n2)>0$.
Using the series \cite[Eq.~13.2.2]{DLMF},
\[
M(s)=M\!\left(-\tfrac12,\tfrac n2,s\right)
=1+\sum_{k=1}^{\infty}\frac{(-\tfrac12)_k}{(\tfrac n2)_k}\frac{s^k}{k!},
\]
where $(x)_k:= x(x+1) \cdots (x+k-1)$ is the Pochhammer symbol.
Setting $s=\tfrac n2$ and using $(-\tfrac12)_k=-\tfrac12( \tfrac12)_{k-1}$ for $k\ge1$ yields
\[
\sum_{k=1}^{\infty}\frac{(-\tfrac12)_k}{(\tfrac n2)_k}\frac{(\tfrac n2)^k}{k!}
=-\frac12\sum_{k=1}^{\infty}\frac{(\tfrac12)_{k-1}}{k!}\,\frac{(\tfrac n2)^k}{(\tfrac n2)_k}
\ge -\frac12\sum_{k=1}^{\infty}\frac{(\tfrac12)_{k-1}}{k!},
\]
since $(\tfrac n2)_k=(\tfrac n2)(\tfrac n2+1)\cdots(\tfrac n2+k-1)\ge(\tfrac n2)^k$ for $n\ge2$.
Now
\[
\frac{(\tfrac12)_{k-1}}{k!}
=\frac{(2k-2)!}{4^{k-1}(k-1)!\,k!}
=\frac{1}{4^{k-1}}\cdot\frac{1}{k}\binom{2k-2}{k-1}
=\frac{C_{k-1}}{4^{k-1}},
\]
where $C_j=\frac{1}{j+1}\binom{2j}{j}$ is the $j$-th Catalan number. Using the generating function
$\sum_{j=0}^{\infty}C_j x^j=\frac{1-\sqrt{1-4x}}{2x}$ and evaluating at $x=\tfrac14$ gives
\[
\sum_{k=1}^{\infty}\frac{(\tfrac12)_{k-1}}{k!}=\sum_{j=0}^{\infty}\frac{C_j}{4^j}=2.
\]
Therefore $M(\tfrac n2)\ge 1-\frac12\cdot 2=0$. Moreover, since $(\tfrac n2)_k>(\tfrac n2)^k$ for
every $k\ge2$, the above inequality is strict, hence $M(\tfrac n2)>0$.
Finally, $M'<0$ and $M(s_0)=0$ (Lemma~\ref{lem:MU}(ii),(iv)), so $M(\tfrac n2)>0$ implies $\tfrac n2<s_0$.
\end{proof}

The following result establishes the existence and uniqueness of a radial solution of~\eqref{eq:profile} compactly supported on an annulus.

\begin{proposition}\label{prop:odesol}
If $n>1$, there exist radii $0 < r_- < r_+$ and a function $\phi \in C^2((r_-,r_+)) \cap C^1([r_-,r_+])$ such that:
\begin{enumerate}
\item[\textup{(i)}] $\phi > 0$ on $(r_-,r_+)$ and $\phi(r_-) = \phi(r_+) = 0$;
\item[\textup{(ii)}] $\phi$ solves the radial profile ODE \eqref{eq:ode}, i.e.
\[
\phi''(r) + \left(\frac{n-1}{r} - \frac{r}{2}\right)\phi'(r) + \frac{1}{2}\,\phi(r) = 0, \qquad r \in (r_-,r_+);
\]
\item[\textup{(iii)}] the Bernoulli conditions hold: $\phi'(r_-) = 1$ and $\phi'(r_+) = -1$.
\end{enumerate}
The function defined by $\phi(y) := \phi(|y|)$ for $y$ in the annulus $\{r_- < |y| < r_+\}$ and $\phi \equiv 0$ otherwise is the unique radial solution of~\eqref{eq:profile} compactly supported on the annulus.
\end{proposition}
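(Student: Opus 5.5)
We will work in the variable $s=r^2/4$, where a radial solution of \eqref{eq:ode} is $F(s)=\phi(2\sqrt s)$ and hence a combination $F=\alpha M+\beta U$ of the fundamental pair of Lemma~\ref{lem:MU}. The Bernoulli condition becomes $|F'(s)|=1/\sqrt{s}$ at the endpoints, because $\phi'(r)=F'(s)\,r/2=\sqrt{s}\,F'(s)$.

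The free boundary data are therefore $F(s_\pm)=0$, $\sqrt{s_-}F'(s_-)=1$ and $\sqrt{s_+}F'(s_+)=-1$, with $s_\pm=r_\pm^2/4$. We normalize by fixing the direction of $(\alpha,\beta)$ and treating the amplitude as a free parameter.

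\textbf{Shooting family.} Any positive solution on an annulus vanishing at both ends has the form $F_\theta=\cos\theta\,M+\sin\theta\,U$, up to a positive multiple. Positivity inside $(s_-,s_+)$ with zeros at the ends forces $F$ to have at least two zeros. Since $M'<0<U'$ and both $M,U$ are monotone with a single zero, the combination must satisfy $\cos\theta>0$ and $\sin\theta>0$. In that range, near $0$ we have $F\to-\infty$ or $F\to$ something $\le$, and as $s\to\infty$, $M$ dominates with $M\to-\infty$. One checks, using the Wronskian sign and the monotonicity of $U/M$ (whose derivative is $W/M^2>0$), that for each $\theta\in(0,\pi/2)$ there are exactly two zeros $a(\theta)<b(\theta)$ with $F_\theta>0$ in between.

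The zeros solve $U/M=-\cot\theta$ on the two branches $(0,s_0)$ and $(s_0,\infty)$ of $U/M$. Both branches are increasing, and the one on $(0,s_0)$ has range $(-\infty,\infty)$ when restricted to $(s_*,s_0)$ or $(0,s_*)$. We will sort this out carefully, using $s_*<s_0$ from Lemma~\ref{lem:MU}(iv).

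At $s=a$, the Wronskian gives $F_\theta'(a)=\sin\theta\,W(a)/M(a)$, and similarly at $b$. Hence
\[
\frac{F'_\theta(a)}{|F'_\theta(b)|}=\frac{W(a)\,|M(b)|}{W(b)\,M(a)}.
\]
Scaling the amplitude fixes $\sqrt{a}F'(a)=1$. The remaining condition becomes the single scalar equation
\[
G(\theta):=\frac{\sqrt{b}\,|F'_\theta(b)|}{\sqrt{a}\,F'_\theta(a)}=1.
\]

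\textbf{Existence.} Existence follows from the intermediate value theorem once we determine the limits of $G$ as $\theta\to0^+$ and $\theta\to\pi/2^-$, using the asymptotics (v) and (vi) of Lemma~\ref{lem:MU}.

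As $\theta\to0$, the solution tends to $M$, so $a\to0$, where $U$ blows up, and $b\to s_0$. Near $a$ the derivative is huge, like $s^{-n/2}$ scaled, so $G\to0$.

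As $\theta\to\pi/2$, we have $a\to s_*$ and $b\to\infty$. There the exponential growth $e^s s^{-(n+1)/2}$ of $|M|$, weighed against $W(b)\sim e^b b^{-n/2}$, gives $G\to\infty$.

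\textbf{Uniqueness.} Uniqueness reduces to strict monotonicity of $G$ (or of $\log G$) in $\theta$. This is the main obstacle. I would try a phase-plane argument: write $\phi=\rho\sin\psi$ and $\phi'=\rho\cos\psi$ (in a suitable weighted form), and compare two shooting solutions via a Sturm-type comparison of their Prüfer angles. The idea is that increasing $\theta$ moves $a$ right and $b$ right, and the ratio of the boundary slopes changes monotonically.

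Concretely, I would compute $\frac{d}{d\theta}\log G$ using $a'(\theta)$ and $b'(\theta)$, obtained from implicit differentiation of $U/M=-\cot\theta$. This yields an expression in $W$, $M$ and the logarithmic derivatives $\frac{d}{ds}\log(\sqrt{s}\,W(s)/M(s))$ at $a$ and $b$. Its sign should follow from $s_*<\frac{n-1}{2}<\frac n2<s_0$ (Lemma~\ref{lem:MUroots}) together with the monotonicity of $M$.

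Finally, radiality plus compact support on an annulus forces exactly this structure. The regularity $C^1$ up to the boundary is automatic from the ODE, which completes the proof.
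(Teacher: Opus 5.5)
Your existence argument is essentially the paper's: you shoot from the inner zero, normalize the inner Bernoulli slope, and apply the intermediate value theorem to the outer slope. Your direction parameter $\theta$ corresponds monotonically to the paper's inner zero $\alpha=a(\theta)\in(0,s_*)$.

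Two details need fixing.
\begin{itemize}
\item On $(0,s_0)$ the function $U/M$ maps $(0,s_*)$ onto $(-\infty,0)$ and $(s_*,s_0)$ onto $(0,\infty)$, not each onto $(-\infty,\infty)$. This split is what places $a\in(0,s_*)$ and $b\in(s_0,\infty)$.
\item Your formula for $G$ through $W$ and $M$, namely $G=e^{b-a}(a/b)^{(n-1)/2}M(a)/|M(b)|$, is indeterminate as $\theta\to0$, because $M(b)\to M(s_0)=0$. Use $M(a)=\tan\theta\,|U(a)|$ and $|M(b)|=\tan\theta\,U(b)$ to rewrite it as $G=e^{b-a}(a/b)^{(n-1)/2}|U(a)|/U(b)$. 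Then Lemma~\ref{lem:MU}(v) gives $a^{(n-1)/2}|U(a)|\to0$, hence $G\to0$.
\end{itemize}

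The genuine gap is uniqueness. You correctly reduce it to strict monotonicity of $G$, but you do not prove that monotonicity. The implicit-differentiation route does not close with the facts you cite. Differentiating $(U/M)(s)=-\cot\theta$ gives $s'(\theta)=M(s)^2/(W(s)\sin^2\theta)$ at both zeros. Writing $\log G=\Lambda(b)-\Lambda(a)$ with $\Lambda(s)=s-\frac{n-1}{2}\log s-\log|M(s)|+\mathrm{const}$, one finds
\[
\sin^2\theta\,\frac{d}{d\theta}\log G=\Phi(b)-\Phi(a),
\qquad
\Phi(s):=\frac{M(s)}{W(s)}\Bigl[\Bigl(1-\frac{n-1}{2s}\Bigr)M(s)-M'(s)\Bigr].
\]
The two values $\Phi(a)$ and $\Phi(b)$ are both negative for $\theta$ near $0$. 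Moreover $\Phi$ is not monotone on $(s_0,\infty)$: $\Phi(s_0)=0$ and $\Phi'(s_0)=-M'(s_0)^2/W(s_0)<0$, while $\Phi>0$ for large $s$. So the sign of $\Phi(b)-\Phi(a)$ is a nonlocal comparison between a point of $(0,s_*)$ and a point of $(s_0,\infty)$. The ordering $s_*<\frac{n-1}{2}<\frac n2<s_0$ and $M'<0$ do not decide it.

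Your Pr\"ufer suggestion stops at exactly the claim to be proved. A Sturm comparison of angles controls zeros (and $b'(\theta)>0$ already follows from $(U/M)'>0$). It does not control the ratio of amplitudes at the two zeros, which is what $G$ measures.

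The missing idea, which is how the paper proceeds, is to make the Pr\"ufer angle itself the independent variable.
\begin{itemize}
\item \emph{Concavity.} In $t=r/\sqrt2=\sqrt{2s}$, each member $y$ of the shooting family solves $y''-2h(t)y'+y=0$ with $h=\frac12\bigl(t-\frac{n-1}{t}\bigr)$. It is concave on its support: $K=-y''$ solves $K''+\bigl(\frac{n+1}{t}-t\bigr)K'-3K=0$, and $K>0$ at both endpoints precisely because $a<\frac{n-1}{2}<b$ in your $s$-variables. This is where Lemma~\ref{lem:MUroots} actually enters. The maximum principle then gives $K>0$.
\item \emph{Angle as parameter.} Write $y'=\sqrt E\cos\vartheta$, $y=\sqrt E\sin\vartheta$. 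Concavity and positivity give $\vartheta'=1-h(t)\sin2\vartheta>0$. Hence $t=t_\alpha(\vartheta)$ is defined on $[0,\pi]$ and solves the scalar equation $dt/d\vartheta=1/(1-h(t)\sin2\vartheta)$.
\item \emph{Ordering.} Two members of the family therefore never cross in this parametrization, and they stay ordered by their inner zeros.
\item \emph{Monotone integrand.} Since $E=1$ at the inner zero and $E=p^2$ at the outer one,
\[
\log p(\alpha)^2=\int_0^\pi\frac{4h(t_\alpha(\vartheta))\cos^2\vartheta}{1-h(t_\alpha(\vartheta))\sin2\vartheta}\,d\vartheta,
\]
and the integrand is increasing in $t$ (strictly for $\vartheta\neq\pi/2$) because $h'>0$.
\end{itemize}
This yields strict monotonicity of the outer slope, and hence uniqueness. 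Without concavity (or another way to guarantee $\vartheta'>0$) and the reparametrization by angle, your proposal has no mechanism that controls the amplitude ratio.
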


\begin{proof}
{\bf Existence.} Let $M,U$ be the fundamental pair from Lemma~\ref{lem:MU}, with unique zeros $s_*$ of $U$ and $s_0$ of $M$, and $s_0> s_*$. The ratio
\begin{equation} \label{eq:R'sign}
R(s):=\frac{M(s)}{U(s)}\quad\text{satisfies}\quad R'(s)=-\frac{W(s)}{U(s)^2}=-\frac{\Gamma(n/2)\,e^s}{2\sqrt{\pi}\,s^{n/2}\,U(s)^2}<0
\end{equation}
wherever $U\neq0$. Note that in the special case $n=1$, since $U(s)=\sqrt{s}>0$, $R$ is strictly monotone and thus no linear combination of $M$ and $U$ can vanish at two different points. 
Assume $n>1$. Since $M(s_*)>0$ and $U$ changes sign at $s_*$, we have $R(s_*^-)=-\infty$ and $R(s_*^+)=+\infty$. The asymptotics in Lemma~\ref{lem:MU}(v) (vi) give $R(s)\to 0^-$ as $s\downarrow 0$ and $R(s)\to-\infty$ as $s\to\infty$.

For $\alpha\in(0,s_*)$, set $c:=R(\alpha)<0$ and $F_\alpha(s):=M(s)-cU(s)$. Then $F_\alpha(s)=0$ iff $R(s)=c$ on $\{U\neq0\}$, so by strict monotonicity of $R$ on $(s_*,\infty)$, there is a unique $\beta(\alpha)\in(s_0,\infty)$ with $F_\alpha(\beta)=0$, and $F_\alpha>0$ on $(\alpha,\beta)$ (since $F_\alpha(s_*)=M(s_*)>0$).

Set $\phi(r)=A\,F_\alpha(r^2/4)$ on $[2\sqrt{\alpha},\,2\sqrt{\beta}]$, with $A>0$ chosen so that $\phi'(2\sqrt{\alpha})=1$. At a zero $s$ of $F_\alpha$, one has $c=M(s)/U(s)$ and hence
$F_\alpha'(s)=M'(s)-cU'(s)=\frac{M'(s)U(s)-M(s)U'(s)}{U(s)}=-\frac{W(s)}{U(s)}=-\frac{\Gamma(n/2)\,e^s}{2\sqrt{\pi}\,s^{n/2}\,U(s)}$.
By the chain rule, $\phi'(r)=A\,F_\alpha'(r^2/4)\cdot r/2$, so at a zero $s$ of $F_\alpha$,
\[
\phi'(2\sqrt{s})=A\,F_\alpha'(s)\,\sqrt{s}=-\frac{A\,\Gamma(n/2)\,e^s}{2\sqrt{\pi}\,s^{(n-1)/2}\,U(s)}.
\]
The condition $\phi'(2\sqrt{\alpha})=1$ gives
$A=-\frac{2\sqrt{\pi}\,\alpha^{(n-1)/2}\,U(\alpha)\,e^{-\alpha}}{\Gamma(n/2)}$, so
\[
p(\alpha):=\phi'(2\sqrt{\beta})=-\frac{A\,\Gamma(n/2)\,e^\beta}{2\sqrt{\pi}\,\beta^{(n-1)/2}\,U(\beta)}=e^{\beta-\alpha}\!\left(\frac{\alpha}{\beta}\right)^{\!(n-1)/2}\!\frac{U(\alpha)}{U(\beta)}<0.
\]
Continuity of $p$ follows from the implicit relation $R(\beta)=R(\alpha)$ and $R'<0$. As $\alpha\downarrow0$, Lemma~\ref{lem:MU}(v) implies $\alpha^{(n-1)/2}\,|U(\alpha)|\to0$; while $R(\alpha)\to0^-$ forces $\beta(\alpha)\to s_0$, hence $p(\alpha)\to0$. As $\alpha\uparrow s_*$, $R(\alpha)\to-\infty$ implies $\beta(\alpha)\to\infty$, and using $U(\alpha)/U(\beta)=M(\alpha)/M(\beta)$ together with $M(s)\sim -\frac{\Gamma(n/2)}{2\sqrt{\pi}}\,e^{s}\,s^{-(n+1)/2}$ gives $p(\alpha)\to-\infty$.

By the intermediate value theorem, there exists $\alpha_0 \in (0,s_*)$ with $p(\alpha_0) = -1$. Setting $r_- := 2\sqrt{\alpha_0}$, $r_+ := 2\sqrt{\beta(\alpha_0)}$, and $\phi(r) := A\,F_{\alpha_0}(r^2/4)$ on $[r_-,r_+]$, extended by zero outside, we obtain a solution of~\eqref{eq:ode} with $\phi(r_\pm)=0$, $\phi > 0$ on $(r_-,r_+)$, $\phi'(r_-)=1$, and $\phi'(r_+)=-1$. Thus $\phi(y) = \phi(|y|)$ is the desired annular solution of~\eqref{eq:profile}.
\bigskip

{\bf Uniqueness.} Firstly, we claim that the desired solution must be a constant multiple of $M+cU$ for some $c>0$. By Lemma \ref{lem:MU} (iii) (iv), a constant multiple of $M$ or $U$ alone has only one root. Besides, $M+cU$ has two roots if and only if there exist $0<\alpha<\beta$ such that $\frac{M(\alpha)}{U(\alpha)} =-c = \frac{M(\beta)}{U(\beta)}$.
By the properties of $R(s)=M(s)/U(s)$, this can only occur when $c>0$, $\alpha\in (0,s_*)$ and $\beta \in (s_0, \infty)$.
On the other hand, for each $c>0$ there are exactly two solutions $\alpha(c)<\beta(c)$ to $R(s) = -c$; and these two solutions satisfy $\alpha(c)< s_* < s_0 < \beta(c)$.
In other words, any solution with two roots is a positive multiple of $F_\alpha$ for some $\alpha\in(0,s_*)$. 
Moreover, after the normalization $\phi(r)=A\,F_\alpha(r^2/4)$ such that $\phi'(r_-)=1$, the Bernoulli conditions hold exactly when $p(\alpha)=-1$. Therefore to prove uniqueness, it suffices to show that the shooting slope $p$ is strictly decreasing on $(0,s_*)$.

For $\alpha\in(0,s_*)$, set
\begin{equation}\label{tmp:annular-scaled-notation}
    a(\alpha):=\sqrt{2\alpha},
 \qquad
 b(\alpha):=\sqrt{2\beta(\alpha)},
 \qquad
 y_\alpha(t):=\frac1{\sqrt2}\,\phi\bigl(\sqrt2\,t\bigr).
\end{equation}
By \eqref{eq:ode}, the rescaled profile $y_\alpha$ satisfies the damped oscillator equation
\begin{equation}\label{eq:y-alpha}
    y_\alpha''-\left(t-\frac{n-1}{t}\right)y_\alpha'+y_\alpha=0
 \qquad\text{on }(a(\alpha),b(\alpha)),
\end{equation}
with
\[
 y_\alpha>0\ \text{ on }(a(\alpha),b(\alpha)),
 \qquad
 y_\alpha'(a(\alpha))=1,
 \qquad
 y_\alpha'(b(\alpha))=p(\alpha)<0,
\]
and Lemma~\ref{lem:MUroots} gives
\[
 0<a(\alpha)<\sqrt{n-1}<\sqrt n<b(\alpha).
\]
For the remainder of the proof we fix $\alpha$ and write $a=a(\alpha)$, $b=b(\alpha)$, $y=y_\alpha$.

\begin{claim}\label{cl:concave}
    We first check that $y''<0$ on $(a,b)$. In particular, the annular profile is concave.
\end{claim}
\noindent \emph{Proof of Claim \ref{cl:concave}.}
Differentiating the equation shows that $K:=-y''$ satisfies
\begin{equation}\label{eq:y-2ndder}
    K''+\left(\frac{n+1}{t}-t\right)K'-3K=0,
\end{equation}
while evaluating the equation at the endpoints gives
\[
 K(a)=\frac{n-1}{a}-a>0,
 \qquad
 K(b)=-\left(b-\frac{n-1}{b}\right)p(\alpha)>0,
\]
since $a<\sqrt{n-1}<b$. As the zeroth-order coefficient is negative, the maximum principle gives $K>0$ on $(a,b)$.

We use a phase-plane argument to prove the monotinicity of $p(\alpha)$. For solutions to \eqref{eq:y-alpha}, set
\[
 E:=y'^2+y^2, \qquad h(t):=\frac12\left(t-\frac{n-1}{t}\right),
\]
and introduce the phase
\[
 y'=\sqrt E\cos\theta,
 \qquad
 y=\sqrt E\sin\theta,
 \qquad \text{ with initial phase }
 \theta(a)=0.
\]
Since $y>0$ and $y''<0$ on $(a,b)$,
\[
 \theta'=\frac{y'^2-yy''}{E}>0,
\]
so $t$ may be regarded as a function $t_\alpha(\theta)$ on $[0,\pi]$, with $t_\alpha(0)=a$ and $t_\alpha(\pi)=b$. Using the equation \eqref{eq:y-alpha} to eliminate $y''$, the same computation gives $\theta'=1-h(t)\sin(2\theta)$, while $E'=4h(t)\,y'^2$ gives $(\log E)'=4h(t)\cos^2\theta$. Hence
\begin{equation}\label{eq:annular-shooting-phase}
 \frac{dt_\alpha}{d\theta}
 =\frac1{1-h(t_\alpha)\sin(2\theta)},
 \qquad
 \frac{d}{d\theta}\log E
 =\frac{4h(t_\alpha)\cos^2\theta}{1-h(t_\alpha)\sin(2\theta)},
\end{equation}
and the positivity of $\theta'$ means that the common denominator is positive along every trajectory.

Let now $0< \alpha_1<\alpha_2< s_*$. The functions $t_{\alpha_1}$ and $t_{\alpha_2}$ solve the same scalar equation in \eqref{eq:annular-shooting-phase}, whose right-hand side is continuously differentiable in $t$ wherever the denominator is positive, and
$t_{\alpha_1}(0)=a(\alpha_1)<a(\alpha_2)=t_{\alpha_2}(0)$. Two solutions cannot cross by ODE uniqueness, so
\[
 t_{\alpha_1}(\theta)<t_{\alpha_2}(\theta)
 \qquad(0\leq\theta\leq\pi);
\]
in particular, $b(\alpha)$ is strictly increasing with $\alpha$.

Since $E(a)=1$ and $E(b)=p(\alpha)^2$, integrating the second equation in \eqref{eq:annular-shooting-phase} gives
\[
 \log p(\alpha)^2
 =\int_0^\pi
 \frac{4h(t_\alpha(\theta))\cos^2\theta}
 {1-h(t_\alpha(\theta))\sin(2\theta)}\,d\theta.
\]
Fix $\theta\in [0,\pi]$ and consider the integrand as a function of $t$ between the two trajectories. The denominator is monotone in $h(t)$ and $h$ is increasing, so the denominator lies between its values at $t_{\alpha_1}(\theta)$ and $t_{\alpha_2}(\theta)$; in particular it is positive there. Moreover, the integrand satisfies
\[
 \frac{\partial}{\partial t}\,
 \frac{4h(t)\cos^2\theta}{1-h(t)\sin(2\theta)}
 =\frac{4h'(t)\cos^2\theta}{\bigl(1-h(t)\sin(2\theta)\bigr)^2} 
 \geq0,
 \qquad
 h'(t)=\frac12+\frac{n-1}{2t^2}>0,
\]
with strict inequality except at $\theta=\pi/2$. Hence $p(\alpha)^2$ is strictly increasing, and since $p<0$, the slope $p(\alpha)$ is strictly decreasing:
\begin{equation}\label{eq:annular-shooting-monotonicity}
 \alpha_1<\alpha_2
 \quad\Longrightarrow\quad
 b(\alpha_1)<b(\alpha_2)
 \quad\text{and}\quad
 p(\alpha_1)>p(\alpha_2).
\end{equation}
In particular, $p(\alpha)=-1$ for exactly one $\alpha\in(0,s_*)$, which completes the proof of uniqueness.

\end{proof}

\begin{proof}[Proof of Theorem \ref{thm:radial}]
    Given a radial solution $\phi $ to~\eqref{eq:profile}, 
its radial representative $\phi (r)$ solves~\eqref{eq:ode}.
If the support of $\phi (r)$ is $\left[ a, b \right]$
for some $0 < a < b < \infty$, 
we must have that $\phi $ is the solution described in Proposition~\ref{prop:odesol}. 
In what follows, let $F(s) = \phi \left( 2\sqrt s \right)$
as in the proof of Proposition~\ref{prop:odesol}. 

If $\phi $ is supported on a ball containing the origin, 
then $F$ has a finite value near $0$. 
But expressing $F(s) = aU(s) + b M(s)$, if $n>1$ we must have $a = 0$, 
as $U$ is unbounded as $s\to 0$ while $M$ is bounded by Lemma~\ref{lem:MU}. In the special case $n=1$, we still express $F(s)=a\sqrt{s}+bM(s)$. Recall that $F$ satisfies the condition $\lim_{s\to0} \sqrt{s}F'(s) = \phi'(0)= 0$. Combined with $M'(0) \in \RR$ this also implies $a=0$.  
Thus $F(s) = bM(s)$. In particular $F \left( s_0 \right) = 0$, 
and the size of the support is determined. The value of $b$ is also uniquely determined, so that
\[ \phi'\left( 2\sqrt{s_0} \right) = \sqrt{s_0} F'(s_0) = b\cdot \sqrt{s_0} M'(s_0) = -1. \]

If the positivity set of $F $ is an unbounded interval $[s,\infty)$, 
then $F(s) = 0$ implies $R(s)= \frac{M(s)}{U(s)} = -\frac{a}{b}$. 
If $s < s_*$, then by the proof of Proposition~\ref{prop:odesol} 
we know $R(s) < 0$, 
and there is a second point $s' > s$ such that $R\left( s' \right) = R(s)$.
This contradicts the positivity set of $F$ being $[s,\infty)$. 
Thus $s \geq s_*$. 
When $s = s_*$, $U\left( s_* \right) = 0$ forces $b = 0$, and thus $F(s) = aU(s)$ for a unique value of $a$ such that it satisfies the Bernoulli boundary condition at $2\sqrt{s_*}$.
In any cases, the claimed asymptotic growth of $\phi $ 
follows from Lemma~\ref{lem:MU} (vi). 

\end{proof}

Lastly, we note the following consequence of the construction and Lemma~\ref{lem:MUroots}, which locates the annulus relative to the zeros $s_*$ and $s_0$.
\begin{proposition}\label{prop:annulus-containment}
Let $\phi$ be the annular profile from Proposition~\ref{prop:odesol}, with support $[r_-,r_+]$. Then
\begin{equation}\label{eq:annulus-containment}
    r_-^2 < 4s_* < 2(n-1) < 2n < 4s_0 < r_+^2.
\end{equation}
In particular, the annulus $\{r_-<|y|<r_+\}$ contains the spheres of radius $\sqrt{2(n-1)}$ and $\sqrt{2n}$.
\end{proposition}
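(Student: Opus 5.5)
The plan is to read the inequalities off the construction in Proposition~\ref{prop:odesol}, using the change of variables $s=r^2/4$, so that $r^2=4s$. In that proof the annular profile is $\phi(r)=A\,F_{\alpha_0}(r^2/4)$ with $F_{\alpha_0}=M-cU$. Its zeros in $s$ are $\alpha_0$ and $\beta(\alpha_0)$, and therefore
\[
r_-^2=4\alpha_0,\qquad r_+^2=4\beta(\alpha_0).
\]
Moreover, the uniqueness part of Proposition~\ref{prop:odesol} shows that any annular profile is of this form. So it suffices to establish
\[
\alpha_0<s_*<\tfrac{n-1}{2}<\tfrac n2<s_0<\beta(\alpha_0).
\]

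The two middle inequalities are exactly Lemma~\ref{lem:MUroots}. For the outer ones I would recall how $\alpha$ and $\beta$ were chosen:
\begin{itemize}
\item $\alpha_0$ was taken in $(0,s_*)$;
\item $\beta(\alpha)$ is the unique solution in $(s_*,\infty)$ of $R(\beta)=R(\alpha)<0$, where $R=M/U$.
\end{itemize}
To see that $\beta>s_0$, I would use the signs of $M$ and $U$ on $(s_*,\infty)$. There $U>0$, while $M>0$ on $(s_*,s_0)$ and $M<0$ beyond $s_0$. Hence $R>0$ on $(s_*,s_0]$, and $R<0$ can only happen after $s_0$. Since $R(\beta)=R(\alpha_0)<0$, this forces $\beta(\alpha_0)>s_0$.

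The sign $R(\alpha_0)<0$ itself comes from two facts on $(0,s_*)$: there $U<0$ (Lemma~\ref{lem:MU}(iii)), and $M>0$ because $M$ is decreasing with its only zero at $s_0>s_*$. The inequality $\alpha_0<s_*$ is strict, since $\alpha_0$ was chosen in the open interval.

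Multiplying the resulting chain by $4$ gives \eqref{eq:annulus-containment}. Because $r_-<\sqrt{2(n-1)}<\sqrt{2n}<r_+$, the spheres of radius $\sqrt{2(n-1)}$ and $\sqrt{2n}$ lie inside the open annulus $\{r_-<|y|<r_+\}$.

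There is no real obstacle here. The only point that needs care is making sure every inequality is strict and that the sign bookkeeping for $R$ is right on each subinterval.
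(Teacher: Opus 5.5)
Your proposal is correct and follows the paper's argument: the outer inequalities come from $r_-^2=4\alpha_0$ and $r_+^2=4\beta(\alpha_0)$ with $\alpha_0\in(0,s_*)$ and $\beta(\alpha_0)\in(s_0,\infty)$, and the middle two are Lemma~\ref{lem:MUroots}. Your sign check on $R=M/U$ spells out why $\beta(\alpha_0)>s_0$, a point the construction in Proposition~\ref{prop:odesol} only states.
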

\begin{proof}
In the construction in Proposition~\ref{prop:odesol}, $r_-=2\sqrt{\alpha_0}$ and $r_+=2\sqrt{\beta(\alpha_0)}$ with $\alpha_0\in(0,s_*)$ and $\beta(\alpha_0)\in(s_0,\infty)$, which gives the two outer inequalities. The middle inequalities are Lemma~\ref{lem:MUroots}.
\end{proof}

The finer description of the annulus stated in Theorem \ref{thm:annulus-geometry} --- the two-sided bounds for the radii \eqref{eq:fine-radius-estimate}, the width and product bounds, and the outward bias of the midpoint --- is proved in Appendix \ref{sec:radius-est}, see Propositions \ref{prop:annulus-localization} and \ref{prop:annulus-width} and Lemmas \ref{lem:annular-phase} and \ref{lem:annular-midpoint}. The classification of the modes in Section \ref{sec:instability} depends crucially on these estimates.

\section{Spectral analysis of the annular self-shrinker}\label{sec:instability}
In \cite[Section 2]{stability}, the authors derived the linearized operator for the evolution equation \eqref{eq:respar} at an equilibrium $\phi$, assuming $\Omega :=\{\phi>0\}$ is bounded, which is
\begin{equation}\label{eq:linear-stability}
    \left\{\begin{array}{ll}
       \mathscr{L} v := \Delta v - \frac{1}{2} y \cdot \nabla v + \frac{1}{2} v = 0,  & \text{ in } \Omega,  \\[5pt]
       \mathscr{B} v := \frac{\partial v}{\partial n} - 
    	\frac{\partial^2 \phi}{\partial n^2} v = 0,  & \text{ on } \partial\Omega.
    \end{array} \right.\tag{L}
\end{equation}
Here \(\partial/\partial n\) denotes differentiation in the direction of the unit normal pointing into \(\Omega\).  

From now on, let $\phi(r)$ denote the radial annular profile in Proposition \ref{prop:odesol}, and let $A=\{r_-<|y|<r_+ \}$ denote the positivity set of $\phi$. Then $\phi$ is an equilibrium for the evolution equation \eqref{eq:respar}.
By  separation of variables, the operator $\mathscr{L}$ for $v(r\omega) = \psi(r)g(\omega)$ becomes 
\begin{align*}
    \mathscr{L} v & = \frac{\partial^2}{\partial r^2} v + \frac{n-1}{r} \frac{\partial}{\partial r} v + \frac{1}{r^2} \Delta_{\mathbb{S}^{n-1}} v - \frac12 r\omega \cdot \nabla v + \frac12 v \\
    & = \left[\psi''(r) + \left( \frac{n-1}{r} - \frac{r}{2} \right) \psi'(r) + \frac12 \psi(r) \right] g(\omega) + \frac{\psi(r)}{r^2} \Delta_{\mathbb{S}^{n-1}} g(\omega).
\end{align*}
Recall that the eigenfunctions of $\Delta_{\mathbb{S}^{n-1}}$ are fully classified as spherical harmonics, and they form a basis for $L^2(\mathbb{S}^{n-1})$: For each $\ell\in \mathbb{N}\cup\{0\}$, $\mu_{\ell} := \ell(\ell+n-2)$ is an eigenvalue of $-\Delta_{\mathbb{S}^{n-1}}$ with eigenspace of dimension \(d_{n,\ell}=\binom{n+\ell-1}{\ell}-\binom{n+\ell-3}{\ell-2}\).
Therefore it suffices to study the spectrum problem of the following ODE:
\begin{equation}\label{eq:spectrum}
    \left\{\begin{array}{l}
    	\mathscr{L}_{\ell}\psi:=\psi''(r) + \left( \frac{n-1}{r} - \frac{r}{2} \right) \psi'(r) + \left( \frac12 - \frac{\mu_\ell}{r^2} \right) \psi(r) = \lambda \psi(r), \text{ in } (r_-,r_+) \\[5pt]
    	\psi'(r_-) + \left( \frac{n-1}{r_-} - \frac{r_-}{2} \right) \psi(r_-) = 0, \quad \psi'(r_+) + \left( \frac{n-1}{r_+} - \frac{r_+}{2} \right) \psi(r_+) = 0,
    \end{array}  \right.\tag{$L_s$}
\end{equation}
for each $\mu= \mu_\ell$ fixed.
By the Sturm--Liouville theory, \eqref{eq:spectrum} has simple eigenvalues $\lambda_{\ell,1} > \lambda_{\ell,2} > \cdots \to -\infty$, and the eigenfunction of $\lambda_{\ell,k}$ has $(k-1)$ zeros.

The proof of Theorem \ref{prop:spectrum-annular} occupies the remainder of the section: Section~\ref{ssec:modes} identifies the explicit eigenfunctions and proves the monotonicity of $\lambda_{\ell,k}$ in $\ell$ and the row $k\geq3$; Section~\ref{ssec:criteria} records two ground-state criteria, which are in turn used in Section~\ref{ssec:negative} to show $\lambda_{2,2}<0$ and $\lambda_{4,1}<0$; Section~\ref{ssec:l3} proves $\lambda_{3,1}>0$ in every dimension and assembles the table.

\subsection{Explicit eigenfunctions and the monotonicity of eigenvalues}\label{ssec:modes}

To write down the Rayleigh quotient for the spectrum problem \eqref{eq:spectrum}, we rewrite the ODE as follows. Denote
\[ \rho(r) := r^{n-1} e^{-\frac{r^2}{4}}, \qquad \alpha(r) := \tfrac{\rho'(r)}{\rho(r)} = \tfrac{n-1}{r} - \tfrac{r}{2}.  \]
Then
\begin{equation}\label{eq:p-transform}
    \zeta:=\rho\psi
\end{equation}
satisfies the Neumann boundary condition:
\begin{equation}\label{tmp:BC}
	\zeta' = \rho (\psi'+ \alpha \psi) = 0 \qquad \text{ at the endpoints } r=r_-, r_+;
\end{equation}
and simple computation shows that the ODE becomes
\begin{equation}\label{tmp:ODE}
	\left( \frac{\zeta'}{\rho} \right)' + \left(1-\frac{\mu-(n-1)}{r^2} \right) \frac{\zeta}{\rho} = \lambda \frac{\zeta}{\rho}.\tag{$N_s$}
\end{equation}
Thus the Rayleigh quotient is
\begin{equation}\label{eq:rayleigh-p}
    \mathcal{R}_{\mu}(\zeta)= \dfrac{-\int (\zeta')^2 \rho^{-1} \, dr + \int\left(1-\frac{\mu-(n-1)}{r^2} \right) \zeta^2 \rho^{-1} \, dr}{\int \zeta^2 \rho^{-1} \, dr},
\end{equation}
with boundary condition $\zeta'(r_-) = \zeta'(r_+)=0$. In particular, the Robin problem \eqref{eq:spectrum} for $\psi$ and the Neumann problem \eqref{tmp:ODE} for $\zeta=\rho\psi$ have the same eigenvalues $\lambda_{\ell,k}$ (when $\mu$ is chosen to be $\mu_\ell$), and we work with the latter from now on.

\begin{lemma}\label{lem:mode-structure}
The eigenvalues $\lambda_{\ell,k}$ of \eqref{eq:spectrum} satisfy:
\begin{enumerate}[label=\textup{(\roman*)}]
\item For each fixed $k$, the eigenvalue $\lambda_{\ell,k}$ is nonincreasing in $\ell$.
\item For every $\ell\geq0$ and every $k\geq3$, $\lambda_{\ell,k}<0$.
\end{enumerate}
\end{lemma}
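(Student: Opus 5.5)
Part (i) follows from the Rayleigh quotient \eqref{eq:rayleigh-p}. The Neumann problem \eqref{tmp:ODE} for $\zeta=\rho\psi$ has a form domain, $H^1(r_-,r_+)$ with the weight $\rho^{-1}$, that does not depend on $\ell$. The only place $\ell$ enters is the potential term $-\mu_\ell/r^2$, and since $\mu_\ell=\ell(\ell+n-2)$ is increasing in $\ell$, the quotient satisfies $\mathcal R_{\mu_{\ell+1}}(\zeta)\le\mathcal R_{\mu_\ell}(\zeta)$ for every admissible $\zeta$. The Courant--Fischer max-min characterization
\[
\lambda_{\ell,k}=\max_{\dim V=k}\ \min_{\zeta\in V\setminus\{0\}}\mathcal R_{\mu_\ell}(\zeta)
\]
then gives $\lambda_{\ell+1,k}\le\lambda_{\ell,k}$ for each fixed $k$.

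For part (ii), part (i) reduces the claim to $\ell=0$, so it suffices to show $\lambda_{0,3}<0$. I would use two explicit eigenvalues and Sturm--Liouville counting rather than any estimate.

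\textbf{Step 1: the time-translation mode.} Differentiating the self-similar family $\sqrt{-t}\,\phi(x/\sqrt{-t})$ in time produces the radial function $\psi_0:=\phi-\tfrac12 r\phi'$, which satisfies $\mathscr L_0\psi_0=\psi_0$. Indeed, if $\mathscr L\phi=0$, then commuting $\mathscr L$ with $y\cdot\nabla$ gives $\mathscr L(y\cdot\nabla\phi)=2\Delta\phi+y\cdot\nabla(\mathscr L\phi)=2\Delta\phi$, and $2\Delta\phi=y\cdot\nabla\phi-\phi$ from the equation. It remains to check the Robin condition in \eqref{eq:spectrum}. At $r_\pm$ we have $\phi=0$ and $\phi'=\pm1$, and the ODE evaluated at the endpoint gives $\phi''=-\alpha\phi'$. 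Substituting these, $\psi_0'+\alpha\psi_0$ vanishes at both endpoints.

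\textbf{Step 2: counting zeros of $\psi_0$.} At the endpoints, $\psi_0(r_\pm)=-\tfrac12 r_\pm\phi'(r_\pm)$, which is negative at $r_-$ and positive at $r_+$. To get exactly one interior zero I would use Claim~\ref{cl:concave}: $\phi''<0$ implies $\psi_0'=\tfrac12(\phi'-r\phi'')$. Exact monotonicity of $\psi_0$ does not follow immediately, so I would argue instead through the eigenfunction structure. The rescaled equation \eqref{eq:respar} has only the constants $1$ and $\tfrac12$ as translation eigenvalues, and the spectral table \eqref{table:spectrum} records $\lambda_{0,2}=1$. This suggests that $\psi_0$ has exactly one zero, hence is the $k=2$ eigenfunction, and that $\lambda_{0,1}>1$.

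\textbf{Step 3: the first eigenvalue and the sign of $\lambda_{0,3}$.} To pin down the ordering I would show directly that some radial eigenfunction has eigenvalue above $1$ and no interior zero. Then $\psi_0$, which changes sign, must be the second eigenfunction, so $\lambda_{0,2}=1$. The remaining claim $\lambda_{0,3}<0$ is the main obstacle, and I would try a domain-monotonicity argument. Any eigenfunction for $\lambda_{0,3}$ has two interior zeros, so it has an interior nodal interval $(c,d)$ on which it solves a Dirichlet problem. Compare $\phi$, which is positive on the whole of $(r_-,r_+)$, solves $\mathscr L_0\phi=0$, and vanishes at both endpoints: it is the Dirichlet ground state with eigenvalue $0$ for $\mathscr L_0$ on $(r_-,r_+)$. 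By strict Dirichlet domain monotonicity, the first Dirichlet eigenvalue on $(c,d)\subsetneq(r_-,r_+)$ is strictly below $0$, so $\lambda_{0,3}<0$. The same argument applied to the $k\ge3$ eigenfunctions gives the row $k\ge3$, and together with part (i) this yields (ii) for every $\ell$.
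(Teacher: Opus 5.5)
Your proof of (i) (the Rayleigh quotient \eqref{eq:rayleigh-p} decreases pointwise in $\mu_\ell$, then Courant--Fischer) and the last part of your Step 3 for (ii) are exactly the paper's proof, and together they suffice. In Step 3 you reduce to $\ell=0$ via (i), restrict a $k\geq3$ eigenfunction to an interior nodal interval, where it is a Dirichlet ground state, and compare it with $\phi$, the Dirichlet ground state with eigenvalue $0$ on $(r_-,r_+)$, using strict domain monotonicity.

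Steps 1--2 and the first half of Step 3 are not needed and should be deleted, because they contain two problems. First, there is a computational error: the time-translation mode is $\phi-r\phi'$, and for $\psi_0=\phi-\tfrac12 r\phi'$ you get $\mathscr L_0\psi_0=\tfrac12(\phi-r\phi')\neq\psi_0$ and $\psi_0'+\alpha\psi_0=\tfrac12\phi'(r_\pm)\neq0$. Second, the appeal to the table \eqref{table:spectrum} is circular, since that table is proved using this lemma.
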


\begin{proof}
By the min-max characterization of eigenvalues $\lambda_{\ell,k}$ using the Rayleigh quotient \eqref{eq:rayleigh-p} with $\mu=\mu_\ell$, we know that for each $k$ fixed, $\lambda_{\ell,k}$ is monotone decreasing as $\ell$ and $\mu_{\ell}$ increase.

By (i), it suffices to show $\lambda_{0,k}<0$ for $k\geq3$.
By the Sturm--Liouville theory, when $k\geq 3$ the corresponding eigenfunction $\psi$ has at least two zeros in $(r_-,r_+)$. For any two consecutive zeros $a<b$, $\psi$ is the first Dirichlet eigenfunction for the domain $(a,b)$. Recall that $\phi$ is the first Dirichlet eigenfunction of the same operator (recall that $\ell=0$ and thus $\mu_\ell=0$) in a strictly larger domain $(r_-,r_+)$, with eigenvalue $0$. Hence by the domain monotonicity of first eigenvalues, the eigenvalue for $\psi$ satisfies $\lambda_{0,k} < 0$.
\end{proof}

\begin{lemma}\label{lem:explicit-modes}
Three eigenvalues for the full spectrum problem \eqref{eq:spectrum-full} are explicit:
\begin{enumerate}[label=\textup{(\roman*)}]
\item $\lambda_{1,1}=1$, with eigenfunction $\langle \omega, a \rangle/\rho(r)$ for any $a\in \RR^n$; 
\item $\lambda_{1,2}=\frac12$, with eigenfunction $\langle \omega, a \rangle \phi'(r)$ for any $a\in \RR^n$; 
\item $\lambda_{0,2}=1$, with eigenfunction $\phi(r)-r\phi'(r)$.
\end{enumerate}
\end{lemma}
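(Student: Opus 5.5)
The plan is to verify each claimed eigenfunction directly in the reduced Sturm--Liouville problem \eqref{eq:spectrum}, and then fix its index $k$ by counting zeros, since the $k$-th eigenfunction has exactly $k-1$ interior zeros. The last two modes come from differentiating the self-similar family in space and in time. Rather than invoke that interpretation, I would check them by direct computation from \eqref{eq:ode}. Throughout, write $\alpha(r)=\frac{n-1}{r}-\frac r2$, so that $\alpha'=-\frac{n-1}{r^2}-\frac12$. The only qualitative input is the concavity $\phi''<0$ on $(r_-,r_+)$ from Claim~\ref{cl:concave}.

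\emph{(i)} For $\ell=1$ we have $\mu_1=n-1$. Take $\psi=1/\rho$, so that $\zeta=\rho\psi\equiv1$. The Neumann condition \eqref{tmp:BC} then holds trivially. In \eqref{tmp:ODE} the zeroth-order coefficient is $1-\frac{\mu_1-(n-1)}{r^2}=1$, so the equation reads $0+\frac1\rho=\lambda\frac1\rho$, which gives $\lambda=1$. Since $\psi>0$ has no zeros, this is the first eigenvalue, so $\lambda_{1,1}=1$.

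\emph{(ii)} Differentiate \eqref{eq:ode} to get $\phi'''+\alpha\phi''+\alpha'\phi'+\frac12\phi'=0$. Substituting $\alpha'$, this is exactly $\mathscr L_1\phi'=\frac12\phi'$. For the Robin condition, note that at $r_\pm$ we have $\psi'+\alpha\psi=\phi''+\alpha\phi'=-\frac12\phi=0$. By concavity $\phi'$ is strictly decreasing from $1$ to $-1$, so it has exactly one zero. Hence it is the second eigenfunction, and $\lambda_{1,2}=\frac12$.

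\emph{(iii)} Multiply the differentiated equation by $r$ and substitute into $\mathscr L_0(r\phi')=2\phi''+r\phi'''+\alpha(\phi'+r\phi'')+\frac12r\phi'$. This yields $\mathscr L_0(r\phi')=2\phi''+\frac{2(n-1)}{r}\phi'$. By \eqref{eq:ode} the right-hand side equals $r\phi'-\phi$. Since $\mathscr L_0\phi=0$, it follows that $\mathscr L_0(\phi-r\phi')=\phi-r\phi'$, i.e.\ $\lambda=1$.

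For the boundary condition, at $r_\pm$ we have $\phi=0$ and $\phi''=-\alpha\phi'$. Therefore $\psi'+\alpha\psi=-r\phi''-\alpha r\phi'=0$. For the zero count, $\psi(r_-)=-r_-<0$ and $\psi(r_+)=r_+>0$, while $\psi'=-r\phi''>0$ by concavity. So $\psi$ has exactly one zero, and $\lambda_{0,2}=1$.

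The main obstacle is the index identification, not the algebra. It relies entirely on strict concavity of $\phi$ (Claim~\ref{cl:concave}), which guarantees exactly one sign change in cases (ii) and (iii). Because the eigenspaces are one-dimensional in each $\ell$-sector, these modes, multiplied by degree-$\ell$ spherical harmonics $\langle\omega,a\rangle$ where needed, give the stated eigenfunctions of \eqref{eq:spectrum-full}.
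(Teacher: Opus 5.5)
Your proof is correct and follows essentially the same route as the paper: you verify the eigenvalue equation and boundary condition directly, then fix the index by counting zeros. The differences are minor. The paper checks (ii) and (iii) in the Neumann variable $\zeta=\rho\psi$ via the divergence form $(\rho\phi')'=-\tfrac12\rho\phi$, so that $\zeta'=-\tfrac12\rho\phi$ and $\zeta'=\tfrac{n-1}{r}\rho\phi$ visibly vanish at $r_\pm$. In (ii) it counts critical points of $\phi$ using $\phi''(r_c)=-\tfrac12\phi(r_c)<0$ rather than the global concavity of Claim~\ref{cl:concave}. In (iii) it obtains $k=2$ from the endpoint sign change together with Lemma~\ref{lem:mode-structure}(ii), whereas your observation $\psi'=-r\phi''>0$ gives the exact zero count more directly.
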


\begin{remark}\label{rem:geometric-modes}
The geometric origins of these modes are discussed after Corollary \ref{cor:stability} in the introduction. For comparison, when $\phi= \phi_B$ is the ball profile solution, by sign considerations $\phi(r)-r\phi'(r)$ is an eigenfunction corresponding to the index $\lambda_{0,1}$, and $\langle \omega, a \rangle \phi'(r)$ is an eigenfunction corresponding to the index $\lambda_{1,1}$.
\end{remark}

\begin{proof}
In divergence form, the profile equation \eqref{eq:ode} reads
\begin{equation}
\label{eq:radial-profile}
    (\rho\,\phi')'=-\frac12\,\rho\,\phi
    \quad\text{on }A,
    \qquad
    \phi=0
    \quad\text{on }\partial A.
\end{equation}

(i) For any $a\in \RR^n$, $\langle \omega,a \rangle /\rho(r)$ is an eigenfunction of \eqref{eq:spectrum-full}. Let $\psi(r) := 1/\rho(r)$. Since $\zeta:= \rho\psi \equiv 1$, it clearly satisfies the ODE \eqref{tmp:ODE} with $\mu=n-1$ and $\lambda = 1$ and the vanishing Neumann boundary condition \eqref{tmp:BC}. This is equivalent to saying $\psi(r)$ is an eigenfunction to \eqref{eq:spectrum} with $\mu=\mu_1$ and $\lambda=1$.
Moreover, since $\psi=1/\rho>0$, it has to be the first eigenfunction of \eqref{eq:spectrum} (with $\mu=\mu_1$ fixed); and $\omega \mapsto \langle \omega, a\rangle$ is a spherical harmonic with $\ell=1$, so $\lambda = \lambda_{1,1}$.

(ii) The equation \eqref{eq:radial-profile} implies $\zeta_1:=\rho\phi'$ satisfies $\zeta_1'(r_\pm)=-\frac12\rho\phi(r_\pm)=0$, which is the Neumann condition \eqref{tmp:BC}, and
\[
    \Bigl(\frac{\zeta_1'}{\rho}\Bigr)'+\frac{\zeta_1}{\rho}
    =-\frac12\,\phi'+\phi'
    =\frac12\cdot\frac{\zeta_1}{\rho},
\]
which is \eqref{tmp:ODE} with $\mu=\mu_1$ and $\lambda=\frac12$.
We claim that the radial profile $\phi$ has a unique critical point in $(r_-,r_+)$. Since $\phi'(r_-) = 1, \phi'(r_+) = -1$, by the intermediate value theorem $\phi$ must have at least one critical point. At any critical point $r_c$, we have $\phi''(r_c) = -\frac12 \phi(r_c) < 0$. Hence $\phi$ can not have more than one critical point in $(r_-,r_+)$. By the Sturm--Liouville theory, this indicates that $\phi'(r)$ is the second eigenfunction, i.e. $\lambda = \lambda_{1,2}$.

(iii) We now verify that $\psi(r) = \phi(r) - r \phi '(r)$
is an eigenfunction of \eqref{eq:spectrum}
with $\mu = 0$ and $\lambda = 1$. Let $\zeta:=\rho\psi$. By \eqref{eq:radial-profile} and $\rho'=\bigl(\tfrac{n-1}{r}-\tfrac{r}{2}\bigr)\rho$,
\[
    \zeta'
    =\rho'\phi+\rho\phi'-\rho\phi'-r(\rho\phi')'
    =\rho'\phi+\frac{r}{2}\,\rho\,\phi
    =\frac{n-1}{r}\,\rho\,\phi,
\]
so $\zeta'(r_\pm)=0$, which is the Neumann condition \eqref{tmp:BC}, and
\[
    \Bigl(\frac{\zeta'}{\rho}\Bigr)'+\Bigl(1+\frac{n-1}{r^2}\Bigr)\frac{\zeta}{\rho}
    =(n-1)\Bigl(\frac{\phi'}{r}-\frac{\phi}{r^2}\Bigr)+\Bigl(1+\frac{n-1}{r^2}\Bigr)\bigl(\phi-r\phi'\bigr)
    =\frac{\zeta}{\rho},
\]
which is \eqref{tmp:ODE} with $\mu=0$ and $\lambda=1$.
Here $\mu=0$, so $\ell=0$. The eigenfunction has different signs at $r_-$ and $r_+$, so by the count of zeros the index $k$ is at least $2$. On the other hand since $\lambda = 1>0$ in this case, Lemma~\ref{lem:mode-structure} (ii) makes $k\geq 3$ impossible, and thus $k=2$ and $\lambda=\lambda_{0,2}$.
\end{proof}

\subsection{Two ground-state criteria}\label{ssec:criteria}

A key step in establishing the signs of $\lambda_{2,2}$ and of the modes $\ell\geq4$ is the following oscillation-type lemma, which converts a solution of the zero-eigenvalue ODE having a single sign change and favorable endpoint behavior into a sign bound for the second Neumann eigenvalue. It is a consequence of the ground-state representation (see \cite[Lemma~2.4]{PT06}). Note that condition (ii) below states precisely that, at each endpoint, the one-sided derivative of $|V|$ taken in the direction pointing inside $(a,b)$ is strictly negative (for comparison, a Neumann eigenfunction would have vanishing derivative there).

\begin{lemma}\label{lem:oscillation}
Let $\rho\in C^1([a,b])$ be positive, let $q\in C([a,b])$, and let $\lambda_1>\lambda_2>\cdots$\footnote{The eigenvalues are simple: a solution of \eqref{eq:osc-spectrum} with $\zeta'(a)=0$ is determined up to a scalar.} denote the eigenvalues of the Neumann problem
\begin{equation}\label{eq:osc-spectrum}
	\left(\frac{\zeta'}{\rho}\right)' + q\,\frac{\zeta}{\rho} = \lambda\,\frac{\zeta}{\rho} \ \text{ in } (a,b), \qquad \zeta'(a)=\zeta'(b)=0.
\end{equation}
Suppose there exists $V\in C^2([a,b])$ solving $\left(\frac{V'}{\rho}\right)' + q\,\frac{V}{\rho} = 0$ in $(a,b)$, such that:
\begin{enumerate}[label=\textup{(\roman*)}]
	\item $V$ has a unique zero $z\in(a,b)$, and the zero is simple;
	\item $V(a)V'(a)<0$ and $V(b)V'(b)>0$.
\end{enumerate}
Then $\lambda_2<0$.
\end{lemma}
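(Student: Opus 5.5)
The plan is to argue by contradiction, using the ground-state (Picone) representation built from $V$ on each of its two nodal intervals $[a,z]$ and $[z,b]$. Throughout, write
\[
Q_I(\zeta):=\int_I\Bigl(\frac{(\zeta')^2}{\rho}-q\,\frac{\zeta^2}{\rho}\Bigr)\,dr
\]
for an interval $I$. By \eqref{eq:osc-spectrum}, an eigenfunction $\zeta$ with eigenvalue $\lambda$ satisfies $-Q_I(\zeta)=\lambda\int_I\zeta^2\rho^{-1}\,dr$ on any subinterval $I$ at whose endpoints the boundary term $\zeta\zeta'/\rho$ vanishes. The proof has three steps.

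\textbf{Step 1: ground-state identity.} Take $I=[a,z']\subset[a,z)$, where $V$ has no zero, and let $\zeta\in C^1(I)$ vanish at $z'$. Write $\zeta=Vw$. Integrating by parts and using $\bigl(V'/\rho\bigr)'=-qV/\rho$, the identity to establish is
\[
Q_I(\zeta)=\int_I \frac{V^2(w')^2}{\rho}\,dr-\frac{V(a)V'(a)}{\rho(a)}\,w(a)^2 .
\]
The boundary term at $z'$ drops out because $w(z')=0$. By hypothesis (ii), $V(a)V'(a)<0$, so the boundary term at $a$ is nonnegative. Hence $Q_I(\zeta)\geq0$, with equality only if $w$ is constant and $w(a)=0$, i.e.\ $\zeta\equiv0$. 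So $Q_I(\zeta)>0$ for every nonzero admissible $\zeta$. The same computation on $[z'',b]\subset(z,b]$, for $\zeta$ vanishing at $z''$, gives the boundary term $+V(b)V'(b)w(b)^2/\rho(b)$, which is nonnegative by (ii), so again $Q>0$.

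\textbf{Step 2: use the nodal structure of the second eigenfunction.} Suppose $\lambda_2\geq0$. By Sturm--Liouville theory, the eigenfunction $\zeta_2$ of $\lambda_2$ has exactly one zero $c\in(a,b)$, and it is simple. If $c<z$, apply Step 1 on $[a,c]$: $\zeta_2$ vanishes at $c$ and satisfies $\zeta_2'(a)=0$, so the boundary term vanishes at both ends. Therefore
\[
Q_{[a,c]}(\zeta_2)=-\lambda_2\int_a^c\zeta_2^2\rho^{-1}\,dr\leq0,
\]
which contradicts $Q_{[a,c]}(\zeta_2)>0$. If $c>z$, the same argument on $[c,b]$ gives a contradiction. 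Hence $\lambda_2<0$ in these cases.

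\textbf{Step 3 (main obstacle): the borderline case $c=z$.} Here Step 1 cannot be applied directly, because $w=\zeta_2/V$ on $[a,z)$ need not vanish at $z$. The boundary term at $z$ is
\[
\lim_{r\to z^-}\frac{\zeta_2^2 V'}{\rho V},
\]
and since both zeros are simple this tends to $0$ (the numerator vanishes to second order, the denominator to first order). So the identity still holds on $[a,z]$, and the contradiction goes through as in Step 2. Alternatively, the case $c=z$ can be excluded directly. Both $\zeta_2$ and $V$ then vanish at $z$. By Step 1, the first mixed eigenvalue on $[a,z]$ (Neumann at $a$, Dirichlet at $z$) is strictly negative, while $\zeta_2|_{[a,z]}$ would be a positive eigenfunction of that same mixed problem with eigenvalue $\lambda_2\geq0$, which is impossible.

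The one point that requires care is this justification of the vanishing boundary term at the common simple zero.
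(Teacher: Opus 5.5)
Your proof is correct, but it reaches the conclusion by a different route than the paper. Both arguments rest on the same ground-state (Picone) identity $Q_I(Vw)=\int_I V^2(w')^2\rho^{-1}\,dr+\bigl[\tfrac{V'V}{\rho}w^2\bigr]_{\partial I}$; note that the paper's $\mathcal Q$ is your $-Q$.

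\textbf{The paper's route.} The paper never looks at the second eigenfunction; it argues purely variationally.
\begin{enumerate}
\item It takes the codimension-one subspace $X=\{\zeta\in H^1(a,b):\zeta(z)=0\}$ in the min-max characterization of $\lambda_2$.
\item It applies the identity to an arbitrary $\zeta\in X$ on both $(a,z-\varepsilon)$ and $(z+\varepsilon,b)$.
\item It removes the inner boundary terms using the $H^1$ estimate $|\zeta(r)|^2=o(|r-z|)$.
\item It then needs compactness to see that the supremum of the negative Rayleigh quotient over $X$ is attained.
\end{enumerate}

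\textbf{Your route.} You invoke Sturm oscillation to know that $\zeta_2$ has exactly one simple zero $c$. Hence one of its nodal intervals, $[a,c]$ or $[c,b]$, lies inside the nodal interval of $V$ adjacent to the same endpoint. Picone's identity there contradicts $Q(\zeta_2)=-\lambda_2\int\zeta_2^2\rho^{-1}\le0$; in fact it gives $\lambda_2<0$ directly, so no contradiction is needed.

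\textbf{What each buys.} Your version has several conveniences:
\begin{itemize}
\item only one side of $z$ is needed;
\item the test function is the $C^2$ eigenfunction itself, so the borderline case $c=z$ reduces to an elementary quotient of two simple zeros;
\item no min-max or attainment argument is required.
\end{itemize}
The price is reliance on the exact nodal count of $\zeta_2$ and a three-case analysis. The paper's argument uses only the variational characterization and handles every case at once.

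\textbf{A small remark on Step~3.} Your ``alternatively'' argument is not really independent of the first one. Step~1 was proved only for $z'<z$, and showing that the mixed Neumann--Dirichlet problem on $[a,z]$ has a negative first eigenvalue requires exactly the same boundary-term limit at $z$. Your first argument there is the one doing the work.
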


\begin{proof}
By Sturm--Liouville theory, the eigenvalues of \eqref{eq:osc-spectrum} admit the min-max characterization
\[
	\lambda_2 = \inf_{\operatorname{codim}Y=1}\,\sup_{\zeta\in Y\setminus\{0\}} \frac{\mathcal{Q}[\zeta]}{\int_a^b \zeta^2\rho^{-1}\,dr},
	\qquad
	\mathcal{Q}[\zeta] := -\int_a^b(\zeta')^2\rho^{-1}\,dr + \int_a^b q\,\zeta^2\rho^{-1}\,dr,
\]
where $Y$ ranges over closed subspaces of $H^1(a,b)$ of codimension one. Since point evaluation is bounded on $H^1(a,b)$,
\[
	X:=\{\zeta\in H^1(a,b):\zeta(z)=0\}
\]
is such a subspace, and it suffices to show that the Rayleigh quotient is negative on $X\setminus\{0\}$.

For an interval $J\subset(a,b)$ on which $V$ does not vanish, let $\mathcal{Q}_J$ denote the quadratic form $\mathcal{Q}$ with the integrals restricted to $J$, and write $\zeta=Vw$. Integration by parts using $q\,\frac{V}{\rho}=-\left(\frac{V'}{\rho}\right)'$ gives the ground-state representation
\begin{multline}\label{eq:osc-factorization}
    \mathcal Q_{J}[\zeta]
    =
    -\int_J \frac{\bigl((Vw)'\bigr)^2}{\rho}\,dr
    -\int_J \left(\frac{V'}{\rho}\right)' Vw^2\,dr \\
    =
    -\int_J \frac{\bigl((Vw)'\bigr)^2}{\rho}\,dr
    -\left[\frac{V'V}{\rho}w^2\right]_{\partial J}
    +\int_J \frac{V'}{\rho}(Vw^2)'\,dr \\
    =
    -\int_J
        \frac{(V'w+Vw')^2
        -V'(V'w^2+2Vww')}{\rho}\,dr
    -\left[\frac{V'V}{\rho}w^2\right]_{\partial J}
    =
    -\int_J\frac{V^2}{\rho}
        (w')^2\,dr
    -
    \left[\frac{V'}{V}\frac{\zeta^2}{\rho}\right]_{\partial J}.
\end{multline}
For $\zeta\in X$ and $\varepsilon>0$ small, applying \eqref{eq:osc-factorization} on $(a,z-\varepsilon)$ and $(z+\varepsilon,b)$ gives
\begin{align}
    \mathcal Q_{(a,z-\varepsilon)}[\zeta]
    +\mathcal Q_{(z+\varepsilon,b)}[\zeta]
    &=
    -\int_{a}^{z-\varepsilon}\frac{V^2}{\rho}
        \left(\left(\frac{\zeta}{V}\right)'\right)^2\,dr
    -\int_{z+\varepsilon}^{b}\frac{V^2}{\rho}
        \left(\left(\frac{\zeta}{V}\right)'\right)^2\,dr \nonumber\\
    &\qquad
    +\frac{V'(a)}{V(a)}\frac{\zeta(a)^2}{\rho(a)}
    -\frac{V'(b)}{V(b)}\frac{\zeta(b)^2}{\rho(b)} \nonumber\\
    &\qquad
    -\frac{V'(z-\varepsilon)}{V(z-\varepsilon)}
        \frac{\zeta(z-\varepsilon)^2}{\rho(z-\varepsilon)}
    +\frac{V'(z+\varepsilon)}{V(z+\varepsilon)}
        \frac{\zeta(z+\varepsilon)^2}{\rho(z+\varepsilon)}.
    \label{eq:osc-factorization-cut}
\end{align}
We now let $\varepsilon\downarrow0$. The inner boundary terms in \eqref{eq:osc-factorization-cut} vanish. Indeed, since the zero of $V$ at $z$ is simple,
\[
    \frac{V'}{V}=\frac{1}{r-z}+O(1)
    \qquad\text{as }r\to z,
\]
and, using the absolutely continuous representative of $\zeta\in H^1(a,b)$ and the condition $\zeta(z)=0$, we have
\[
    |\zeta(r)|^2
    \leq |r-z|\int_z^r|\zeta'(t)|^2\,dt=o(|r-z|).
\]
Passing to the limit in \eqref{eq:osc-factorization-cut}, we obtain
\begin{equation}\label{eq:osc-limit}
    \mathcal Q[\zeta]
    =
    -\int_{a}^{z}\frac{V^2}{\rho}
        \left(\left(\frac{\zeta}{V}\right)'\right)^2\,dr
    -\int_{z}^{b}\frac{V^2}{\rho}
        \left(\left(\frac{\zeta}{V}\right)'\right)^2\,dr
    +\frac{V'(a)}{V(a)}\frac{\zeta(a)^2}{\rho(a)}
    -\frac{V'(b)}{V(b)}\frac{\zeta(b)^2}{\rho(b)}.
\end{equation}
By assumption (ii), both endpoint terms are nonpositive, hence $\mathcal{Q}[\zeta]\leq0$ for every $\zeta\in X$. Moreover, if $\mathcal{Q}[\zeta]=0$, then $(\zeta/V)'\equiv0$ on each side of $z$, that is, $\zeta=c_-V$ on $(a,z)$ and $\zeta=c_+V$ on $(z,b)$; the endpoint terms in \eqref{eq:osc-limit} then equal $c_-^2\,V'(a)V(a)/\rho(a)$ and $-c_+^2\,V'(b)V(b)/\rho(b)$, which by (ii) are strictly negative unless $c_-=c_+=0$. Therefore $\mathcal{Q}[\zeta]<0$ for every nonzero $\zeta\in X$.

Finally, by the compact embedding $H^1(a,b)\hookrightarrow C([a,b])$, the supremum of the Rayleigh quotient over $X\setminus\{0\}$ is attained, and is therefore negative. The min-max characterization then gives $\lambda_2<0$.
\end{proof}

The same representation yields a criterion for the sign of the first eigenvalue: a positive supersolution with the favorable endpoint behavior forces $\lambda_1<0$.

\begin{lemma}\label{lem:supersolution}
Let $\rho$, $q$, and $(\lambda_k)_{k\geq1}$ be as in Lemma~\ref{lem:oscillation}. Suppose there exists $V\in C^2([a,b])$ with $V>0$ on $[a,b]$, such that:
\begin{enumerate}[label=\textup{(\roman*)}]
	\item $\left(\frac{V'}{\rho}\right)' + q\,\frac{V}{\rho}<0$ on $[a,b]$;
	\item $V'(a)\leq0\leq V'(b)$.
\end{enumerate}
Then $\lambda_1<0$.
\end{lemma}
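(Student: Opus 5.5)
The plan is to reuse the ground-state representation \eqref{eq:osc-factorization} from the proof of Lemma~\ref{lem:oscillation}. Since $V$ does not vanish on $[a,b]$, we can apply it directly on all of $J=(a,b)$, and no cut at an interior zero is needed. Set $f:=-\bigl((V'/\rho)'+qV/\rho\bigr)>0$ on $[a,b]$. For any $\zeta\in H^1(a,b)$, write $\zeta=Vw$ with $w\in H^1$. The integration by parts in \eqref{eq:osc-factorization} then uses $qV/\rho=-(V'/\rho)'-f$ in place of the exact equation, and this yields the extra term
\[
\mathcal{Q}[\zeta]=-\int_a^b\frac{V^2}{\rho}(w')^2\,dr-\int_a^b f\,V w^2\,dr-\left[\frac{V'}{V}\frac{\zeta^2}{\rho}\right]_a^b .
\]
The boundary term equals $-\frac{V'(b)}{V(b)}\frac{\zeta(b)^2}{\rho(b)}+\frac{V'(a)}{V(a)}\frac{\zeta(a)^2}{\rho(a)}$. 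By hypothesis (ii) and $V>0$, both pieces are $\leq 0$.

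With this identity, $\mathcal{Q}[\zeta]\leq-\int_a^b fVw^2\,dr=-\int_a^b \frac{f}{V}\zeta^2\,dr$. Since $f/V$ is continuous and positive on the compact interval $[a,b]$, and $\rho$ is bounded above, there is a constant $c>0$ with $\frac{f}{V}\geq c\,\rho^{-1}$ on $[a,b]$. This gives $\mathcal{Q}[\zeta]\leq -c\int_a^b\zeta^2\rho^{-1}\,dr$ for every $\zeta\in H^1(a,b)$. The variational characterization $\lambda_1=\sup_{\zeta\neq0}\mathcal{Q}[\zeta]/\int\zeta^2\rho^{-1}$ then gives $\lambda_1\leq -c<0$.

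The only point needing care is the justification of the integration by parts for $H^1$ test functions rather than smooth ones. I would handle it either by density of $C^1([a,b])$ in $H^1(a,b)$, using the continuity of every term in the $H^1$ norm (the boundary terms are controlled by the embedding into $C([a,b])$), or by directly writing the identity for $w=\zeta/V\in H^1$, which is legitimate because $V\in C^2$ is bounded away from zero. I expect no real obstacle; the argument is a simpler, interior-zero-free version of Lemma~\ref{lem:oscillation}.
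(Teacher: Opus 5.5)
Your proposal is correct and matches the paper's proof: the same ground-state identity on all of $(a,b)$, the extra term $-\int fVw^2$ coming from the strict supersolution inequality, and endpoint terms that are nonpositive by (ii). The only difference is minor: you extract the uniform bound $\mathcal Q[\zeta]\leq -c\int\zeta^2\rho^{-1}$, which yields $\lambda_1\leq -c$ directly, whereas the paper shows $\mathcal Q[\zeta]<0$ for each nonzero $\zeta$ and relies on attainment of the supremum, as in Lemma~\ref{lem:oscillation}.
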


\begin{proof}
Let $\zeta\in H^1(a,b)$ be nonzero and write $\zeta=Vw$. Repeating the integration by parts in \eqref{eq:osc-factorization} on $J=(a,b)$, without using an equation for $V$, gives
\[
    \mathcal Q[\zeta]
    =
    -\int_a^b\frac{V^2}{\rho}(w')^2\,dr
    +\int_a^b\left[\left(\frac{V'}{\rho}\right)'+q\,\frac{V}{\rho}\right]Vw^2\,dr
    +\frac{V'(a)}{V(a)}\frac{\zeta(a)^2}{\rho(a)}
    -\frac{V'(b)}{V(b)}\frac{\zeta(b)^2}{\rho(b)}.
\]
By (i) and (ii), every term is nonpositive, and the second is strictly negative unless $w\equiv0$. Hence $\mathcal Q[\zeta]<0$ for every nonzero $\zeta\in H^1(a,b)$, and the min-max characterization gives $\lambda_1<0$ as in the proof of Lemma~\ref{lem:oscillation}.
\end{proof}

\subsection{Negativity of $\lambda_{2,2}$ and $\lambda_{4,1}$}\label{ssec:negative}

\begin{lemma}\label{lem:lambda22}
For every $n\geq2$, $\lambda_{2,2}<0$.
\end{lemma}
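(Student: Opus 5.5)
The plan is to apply Lemma~\ref{lem:oscillation} on $[a,b]=[r_-,r_+]$ with the weight $\rho=r^{n-1}e^{-r^2/4}$ and potential $q=1-\frac{\mu_2-(n-1)}{r^2}=1-\frac{n+1}{r^2}$ (since $\mu_2=2n$). For this I need an explicit solution $V$ of the zero-eigenvalue equation $(V'/\rho)'+qV/\rho=0$. Following the introduction, I take $\psi:=\phi+\frac{2n}{r}\phi'$ in the $\psi$-variables and set $V:=\rho\psi$.

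\textbf{Step 1: $\psi$ solves $\mathscr L_2\psi=0$.} I would check this directly from \eqref{eq:ode}. Write $\mathscr L_0\phi=0$ and use the commutator of $\mathscr L_0$ with $\frac1r\partial_r$. Differentiating \eqref{eq:ode} shows that $\phi'$ satisfies $\mathscr L_1$-type equations with eigenvalue $\tfrac12$, as in Lemma~\ref{lem:explicit-modes}(ii). Dividing by $r$ and collecting terms should give $\mathscr L_2(\phi'/r)=-\frac{1}{2n}\phi$, up to the exact constant, which I would verify by a routine calculation. Hence $\mathscr L_2(\phi+\tfrac{2n}{r}\phi')=\mathscr L_2\phi+2n\mathscr L_2(\phi'/r)=-\frac{2n}{r^2}\phi+\dots=0$, with the $\mu$ term $-\mu_2\phi/r^2$ cancelling against the commutator.

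\textbf{Step 2: endpoint signs.} At the endpoints, $\phi(r_\pm)=0$ and $\phi'(r_\pm)=\pm1$, so $\psi(r_-)=2n/r_->0$ and $\psi(r_+)=-2n/r_+<0$. Hence $V$ has a sign change. For condition (ii) of Lemma~\ref{lem:oscillation}, note $V'=\rho(\psi'+\alpha\psi)$. I compute $\psi'$ at the endpoints using $\phi''=-\alpha\phi'-\frac12\phi$:
\[\psi'=\phi'-\tfrac{2n}{r^2}\phi'+\tfrac{2n}{r}\phi''=\phi'\Bigl(1-\tfrac{2n}{r^2}-\tfrac{2n}{r}\alpha\Bigr)\quad\text{at }r_\pm.\]
Adding $\alpha\psi=\alpha\frac{2n}{r}\phi'$ gives $\psi'+\alpha\psi=\phi'(1-\frac{2n}{r^2})$. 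So at $r_-$ the sign is that of $1-2n/r_-^2<0$, because $r_-^2<2n$. At $r_+$ the sign is that of $-(1-2n/r_+^2)<0$, because $r_+^2>2n$. Both facts come from Proposition~\ref{prop:annulus-containment}. Thus $V(a)V'(a)<0$ and $V(b)V'(b)>0$, which is exactly condition (ii). This is where the containment of the sphere of radius $\sqrt{2n}$ enters.

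\textbf{Step 3: uniqueness and simplicity of the zero.} This is the step I expect to be the main obstacle. A zero of $\psi$ is a point where $\phi'=-\frac{r}{2n}\phi$. Since $\phi>0$ inside, such zeros occur only where $\phi'<0$, that is, beyond the unique critical point $r_c$ of $\phi$ (Lemma~\ref{lem:explicit-modes}(ii)). On $(r_c,r_+)$ I would use the concavity of Claim~\ref{cl:concave}. Set $g:=r\phi+2n\phi'$, which has the sign of $\psi$. Then $g'=\phi+r\phi'+2n\phi''$. This is negative on $(r_c,r_+)$, since $\phi''<0$ and $\phi'<0$ there, provided the $\phi$ term is dominated. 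Using the ODE, $2n\phi''=-2n\alpha\phi'-n\phi$, so $g'=(1-n)\phi+\phi'(r-2n\alpha)$. Moreover $r-2n\alpha=r+nr-\frac{2n(n-1)}{r}$, which is positive once $r^2>\frac{2n(n-1)}{n+1}$. That holds on the region $r>r_c$ if $r_c^2\ge 2(n-1)\cdot\frac{n}{n+1}$. I would verify this threshold from $\phi'(r_c)=0$ together with the phase-plane bounds, or from Proposition~\ref{prop:annulus-containment}. Given it, $g'<0$ wherever $\phi'<0$, so $g$ is strictly decreasing from $g(r_c)>0$ to $g(r_+)<0$. This yields a unique, simple zero, and Lemma~\ref{lem:oscillation} gives $\lambda_{2,2}<0$.

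If the threshold check fails, the fallback is a Sturm comparison. Between two zeros of $\psi$ a solution of $\mathscr L_2\psi=0$ would be a Dirichlet eigenfunction with eigenvalue $0$. I would compare it with $\phi'$, which has eigenvalue $\tfrac12$ for the smaller potential of $\ell=1$, to rule out a second zero.
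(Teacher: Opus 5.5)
Steps~1--2 and the final appeal to Lemma~\ref{lem:oscillation} coincide with the paper's proof. You use the same function $V=\rho\bigl(\phi+\tfrac{2n}{r}\phi'\bigr)$, the same endpoint identity $V'(r_\pm)=\rho(r_\pm)\,\phi'(r_\pm)\bigl(1-\tfrac{2n}{r_\pm^2}\bigr)$, and the same inequality $r_-^2<2n<r_+^2$ from Proposition~\ref{prop:annulus-containment}. Two slips need fixing:
\begin{itemize}
\item The identity behind Step~1 is $\mathscr L_2(\phi'/r)=\phi/r^2$, not a multiple of $\phi$. With it you get exactly $-\tfrac{2n}{r^2}\phi+\tfrac{2n}{r^2}\phi=0$. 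The paper obtains the same fact in the variable $s=r^2/4$ from $\mathcal L_2F=-\tfrac{n}{2s}F$ and $\mathcal L_2F_s=\tfrac{F}{2s}$.
\item The Bernoulli data are $\phi'(r_\pm)=\mp1$, which is what you actually use.
\end{itemize}

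Step~3 is where you differ from the paper. The paper excludes a second zero by eigenvalue comparison. Two consecutive zeros $x_0<x_1$ would make $\psi$ a principal Dirichlet eigenfunction of $\mathscr L_2$ on $(x_0,x_1)$ with eigenvalue $0$. But $\mathscr L_2=\mathscr L_0-\tfrac{2n}{r^2}$, and $\phi$ is the principal Dirichlet eigenfunction of $\mathscr L_0$ on the larger interval $(r_-,r_+)$ with eigenvalue $0$, which contradicts strict domain monotonicity.

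Your monotonicity argument for $g=r\phi+2n\phi'$ on $(r_c,r_+)$ is correct once the threshold $r_c^2\geq\tfrac{2n(n-1)}{n+1}$ is known. You leave that threshold open, and Proposition~\ref{prop:annulus-containment} does not supply it, since it says nothing about where the maximum of $\phi$ sits. It does follow from Lemma~\ref{lem:annular-phase}, which gives $c\geq\sqrt{2\mu}$, i.e.\ $r_c\geq\sqrt{2(n-1)}$. That is a nontrivial phase-plane fact, so your route relies on heavier input than the paper's.

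Your fallback via $\phi'$ does not work as stated. The function $\phi'$ solves $p''+\alpha p'-\tfrac{n-1}{r^2}p=0$, whose potential exceeds that of $\mathscr L_2\psi=0$ only where $r^2<2(n+1)$. But $r_+^2>2(n+1)$ by Proposition~\ref{prop:annulus-localization}, so a hypothetical interval between two zeros need not lie where Sturm comparison applies. Also, $\phi'$ is not a Dirichlet eigenfunction on any subinterval of $(r_c,r_+)$. The right comparison function is $\phi$, as in the paper.

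In fact no location information is needed at all. At any zero of $g$ in $(r_-,r_+)$, substitute $\phi'=-\tfrac{r}{2n}\phi$ into your own formula $g'=(1-n)\phi+(r-2n\alpha)\phi'$. This gives $g'=-\tfrac{(n+1)r^2}{2n}\,\phi<0$. So every zero is simple and is a crossing from positive to negative values. Together with $g(r_-)>0>g(r_+)$, this forces exactly one zero, by an argument shorter than both your threshold route and the paper's eigenvalue comparison.
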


\begin{proof}
The strategy is to apply Lemma~\ref{lem:oscillation}: we construct an explicit solution $g$ of the zero-eigenvalue equation $\mathscr{L}_2 g=0$ in $(r_-,r_+)$ (with no boundary condition imposed) such that $V:=\rho g$ has a unique, simple zero in $(r_-,r_+)$ and the inward one-sided derivative of $|V|$ is strictly negative at both endpoints. The endpoint behavior of $V$ encodes the geometric fact that the annulus contains the sphere of radius $\sqrt{2n}$ (Proposition~\ref{prop:annulus-containment}). Recall from \eqref{eq:spectrum} that
\begin{equation}
    \mathscr{L}_{2}u=u''+\left(\frac{n-1}{r}-\frac{r}{2}\right)u'+ \left( \frac12 - \frac{2n}{r^2} \right)u.
\end{equation}

We construct $g$ by briefly returning to the variable $s=r^2/4$. 
Using $\partial_r =\frac{r}{2}\partial_s$ and $r^2=4s$, the operator $\mathscr{L}_2$ becomes
\begin{equation}
    \mathcal{L}_2H=sH_{ss}+\left(\frac{n}{2}-s\right)H_s+\left(\frac12 -\frac{n}{2s}\right)H.
\end{equation}
Recalling that $F(s) =\phi(2\sqrt{s})$ solves \eqref{eq:K}, we have
\begin{equation} \label{eq:L_2F}
    \mathcal{L}_2F = -\frac{n}{2s}F.
\end{equation}
Differentiating \eqref{eq:K}, we get
\begin{equation}
    sF_{sss}+\left( \frac{n}{2}+1-s \right)F_{ss}-\frac12F_s=0,
\end{equation}
so that $F_s$ satisfies
\begin{equation} \label{eq:L_2G}
    \mathcal{L}_2 F_s=-F_{ss}+ \left(1 -\frac{n}{2s}\right)F_s=-\frac{1}{s}\left(sF_{ss}+\left(\frac{n}{2}-s\right)F_s\right)=\frac{F}{2s}.
\end{equation}
Thus, from \eqref{eq:L_2F} and \eqref{eq:L_2G}, we obtain
\begin{equation}
    \mathcal{L}_2(F+nF_s)=0.
\end{equation}
Hence, returning to the $r$--coordinates, we deduce that
\begin{equation}\label{eq:L2-g-definition}
g(r):=\phi(r)+\frac{2n}{r}\phi'(r),
\end{equation}
satisfies
\begin{equation}
        \mathscr{L}_2g=0.
\end{equation}
 We claim that $g$ has a unique root in $(r_-,r_+)$. Indeed, by the Bernoulli condition, we have $g(r_-)>0$ and $g(r_+)<0$, so $g$ has at least one root in $(r_-,r_+)$. On the other hand, if there were two consecutive roots $x_0<x_1$, then $g$ would be the principal Dirichlet eigenfunction for $\mathscr{L}_2$ on $I:=(x_0,x_1) \subset (r_-,r_+)$, with eigenvalue $\lambda^{\operatorname{Dir}}_{2,1}(I)=0$. Meanwhile, $\phi$ satisfies $\mathscr{L}_0\phi=0$ and is the principal Dirichlet eigenfunction for $\mathscr{L}_0$ on $(r_-,r_+)$, with eigenvalue $\lambda_{0,1}^{\operatorname{Dir}}(r_-,r_+)=0$. Thus, by the fact that $\mathscr{L}_2=\mathscr{L}_0-\frac{2n}{r^2}$,
 \begin{equation}
    0=\lambda^{\operatorname{Dir}}_{2,1}(I), \qquad \lambda^{\operatorname{Dir}}_{2,1}(r_-,r_+)<\lambda^{\operatorname{Dir}}_{0,1}(r_-,r_+)=0,
 \end{equation}
 a contradiction with the strict domain monotonicity of eigenvalues. Let $z\in(r_-,r_+)$ denote the unique zero of $g$.
The zero is simple, since otherwise ODE uniqueness for $\mathscr L_2g=0$ would force $g\equiv0$.

Set $V:=\rho g$. The equation $\mathscr L_2g=0$, which is equivalent to \eqref{tmp:ODE}
with $\mu=\mu_2=2n$ and $\lambda=0$, gives
\begin{equation}\label{eq:L2-V-equation}
    \left(\frac{V'}{\rho}\right)'
    +\left(1-\frac{n+1}{r^2}\right)\frac{V}{\rho}=0.
\end{equation}
We compute $V'$ at the two endpoints. Since
$\phi(r_\pm)=0$ and $\phi'(r_\pm)=\mp1$, we have
\begin{equation}\label{eq:L2-g-endpoint-values}
    g(r_\pm)=\mp\frac{2n}{r_\pm}.
\end{equation}
Writing $\alpha=\rho'/\rho$, the ODE \eqref{eq:ode} gives $\phi''(r_\pm)=-\alpha(r_\pm)\phi'(r_\pm)$, and hence, differentiating \eqref{eq:L2-g-definition},
\[
    g'(r_\pm)
    =
    \phi'(r_\pm)
    +
    2n\left(\frac{\phi''(r_\pm)}{r_\pm}
    -\frac{\phi'(r_\pm)}{r_\pm^2}\right)
    =
    \phi'(r_\pm)\left(1-\frac{2n\alpha(r_\pm)}{r_\pm}
    -\frac{2n}{r_\pm^2}\right).
\]
Substituting these values into $V'=\rho(g'+\alpha g)$, the terms involving $\alpha$ cancel, and we obtain
\begin{equation}\label{eq:L2-Vprime-endpoint-values}
    V'(r_\pm)
    =
    \mp\rho(r_\pm)\,\frac{r_\pm^2-2n}{r_\pm^2}.
\end{equation}
By Proposition~\ref{prop:annulus-containment}, $r_-^2<2n<r_+^2$.
Combining this with \eqref{eq:L2-Vprime-endpoint-values}, we obtain
\begin{equation}\label{eq:L2-V-endpoint-sign}
    V'(r_-)<0,\qquad V'(r_+)<0.
\end{equation}

We can now conclude. By \eqref{eq:L2-g-endpoint-values}, $V(r_-)=\rho(r_-)g(r_-)>0$ and $V(r_+)<0$, so \eqref{eq:L2-V-endpoint-sign} gives $V(r_-)V'(r_-)<0$ and $V(r_+)V'(r_+)>0$.
Moreover, since $\rho>0$, the function $V$ has the same unique, simple zero $z$ as $g$. Thus $V$ satisfies the hypotheses of Lemma~\ref{lem:oscillation} on $(r_-,r_+)$, with weight $\rho$ and potential $q(r)=1-\frac{n+1}{r^2}$: it solves the zero-eigenvalue equation by \eqref{eq:L2-V-equation}, and conditions (i) and (ii) hold. Since the Neumann problem \eqref{eq:osc-spectrum} for this choice of $\rho$ and $q$ is exactly \eqref{tmp:ODE} with $\mu=\mu_2$, which is equivalent to \eqref{eq:spectrum}, its eigenvalues are $(\lambda_{2,k})_{k\geq 1}$. Lemma~\ref{lem:oscillation} therefore gives $\lambda_{2,2}<0$.
\end{proof}

The first mode with $\ell=4$ is treated by an explicit supersolution.

\begin{proposition}\label{prop:lambda41}
For every $n\geq2$, $\lambda_{4,1}<0$.
\end{proposition}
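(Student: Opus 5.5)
Apply Lemma~\ref{lem:supersolution} on $[r_-,r_+]$ with weight $\rho=r^{n-1}e^{-r^2/4}$ and potential
\[
q(r)=1-\frac{\mu_4-(n-1)}{r^2}=1-\frac{3n+9}{r^2},
\]
where $\mu_4=4(n+2)$. This is exactly the Neumann problem \eqref{tmp:ODE} with $\mu=\mu_4$. It therefore suffices to produce a positive $V$ on $[r_-,r_+]$ satisfying two conditions: $(V'/\rho)'+qV/\rho<0$, and $V'(r_-)\le 0\le V'(r_+)$. Equivalently, for $\psi=V/\rho$ we need $\mathscr L_4\psi<0$ together with $\psi'+\alpha\psi\le0$ at $r_-$ and $\ge0$ at $r_+$.

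The natural first trial is $V\equiv1$, i.e.\ $\psi=1/\rho$, since it is the $\lambda_{1,1}=1$ eigenfunction. It satisfies the Neumann condition with equality, and $(V'/\rho)'+qV/\rho=q/\rho$. So this trial works only if $q<0$ on the annulus, i.e.\ $r_+^2<3n+9$. By Theorem~\ref{thm:annulus-geometry}, $r_+^2\approx 2n$ for large $n$, so this is fine asymptotically. But the zeroth-order term only has margin $\approx\frac{3-\ell}{2}=-\frac12$, and in small dimensions $r_+^2$ may exceed $3n+9$. I would therefore refine the trial to a quadratic correction
\[
V=1+c\,(r-m)^2 \quad\text{or}\quad V=1+c\,(r^2-2(n-1))^2,
\]
centred at the midpoint $m=\tfrac12(r_-+r_+)$, with $c>0$ small. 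Its derivative $V'=2c(r-m)$ is negative at $r_-$ and positive at $r_+$, which gives condition (ii) automatically.

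Condition (i) reads
\[
V''-\alpha V'+qV<0,
\]
where $\alpha=\frac{n-1}{r}-\frac r2$. Since $\alpha>0$ for $r<\sqrt{2(n-1)}$ and $\alpha<0$ beyond, the drift term $-\alpha V'=-2c\alpha(r-m)$ is negative near both ends as long as $m$ is close to $\sqrt{2(n-1)}$. This negativity compensates the positive contribution $2c$ from $V''$ and the possibly positive $q$ near $r_+$. Multiplying by $r^2$, condition (i) becomes a polynomial inequality in $r$ on $[r_-,r_+]$. I will choose $c$ (e.g.\ $c=\tfrac14$, or a value depending on $n$) to optimise it. I will then verify it using only the product bound $r_-r_+<2(n-1)$ and the width bound $r_+-r_-<\sqrt{14}$ from Theorem~\ref{thm:annulus-geometry}\ref{it:geom-width}. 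The constraint $r_-<\sqrt{2(n-1)}<\sqrt{2n}<r_+$ from Proposition~\ref{prop:annulus-containment} will serve to locate the annulus.

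The main obstacle is closing this inequality uniformly in $n\ge2$, particularly for small $n$, where $(3n+9)/r^2$ is not clearly larger than $1$ at $r_+$ and the margins are tight. My plan is to write $r=\sqrt{2(n-1)}+x$ with $|x|$ bounded by the width. I will then show the resulting expression is negative by treating it as concave or convex in $x$ and checking the extreme configurations allowed by the product and width bounds. If one quadratic trial fails, I would add a linear term $b(r-m)$ to tilt $V$ toward the outer edge, where $q$ is largest. The outward bias of the midpoint suggests that this tilt should help.
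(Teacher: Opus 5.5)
Your reduction to Lemma~\ref{lem:supersolution} with $q=1-\frac{3n+9}{r^2}$ is right. So is the observation that $V\equiv 1$ works exactly when $r_+^2<3n+9$. The refinement, however, rests on a sign error and cannot close the argument.

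Near $r_-$ one has $\alpha>0$ and $r-m<0$; near $r_+$ one has $\alpha<0$ and $r-m>0$. So $-2c\alpha(r-m)>0$ near \emph{both} ends. This is not specific to your trial. Condition (ii) forces $V'(r_-)\le 0\le V'(r_+)$, and $\alpha(r_-)>0>\alpha(r_+)$, so the drift term $-\alpha V'$ is nonnegative at both endpoints for \emph{every} admissible $V$. In Neumann form the drift repels from $\sqrt{2(n-1)}$.

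Hence at $r_+$ you need $V''(r_+)+q(r_+)V(r_+)<0$, and if $q(r_+)\ge 0$ this forces $V''(r_+)<0$. Your trials $1+c(r-m)^2$ and $1+c(r^2-2(n-1))^2$, with or without a linear tilt, all have $V''(r_+)>0$. A concave quadratic cannot satisfy (ii). So these corrections fail exactly in the case they were meant to handle, and when $q(r_+)<0$ they are not needed.

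Nor can that case be excluded with the information you intend to use. The product and width bounds only give $r_+<\frac12\bigl(\sqrt{14}+\sqrt{8n+6}\bigr)$, which is compatible with $r_+^2>3n+9$ for every $2\le n\le 20$. At $n=10$, for instance, they allow $r_+^2\approx 42.3>39$, and the remaining bounds of Theorem~\ref{thm:annulus-geometry} do not exclude this.

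The missing idea is a supersolution that increases across the annulus with $V'=0$ at both endpoints, which removes the adverse drift there, and that is concave near the outer edge. The paper works in $t=r/\sqrt2$ with $p=ab$ and takes $V(t)=t^{-2p/5}\exp\bigl((2(a+b)t-t^2)/5\bigr)$. Its logarithmic derivative $g=\frac{2(t-a)(b-t)}{5t}$ is nonnegative, vanishes at $a$ and $b$, and has $g'(b)<0$.

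Using $n-1-p>0$ (the product bound), the supersolution inequality reduces to $F_p:=g'+g^2+(t-p/t)g+2-3(p+4)/t^2<0$. With $u=(t-a)(b-t)$ and $v=(t-a)-(b-t)$ one finds $t^2F_p=-t^2+\frac{v(2u+13)}{5}t+\frac{14u^2+65u-300}{25}$. Completing the square in $t$ and using $v^2=(b-a)^2-4u<7-4u$ (the width bound) gives $t^2F_p<-\frac{16u^3+124u^2+52u+17}{100}<0$. The width bound is used almost at its threshold: at $u=0$ the argument needs $(b-a)^2<\frac{1200}{169}$.
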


\begin{proof}
Work in the scaled variables \eqref{tmp:annular-scaled-notation} or \eqref{eq:annular-scaled-notation} and set
$\varrho(t):=t^{n-1}e^{-t^2/2}$. Since
$\mu_4-(n-1)=3(n+3)$, the change of variables
$V(t):=\zeta(\sqrt2\,t)$ transforms the equation \eqref{tmp:ODE} into
\begin{equation}\label{eq:l4-scaled}
    \left(\frac{V'}{\varrho}\right)'
    +\left(2-\frac{3(n+3)}{t^2}\right)\frac{V}{\varrho}
    =2\lambda\frac{V}{\varrho},
    \qquad V'(a)=V'(b)=0,
\end{equation}
whose eigenvalues are $(2\lambda_{4,k})_{k\geq1}$. Thus, by
Lemma~\ref{lem:supersolution}, it is enough to find a positive supersolution $V$ with
$V'(a)\leq0\leq V'(b)$ such that
\[
    \mathcal HV:=\varrho\left(\frac{V'}{\varrho}\right)'
    +\left(2-\frac{3(n+3)}{t^2}\right)V<0.
\]

Set $p:=ab$ and define
\[
    V(t):=t^{-2p/5}
    \exp\left(\frac{2(a+b)t-t^2}{5}\right),
    \qquad
    g(t):=\frac{V'(t)}{V(t)}
    =\frac{2(t-a)(b-t)}{5t}.
\]
Then $V>0$, while $g\geq0$ on $[a,b]$ and $g(a)=g(b)=0$; hence
$V'(a)=V'(b)=0$. By Lemma~\ref{lem:annular-phase},
$\delta:=n-1-p>0$. Since $\varrho'/\varrho=(n-1)/t-t$,
\[
    \frac{\mathcal HV}{V}
    =F_p-\delta\left(\frac gt+\frac3{t^2}\right)<F_p,
    \qquad
    F_p:=g'+g^2+\left(t-\frac pt\right)g
      +2-\frac{3(p+4)}{t^2},
\]
because $g\geq0$ and $t>0$.

For $t\in[a,b]$, put
\[
    x:=t-a,\qquad y:=b-t,\qquad u:=xy,\qquad v:=x-y.
\]
Then $u'=-v$, and
\[
    p=(t-x)(t+y)=t^2-vt-u,
    \qquad
    g=\frac{2u}{5t},
    \qquad
    g'=-\frac{2v}{5t}-\frac{2u}{5t^2}.
\]
Substitution into the definition of $F_p$ gives
\[
    t^2F_p
    =-t^2+\frac{v(2u+13)}5\,t
      +\frac{14u^2+65u-300}{25}.
\]
Completing the square in $t$ and using Proposition~\ref{prop:annulus-width},
\[
    -t^2+\frac{v(2u+13)}5\,t
    \leq \frac{v^2(2u+13)^2}{100},
    \qquad
    v^2=(x-y)^2=(x+y)^2-4xy
    =(b-a)^2-4u<7-4u.
\]
Since $u=xy\geq0$, it follows that
\[
\begin{split}
    t^2F_p
    &<\frac{(7-4u)(2u+13)^2}{100}
      +\frac{14u^2+65u-300}{25}\\
    &=-\frac{16u^3+124u^2+52u+17}{100}<0.
\end{split}
\]
Thus $F_p<0$ on $[a,b]$. It follows that
$\mathcal HV/V<F_p<0$, and hence $\mathcal HV<0$ there. Since
$V>0$ and $V'(a)=V'(b)=0$, Lemma~\ref{lem:supersolution} applied
to \eqref{eq:l4-scaled} gives $2\lambda_{4,1}<0$, and hence
$\lambda_{4,1}<0$.
\end{proof}

\subsection{Positivity of $\lambda_{3,1}$}\label{ssec:l3}

We now prove that $\lambda_{3,1}>0$ for every $n\geq2$. This is the borderline case described in Section \ref{ssec:main-ideas}: by \eqref{eq:fine-radius-estimate}, the zeroth-order coefficient $1-\frac{\mu_\ell-(n-1)}{r^2}$ of the Rayleigh quotient \eqref{eq:rayleigh-p} equals $\frac{3-\ell}{2}+O(n^{-1/2})$ uniformly on the annulus, so the formal limiting problem for $\ell=3$ has largest eigenvalue zero, and the sign of $\lambda_{3,1}$ is decided by the finer geometric information of Theorem \ref{thm:annulus-geometry}.

The first step is a reduction to a moment inequality for the annular radii.

\begin{lemma}\label{lem:l3-moment}
Let $n\geq2$, and suppose that
\begin{equation}\label{eq:l3-moment}
    \int_{r_-}^{r_+}\bigl(r^2-2(n+2)\bigr)\,r^4\rho(r)\,dr>0 .
\end{equation}
Then $\lambda_{3,1}>0$.
\end{lemma}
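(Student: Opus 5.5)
The plan is variational. $\lambda_{3,1}$ is the top eigenvalue of the Neumann problem \eqref{tmp:ODE} with $\mu=\mu_3=3(n+1)$, so $\mu_3-(n-1)=2(n+2)$ and the potential is
\[
q(r)=1-\frac{2(n+2)}{r^2}.
\]
By the max characterization behind \eqref{eq:rayleigh-p}, it suffices to find one admissible test function whose quadratic form $\mathcal Q$ is strictly positive. Natural boundary conditions impose no constraint, so any $H^1$ function is admissible.

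The observation driving the choice of test function is that the integrand of \eqref{eq:l3-moment} is exactly
\[
\bigl(r^2-2(n+2)\bigr)r^4\rho=q\,r^6\rho=q\,\frac{\zeta^2}{\rho}\quad\text{for }\zeta=r^3\rho,\ \text{i.e. }\psi=r^3.
\]
So the moment is the potential part of $\mathcal Q$ at $\psi=r^3$. The whole issue is to dispose of the kinetic term $-\int(\zeta')^2\rho^{-1}$, which for this naive choice does not vanish.

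To handle the kinetic term I would use the identity valid in the $\psi$-picture,
\[
\mathcal Q(\psi)=\int\rho\,\psi\,\mathscr L_3\psi\,dr-\Bigl[\rho\,\psi\,(\psi'+\alpha\psi)\Bigr]_{r_-}^{r_+}.
\]
A direct computation gives $\mathscr L_3 r^3=-r^3$ and $\psi'+\alpha\psi=-\tfrac{r^2}{2}\bigl(r^2-2(n+2)\bigr)$. Hence the boundary term is itself a moment-type expression, and after integrating it by parts the bare choice $\psi=r^3$ gives $\mathcal Q=-\tfrac14\int r^4\rho\,(r^2-2n-8)(r^2-2n-4)$. This is not yet the moment.

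I would therefore factor the operator. The $\ell=1$ problem has the explicit ground state $\zeta\equiv1$ (Lemma~\ref{lem:explicit-modes}(i)), with potential $1-\tfrac{0}{r^2}$ shifted. Writing the $\ell=3$ form in ground-state representation relative to a suitable explicit positive function $V$, as in \eqref{eq:osc-factorization}, reduces $\mathcal Q[Vw]$ to:
\begin{itemize}
\item a nonpositive kinetic term $-\int V^2\rho^{-1}(w')^2$,
\item a potential term $\int[(V'/\rho)'+qV/\rho]Vw^2$,
\item endpoint terms.
\end{itemize}
Equivalently, one can pass to the dual (Darboux-transformed) problem for $\eta=\zeta'/\rho$, which turns the Neumann problem into a Dirichlet one. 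I would choose $V$ among explicit functions of the form $r^k$, $r^k\rho^{\pm1}$ and their combinations, so that two things happen:
\begin{itemize}
\item the endpoint terms vanish or have a good sign, using $\phi(r_\pm)=0$, $\phi'(r_\pm)=\mp1$ if $\phi$ enters the construction;
\item for $w\equiv1$ the interior term is a positive multiple of $(r^2-2(n+2))r^4\rho$.
\end{itemize}
With $w\equiv1$ the kinetic term then drops out, and $\mathcal Q>0$ follows directly from \eqref{eq:l3-moment}.

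The main obstacle is finding this exact factorization, that is, the right $V$ or intertwining map. The naive choice $\psi=r^3$ leaves the wrong quadratic $(r^2-2n-8)(r^2-2n-4)$ in place of the linear factor $r^2-2(n+2)$. My first attempt would be to combine $r^3$ with lower-order terms, $\psi=r^3+c\,r$, and tune $c$ so that the cross terms from $\mathscr L_3$ and from the boundary integration by parts cancel the extra factor. I would check the endpoint contributions using the Robin boundary condition in \eqref{eq:spectrum}.
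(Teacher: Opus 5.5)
\paragraph{Verdict.} Your setup is sound, but there is a genuine gap: no test function is ever produced, and the ansatz you propose for finding one cannot succeed. The correct pieces are the variational characterization, $\mu_3-(n-1)=2(n+2)$, the identity $\mathcal Q(\psi)=\int\rho\,\psi\,\mathscr L_3\psi-[\rho\psi(\psi'+\alpha\psi)]$, and your computation for $\psi=r^3$.

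\paragraph{Why $w\equiv1$ with an elementary $V$ cannot work.} With $w\equiv1$, the ground-state representation is just $\mathcal Q$ evaluated at the single function $\zeta=V$. The moment hypothesis is \emph{sharp}. Writing $\zeta=r^{-2}g$,
\[
    \mathcal Q_3[r^{-2}g]=2\Bigl[\frac{g^2}{r^5\rho}\Bigr]_{r_-}^{r_+}-\int_{r_-}^{r_+}\frac{(g')^2}{r^4\rho}\,dr .
\]
First maximize over $g$ with prescribed endpoint values; Cauchy--Schwarz forces $g'\propto r^4\rho$. Then maximize over the two endpoint values. This shows that $\mathcal Q_3$ takes a positive value if and only if the moment \eqref{eq:l3-moment} is positive.

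Consequently, on an interval where the moment vanishes, $\lambda_{3,1}=0$. There the only nonzero functions with $\mathcal Q_3\geq0$ are multiples of the top eigenfunction $r^{-2}\bigl(C_1+C_2\int_{r_-}^r s^4\rho\,ds\bigr)$, for specific constants. This is an incomplete Gamma function, non-elementary for odd $n$. The lemma uses nothing about $r_\pm$ beyond the moment. So an identity $\mathcal Q[V]=c\cdot(\text{moment})+(\text{nonnegative terms})$, with $V$ a fixed short combination of $r^k$, $r^k\rho^{\pm1}$ (in particular $\psi=r^3+cr$), would also give $\mathcal Q[V]\geq0$ on such intervals. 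That is impossible. Heuristically, $\zeta=\rho\,(r^3+cr)$ is Gaussian-peaked across the annulus, whereas the near-optimal test function is almost constant there.

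\paragraph{The missing idea.} It sits in your own candidate list, but must be used differently. The function $V=r^{-2}$ solves $(V'/\rho)'+qV/\rho=0$ exactly. Equivalently, the $\ell=3$ operator factors as $\bigl(\partial_r+\frac r2-\frac{n+1}r\bigr)\bigl(\partial_r+\frac2r\bigr)$. So in the ground-state representation relative to $r^{-2}$, the potential term vanishes identically and one obtains the formula above.

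You must then take $w=g$ \emph{nonconstant}. Taking $w\equiv1$ only yields $2(1/v-1/u)$, with $u=r_-^5\rho(r_-)$ and $v=r_+^5\rho(r_+)$, and the moment does not control its sign.

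The paper chooses $g'=r^4\rho$ and $g(r_-)=u/2$. With $I=\int r^4\rho$, this gives $\mathcal Q_3=\frac{(u+2I)(u+2I-v)}{2v}$. The identity $(r^5\rho)'=r^4\rho\,(n+4-\frac{r^2}2)$ turns $u+2I-v$ into $\frac12\int(r^2-2(n+2))r^4\rho\,dr$, which is positive by hypothesis.
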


\begin{proof}
Let $\mathcal Q_3$ denote the numerator of the Rayleigh quotient \eqref{eq:rayleigh-p} for
$\mu=\mu_3$. Since $\mu_3-(n-1)=2(n+2)$, the corresponding differential
expression factors as
\[
    \mathscr H_3:=\partial_r^2 -\left(\frac{n-1}{r}-\frac r2\right)\partial_r
    +1-\frac{2(n+2)}{r^2}
    =
    \left(\partial_r+\frac r2-\frac{n+1}{r}\right)
    \left(\partial_r+\frac2r\right).
\]
Noting that $r^{-2}$ is the integrating factor for the second operator on the right, writing $\zeta=r^{-2}g$ gives
\begin{equation} \label{eq:l3-reduced}
    \mathcal Q_3[r^{-2}g]
    =2\left[\frac{g^2}{r^5\rho}\right]_{r_-}^{r_+}
    -\int_{r_-}^{r_+}\frac{(g')^2}{r^4\rho}\,dr .
\end{equation}
Set $I:=\int_{r_-}^{r_+}r^4\rho\,dr$, $u:=r_-^5\rho(r_-)$, and
$v:=r_+^5\rho(r_+)$, and choose
\[
    g(r):=\frac u2+\int_{r_-}^r s^4\rho(s)\,ds.
\]
We note that (by the Cauchy-Schwarz inequality), $g'=r^4\rho(r)$ is chosen to minimize the value of the negative term in \eqref{eq:l3-reduced} given the endpoint values $g(r_{\pm})$.
Since $g(r_-)=u/2$, $g(r_+)=u/2+I$, and $g'=r^4\rho$, direct substitution
gives
\[
    \mathcal Q_3[r^{-2}g]
    =\frac{(u+2I)^2}{2v}-\frac u2-I
    =\frac{(u+2I)(u+2I-v)}{2v}.
\]
Moreover,
\[
    u+2I-v
    =\frac12\int_{r_-}^{r_+}
       \bigl(r^2-2(n+2)\bigr)r^4\rho(r)\,dr>0
\]
by the assumption \eqref{eq:l3-moment}. Thus $\mathcal Q_3[r^{-2}g]>0$, and the variational
characterization gives $\lambda_{3,1}>0$.
\end{proof}

Thanks to Lemma \ref{lem:l3-moment}, the sign of $\lambda_{3,1}$ can be determined by the location of the radii $r_{\pm}$, which in turn depends on the dimension. However, we will explain below why the moment inequality is difficult to verify numerically and what it depends on. In fact, the integral in \eqref{eq:l3-moment} has leading order in $O(1)$ and does not grow with dimensions, which matches with the heuristics above that the largest eigenvalue of the formal limiting problem is
zero. 

To see this, we center the interval of integration $[r_-,r_+]$ by setting
\[ r_0:= \frac{r_+ + r_-}{2}, \qquad \bar{d}:= \frac{r_+ - r_-}{2}. \]
By the asymptotics of the radii in \eqref{eq:fine-radius-estimate} and proven in Appendix \ref{sec:radius-est}, we have that
\begin{equation}\label{tmp:center-shift}
    r_0 = \sqrt{2(n-1)} + O(n^{-1/2}), \qquad \bar{d} = \sqrt{2}\kappa + O(n^{-1/2}) = O(1). 
\end{equation}
Note that the boundary data, $\phi(r_-) = \phi(r_+)$ and $\phi'(r_-) = -\phi'(r_+)$, are those of a profile even about the midpoint $r_0$, but the equation \eqref{eq:ode} is not invariant under the reflection, because the damping coefficient $\frac{n-1}{r} - \frac{r}{2}$ is not odd with respect to its unique root $\sqrt{2(n-1)}$.
To quantify the mismatch between the symmetric boundary data and asymmetric equation, we also set
\[ \bar{R}:= r_0^2 - 2(n-1), \]
which is bounded thanks to \eqref{tmp:center-shift}. (In fact, it is also bounded from below by Lemma \ref{lem:annular-midpoint}.)

By letting $g_n(r):= r^4 \rho(r) = r^{n+3} e^{-r^2/4}$, the integral in \eqref{eq:l3-moment} is the sum of
\begin{equation}\label{tmp:integral-center}
    \int_0^{\bar{d}} \left[ (r_0 \pm s)^2 - 2(n+2) \right] g_n(r_0\pm s) ds. 
\end{equation} 
We analyze the integral on the right half, and the left half is analogous. For bounded $s$, Taylor expansion of the normalized weight gives
\[
 \log\frac{g_n(r_0+s)}{g_n(r_0)}
 =(n+3)\log\left(1+\frac{s}{r_0}\right)
   -\frac{r_0 s}{2}-\frac{s^2}{4}
 = -\frac{s^2}{2}  + \frac{s}{6r_0} \left(24-3\bar{R} + s^2 \right)  +O(r_0^{-2}).
\]
In particular the two terms of order $r_0$ cancels.
Thus exponentiating gives
\[
 \frac{g_n(r_0+s)}{g_n(r_0)}
 =e^{-s^2/2}\left[
 1+\frac{s}{6r_0} \left(24-3\bar{R} + s^2 \right) +O(r_0^{-2})
 \right].
\]
Therefore the integral on the right half \eqref{tmp:integral-center} has the following asymptotics
\begin{equation}\label{tmp:integral-asymptotics}
    \int_0^{\bar{d}} e^{-s^2/2} \left[ 2sr_0 + \left( \bar{R}-6 + (9-\bar{R}) s^2 + \frac{s^4}{3} \right) + O(r_0^{-1}) \right] ds. 
\end{equation} 
Note that the leading order term $2sr_0$ is odd, so it cancels with the integral on the left half. Hence the sign of the full integral is determined by the sign of the next order expansion, the $O(1)$ term
\begin{equation}\label{tmp:integral-leading-order}
    2 \int_0^{\bar{d}} e^{-s^2/2} \left( \bar{R}-6 + (9-\bar{R}) s^2 + \frac{s^4}{3} \right) ds.
\end{equation}

This formula makes the sign behavior visible. Proposition~\ref{prop:annulus-width}
and Lemma~\ref{lem:annular-midpoint}, after rescaling, give $\bar{R}<\bar{d}^2<7/2$. Hence the polynomial
in parentheses is negative near $s=0$ and strictly increasing as a function of
$s^2$, since its derivative with respect to $s^2$ is
$9-\bar{R}+\frac23 s^2>0$. Thus the negative contribution near the midpoint of the
annulus competes with positive contributions nearer the two endpoints. For
$n\geq4001$, the sharper radius and midpoint estimates give explicit lower bounds for $\bar{R}$ and $d$; % $R>3/5$ and $d>8/7$; 
moreover, the exact moment is increasing in both parameters in this
range by Lemma~\ref{lem:l3-monotone}. It is therefore enough to verify the positivity of the integral \eqref{tmp:integral-leading-order} at the lower bounds of $\bar{R}$ and $\bar{d}$, see \eqref{eq:l3-J0}.
In particular, at the worst admissible corner of $(\bar{R},\bar{d})$ the
positive contributions near the endpoints outweigh the negative contribution
near the midpoint.

As we go further down in the asymptotics of the integral \eqref{tmp:integral-asymptotics}, note that every odd power of $r_0^{-1}$ corresponds to an odd function in $s$, so the integrals on the left and right half intervals cancel each other. The next surviving term is therefore of order $r_0^{-2} = O(n^{-1})$.
For $n\geq4001$, this error is smaller than
the positive margin gained from the leading order term \eqref{tmp:integral-leading-order}.

This cancellation illustrates the borderline nature of
the $\ell=3$ mode: The largest eigenvalue of the formal limiting problem is
zero, and here the sign comes from the first asymmetric correction to the
Gaussian weight, which a leading-order estimate discards. This is why the
available margins are small and coarse radius bounds do not suffice.

Appendix~\ref{sec:l3-details} makes the above argument quantitative.
Lemma~\ref{lem:l3-monotone} proves the monotonicity used above. For
$n\geq4001$, the sharper radius and midpoint estimates give explicit lower bounds of $\bar{R}$ and $\bar{d}$ (modulo rescalings), %$(R,d)$ above the corner $(3/5,8/7)$, 
while Lemma~\ref{lem:l3-corner} controls the
$O(n^{-1})$ remainder there. These ingredients are combined in
Lemma~\ref{lem:l3-tail}. Finally, Lemma~\ref{lem:l3-finite} handles the
dimensions $2\leq n\leq4000$ by a rigorous finite computation.

\begin{proposition}\label{prop:lambda31}
For every $n\geq2$, $\lambda_{3,1}>0$.
\end{proposition}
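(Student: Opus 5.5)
By Lemma~\ref{lem:l3-moment}, it suffices to verify the moment inequality \eqref{eq:l3-moment} for every $n\geq2$. We split into the range $n\geq4001$, handled analytically, and the range $2\leq n\leq4000$, handled by the certified computation of Lemma~\ref{lem:l3-finite}.

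\textbf{Large dimensions.} Center the interval at $r_0=\tfrac12(r_++r_-)$ with half-width $\bar d$, and write $\bar R=r_0^2-2(n-1)$. Write the moment as $J_n(\bar R,\bar d)$, viewed as a function of the midpoint and the width.

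First, Lemma~\ref{lem:l3-monotone} shows that $J_n$ is increasing in both the midpoint and the width. It is therefore enough to evaluate $J_n$ at explicit lower bounds for $\bar R$ and $\bar d$. For the width, we use part~\ref{it:geom-loc} of Theorem~\ref{thm:annulus-geometry} together with the rational bounds for $\kappa$ from Lemma~\ref{lem:kappa-bounds}. For the midpoint, we use the outward-bias lower bound in part~\ref{it:geom-bias}, expressed through the half-width. In this way both lower bounds depend only on dimension-free quantities, up to $O(n^{-1/2})$ corrections that we control explicitly.

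Next, we expand $J_n$ around this corner as in \eqref{tmp:integral-asymptotics}. The odd term $2sr_0$ cancels between the two halves. The $O(1)$ term is the integral \eqref{tmp:integral-leading-order}, evaluated at the corner values; call it $J_0$, as in \eqref{eq:l3-J0}. We check by an explicit quadrature with rational bounds that $J_0$ is bounded below by a fixed positive constant. Since odd powers of $r_0^{-1}$ also cancel, the remainder is $O(r_0^{-2})=O(n^{-1})$. Lemma~\ref{lem:l3-corner} supplies an explicit constant for this remainder, using Taylor bounds with remainder on $\log(1+s/r_0)$ and on the exponential. For $n\geq4001$ the remainder is smaller than the margin $J_0$, and combining these steps gives Lemma~\ref{lem:l3-tail}.

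\textbf{Small dimensions.} For each $2\leq n\leq4000$ we enclose $r_\pm$ rigorously and then bound the moment.
\begin{enumerate}[label=(\roman*), topsep=0pt]
\item Enclose the shooting parameter $\alpha_0$ by interval evaluation of Kummer functions. The strict monotonicity of the shooting slope $p(\alpha)$ from Proposition~\ref{prop:odesol} turns a sign change $p(\alpha_1)>-1>p(\alpha_2)$ into the enclosure $\alpha_0\in(\alpha_1,\alpha_2)$.
\item By \eqref{eq:annular-shooting-monotonicity}, $b(\alpha)$ is monotone, so this also encloses $r_+$.
\item By Lemma~\ref{lem:l3-monotone}, the moment is minimized at the lower ends of the enclosures; evaluate it there with an interval quadrature, scaled by $e^{-r_0^2/4}r_0^{-(n+3)}$ to avoid overflow, and check that it is positive.
\end{enumerate}
Strictly, Lemma~\ref{lem:l3-monotone} is a statement about the midpoint and width, so in step (iii) the monotonicity must be translated into the coordinates $(r_-,r_+)$. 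Alternatively, we bound the integral directly over the box of admissible pairs $(r_-,r_+)$.

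\textbf{Main obstacle.} The hardest step is the tail estimate in the analytic range. The margin $J_0$ is small because the polynomial $\bar R-6+(9-\bar R)s^2+s^4/3$ is negative near the midpoint and positive only near the endpoints. Its positivity relies on the lower bound from the outward bias, and the $O(n^{-1})$ remainder has to be tracked with explicit constants small enough for the threshold $n=4001$ to close.
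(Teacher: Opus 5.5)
For $n\geq4001$ your argument is the paper's: the reduction via Lemma~\ref{lem:l3-moment}, Lemma~\ref{lem:l3-monotone}, the corner estimate of Lemma~\ref{lem:l3-corner}, and Lemma~\ref{lem:l3-tail}. There, $d>\tfrac87$ comes from Proposition~\ref{prop:annulus-localization} and Lemma~\ref{lem:kappa-bounds}, and $R>\gamma_d>\gamma_{8/7}>\tfrac35$ comes from Lemma~\ref{lem:annular-midpoint}. You should also record $R<d^2<\tfrac74<3$ and $m>d$, which Lemma~\ref{lem:l3-monotone} requires.

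For $2\leq n\leq4000$ your certified computation differs from Lemma~\ref{lem:l3-finite}:
\begin{itemize}
\item \textbf{The paper} evaluates no special functions. It integrates \eqref{eq:annular-y-equation} with validated Taylor models, forward from a trial point $A_n$ and backward from a trial point $B_n$. Corollary~\ref{cor:shooting-bracket} then gives $A_n<a$ and $B_n<b$. It certifies $A_n^2<n+2<B_n^2$ and the scaled moment on $[A_n,B_n]$, and uses a Sturm-separation argument to know that the detected sign change is the first return.
\item \textbf{You} enclose $\alpha_0$ through the closed-form Wronskian expression for $p(\alpha)$ and the monotonicity of $p$ and $b(\alpha)$.
\end{itemize}
Your route is sound, and it gets the first-return property for free, since $M/U$ is strictly monotone on $(s_*,\infty)$. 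It does, however, rest on rigorous evaluations of $M(-\tfrac12,\tfrac n2,\cdot)$ and $U(-\tfrac12,\tfrac n2,\cdot)$ with $\tfrac n2$ up to $2000$ near $s\approx\tfrac n2$, plus certified root-finding for $\beta(\alpha)$. The paper's verifier uses nothing beyond ball arithmetic on an explicit recurrence. Only the lower side of your enclosure matters: a single $\alpha_1<\alpha_0$ already gives $2\sqrt{\alpha_1}<r_-$ and $2\sqrt{\beta(\alpha_1)}<r_+$.

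In your step (iii), Lemma~\ref{lem:l3-monotone} is the wrong tool. Its hypotheses $R\geq\tfrac35$ and $d\geq\tfrac87$ are only established for $n\geq4001$. Also, monotonicity in midpoint and width does not give monotonicity in each endpoint.

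What you need is the elementary endpoint argument that closes the paper's proof:
\begin{itemize}
\item The derivative of the integral in \eqref{eq:l3-moment} with respect to $r_-$ is $-(r_-^2-2(n+2))r_-^4\rho(r_-)$. This is positive because $r_-^2<r_-r_+<2(n-1)$ by Lemma~\ref{lem:annular-phase}.
\item The derivative with respect to $r_+$ is $(r_+^2-2(n+2))r_+^4\rho(r_+)$, which is positive only once $r_+^2>2(n+2)$.
\end{itemize}
So your computation must also certify that the lower end of the $r_+$-enclosure exceeds $\sqrt{2(n+2)}$. With that, the moment at $(r_-,r_+)$ dominates its value at the lower ends of the enclosures, and no bounds on $R$ or $d$ are needed for small $n$.
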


\begin{proof}
Proposition~\ref{prop:l3-moment-positive} verifies
\eqref{eq:l3-moment}; hence Lemma~\ref{lem:l3-moment} gives the result.
\end{proof}

\begin{proof}[{Proof of Theorem~\ref{prop:spectrum-annular}}]
The row $k\geq3$ is Lemma~\ref{lem:mode-structure}(ii). By Lemma~\ref{lem:explicit-modes}, $\lambda_{1,1}=1$, $\lambda_{1,2}=\frac12$, and $\lambda_{0,2}=1$; since the eigenvalues for fixed $\ell$ are strictly decreasing in $k$, this also gives $\lambda_{0,1}>\lambda_{0,2}=1$. In the row $k=2$, Lemma~\ref{lem:lambda22} gives $\lambda_{2,2}<0$, and then $\lambda_{3,2}\leq\lambda_{2,2}<0$ and $\lambda_{4,2}\leq\lambda_{2,2}<0$ by Lemma~\ref{lem:mode-structure}(i). In the row $k=1$, Proposition~\ref{prop:lambda31} gives $\lambda_{3,1}>0$, and then $\lambda_{2,1}\geq\lambda_{3,1}>0$ by Lemma~\ref{lem:mode-structure}(i); Proposition~\ref{prop:lambda41} gives $\lambda_{4,1}<0$, and the same monotonicity gives $\lambda_{\ell,k}<0$ for every $\ell\geq4$ and $k\geq1$.
\end{proof}

\section{Applications of spectral analysis: uniqueness of tangent flow and stability}\label{sec:spectrum-applications}

We briefly explain the setup in \cite{stability} as follows, to illustrate the implications of the linear spectral analysis in Theorem \ref{prop:spectrum-annular}. The authors study solutions $v(s,y)$ to \eqref{eq:respar} near an equilibrium $\phi$.
For each $s$ such that the boundary of $\Omega_s:= \{v(s,\cdot)>0 \}$ is a graph over $\partial\Omega=\partial\{\phi>0\}$ with small norm, there is a vector field $\Psi(s,\cdot)$ supported in a small tubular neighborhood of $\partial\Omega$ such that $\operatorname{Id} + \Psi(s,\cdot): \Omega \to \Omega_s$ is a diffeomorphism. % mapping $\Omega$ onto $\Omega_s$. 
% More precisely, suppose $\partial\Omega_s = \Graph_{\partial\Omega} h_s$ and let $\nu_\xi$ denote the unit normal derivative at $\xi \in \partial\Omega$ pointing in the positivity set. We define
% \[ \Psi(s,\xi):= h_s(\xi) \nu_{\xi}, \qquad \text{ for each } \xi\in \partial\Omega; \]
% and for each $\xi \in \mathcal{N}(\Omega) := \{\xi\in  \Omega: \dist(\xi, \partial\Omega)<\tau_0\}$ for some $\tau_0$ sufficiently small (depending on the $C^2$ norm of $\partial\Omega$), letting $(\xi', \tau)$ denote the Fermi coordinate of $\xi$, that is, $\xi'$ denote a projection of $\xi$ onto $\partial\Omega$ and $\tau=|\xi-\xi'|$, we define
% \begin{equation}\label{def:transf}
%     \Psi(s,\xi) := \psi(\tau) h_s(\xi') \nu_{\xi'}, 
% \end{equation} 
% where $\psi$ is a given smooth cut-off function taking value $1$ near the origin and vanishing after $\tau_0$.

To fix notation, for each $s$ in a suitable time window $I$ and $y\in \Omega_s$, we write $\xi\in \Omega$ such that $y=\xi + \Psi(s,\xi)$. Then the difference
\begin{equation}\label{def:perturb-domain-transf}
    \eta(s,\xi) := v(s,y) - \left( \phi(\xi) + \Psi(s,\xi) \cdot \nabla \phi(\xi) \right) 
\end{equation} 
satisfies a nonlinear equation in the \emph{fixed domain} $\Omega$
\begin{equation}\label{eq:perturb}
    \left\{\begin{array}{ll}
    \partial_s \eta = \mathscr{L} \eta + F(\eta(s,\cdot)) & \text{ in } \Omega \\
    \mathscr{B} \eta = G(\eta(s,\cdot)) &  \text{ in } \partial\Omega
\end{array} \right. 
\end{equation}
where $F$ and $G$ map $C^2(\overline{\Omega})$ to $C(\overline{\Omega})$ and $C^1(\partial\Omega)$ respectively, with $F(0)=0, F'(0)=0, G(0)=0, G'(0)=0$. In other words, the nonlinear equation \eqref{eq:perturb} is the linear equation \eqref{eq:linear-stability} plus quadratic error terms.

The spectral analysis for the annular solution in Theorem \ref{prop:spectrum-annular} in particular implies that, subject to the Robin boundary condition $\mathscr{B} \eta =0$, the operator $\mathscr{L}$ has no kernel element. Together with the nonlinear equation \eqref{eq:perturb}, this gives the local rigidity needed for
Theorem~\ref{thm:compact-radial-tangent-uniqueness}. 

\begin{lemma}[Local isolation of the ball and annular profiles]\label{lem:compact-radial-local-isolation}
Let $\mathcal T$ be the collection of tangent profiles given by
Theorem~\ref{thm:blowup}. Each $\phi_*\in\{\phi_B,\phi_A\}$ is isolated in
$\mathcal T$ for the topology of locally uniform convergence.
\end{lemma}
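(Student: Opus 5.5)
We argue by contradiction. Suppose $\phi_j\in\mathcal T$, $\phi_j\neq\phi_*$, with $\phi_j\to\phi_*$ locally uniformly. By Theorem~\ref{thm:blowup}, each $\phi_j$ solves \eqref{eq:profile}, satisfies $\|D^2\phi_j\|_{L^\infty(\{\phi_j>0\})}\le C_s$ (uniformly in $j$, since $C_s$ depends only on $u$), has $|\nabla\phi_j|\le L$, and attains the Bernoulli condition classically on each $C^{1,1}$ sheet of its free boundary.

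\textbf{Step 1: convergence of free boundaries.} We first upgrade the convergence to convergence of positivity sets and free boundaries. We rerun Step~2 and Step~3 of the proof of Theorem~\ref{thm:blowup} in the stationary setting. The thickness estimate \eqref{eq:thickness}, the lower bound \eqref{eq:nearbd-lower}, and non-degeneracy (now from the elliptic Harnack inequality with zero right-hand side) give Hausdorff convergence $\partial\{\phi_j>0\}\to\partial\Omega_*$ on compact sets. Because $\Omega_*$ is bounded with smooth boundary, nondegeneracy away from the free boundary together with $\phi_j\to0$ uniformly on compact sets outside $\overline{\Omega_*}$ yields $\{\phi_j>0\}\cap B_R\subset\mathcal N_{\varepsilon_j}(\Omega_*)$. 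Components far away are ruled out as well, since any component carries a point where $\phi_j\ge c>0$, and such a point would violate local uniform convergence to $0$. We should also ensure there is no large component escaping to infinity. For this we use the energy identity \eqref{eq:density-tangent}: all elements of $\mathcal T$ share the same Gaussian density $\theta(0+)$, so the Gaussian measure of $\{\phi_j>0\}\setminus B_R$ tends to $0$. Combined with the uniform Hessian bound, which forces a definite Gaussian mass near any free boundary point within a fixed radius, this excludes far free boundary on $B_{R'}$ for every $R'$.

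A subtle case is a piece of $\{\phi_j>0\}$ at very large radius, where the Gaussian weight is tiny. We handle it by noting that Step~1 only needs $\partial\{\phi_j>0\}\cap B_{R_0}$ near $\partial\Omega_*$ for a fixed large $R_0$, and then showing that $\phi_j$ restricted to the component containing $\partial\Omega_*$-nearby sheets is compactly supported near $\Omega_*$. Since $C^{1,1}$ sheets converge in $C^{1}$ (Arzel\`a--Ascoli as in Theorem~\ref{thm:blowup}), $\partial\{\phi_j>0\}$ near $\partial\Omega_*$ is a normal graph $h_j$ over $\partial\Omega_*$ with $\|h_j\|_{C^{1}}\to0$. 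Because $\partial\Omega_*$ consists of one or two spheres, there is no double-sheet collision: the positive side of each sheet is uniformly thick, and $\phi_*$ is positive on only one side.

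\textbf{Step 2: $C^{2,\alpha}$ upgrade via partial hodograph.} Near $\partial\Omega_*$ we apply the partial hodograph transform. Since $|\nabla\phi_j|\approx1$, we exchange the normal variable for $\phi_j$, which flattens the free boundary. In the new variables \eqref{eq:profile} becomes a quasilinear elliptic equation with a nonlinear oblique boundary condition. Schauder theory for oblique problems \cite{LT} then gives uniform $C^{2,\alpha}$ bounds, so $h_j\to0$ in $C^{2,\alpha}(\partial\Omega_*)$ and $\phi_j\circ(\mathrm{Id}+\Psi_j)\to\phi_*$ in $C^{2,\alpha}(\overline{\Omega_*})$. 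Here $\Psi_j$ is the extension of $h_j\nu$ as in \eqref{def:perturb-domain-transf}.

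\textbf{Step 3: inverse function theorem.} Using the fixed-domain formulation \eqref{eq:perturb} with $\partial_s\eta=0$, each $\phi_j$ corresponds to a small zero $\eta_j$ of the map $\mathcal F(\eta)=(\mathscr L\eta+F(\eta),\,\mathscr B\eta-G(\eta))$ from $C^{2,\alpha}(\overline\Omega)$ to $C^{\alpha}(\overline\Omega)\times C^{1,\alpha}(\partial\Omega)$. More precisely, we parametrize by the boundary height $h$ and the interior correction and apply the implicit function theorem. Its derivative at $0$ is $(\mathscr L,\mathscr B)$. By Theorem~\ref{prop:spectrum-annular}, every eigenvalue is nonzero for $\phi_A$; for $\phi_B$ we use the table with the row $k=1$ removed. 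Hence $0$ is not in the spectrum, so $(\mathscr L,\mathscr B)$ is Fredholm of index $0$ with trivial kernel, and therefore an isomorphism. The inverse function theorem then gives that $\eta=0$ is the only zero in a neighborhood, so $\phi_j=\phi_*$ for large $j$, a contradiction.

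\textbf{Main obstacle.} The delicate step is Step~1 together with Step~2. We must ensure that locally uniform convergence, with no minimality available, forces the free boundaries to be small $C^{2,\alpha}$ graphs over $\partial\Omega_*$. In particular we must exclude spurious extra components, including those far away, which we would control through the common density $\theta(0+)$ and the uniform Hessian bound.
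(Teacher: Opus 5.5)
Steps 1--3 follow the paper's route near $\partial\Omega_*$: convergence of the free boundaries to a single normal graph, the partial hodograph upgrade via the oblique estimates of \cite{LT}, and the inverse function theorem using the trivial kernel of $(\mathscr L,\mathscr B)$. Note that the index-zero Fredholm property is a separate structural input (the paper cites \cite[Theorem~2.30]{Lieberman}). It is not a consequence of the eigenvalue table, so your sentence ``hence $0$ is not in the spectrum, so $(\mathscr L,\mathscr B)$ is Fredholm of index $0$'' runs the logic backwards.

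\textbf{The gap is the last sentence of Step~3.} The inverse function theorem works in the fixed domain $\Omega_*$, so it only gives $\eta_j=0$. That is, $\Omega_*$ is a connected component of $\{\phi_j>0\}$ and $\phi_j=\phi_*$ on it. It says nothing about components of $\{\phi_j>0\}$ centred at points $z_j$ with $|z_j|\to\infty$. Such components are invisible to locally uniform convergence, and your Step~1 does not exclude them either. That the Gaussian measure of $\{\phi_j>0\}\setminus B_R$ tends to $0$ is compatible with them: the thickness estimate \eqref{eq:thickness} only gives such a component Gaussian mass of order $r_0^n e^{-(|z_j|+r_0)^2/4}$, which has no uniform lower bound. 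You flag this ``subtle case'' and restrict attention to the component near $\Omega_*$, but you never come back to it. So the conclusion $\phi_j=\phi_*$ on all of $\RR^n$, and hence the contradiction, is not justified.

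\textbf{The missing idea:} use the density identity \eqref{eq:density-tangent} exactly rather than asymptotically, and only after the inverse function theorem. Every element of $\mathcal T$ has the same Gaussian density $\theta(0+)$, so
\[
\int_{\{\phi_j>0\}}e^{-|y|^2/4}\,dy=\int_{\Omega_*}e^{-|y|^2/4}\,dy.
\]
Once you know $\Omega_*$ is a component of $\{\phi_j>0\}$, the open set $\{\phi_j>0\}\setminus\overline{\Omega_*}$ has zero Gaussian measure and is therefore empty, however far out it lies. This is how the paper concludes $\phi_j=\phi_*$.
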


\begin{proof}
Fix $\phi_*\in\{\phi_B,\phi_A\}$, set $\Omega_*:=\{\phi_*>0\}$, and suppose
that $\phi_j\in\mathcal T$ and $\phi_j\to\phi_*$ locally uniformly.
Recall that tangent profiles $\phi_j$ in $\mathcal{T}$ satisfy a Hessian bound with uniform upper bound $C_s$; and the estimates \eqref{eq:nearbd-lower} and \eqref{eq:thickness} use only the
uniform Hessian bound and the classical Bernoulli condition, hence their proofs apply verbatim to $\phi_j$'s. Applying the proof of \eqref{eq:CVHausdorff} and then the
Arzel\`a--Ascoli argument using the uniform curvature bound and the local
perimeter estimate in \eqref{eq:periUB} likewise shows that, near
$\partial\Omega_*$, the free boundaries
$\Gamma_j:=\partial\{\phi_j>0\}$ are locally finite disjoint unions of $C^1$
hypersurfaces converging to $\partial\Omega_*$ in $C^1$.

Fix a connected component $\Sigma_*$ of $\partial\Omega_*$ and choose
$\delta>0$ so small that its $\delta$-tubular neighborhood is disjoint from those of
the other components. Using the normal parametrization from the proof of
\eqref{eq:tube-meas}, write
\[
    \mathcal N_\delta(\Sigma_*)
    =\{p+t\nu_*(p):p\in\Sigma_*,\ |t|<\delta\},
\]
where $\nu_*$ is the unit normal pointing into $\Omega_*$. For large $j$, denote the connected hypersurface
components of $\Gamma_j\cap\mathcal N_\delta(\Sigma_*)$ by
$\Sigma_j^1,\ldots,\Sigma_j^{N_j}$.
Together with the $C^1$ convergence of the free boundaries, the one-sided tubular estimate \eqref{eq:thickness} and local uniform convergence $\phi_j \to \phi_*$ force the positive-side normal of each $\Sigma_j^k$ to converge to $\nu_*$: a component with the opposite orientation would carry a positive collar into the interior of $\RR^n\setminus\overline\Omega_*$. For all sufficiently large $j$, suppose that $\Sigma_j^k$ meets the normal fiber over $\Sigma_*$ through $p$ at $x_0=p+t_0\nu_*(p)$. Then, for every
sufficiently small $\varepsilon>0$,   
\[  p+(t_0+\varepsilon)\nu_*(p)\in\{\phi_j>0\}. \]
On the other hand, suppose $\varepsilon_j>0$ is the largest value such that $p+(t_0-\varepsilon)\nu_*(p)\in\{\phi_j=0\}$ for all $0<\varepsilon< \varepsilon_j$. Then either $x_j:= p+(t_0-\varepsilon_j)\nu_*(p)\notin \mathcal{N}_\delta(\Sigma_*)$, or $x_j \in \Gamma_j \cap \mathcal{N}_\delta(\Sigma_*)$. The latter contradicts the fact that $|\nu_j(x_j)- \nu_*(p)| \ll 1$.
To sum up, a normal fiber cannot meet $\bigcup_{k=1}^{N_j}\Sigma_j^k$ in two distinct
points with this orientation: between two transitions from $\{\phi_j=0\}$ to
$\{\phi_j>0\}$ there would have to be a transition in the opposite direction.
It follows that $\Gamma_j\cap\mathcal N_\delta(\Sigma_*)$ has exactly one
connected component, which is a normal graph over $\Sigma_*$ and converges to
$\Sigma_*$ in $C^1$.

To upgrade the convergence, we work in a boundary chart in which the $x_n$-axis
points into $\Omega_*$ and consider the partial hodograph map
\[
    H_j(x',x_n):=(x',\phi_j(x',x_n)).
\]
The $C^1$ convergence of these normal graphs and the uniform Hessian bound
allow the chart and its positive-side collar to be chosen so that
\[
    \frac12\leq\partial_{x_n}\phi_j\leq\frac32
\]
there, uniformly in $j$. Hence $\det DH_j=\partial_{x_n}\phi_j$ is bounded away
from zero. Write the inverse as $H^{-1}_j(z',z_n)=(z',q_j(z',z_n))$, and define $q_*$ analogously using $H_*(x',x_n)=(x',\phi_*(x',x_n))$. The profile equation becomes a uniformly
elliptic quasilinear equation for $q_j$, while the boundary Bernoulli condition becomes
the uniformly oblique boundary condition
\[
    \partial_{z_n}q_j
    =\sqrt{1+|\nabla_{z'}q_j|^2}
    \qquad\text{on $\{z_n=0\}$}.
\]
The boundary regularity estimate for nonlinear oblique problems
\cite[Theorem~6.2 and the following remark]{LT} gives uniform
$C^{2,\alpha_0}$ bounds for $q_j$ on smaller boundary charts.
Together with
the locally uniform convergence, this yields $q_j\to q_*$ in $C^{2,\alpha}$ on the upper half space for some
$\alpha\in(0,\alpha_0)$. Consequently, the perturbations $\eta_j$ associated to these normal graphs through
\eqref{def:perturb-domain-transf} satisfy
$\eta_j\to0$ in $C^{2,\alpha}(\overline\Omega_*)$.

For stationary profiles, \eqref{eq:perturb} becomes
\[
    \mathscr L\eta+F(\eta)=0\quad\text{in $\Omega_*$},
    \qquad
    \mathscr B\eta-G(\eta)=0\quad\text{on $\partial\Omega_*$}.
\]
The fixed-domain construction in \cite[Sections~2--3]{stability} gives a
$C^1$ map
\[
\mathcal H(\eta):=(\mathscr L\eta+F(\eta),\mathscr B\eta-G(\eta))
\colon C^{2,\alpha}(\overline\Omega_*)
\longrightarrow C^\alpha(\overline\Omega_*)\times C^{1,\alpha}(\partial\Omega_*)
\]
near zero, with $D\mathcal H(0)=(\mathscr L,\mathscr B)$. Theorem
\ref{prop:spectrum-annular} and the ball spectral analysis in
\cite[Section~4.2]{stability} give
$\ker D\mathcal H(0)=\{0\}$. By
\cite[Theorem~2.30]{Lieberman}, $D\mathcal  H(0)$ is Fredholm of index zero and
hence an isomorphism. The inverse function theorem gives
$\eta_j=0$ for all sufficiently large $j$. Consequently, $\Omega_*$ is a
connected component of $\{\phi_j>0\}$, and $\phi_j=\phi_*$ on $\Omega_*$.

The Gaussian-volume identity in Theorem~\ref{thm:blowup} gives
\[
    \int_{\{\phi_j>0\}}e^{-|y|^2/4}\,dy
    =\int_{\Omega_*}e^{-|y|^2/4}\,dy.
\]
Any other connected component of $\{\phi_j>0\}$ would be open and would add
strictly positive Gaussian volume. Hence there is no such component, and
$\phi_j=\phi_*$ on $\RR^n$. This proves the isolation.
\end{proof}

\begin{proof}[Proof of Theorem~\ref{thm:compact-radial-tangent-uniqueness}]
Let
\[
    \omega(v):=\bigcap_{S>0}
    \overline{\{v(s,\cdot):s\geq S\}}^{\,C_{\rm loc}(\RR^n)}.
\]
The compactness in Theorem~\ref{thm:blowup} makes every tail precompact.
Since $s\mapsto v(s,\cdot)$ is continuous in $C_{\rm loc}(\RR^n)$, each tail
closure is connected, and thus their nested intersection $\omega(v)$ is a
nonempty compact connected set. Every element of $\omega(v)$ belongs to
$\mathcal T$, and the hypothesis gives $\phi_*\in\omega(v)$. By
Lemma~\ref{lem:compact-radial-local-isolation}, the singleton $\{\phi_*\}$ is
both open and closed relative to $\omega(v)$. Connectedness yields
$\omega(v)=\{\phi_*\}$, and precompactness then gives the full convergence.
\end{proof}

Back to the stability analysis in \cite{stability}, the authors prove the following in Theorems 3.1 and 3.3, respectively.

\begin{theorem}
	For any $\eta_0$ with small norm in $C^{2,\alpha}(\overline{\Omega})$ and satisfying the compatibility condition $\mathscr{B} \eta_0 = G(\eta_0(\cdot))$, there is a unique solution to \eqref{eq:perturb} with initial data $\eta_0$.
\end{theorem}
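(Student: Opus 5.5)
This is a local well-posedness statement for the fixed-domain problem \eqref{eq:perturb}, so I would prove it as a semilinear parabolic problem with nonlinear boundary condition. The plan is to use maximal regularity in Hölder spaces for the linear part, and a contraction argument for the nonlinear terms.

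\textbf{Linear theory.} Set $X=C^\alpha(\overline\Omega)$ and consider the realization $A$ of $\mathscr L$ in $X$, with domain $D(A)=\{\eta\in C^{2,\alpha}(\overline\Omega):\mathscr B\eta=0\}$. Since $\partial\Omega$ is smooth (a sphere, or two spheres for the annulus) and $\mathscr B$ is a first-order Robin operator with smooth coefficient $\partial^2\phi/\partial n^2$, this is a uniformly elliptic problem with oblique boundary condition satisfying the complementing condition. I would argue that $A$ generates an analytic semigroup, non-dense in $X$.

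The key linear input is optimal Hölder regularity for the inhomogeneous problem
\[
\partial_s\eta=\mathscr L\eta+f \ \text{ in } \Omega, \qquad \mathscr B\eta=g \ \text{ on } \partial\Omega, \qquad \eta(0)=\eta_0 .
\]
In the parabolic Hölder spaces (as in Lunardi's and Brauner--Hulshof--Lunardi's framework), this reads: for $f\in C^{0,\alpha}([0,T];C^0)\cap C([0,T];C^\alpha)$, $g$ in the corresponding trace space on $\partial\Omega$, and compatible $\eta_0\in C^{2,\alpha}$ with $\mathscr B\eta_0=g(0)$, the unique solution lies in $E_T:=C^{1}([0,T];C^0)\cap C([0,T];C^{2,\alpha})$. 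The norm bound is uniform for $T\le1$. I would obtain $g$ by extending it into $\Omega$ to reduce to $g=0$, then use the variation of constants formula together with the Sinestrari--Lunardi optimal regularity theorems.

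\textbf{Fixed point.} Given small compatible $\eta_0$, take a ball in $E_T$ around the solution of the linear problem with $f=0$, $g=G(\eta_0)$ (held constant in time), and define $\Gamma(\eta)$ as the solution of the linear problem with $f=F(\eta)$ and $g=G(\eta)$. Because $F$ and $G$ are $C^1$ with $F(0)=G(0)=0$ and $F'(0)=G'(0)=0$, their Lipschitz constants on balls of radius $r$ in $C^{2,\alpha}$ are $O(r)$. Together with the uniform linear estimate, this makes $\Gamma$ a contraction of a small ball into itself, provided $\|\eta_0\|_{C^{2,\alpha}}$ and the radius are small. The compatibility condition $\mathscr B\eta_0=G(\eta_0)$ is exactly what guarantees that $\Gamma$ maps into the space with the right behavior at $s=0$. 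Uniqueness then follows from the contraction property in small balls, plus a continuation argument to remove the smallness restriction on the competitor.

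\textbf{Main obstacle.} The delicate point is the time regularity needed on $F(\eta)$ and $G(\eta)$. Since $F,G$ are defined only as maps on $C^2$-type spaces, showing that $s\mapsto G(\eta(s))$ lies in the required time-Hölder trace class (with exponent $(1+\alpha)/2$ in time) needs a more careful functional setting. I would follow \cite{stability}: work in their weighted-in-time spaces and exploit the explicit structure of $F,G$ coming from the transformation $\operatorname{Id}+\Psi$, namely polynomial dependence on $\eta$ and its derivatives up to order two. That structure lets me check the required time regularity directly.
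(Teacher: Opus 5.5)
Your outline follows the standard route for this result. The paper gives no proof of its own and quotes the statement from Brauner--Hulshof--Lunardi \cite{stability} (their Theorem 3.1), where local well-posedness rests on optimal H\"older regularity for the linearized problem $(\mathscr L,\mathscr B)$ with nonhomogeneous boundary data plus a fixed-point argument using $F(0)=G(0)=0$, $F'(0)=G'(0)=0$ and the compatibility condition $\mathscr B\eta_0=G(\eta_0)$, as you propose.

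The one thing to fix is the solution space. The linear solution is in general only bounded, not continuous, in time with values in $C^{2,\alpha}(\overline\Omega)$: already $s\mapsto e^{s\mathscr L}\eta_0$ is not continuous into $C^{2,\alpha}$ at $s=0$ for a generic $\eta_0\in C^{2,\alpha}$. So your $\Gamma$ does not map into your $E_T$, and $E_T$ should be replaced by $C^{1+\alpha/2,2+\alpha}([0,T]\times\overline\Omega)$, with $F(\eta)\in C^{\alpha/2,\alpha}$ and $G(\eta)\in C^{(1+\alpha)/2,1+\alpha}$ on the lateral boundary. In that setting, the fact that $F$ involves second derivatives of $\eta$ is harmless: the problem is fully nonlinear rather than semilinear, but $F'(0)=0$ keeps its Lipschitz constant small on small balls.
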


\begin{theorem}
	Let $\sigma(\mathscr{L})$ denote the discrete spectrum of $\mathscr{L}$ with boundary condition $\mathscr{B}v=0$. Suppose that $\sigma^+(\mathscr{L})$, the elements in $\sigma(\mathscr{L})$ with positive real parts, is not empty.  Let $P^+$ denote the spectral projection associated with $\sigma^+(\mathscr{L})$, and let $\beta \in (0, \min \{ \Re \lambda: \lambda \in \sigma^+(\mathscr{L})\} )$ be fixed. Then there exist $M>0$, $\delta_0>0$ and a Lipschitz continuous map
    \[ \mathscr{R}: P^+(C^{2,\alpha}(\overline{\Omega})) \cap B(0,\delta_0) \to (\operatorname{Id}-P^+)(C^{2,\alpha}(\overline{\Omega})) \]
    with $\mathscr{R}'(0)=0$ such that for each $\eta_0 := \varphi+\mathscr{R}\varphi$ with some $\varphi \in P^+(C^{2,\alpha}(\overline{\Omega})) \cap B(0,\delta_0)$, the nonlinear equation \eqref{eq:perturb} has a unique backwards solution with initial data $\eta_0$, and moreover
    \begin{equation}\label{eq:expdecay}
        \|e^{\beta |s|} \eta(s,y) \|_{C^{1+\alpha/2, 2+\alpha}((-\infty,0] \times \overline{\Omega}) } \leq M. 
    \end{equation} 
    In other words,
    \begin{equation*}
        \eta(s,\cdot) \to 0 \text{ exponentially fast as } s\downarrow -\infty, \qquad \eta(s,\cdot) \to \eta_0 \text{ as } s\uparrow 0.
    \end{equation*} 
\end{theorem}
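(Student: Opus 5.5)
The plan is to run the Lyapunov--Perron construction of unstable manifolds for fully nonlinear parabolic problems with nonlinear boundary conditions, in the analytic-semigroup framework of Lunardi. Only the abstract structure of \eqref{eq:perturb} is used: linear part $(\mathscr L,\mathscr B)$, and nonlinearities $F,G$ that are $C^1$ near $0$ with $F(0)=G(0)=0$ and $F'(0)=G'(0)=0$.

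\textbf{Step 1: the linear problem.} Let $A$ be the realization of $\mathscr L$ in $X:=C(\overline\Omega)$ with domain $\{\eta\in\bigcap_{p}W^{2,p}:\ \mathscr L\eta\in X,\ \mathscr B\eta=0\}$. Since $\partial\Omega$ is smooth and $\mathscr B$ is a Robin operator, $A$ is sectorial and has compact resolvent. Its spectrum is therefore discrete, consisting of eigenvalues of finite multiplicity, and $\sigma^+(\mathscr L)$ is a finite set. Let $P^+$ be the Riesz projection onto $\sigma^+$, and put $X^+=P^+X$ and $X^-=(\operatorname{Id}-P^+)X$. Then $X^+$ is finite dimensional, so all norms on it are equivalent, including the $C^{2,\alpha}$ norm. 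Fix $\beta$ as in the statement, and choose $\omega$ with $0<\omega<\beta<\min\Re\sigma^+$. The dichotomy estimates are
\[
\|e^{sA}P^+\|\le Ce^{\omega' s}\ (s\le0)\ \text{for some }\omega'>\beta,\qquad \|e^{sA}(\operatorname{Id}-P^+)\|\le Ce^{\varepsilon s}\ (s\ge0)\ \text{for any }\varepsilon>0.
\]
The second holds because $\sigma^-$ lies in $\{\Re\le0\}$ and $A$ is sectorial. These have to be upgraded to the weighted parabolic H\"older space
\[
Y_\beta:=\bigl\{\eta:\ e^{\beta|s|}\eta\in C^{1+\alpha/2,2+\alpha}((-\infty,0]\times\overline\Omega)\bigr\}.
\]
For this I would use Lunardi's optimal regularity for the problem $\eta_s=\mathscr L\eta+f$, $\mathscr B\eta=g$. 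The data $(f,g)$ lie in the corresponding weighted spaces $C^{\alpha/2,\alpha}$ and $C^{(1+\alpha)/2,1+\alpha}$. The boundary datum $g$ is lifted through a right inverse of $\mathscr B$, which turns it into an interior forcing plus a correction in $D_A(\theta)$-type interpolation spaces.

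\textbf{Step 2: the fixed-point map.} For $\varphi\in X^+$ with $|\varphi|<\delta_0$, define
\[
\Gamma(\eta)(s)=e^{sA}\varphi-\int_s^0 e^{(s-\tau)A}P^+\mathcal N(\eta)(\tau)\,d\tau+\int_{-\infty}^s e^{(s-\tau)A}(\operatorname{Id}-P^+)\mathcal N(\eta)(\tau)\,d\tau .
\]
Here $\mathcal N(\eta)$ is shorthand for the pair $(F(\eta),G(\eta))$, interpreted via the lifting of Step 1. The first integral converges because $P^+$ gives backward decay faster than $e^{\beta s}$. The second converges because $\mathcal N(\eta)$ is $O(e^{2\beta s})$ while $\operatorname{Id}-P^+$ grows at most like $e^{\varepsilon(s-\tau)}$; this is exactly where $\beta<\min\Re\sigma^+$ and $\sigma^-\subset\{\Re\le0\}$ enter.

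Since $F'(0)=G'(0)=0$, on the ball of radius $r$ in $Y_\beta$ the map $\mathcal N$ has Lipschitz constant $o(1)$ as $r\to0$ into the weighted data spaces. Then $\Gamma$ is a contraction on this ball for $\delta_0,r$ small, and its fixed point $\eta_\varphi$ depends Lipschitz-continuously on $\varphi$.

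\textbf{Step 3: conclusion.} Set $\mathscr R(\varphi):=(\operatorname{Id}-P^+)\eta_\varphi(0)$. The estimate $\|\eta_\varphi\|_{Y_\beta}\le C|\varphi|$ is \eqref{eq:expdecay}. Because $\mathcal N$ is quadratic, $\|\mathscr R\varphi\|\le C|\varphi|^2$, which gives $\mathscr R'(0)=0$. Uniqueness among backward solutions in $Y_\beta$ follows from the contraction property, and compatibility at $s=0$ is automatic because $\eta_\varphi$ solves the problem. Continuity as $s\uparrow0$ comes from the regularity of $\eta_\varphi$ in $Y_\beta$.

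\textbf{Main obstacle.} The hard part is Step 1 in the presence of the nonlinear boundary condition. Variation of constants does not apply directly, because $G(\eta)$ enters through the boundary. I would first try Lunardi's device: rewrite $\mathscr B\eta=G(\eta)$ by lifting with a Neumann-type operator $N$ with $\mathscr BN g=g$ and $\mathscr L Ng=0$, and use the extrapolated semigroup on $D_A(\alpha/2)$. Optimal H\"older regularity in the weighted time-unbounded setting then gives the needed estimates. Everything else is the standard contraction bookkeeping.
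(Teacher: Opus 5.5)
Your proposal is correct and follows the same route as the source: the paper does not reprove this statement but quotes Theorems 3.1 and 3.3 of \cite{stability}, and those rest on exactly this Lyapunov--Perron fixed point in exponentially weighted parabolic H\"older spaces, built on Lunardi's optimal regularity for the linear problem with nonhomogeneous Robin data. Two harmless adjustments are needed: since $0$ may lie in the spectrum in the abstract setting, the lifting should solve $(\lambda_0-\mathscr{L})Ng=0$, $\mathscr{B}Ng=g$ for some $\lambda_0$ in the resolvent set of $A$; and if $F,G$ are only $C^1$ you get $\|\mathscr{R}\varphi\|=o(|\varphi|)$ rather than $O(|\varphi|^2)$, which still gives $\mathscr{R}'(0)=0$.
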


The map $\mathscr{R}$ describes the \emph{unstable manifold} generated by a finite-dimensional linear subspace of unstable eigenfunctions.
As an example, we have seen in Theorem \ref{prop:spectrum-annular} that for any $a\in \RR^n$, $\phi'(r) \langle \omega, a\rangle$ is an eigenfunction with eigenvalue $\frac12>0$, and it corresponds to the following solution to the nonlinear problem \eqref{eq:respar} $v_a(s,y):= \phi(y+a e^{\frac{s}{2}})$. Clearly, if $|a|$ is sufficiently small, each $\partial \Omega_s = \partial A- a e^{\frac{s}{2}}$ is a graph over $\partial A$ for all $s\in (-\infty,0]$, and
\[ \text{ as }s \downarrow -\infty, \qquad v_a(s,\cdot) - \phi \to 0 \text{ exponentially fast,} \]
\[  \text{ as } s\uparrow 0, \qquad v_a(s,\cdot) - \phi \to \phi(y+a) - \phi(y) = \phi'(r) \langle \omega, a \rangle + O(|a|^2), \text{ with } r=|y|, \omega = \frac{y}{|y|}. \]
Similarly, there are explicit solutions to \eqref{eq:respar} with initial data $(1-\epsilon)\phi\left(y/(1-\epsilon) \right)$ generated by the eigenfunction $\phi(r)-r\phi'(r)$, which satisfies an analogous asymptotics.

Conversely, consider any backwards solution $\eta$ constructed as above from an unstable mode. What is the behavior of the corresponding solution $v$ to \eqref{eq:respar}? Recall that the domain transformations argument here demands an extra term $\Psi(s,\xi) \cdot \nabla \phi(\xi)$ in the perturbation \eqref{def:perturb-domain-transf}, which works under the a priori assumption that $\Omega_s$ is sufficiently close to $\Omega$. For each $s \in I$, we can recover the boundary data of the vector field $\Psi(s,\cdot)$ as
\[ \Psi(s,\xi) \cdot \nu_\xi = -\Psi(s,\xi) \cdot \nabla \phi(\xi) = \eta(s,\xi) - v(s,y) + \phi(\xi) = \eta(s,\xi) = O(e^{-\beta|s|}), \qquad \xi \in \partial\Omega,  \]
by \eqref{eq:expdecay}. 
Thus the a priori assumption in fact holds for all $s\in (-\infty,0]$, and this determines the full vector field $\Psi(s,\cdot)$ modulo a smooth cut-off function. Then the corresponding solution $v(s,y)$ satisfies: away from the boundary $v(s,y) - \phi(\xi) = \eta(s,\xi) = O(e^{-\beta|s|})$, and near the boundary 
\[ \left| v(s,y) - \phi(\xi) \right| = \left| v(s,y) - \phi(\xi) - (v(s,y') - \phi(\xi')) \right| \lesssim \|\eta\|_{C^{2,\alpha}} |\xi-\xi'| = O(e^{-\beta|s|}\dist(\xi,\partial\Omega)), \] 
where $\xi'$ denotes the projection of $\xi$ onto $\partial\Omega$, and $y' = \xi' + \Psi(s,\xi') \in \partial\Omega_s$. In other words, $v(s,\cdot)$ tends to the equilibrium $\phi$ exponentially fast as $s\to -\infty$.

The above discussion shows how the spectral analysis in Theorem \ref{prop:spectrum-annular} leads to the instability assertion in Corollary \ref{cor:stability}.

\subsection*{Use of AI} The Python script used for the finite verification of Lemma \ref{lem:l3-finite} was written with the assistance of a coding agent. Moreover, agents were used to run inexpensive numerical explorations at scale. In particular, the monotonicity property of Lemma \ref{lem:l3-monotone}
was initially discovered by using an agent to systematically search for relevant quantities that numerically appear to be signed or monotone. The authors take full responsibility for the entire paper.

\subsection*{Acknowledgements}
The authors started working on the paper during the AIM workshop ``Time-dependent Bernoulli-type free boundary problems'', organized by Max Engelstein, William Feldman and Inwon Kim. We would like to thank the organizers and the American Institute of Mathematics for making this collaboration possible. We are also grateful to Nikola Kamburov and Dennis Kriventsov for introducing us to the problem.
The third-named author is supported by the NSF Grant No. DMS-2350351.

\appendix
\part*{Appendices}
%\addcontentsline{toc}{part}{Appendices}

\section{Radius estimates for the annular solution}\label{sec:radius-est}
Let $\phi$ be the annular profile from Proposition \ref{prop:odesol} with support $[r_-,r_+]$.
To simplify the remaining computations, we introduce some notation (recall that they were already used in the proof of Proposition \ref{prop:odesol}):
\begin{equation}\label{eq:annular-scaled-notation}
    \mu:=\frac{n-1}{2},
    \qquad
    a:=\frac{r_-}{\sqrt2},
    \qquad
    b:=\frac{r_+}{\sqrt2},
    \qquad
    y(t):=\frac1{\sqrt2}\phi(\sqrt2\,t),
\end{equation}
and $h(t):=\frac t2-\frac{\mu}{t}$. Then the equation \eqref{eq:ode} becomes the damped oscillator equation
\begin{equation}\label{eq:annular-y-equation}
    y''-2h(t)y'+y=0
    \qquad\text{on }(a,b),\tag{D}
\end{equation}
and
\begin{equation}\label{eq:annular-y-data}
    y(a)=y(b)=0,\qquad y>0\text{ on }(a,b),
    \qquad y'(a)=1,\qquad y'(b)=-1.
\end{equation}

Recall that $y$ is concave on its support, see Claim \ref{cl:concave}. Besides, the shooting monotonicity \eqref{eq:annular-shooting-monotonicity} yields the following one-sided bounds for $a$ and $b$ from single trajectories of \eqref{eq:annular-y-equation}, which is used in the finite verification in Section~\ref{sec:instability}.

\begin{cor}\label{cor:shooting-bracket}
Let $w$ solve $w''-2h(t)w'+w=0$.
\begin{enumerate}[label=\textup{(\roman*)}]
\item If $w(A)=0$ and $w'(A)=1$ for some $A>0$, and $w$ vanishes again at some $z>A$ with $w>0$ on $(A,z)$ and $w'(z)>-1$, then $A<a$.
\item If $w(B)=0$ and $w'(B)=-1$ for some $B>0$, and $w$ vanishes at some $\hat A<B$ with $w>0$ on $(\hat A,B)$ and $w'(\hat A)>1$, then $B<b$.
\end{enumerate}
\end{cor}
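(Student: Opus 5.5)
Both parts follow from embedding the single trajectory $w$ into the one-parameter shooting family of Proposition~\ref{prop:odesol}, whose monotonicity \eqref{eq:annular-shooting-monotonicity} then compares $w$ with the annular profile.

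\emph{Part (i).} In the scaled variables \eqref{tmp:annular-scaled-notation}, the family $y_\alpha$ starts at $a(\alpha)=\sqrt{2\alpha}$ with slope $1$, vanishes again at $b(\alpha)$, and ends with slope $p(\alpha)$. The annular profile $y$ corresponds to the unique $\alpha_0$ with $p(\alpha_0)=-1$, so $a=a(\alpha_0)$.

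Given $w$, solutions of \eqref{eq:annular-y-equation} are linear combinations of the rescalings of $M$ and $U$. Since $w$ has two zeros $A<z$ and is positive between them, the uniqueness part of Proposition~\ref{prop:odesol} shows that $w$ is a positive multiple of some $F_\alpha$. Hence $A\in(0,\sqrt{2s_*})$, we may write $A=a(\alpha_1)$, and $z$ is the next zero $b(\alpha_1)$. The normalization $w'(A)=1$ then identifies $w=y_{\alpha_1}$, so $w'(z)=p(\alpha_1)$.

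By hypothesis $p(\alpha_1)>-1=p(\alpha_0)$. Since $p$ is strictly decreasing, $\alpha_1<\alpha_0$, and since $a(\alpha)=\sqrt{2\alpha}$ is increasing, $A<a$.

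\emph{Part (ii).} The argument is the same, now normalizing at the right endpoint. As in part (i), $w$ is a positive multiple of $F_{\alpha_1}$ with $\hat A=a(\alpha_1)$ and $B=b(\alpha_1)$.

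Rescale $w$ so that its slope at $\hat A$ equals $1$. This gives $y_{\alpha_1}=w/w'(\hat A)$, and therefore $p(\alpha_1)=-1/w'(\hat A)$. Because $w'(\hat A)>1$, we get $p(\alpha_1)\in(-1,0)$, so $p(\alpha_1)>p(\alpha_0)$ and hence $\alpha_1<\alpha_0$. Since $b(\alpha)$ is strictly increasing by \eqref{eq:annular-shooting-monotonicity}, $B=b(\alpha_1)<b(\alpha_0)=b$.

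\emph{Main obstacle.} The only step needing care is identifying an arbitrary trajectory with two zeros and positivity in between as a member of the family $F_\alpha$ with $\alpha\in(0,s_*)$. This is exactly the sign analysis of $R=M/U$ in the uniqueness part of Proposition~\ref{prop:odesol}. The rest is monotonicity of $p$ and $b$.
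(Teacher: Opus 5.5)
Your proposal is correct and follows the paper's proof essentially step for step. You normalize $w$ to unit slope at its left zero, identify it with the shooting trajectory $y_\alpha$ for $\alpha=A^2/2$ (respectively $\hat A^2/2$), lying in $(0,s_*)$, read off $p(\alpha)=w'(z)$ or $p(\alpha)=-1/w'(\hat A)$, and conclude from the monotonicity of $p$ and $b$ in \eqref{eq:annular-shooting-monotonicity}. The only difference is that you spell out, through the sign analysis of $R=M/U$ in the uniqueness part of Proposition~\ref{prop:odesol}, why a trajectory with two zeros and positive in between belongs to the shooting family; the paper states this in a single clause.
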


\begin{proof}
In the notation of the proof of Proposition~\ref{prop:odesol}, a solution vanishing at $A$ with unit slope and positive up to a second zero is the member $y_\alpha$ of the shooting family with $\alpha=A^2/2$; the existence of the second zero forces $\alpha\in(0,s_*)$, and then $z=b(\alpha)$ and $w'(z)=p(\alpha)$.

(i) Here $p(\alpha)>-1=p(\alpha_0)$, so $\alpha<\alpha_0$ by \eqref{eq:annular-shooting-monotonicity}, that is, $A<a$.

(ii) The function $w/w'(\hat A)$ vanishes at $\hat A$ with unit slope, is positive on $(\hat A,B)$, and has next zero $B$. Hence, with $\hat\alpha:=\hat A^2/2$,
\[
    p(\hat\alpha)=-\frac1{w'(\hat A)}>-1,
\]
so $\hat\alpha<\alpha_0$ as in (i), and \eqref{eq:annular-shooting-monotonicity} gives $B=b(\hat\alpha)<b(\alpha_0)=b$.
\end{proof}

The estimates \eqref{eq:fine-radius-estimate} involve the constant $\kappa=1.1623422615\ldots$, defined as the first positive zero of the even solution of the Hermite-type equation
\begin{equation}\label{eq:hermite-kappa}
    Y''-2xY'+Y=0,
    \qquad
    Y(0)=1,\quad Y'(0)=0;\tag{H}
\end{equation}
in terms of Kummer's function, $Y(x)=M\!\left(-\tfrac14,\tfrac12,x^2\right)$.
This equation is the formal $n\to\infty$ limit of \eqref{eq:annular-y-equation}, after recentering at the zero $\sqrt{2\mu}$ of the damping coefficient $h$: for $t=\sqrt{2\mu}+x$,
\begin{equation}\label{eq:drift-comparison}
    h(\sqrt{2\mu}+x)
    =
    x-\frac{x^2}{2(\sqrt{2\mu}+x)}
    <x,
    \qquad
    -\sqrt{2\mu}<x,\ x\neq0.
\end{equation}

The following rational bounds for $\kappa$ avoid decimal estimates.

\begin{lemma}\label{lem:kappa-bounds}
The constant defined by \eqref{eq:hermite-kappa} satisfies
\begin{equation}\label{eq:kappa-bounds}
    \frac43<\kappa^2<\frac{23}{17}.
\end{equation}
\end{lemma}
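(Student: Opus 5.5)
To prove $\frac43<\kappa^2<\frac{23}{17}$, I will bracket $\kappa$ by sign changes of $Y(x)=M(-\tfrac14,\tfrac12,x^2)$. Write $X:=x^2$ and
\[
Y=\sum_{k\ge0}c_kX^k,\qquad c_0=1,\qquad c_{k+1}=\frac{(k-\tfrac14)}{(k+1)(k+\tfrac12)}\,c_k .
\]
Since $-\tfrac14<0$, we have $c_1=-\tfrac12<0$, and every later coefficient is also negative, because the ratio $(k-\tfrac14)/((k+1)(k+\tfrac12))$ is positive for $k\ge1$. Hence $Y(x)=1-\sum_{k\ge1}|c_k|X^k$ is strictly decreasing in $X>0$. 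This already shows that $Y$ has exactly one positive zero $\kappa$, and that $\kappa^2$ lies in an interval $(X_1,X_2)$ precisely when $Y>0$ at $X_1$ and $Y<0$ at $X_2$. So it suffices to prove
\[
M\!\left(-\tfrac14,\tfrac12,\tfrac43\right)>0\qquad\text{and}\qquad M\!\left(-\tfrac14,\tfrac12,\tfrac{23}{17}\right)<0 .
\]

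\textbf{Step 1 (positivity at $X=4/3$).} The tail is all negative, so I need an upper bound on $\sum_{k\ge1}|c_k|X^k$. I will compute $|c_k|$ exactly as rationals for $k\le K$, with $K$ around $6$–$8$. From the recursion, $|c_{k+1}|/|c_k|\le \frac{1}{k+1}$, so the remainder satisfies
\[
\sum_{k>K}|c_k|X^k\le |c_K|X^K\bigl(e^{X}-\text{partial}\bigr),
\]
or more simply it is bounded by a geometric series with ratio $X/(K+1)$. I then check that $1-\sum_{k\le K}|c_k|X^k-(\text{tail bound})>0$, which is a finite rational inequality.

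\textbf{Step 2 (negativity at $X=23/17$).} Since the terms are all negative, truncation alone suffices: $Y<1-\sum_{k=1}^{K}|c_k|X^k$. I will pick $K$ large enough that this partial sum exceeds $1$.

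\textbf{Main obstacle.} The expected difficulty is the thin margins: $\kappa^2\approx1.3510$, while $4/3\approx1.3333$ and $23/17\approx1.3529$. Step 2 has a margin of only about $0.002$ in $X$, and $|Y'|$ there is $O(1)$, so $Y(23/17)\approx-0.002$. The partial sums therefore have to be accurate to about $10^{-3}$. I expect roughly 7–8 terms to be needed (at $X\approx1.35$ the terms decay like $X^k/k!$ times a slowly varying factor), and the resulting rational arithmetic should be carried out exactly.

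An alternative for the upper bound is to run a Sturm comparison or energy argument on (H), but the series route is cleaner and fully rigorous.
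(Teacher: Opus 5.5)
Your plan is correct and is essentially the paper's proof. The paper also writes $Y=1-\sum_{j\ge1}b_jx^{2j}$ with all $b_j>0$ from the same ratio recursion. It truncates at $x^2=23/17$, where the terms through $j=5$ already give $Y<-16882063/16356752640$, so fewer terms are needed than you expected. At $x^2=4/3$ it truncates and bounds the tail by a geometric series.

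One small fix: your first displayed tail bound $|c_K|X^K(e^X-\text{partial})$ is not an upper bound as written. The correct form would be $|c_K|\,K!\,\bigl(e^X-\sum_{k\le K}X^k/k!\bigr)$. The geometric bound with ratio $X/(K+1)$ that you give next is valid, so use that one.
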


\begin{proof}
Write $Y(x)=1-\sum_{j\geq1}b_jx^{2j}$. Substituting the series in \eqref{eq:hermite-kappa} gives
\[
    b_1=\frac12,
    \qquad
    \frac{b_{j+1}}{b_j}=\frac{j-\frac14}{\bigl(j+\frac12\bigr)(j+1)},
\]
so $b_j>0$ for every $j\geq1$.

Since every term of the sum is positive, each partial sum of the series for $Y$ is an upper bound for $Y$. Evaluating the terms with $j\leq5$ at $x^2=q:=\tfrac{23}{17}$ gives
\[
    Y\!\left(\sqrt{23/17}\right)
    <
    1-\frac q2-\frac{q^2}8-\frac{7q^3}{240}-\frac{11q^4}{1920}-\frac{11q^5}{11520}
    =
    -\frac{16882063}{16356752640}
    <0.
\]
Since $Y(0)=1$, the first positive zero satisfies $\kappa^2<\frac{23}{17}$.

At $x^2=\frac43$, the first four terms of the sum have total $\frac{1186}{1215}$, the fifth is $\frac{44}{10935}$, and the ratio of every subsequent term to its predecessor is at most $\frac29$. Hence
\[
    \sum_{j\geq1}b_j\left(\frac43\right)^j
    \leq
    \frac{1186}{1215}+\frac{44}{8505}<1,
\]
so $Y(2/\sqrt3)>0$ and $\kappa^2>\frac43$.
\end{proof}

\begin{proposition}\label{prop:annulus-localization}
Let $\phi$ be the annular profile from Proposition~\ref{prop:odesol}, with support $[r_-,r_+]$. For $n\geq3$,
\begin{equation}\label{eq:radii-localization}
    2\left(\sqrt{n-1}-\kappa\right)^2<r_-^2<2\left(\frac{n-1}{\sqrt{n-1}+\kappa}\right)^2,
    \qquad
    2\left(\sqrt{n-1}+\kappa\right)^2<r_+^2<2\left(\frac{n-1}{\sqrt{n-1}-\kappa}\right)^2.
\end{equation}
In particular,
\begin{equation}\label{eq:radii-asymptotics}
    r_\pm=\sqrt{2(n-1)}\pm\sqrt2\,\kappa+O(n^{-1/2})
    \qquad\text{as }n\to\infty.
\end{equation}
For $n=2$, where $\sqrt{n-1}<\kappa$, the upper bound for $r_-$ and the lower bound for $r_+$ in \eqref{eq:radii-localization} remain valid.
\end{proposition}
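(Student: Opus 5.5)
In the scaled variables \eqref{eq:annular-scaled-notation} the claim reads $\sqrt{n-1}-\kappa<a<\frac{n-1}{\sqrt{n-1}+\kappa}$ and $\sqrt{n-1}+\kappa<b<\frac{n-1}{\sqrt{n-1}-\kappa}$, where $\sqrt{2\mu}=\sqrt{n-1}$. The plan has two steps. First, prove the two \emph{lower} bounds $a>\sqrt{n-1}-\kappa$ and $b>\sqrt{n-1}+\kappa$ by comparing $y$ with the Hermite solution $Y$ of \eqref{eq:hermite-kappa}. Second, convert them into the two \emph{upper} bounds using the product bound $ab<n-1$ of Lemma~\ref{lem:annular-phase}, i.e.\ $r_-r_+<2(n-1)$ from Theorem~\ref{thm:annulus-geometry}\ref{it:geom-width}. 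The conversion is immediate:
\[
a<\frac{n-1}{b}<\frac{n-1}{\sqrt{n-1}+\kappa},
\qquad
b<\frac{n-1}{a}<\frac{n-1}{\sqrt{n-1}-\kappa}.
\]
The second inequality requires $\sqrt{n-1}>\kappa$, which holds for $n\geq3$ since $\kappa^2<23/17<2$ by Lemma~\ref{lem:kappa-bounds}. For $n=2$ only the first conversion is available, which explains the stated exceptions. The asymptotics \eqref{eq:radii-asymptotics} then follow by expanding $\frac{n-1}{\sqrt{n-1}\pm\kappa}=\sqrt{n-1}\mp\kappa+O(n^{-1/2})$.

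For the lower bounds, recenter at $x=t-\sqrt{n-1}$ and write $\tilde y(x)=y(\sqrt{n-1}+x)$. Then $\tilde y''-2\tilde h\tilde y'+\tilde y=0$ with $\tilde h(x)<x$ for $x\neq0$, by \eqref{eq:drift-comparison}. The key tool is a weighted Wronskian $\mathcal W:=e^{-x^2}(Y'\tilde y-Y\tilde y')$. A direct computation using both equations gives
\[
\mathcal W'=2e^{-x^2}\bigl(x-\tilde h(x)\bigr)\,Y\,\tilde y',
\]
so the sign of $\mathcal W'$ is that of $Y\tilde y'$, since $x-\tilde h>0$. To prove $b>\sqrt{n-1}+\kappa$, suppose instead $\beta:=b-\sqrt{n-1}\leq\kappa$ and integrate over $[x_c,\beta]$. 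Here $x_c$ is the recentred unique critical point of $y$, which exists by the proof of Lemma~\ref{lem:explicit-modes}(ii). On this interval $\tilde y'<0$ and $\tilde y>0$.

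The endpoint values of $\mathcal W$ should now force a contradiction. At $\beta$, $\mathcal W=-e^{-\beta^2}Y(\beta)\tilde y'(\beta)=e^{-\beta^2}Y(\beta)\geq0$. At $x_c$, $\mathcal W=e^{-x_c^2}Y'(x_c)\tilde y(x_c)$, which is negative if $x_c>0$, since $Y'<0$ on $(0,\kappa)$. But $\mathcal W'<0$ in between where $Y>0$, so $\mathcal W$ decreases from a negative value to a nonnegative one, a contradiction. The bound $a>\sqrt{n-1}-\kappa$ is handled symmetrically on $[\alpha,x_c]$ with $\alpha:=a-\sqrt{n-1}$: there $\tilde y'>0$, the sign of $\mathcal W'$ flips, and the endpoint comparison runs the other way, using the evenness of $Y$.

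The main obstacle is the location of the critical point $x_c$ relative to $0$. The argument above for $b$ uses $x_c\geq0$, and the mirror argument for $a$ needs $x_c\leq0$ or an adaptation. I would first try to show directly that $y'(\sqrt{n-1})$ has a sign. If that fails, I would split at $x=0$ and integrate $\mathcal W'$ over $[0,\beta]$ instead of $[x_c,\beta]$, controlling the boundary term $\mathcal W(0)=-\tilde y'(0)$ (note $Y'(0)=0$ and $Y(0)=1$). The aim would then be a one-sided comparison in which the possibly wrong-signed piece between $0$ and $x_c$ is absorbed using $Y'(0)=0$ and the concavity of $y$ from Claim~\ref{cl:concave}. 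If needed, I would fall back on a Sturm-type comparison in Liouville normal form, where the inequality $\tilde h<x$ is turned into an inequality between the potentials.
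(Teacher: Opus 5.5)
\textbf{There is a genuine gap: both of your lower bounds need $x_c\geq0$, and your proposal does not supply it.} Your architecture is the paper's: lower bounds from a Wronskian comparison with $Y$ exploiting $\tilde h<x$, then upper bounds from $ab<n-1$. The conversion step, the $n=2$ bookkeeping, the asymptotics and your identity $\mathcal W'=2e^{-x^2}(x-\tilde h)\,Y\tilde y'$ are all correct. But the comparison step is not closed. It needs the location of the maximum, $x_c\geq0$, i.e.\ $c\geq\sqrt{2\mu}=\sqrt{n-1}$, equivalently $\tilde y'(0)\geq0$, and you leave this open with speculative fallbacks.

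This fact is a genuine ingredient. In the paper it is the first assertion of Lemma~\ref{lem:annular-phase}, the same lemma you quote only for $ab<n-1$. It is proved by the phase-plane symmetrization. Pairing $\theta$ with $\pi-\theta$ in $\int_0^\pi\frac{d}{d\theta}\log E\,d\theta=\log E(b)-\log E(a)=0$ shows that the paired integrand has the sign of $t(\theta)t(\pi-\theta)-(n-1)$. The crossing rule for $P(\theta)=t(\theta)t(\pi-\theta)$ then excludes $c^2<n-1$. Without this fact your endpoint comparisons are consistent and yield no contradiction. For example, if $x_c<0$ and you integrate over $[0,\beta]$, then $\mathcal W(0)=-\tilde y'(0)>0$, $\mathcal W$ decreases, and $\mathcal W(\beta)\geq0$, with nothing contradicted; concavity does not repair this. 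Given $x_c\geq0$, your argument for $b$ on $[x_c,\beta]$ is correct as written. The paper instead uses the $M$-weighted identity $(MW)'=2M(H-x)\tilde y\,Y'$ on $[0,\beta]$, whose interior sign does not involve $\tilde y'$, but it still needs $\tilde y'(0)\geq0$ at the left endpoint.

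\textbf{Your account of the bound on $a$ is wrong.} It does not need $x_c\leq0$, and under $x_c\leq0$ the mirrored argument gives nothing. You cannot integrate over $[\alpha,x_c]$ when $\alpha<-\kappa$, since $Y<0$ to the left of $-\kappa$ and the sign of $\mathcal W'$ is lost. You must start at $-\kappa$, where $\mathcal W(-\kappa)=e^{-\kappa^2}Y'(-\kappa)\tilde y(-\kappa)\geq0$, and $\mathcal W$ increases while $\tilde y'>0$. If $x_c\leq0$, the right-endpoint value $e^{-x_c^2}Y'(x_c)\tilde y(x_c)$ is also $\geq0$, so there is no contradiction.

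The argument closes only by using $x_c\geq0$ again and stopping at $x=0$. On $(-\kappa,0)$ one then has $\tilde y'>0$ and $Y>0$, so $\mathcal W$ increases strictly from $\mathcal W(-\kappa)\geq0$ to $\mathcal W(0)=-\tilde y'(0)\leq0$, a contradiction. This is the paper's proof in Lemma~\ref{lem:annular-barrier}, and it uses $\sqrt{n-1}>\kappa$, i.e.\ $n\geq3$, so that $[-\kappa,0]$ corresponds to $t>0$. In short, both lower bounds rest on the one-sided fact $c\geq\sqrt{n-1}$. Once you import it from Lemma~\ref{lem:annular-phase} and fix the $a$-case as above, your proof is complete.
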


The proof of Proposition \ref{prop:annulus-localization} depends on two lemmas. The first one locates the maximum point of the profile and bounds the product of the two radii; it is proved by a phase-plane argument.

\begin{lemma}\label{lem:annular-phase}
The scaled profile $y$ in \eqref{eq:annular-y-equation} has a unique critical point $c\in(a,b)$. Moreover,
\[
    c\geq\sqrt{2\mu}
    \qquad\text{and}\qquad
    ab<2\mu;
\]
in particular, $r_-r_+<2(n-1)$.
\end{lemma}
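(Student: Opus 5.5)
The first assertion follows from Claim~\ref{cl:concave}. Since $y''<0$ on $(a,b)$, $y'$ is strictly decreasing from $y'(a)=1$ to $y'(b)=-1$, so it has exactly one zero $c\in(a,b)$. For the remaining two inequalities I will work with the oscillator energy $E:=y'^2+y^2$ already used in the proof of Proposition~\ref{prop:odesol}. It satisfies $E'=2y'(y''+y)=4h(t)\,y'^2$ and $E(a)=E(b)=1$, which gives the balance identity
\[
    \int_a^b h(t)\,y'(t)^2\,dt=0 .
\]
Both inequalities will be read off from this identity after pairing the ascending and descending branches of $y$ level by level.

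Concretely, let $m:=y(c)$. For $\eta\in(0,m)$ let $t_1(\eta)\in(a,c)$ and $t_2(\eta)\in(c,b)$ be the two preimages of $\eta$, which exist by concavity. Set $P(\eta):=y'(t_1)^2$ and $Q(\eta):=y'(t_2)^2$. Since $\frac{d}{dt}y'^2=2y'y''$, the equation \eqref{eq:annular-y-equation} gives
\[
    \frac{dP}{d\eta}=4h(t_1)\sqrt P-2\eta,
    \qquad
    \frac{dQ}{d\eta}=-4h(t_2)\sqrt Q-2\eta,
\]
with $P(0)=Q(0)=1$ and $P(m)=Q(m)=0$. The difference $D:=P-Q$ then satisfies $D(0)=D(m)=0$ and
\[
    D'=4\bigl(h(t_1)\sqrt P+h(t_2)\sqrt Q\bigr).
\]
The symmetrization rests on the identity
\[
    h(t_1)+h(t_2)=(t_1+t_2)\Bigl(\tfrac12-\tfrac{\mu}{t_1t_2}\Bigr),
\]
so the sign of $h(t_1)+h(t_2)$ is the sign of $t_1t_2-2\mu$.

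For $c\geq\sqrt{2\mu}$ I will argue by contradiction. If $c<\sqrt{2\mu}$, then $h<0$ on the whole ascending branch, so $\sqrt P$ is a subsolution near the top. Near $\eta=m$ I will do a local expansion: $P\approx Q\approx 2|y''(c)|(m-\eta)$, and the next-order terms are controlled by $h(c)<0$. This should show that $D$ has a definite sign near $m$. I will then propagate that sign down to $\eta=0$ by a comparison argument for the two Riccati-type equations above, contradicting $D(0)=0$. I expect a cleaner route may exist: test the balance identity after splitting at $\sqrt{2\mu}$. Part of the descending branch lies in $\{h<0\}$ as well, and the slopes there are bounded by those on the ascending branch at the same level. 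I will use whichever of these closes.

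For $ab<2\mu$ the plan is as follows. First show that $\eta\mapsto t_1(\eta)t_2(\eta)$ is monotone, decreasing as $\eta$ increases toward $m$ or at least bounded by its endpoint values. At $\eta=m$ the product equals $c^2\geq 2\mu$, and at $\eta=0$ it equals $ab$. Now suppose $ab\geq2\mu$. Then every level satisfies $t_1t_2\geq2\mu$, hence $h(t_2)\geq-h(t_1)$. Combined with $h(t_2)>0$, this makes $D'\geq 4|h(t_1)|(\sqrt Q-\sqrt P)$ wherever $h(t_1)<0$, and $D'>0$ where $h(t_1)\geq0$. A Gronwall-type argument on $D$ starting from $D(m)=0$ then forces $D(0)\neq0$, which is a contradiction. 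Finally, $r_-r_+=2ab<4\mu=2(n-1)$.

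The main obstacle is the monotonicity, or more precisely the endpoint control, of $t_1t_2$ along levels. The natural approach is to differentiate: $\frac{dt_1}{d\eta}=1/\sqrt P$ and $\frac{dt_2}{d\eta}=-1/\sqrt Q$, so the required sign is
\[
    t_2/\sqrt P \lessgtr t_1/\sqrt Q .
\]
This couples back to the sign of $D$. I would therefore try to prove the sign of $D$ and the product bound simultaneously by a continuity, or first-crossing, argument started from the top level $\eta=m$.
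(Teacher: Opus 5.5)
Your first step (uniqueness of $c$ via Claim~\ref{cl:concave}), the energy balance $\int_a^b h\,y'^2\,dt=0$, and the identity $h(t_1)+h(t_2)=(t_1+t_2)\bigl(\frac12-\frac{\mu}{t_1t_2}\bigr)$ are the right ingredients. However, neither inequality is actually proved. For $c\ge\sqrt{2\mu}$ you only list candidate routes (``I will use whichever of these closes''). For $ab<2\mu$ you defer the level-wise bound on $t_1t_2$, which you correctly flag as the main obstacle.

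In your level parametrization that obstacle is real. The two branches carry unequal weights $\sqrt P\neq\sqrt Q$. At a level where $t_1t_2=2\mu$, the derivative $\frac{d}{d\eta}(t_1t_2)=t_2/\sqrt P-t_1/\sqrt Q$ is signed only if you know $Q\ge P$ there. The first-crossing propagation you propose gives the opposite inequality:
\begin{itemize}
\item For $ab<2\mu$: if $ab>2\mu$ and $\eta_0$ is the first level with $t_1t_2=2\mu$, your own Gronwall inequality on $(0,\eta_0)$ gives $(De^K)'\ge0$. From $D(0)=0$ you then get $D(\eta_0)\ge0$, i.e.\ $P\ge Q$. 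This does not prevent the product from descending to $2\mu$.
\item For $c\ge\sqrt{2\mu}$: the same thing happens from the top level $\eta=m=y(c)$. On the interval below $m$ where $t_1t_2<2\mu$, one has $(De^{-K})'<0$ with $K'=4h(t_1)/(\sqrt P+\sqrt Q)$. So again $P>Q$ at the crossing, which is the unhelpful direction.
\end{itemize}
Also, $t_1t_2$ cannot be decreasing in $\eta$, since the statement itself says $ab<2\mu\le c^2$.

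The missing idea is to pair the branches by the Pr\"ufer angle rather than by level. With $y'=\sqrt E\cos\theta$ and $y=\sqrt E\sin\theta$ as in \eqref{eq:annular-shooting-phase}, $t(\theta)$ solves $dt/d\theta=1/(1-h(t)\sin2\theta)$, an equation that does not involve $E$. Pair $\theta$ with $\pi-\theta$, and set $t_1=t(\theta)$, $t_2=t(\pi-\theta)$, $S=\sin2\theta$.
\begin{itemize}
\item Because $h(t_1)(1+h(t_2)S)+h(t_2)(1-h(t_1)S)=h(t_1)+h(t_2)$, the paired integrand of $\frac{d}{d\theta}\log E$ has exactly the sign of $t_1t_2-2\mu$, with no leftover weights.
\item Whenever $t_1t_2=2\mu$, one has $h(t_1)=-h(t_2)=\frac{t_1-t_2}{2}$. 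The two angular speeds then coincide, and
\[
    \frac{d}{d\theta}(t_1t_2)=\frac{t_2-t_1}{1+h(t_2)S}>0 .
\]
This is an unconditional crossing rule.
\end{itemize}
With this rule, a last-crossing argument from $\theta=\pi/2$ (where $t_1t_2=c^2$) gives $c\ge\sqrt{2\mu}$. A first-crossing argument from $\theta=0$ (where $t_1t_2=ab$) gives $ab<2\mu$. Each is obtained by contradiction with $\log E(b)=\log E(a)$, and this is the paper's proof.
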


\begin{proof}
Recall that $y$ is concave, so $y'$ is strictly decreasing; since $y'(a)=1$ and $y'(b)=-1$, it follows that $y$ has exactly one critical point $c\in(a,b)$, with $y'>0$ on $[a,c)$ and $y'<0$ on $(c,b]$.

Define the oscillator energy $E(t):=y'(t)^2+y(t)^2$. The Bernoulli conditions give
\begin{equation}\label{eq:annular-energy-derivative}
    E(a)=E(b)=1.
\end{equation}
The profile $y$ is the member $y_{\alpha_0}$ of the shooting family in the proof of Proposition~\ref{prop:odesol}, so the phase representation \eqref{eq:annular-shooting-phase} applies: with $y'=\sqrt E\cos\theta$, $y=\sqrt E\sin\theta$, $\theta(a)=0$, the variable $t$ is a strictly increasing function of $\theta\in[0,\pi]$, with $t(0)=a$, $t(\pi)=b$, $t(\pi/2)=c$, and
\begin{equation}\label{eq:annular-theta-derivative}
    \theta'=1-h(t)\sin(2\theta),
\end{equation}
\begin{equation}\label{eq:annular-logE-theta-derivative}
    \frac{d}{d\theta}\log E
    =
    \frac{4h(t(\theta))\cos^2\theta}{\theta'(t(\theta))}
    =:
    I(\theta).
\end{equation}
Integrating and pairing $\theta$ with $\pi-\theta$, \eqref{eq:annular-energy-derivative} gives
\begin{equation}\label{eq:annular-logE-integral}
    0
    =
    \log E(b)-\log E(a)
    =
    \int_0^{\pi/2}
    \bigl(I(\theta)+I(\pi-\theta)\bigr)\,d\theta.
\end{equation}
For $0<\theta<\pi/2$, let
\[
    t_1:=t(\theta),
    \qquad
    t_2:=t(\pi-\theta),
    \qquad
    S:=\sin(2\theta)>0,
\]
so that $a<t_1<c<t_2<b$, and \eqref{eq:annular-theta-derivative} reads $\theta'(t_1)=1-h(t_1)S$, $\theta'(t_2)=1+h(t_2)S$. Since $\cos^2(\pi-\theta)=\cos^2\theta$ and
\[
    h(t_1)\bigl(1+h(t_2)S\bigr)+h(t_2)\bigl(1-h(t_1)S\bigr)
    =
    h(t_1)+h(t_2),
\]
the integrand $I(\theta)+I(\pi-\theta)$ in \eqref{eq:annular-logE-integral} has the same sign as
\begin{equation}\label{eq:annular-h-sum}
    h(t_1)+h(t_2)
    =
    (t_1+t_2)\left(\frac12-\frac{\mu}{t_1t_2}\right),
\end{equation}
that is, the sign of $t_1t_2-2\mu$.

Accordingly, set $P(\theta):=t_1t_2$, so that $P(0^+)=ab$ and $P(\pi/2^-)=c^2$. We claim the following crossing rule: $P'(\theta)>0$ at every $\theta\in(0,\pi/2)$ with $P(\theta)=2\mu$. Indeed,
\begin{equation}\label{eq:annular-P-derivative}
    P'
    =
    \frac{t_2}{1-h(t_1)S}
    -
    \frac{t_1}{1+h(t_2)S}
    =
    \frac{
        t_2-t_1+S\bigl(t_2h(t_2)+t_1h(t_1)\bigr)
    }{
        (1-h(t_1)S)(1+h(t_2)S)
    },
\end{equation}
and when $t_1t_2=2\mu$ we have $h(t_1)=\frac{t_1-t_2}2$ and $h(t_2)=\frac{t_2-t_1}2$, so the numerator equals $t_2-t_1+\frac S2(t_2-t_1)^2>0$.

Suppose now that $c<\sqrt{2\mu}$, so that $P(\pi/2^-)=c^2<2\mu$. Then the crossing rule forces $P<2\mu$ on all of $(0,\pi/2)$: otherwise there would be a last $\theta_0$ with $P(\theta_0)=2\mu$, and $P<2\mu$ on $(\theta_0,\pi/2)$ would give $P'(\theta_0)\leq0$. Hence $h(t_1)+h(t_2)<0$ for every $\theta$ by \eqref{eq:annular-h-sum}, and the integral in \eqref{eq:annular-logE-integral} is strictly negative, a contradiction. Thus $c\geq\sqrt{2\mu}$.

Similarly, suppose that $ab\geq2\mu$. Then $P>2\mu$ for all $\theta$ close to $0$: this follows from continuity if $ab>2\mu$, while if $ab=2\mu$ it follows from \eqref{eq:annular-P-derivative}, whose numerator tends to $b-a>0$ as $\theta\downarrow0$. If $P(\theta_1)\leq2\mu$ for some $\theta_1$, then at the first $\theta_0\leq\theta_1$ with $P(\theta_0)=2\mu$ we would have $P>2\mu$ on $(0,\theta_0)$ and hence $P'(\theta_0)\leq0$, contradicting the crossing rule. Hence $P>2\mu$ on all of $(0,\pi/2)$, and the integral in \eqref{eq:annular-logE-integral} is strictly positive, again a contradiction. Thus $ab<2\mu$.
\end{proof}

The second lemma bounds $a$ and $b$ from below by comparison with the limiting equation \eqref{eq:hermite-kappa}.

\begin{lemma}\label{lem:annular-barrier}
We have
\[
    a>\sqrt{2\mu}-\kappa
    \qquad\text{and}\qquad
    b>\sqrt{2\mu}+\kappa.
\]
\end{lemma}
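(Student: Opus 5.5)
We prove both bounds with a single Wronskian-type comparison between the recentered profile and the even Hermite solution $Y$ of \eqref{eq:hermite-kappa}. The inputs are the drift inequality \eqref{eq:drift-comparison} and the location $c\geq\sqrt{2\mu}$ of the critical point from Lemma~\ref{lem:annular-phase}.

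\textbf{Setup.} Set $x:=t-\sqrt{2\mu}$, $Z(x):=y(\sqrt{2\mu}+x)$, $\alpha:=a-\sqrt{2\mu}$, $\beta:=b-\sqrt{2\mu}$ and $x_c:=c-\sqrt{2\mu}\geq0$. Then $Z''=2h\,Z'-Z$, where $h=h(\sqrt{2\mu}+x)$, and $Y''=2xY'-Y$. Define the weighted Wronskian
\[
D(x):=e^{-x^2}\bigl(Y'(x)Z(x)-Y(x)Z'(x)\bigr).
\]
Substituting the two equations, the terms $\mp YZ$ cancel, and
\[
D'(x)=2e^{-x^2}\bigl(x-h(\sqrt{2\mu}+x)\bigr)\,Y(x)\,Z'(x).
\]
By \eqref{eq:drift-comparison}, $x-h>0$ for $x\neq0$ with $x>-\sqrt{2\mu}$. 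Hence the sign of $D'$ is the sign of $YZ'$.

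\textbf{Facts about $Y$ and $Z$.} Since $(e^{-x^2}Y')'=-e^{-x^2}Y$, we have $Y'(x)=-e^{x^2}\int_0^x e^{-s^2}Y\,ds$. So $Y'<0$ on $(0,\kappa]$ and $Y'>0$ on $[-\kappa,0)$, because $Y>0$ on $(-\kappa,\kappa)$; also $Y(0)=1$. By concavity (Claim~\ref{cl:concave}) and Lemma~\ref{lem:annular-phase}, $Z'>0$ on $[\alpha,x_c)$ and $Z'<0$ on $(x_c,\beta]$. In particular $Z'(0)\geq0$ because $x_c\geq0$.

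\textbf{Bound for $b$.} Suppose $\beta\leq\kappa$. On $(x_c,\beta)$ we have $Y>0$ and $Z'<0$, so $D$ is strictly decreasing there. At the critical point, $D(x_c)=e^{-x_c^2}Y'(x_c)Z(x_c)$. Here $Y'(x_c)\leq0$, since either $x_c=0$ or $x_c\in(0,\kappa)$, and $Z(x_c)>0$; so $D(x_c)\leq0$. At the endpoint, $Z(\beta)=0$ and $Z'(\beta)=-1$, so $D(\beta)=e^{-\beta^2}Y(\beta)\geq0$. This contradicts $D(\beta)<D(x_c)\leq0$. Hence $\beta>\kappa$, that is, $b>\sqrt{2\mu}+\kappa$.

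\textbf{Bound for $a$.} Suppose $\alpha\leq-\kappa$. Then $[-\kappa,0]\subset[\alpha,x_c]$, and on $(-\kappa,0)$ we have $Y>0$ and $Z'>0$, so $D$ is strictly increasing there. At the left end, $D(-\kappa)=e^{-\kappa^2}Y'(-\kappa)Z(-\kappa)\geq0$. At the origin, $D(0)=-Y(0)Z'(0)=-Z'(0)\leq0$. This contradicts $D(0)>D(-\kappa)\geq0$. Hence $a>\sqrt{2\mu}-\kappa$.

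\textbf{Where the difficulty lies.} The delicate point is choosing the integration intervals so that every boundary term of $D$ has a sign. This requires $x_c\geq0$, that is, $c\geq\sqrt{2\mu}$. This is exactly why Lemma~\ref{lem:annular-phase} is needed: without it, $Z'(0)$ and $Y'(x_c)$ could have the wrong sign. The comparison also needs the strict drift inequality $h<x$ from \eqref{eq:drift-comparison}, which holds on the whole relevant range because $a>0$ forces $x>-\sqrt{2\mu}$.
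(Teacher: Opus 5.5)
Your proof is correct and is essentially the paper's argument: a Wronskian comparison with the even Hermite solution $Y$, driven by the drift inequality \eqref{eq:drift-comparison} and the location $c\geq\sqrt{2\mu}$ from Lemma~\ref{lem:annular-phase}, with the $a$-bound argued on $(-\kappa,0)$ exactly as in the paper (your $D$ is $-e^{-x^2}W$). The only variation is in the $b$-bound: the paper integrates the other form of the identity, $(MW)'=2M(H-x)\widetilde y\,Y'$, over $(0,\delta)$ using the signs of $\widetilde y$ and $Y'$, whereas you reuse the $e^{-x^2}$-weighted identity on $(x_c,\beta)$ using $Z'<0$ past the critical point, and both close the same way.
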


\begin{proof}
Set
\[
    \widetilde y(x):=y(\sqrt{2\mu}+x),
    \qquad
    H(x):=h(\sqrt{2\mu}+x),
\]
and define the Wronskian
\[
    W(x):=\widetilde y'(x)Y(x)-\widetilde y(x)Y'(x).
\]
The equations for $\widetilde y$ and $Y$ give
\begin{align}
    W'-2xW&=2(H-x)\widetilde y'Y,
    \label{eq:annular-wronskian-left}\\
    W'-2HW&=2(H-x)\widetilde yY'.
    \label{eq:annular-wronskian-right}
\end{align}
Recall from \eqref{eq:drift-comparison} that $H(x)-x<0$ for $x\neq0$.
Moreover, $Y>0$ on $(-\kappa,\kappa)$, with $Y'>0$ on
$(-\kappa,0)$ and $Y'<0$ on $(0,\kappa)$. Indeed, $Y$ is even and
$Y'(0)=0$, while at any critical point in $\{Y>0\}$ its equation gives
$Y''=-Y<0$; hence $Y'$ cannot vanish anywhere else in
$(-\kappa,\kappa)$.

\emph{The bound $a>\sqrt{2\mu}-\kappa$.}
When $n=2$ this is immediate from $a>0$ and $\sqrt{2\mu}=1<\kappa$.
Suppose therefore that $n\geq3$ and, to the contrary, that
$a\leq\sqrt{2\mu}-\kappa$. By Lemma~\ref{lem:annular-phase},
$c\geq\sqrt{2\mu}$, and hence $\widetilde y'>0$ on $(-\kappa,0)$.
Equation~\eqref{eq:annular-wronskian-left} yields
\[
    \bigl(e^{-x^2}W\bigr)'
    =2e^{-x^2}(H-x)\widetilde y'Y<0
    \qquad\text{on }(-\kappa,0).
\]
If $a<\sqrt{2\mu}-\kappa$, then
\[
    W(-\kappa)=-\widetilde y(-\kappa)Y'(-\kappa)<0,
\]
whereas if $a=\sqrt{2\mu}-\kappa$, then both $\widetilde y$ and $Y$
vanish at $-\kappa$, so $W(-\kappa)=0$. In either case the preceding
strict inequality gives $W(0)<0$. On the other hand, $Y'(0)=0$ and
$\widetilde y'(0)\geq0$ because $c\geq\sqrt{2\mu}$, so
\[
    W(0)=\widetilde y'(0)Y(0)\geq0,
\]
a contradiction.

\emph{The bound $b>\sqrt{2\mu}+\kappa$.}
Suppose instead that $b\leq\sqrt{2\mu}+\kappa$, and set
$\delta:=b-\sqrt{2\mu}\in(0,\kappa]$. Let
\[
    M(x):=\exp\left(-2\int_0^x H(s)\,ds\right).
\]
Since $\widetilde y>0$ and $Y'<0$ on $(0,\delta)$,
\eqref{eq:annular-wronskian-right} gives
\[
    (MW)'=2M(H-x)\widetilde yY'>0
    \qquad\text{on }(0,\delta).
\]
If $\delta<\kappa$, then
\[
    W(\delta)=\widetilde y'(\delta)Y(\delta)<0,
\]
while if $\delta=\kappa$, then both $\widetilde y$ and $Y$ vanish at
$\delta$, so $W(\delta)=0$. Since $MW$ is strictly increasing and
$M(0)=1$, it follows that $W(0)<0$. This again contradicts
\[
    W(0)=\widetilde y'(0)Y(0)\geq0.
\]
\end{proof}

\begin{proof}[Proof of Proposition~\ref{prop:annulus-localization}]
Since $r_-=\sqrt2\,a$ and $r_+=\sqrt2\,b$, in the scaled variables \eqref{eq:annular-scaled-notation} the bounds \eqref{eq:radii-localization} read
\[
    \sqrt{2\mu}-\kappa<a<\frac{2\mu}{\sqrt{2\mu}+\kappa},
    \qquad
    \sqrt{2\mu}+\kappa<b<\frac{2\mu}{\sqrt{2\mu}-\kappa}.
\]
The two lower bounds are the content of Lemma~\ref{lem:annular-barrier}. Combining them with $ab<2\mu$ from Lemma~\ref{lem:annular-phase} gives
\[
    a<\frac{2\mu}b<\frac{2\mu}{\sqrt{2\mu}+\kappa},
    \qquad
    b<\frac{2\mu}a<\frac{2\mu}{\sqrt{2\mu}-\kappa}.
\]
The second chain uses $a>\sqrt{2\mu}-\kappa>0$ and hence requires $n\geq3$; the first does not, which proves the claim for $n=2$. 
Finally, the two enclosing bounds for $a$, and likewise those for $b$, differ by $\kappa^2/(\sqrt{n-1}\pm\kappa)=O(n^{-1/2})$, which gives \eqref{eq:radii-asymptotics}.

\end{proof}

Next, we bound the width of the annulus. The bound is dimension-free, and together with the product bound of Lemma~\ref{lem:annular-phase} it is the only radius information used to classify the $\ell\geq4$ modes in Section \ref{sec:instability}. The eigenvalue-comparison strategy is outlined in Section \ref{ssec:main-ideas}.

\begin{proposition}\label{prop:annulus-width}
Let $\phi$ be the annular profile from Proposition~\ref{prop:odesol}, with support $[r_-,r_+]$. For every $n\geq2$,
\[
    r_+-r_-<\sqrt{14};
\]
equivalently, in the scaled variables \eqref{eq:annular-scaled-notation}, $b-a<\sqrt7$.
\end{proposition}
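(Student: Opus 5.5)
We argue by contradiction, following the eigenvalue-comparison strategy of Section~\ref{ssec:main-ideas}. Suppose $b-a\geq\sqrt7$. In the scaled variables \eqref{eq:annular-scaled-notation}, equation \eqref{eq:annular-y-equation} reads $(\varrho y')'+\varrho y=0$ with $\varrho(t)=t^{n-1}e^{-t^2/2}=t^{2\mu}e^{-t^2/2}$. Since $y>0$ on $(a,b)$ and vanishes at both ends, $y$ is the first Dirichlet eigenfunction of $-\varrho^{-1}(\varrho\,\cdot\,')'$ on $(a,b)$, with eigenvalue exactly $1$. Hence
\[
  \int_{I}(w')^2\varrho\,dt\;\geq\;\int_{I}w^2\varrho\,dt
\]
for every $w\in H^1_0(I)$ and every subinterval $I\subset(a,b)$. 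Note that $w$ vanishes at the ends of $I$, so domain monotonicity lets us restrict to any subinterval. The goal is to exhibit an explicit trial function on a subinterval of length $\sqrt7$ for which the Rayleigh quotient is strictly below $1$.

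\textbf{Choice of subinterval.} By Lemma~\ref{lem:annular-phase}, $ab<2\mu$, and by Proposition~\ref{prop:annulus-containment}, $a<\sqrt{2\mu}<b$. I would choose $I=[m-\sqrt7/2,\,m+\sqrt7/2]\subset[a,b]$, centered as near $\sqrt{2\mu}$ as the containment allows; if needed, shift $I$ toward the endpoint lying closer to $\sqrt{2\mu}$. Writing $t=\sqrt{2\mu}+x$, the weight becomes
\[
  \varrho=\varrho(\sqrt{2\mu})\,e^{-x^2+\epsilon(x)}, \qquad \epsilon(x)=2\mu\log\bigl(1+x/\sqrt{2\mu}\bigr)-\sqrt{2\mu}\,x-\tfrac{x^2}{2}+x^2,
\]
with $\epsilon=O(x^3/\sqrt\mu)$ and $\epsilon\leq0$ for $x>0$, in the spirit of \eqref{eq:drift-comparison}. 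In the limit the problem is the Hermite operator with weight $e^{-x^2}$. Its first Dirichlet eigenvalue on $(-L,L)$ equals $1$ exactly when $L=\kappa$, and $2\kappa\approx2.32<\sqrt7\approx2.65$. So the limiting problem has room, and the trial function should be modeled on $Y$, truncated or stretched to the longer interval.

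\textbf{Trial function.} A concrete choice is $w(x)=\cos(\pi x/(2L))\,e^{\beta x^2}$ with $2L=\sqrt7$, or a polynomial $w=(L^2-x^2)(1+\gamma x^2)$, with parameters tuned. The quotient then becomes a ratio of integrals of $w'^2$ and $w^2$ against $x^{k}$ times the exact weight. Because $I$ may be off-center relative to $\sqrt{2\mu}$, I would handle the two ends separately. Using $\log(1+z)\leq z-z^2/2+z^3/3$ and the matching lower bound, bound $\varrho$ above on $\{w'^2\text{ large}\}$ and below on $\{w^2\text{ large}\}$, reducing everything to explicit Gaussian-polynomial integrals with an error uniform in $\mu\geq\tfrac12$.

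\textbf{Main obstacle.} The hard part is uniformity down to $n=2$ ($\mu=\tfrac12$). There $\sqrt{2\mu}=1$, the interval $I$ may reach near $t=0$, and the weight $t\,e^{-t^2/2}$ is far from Gaussian, so the asymptotic picture gives no margin; the final inequality is expected to read roughly $999/1000<1$. I would first treat large $\mu$ by the perturbative bound above. For small $\mu$ I would not rely on Taylor bounds: instead, place $I$ using $a<\sqrt{2\mu}$ and the constraint $b\geq a+\sqrt7$, and evaluate the Rayleigh quotient with the exact weight $t^{2\mu}e^{-t^2/2}$ for the trial function. Then check monotonicity in $\mu$, and in the position of $I$, by hand or by rational bounds on a finite grid of cases.
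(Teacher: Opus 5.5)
Your framework is the paper's: the first weighted Dirichlet eigenvalue on $(a,b)$ equals $1$, domain monotonicity, and a trial function on a subinterval of length $\sqrt7$. The placement step, however, has a genuine gap. The facts you use, $ab<2\mu$ and $a<\sqrt{2\mu}<b$, do not locate $[a,b]$ well enough for any trial function to succeed.

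They are also satisfied by the other members $y_\alpha$ of the shooting family in the proof of Proposition~\ref{prop:odesol}. Each $y_\alpha$ is a positive solution of $(\varrho y')'+\varrho y=0$ vanishing at $a(\alpha)=\sqrt{2\alpha}<\sqrt{2\mu}$ and $b(\alpha)=\sqrt{2\beta(\alpha)}>\sqrt{2s_0}$. So $(a(\alpha),b(\alpha))$ has Dirichlet eigenvalue exactly $1$. As $\alpha\downarrow0$, $a(\alpha)b(\alpha)\to0<2\mu$ and $b(\alpha)-a(\alpha)\to\sqrt{2s_0}>\sqrt n\geq\sqrt7$ for $n\geq7$. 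For such an interval every trial function supported in it has Rayleigh quotient $\geq1$. Hence no choice of $I$, of trial function, or of weight bounds can give a contradiction from your inputs alone.

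In your Hermite picture, the intervals with eigenvalue $1$ form a one-parameter family running from $(-\kappa,\kappa)$ to a half-line (the limit of the ball profile). The bound $ab<2\mu$ only says the centre is $\lesssim0$, so the wide, inward-shifted members survive. For the cosine trial centred at $x_0$ the limiting quotient is $\frac{\pi^2}7-1+\frac74\bigl(\frac13-\frac2{\pi^2}\bigr)+x_0^2\approx0.64+x_0^2$. So the centre must lie within about $0.6$ of $\sqrt{2\mu}$, and your placement rule does not guarantee this.

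The missing ingredient is Lemma~\ref{lem:annular-barrier}: $a>\sqrt{2\mu}-\kappa$ and $b>\sqrt{2\mu}+\kappa$. It is proved from $c\geq\sqrt{2\mu}$, hence from the Bernoulli conditions at both ends, which the $y_\alpha$ above violate at $b$. Combined with $ab<2\mu$ and Lemma~\ref{lem:kappa-bounds}, it traps $p=ab$ in the window \eqref{eq:product-window}.

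The paper then takes $[\alpha,\beta]$ with $\alpha\beta=p$ and $\beta-\alpha=\sqrt7$, centred at $\sqrt{p+7/4}$. This lies inside $[a,b]$ because, at fixed product, the endpoints move monotonically with the width. After the Liouville substitution the potential is $V_n=\frac{t^2}4+\frac{\mu(\mu-1)}{t^2}-\mu-\frac12$. For $n\geq4$ the inverse-square term penalizes inward placement. The left end of the window is what makes the explicit cosine bound $\mathcal Q(p)$ increasing, giving $\mathcal Q(p)<\mathcal Q(2\mu)<1$; the last step reduces to a quadratic in $n$ with negative discriminant, uniformly in $n\geq3$. For $n=2$, where $\mu(\mu-1)<0$, only $0<p<1$ is needed, and a two-mode trial function closes the argument.

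This explicit route also replaces your perturbative regime plus finite grid. As stated, that plan would not cover a continuum of values of $\mu$ and positions of $I$ without explicit monotonicity or Lipschitz control.

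Finally, your sign is reversed. With $z=x/\sqrt{2\mu}$, $\epsilon(x)=2\mu\bigl[\log(1+z)-z+\tfrac{z^2}2\bigr]$ is $\geq0$ for $x>0$ and $\leq0$ for $x<0$, consistent with \eqref{eq:drift-comparison}.
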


\begin{proof}
Write $p:=ab$ and
\[
    \varrho(t):=t^{2\mu}e^{-t^2/2},
\]
so that $\varrho'=-2h\varrho$ and the equation \eqref{eq:annular-y-equation} becomes divergence form $(\varrho\,y')'+\varrho\,y=0$. By Lemma~\ref{lem:annular-phase}, $p<2\mu$ for every $n\geq2$; for $n\geq3$, Lemmas \ref{lem:annular-barrier} and \ref{lem:kappa-bounds} complement this with a lower bound, so that $p$ lies in the window
\begin{equation}\label{eq:product-window}
    2\mu-\frac{23}{17}
    <2\mu-\kappa^2
    =\bigl(\sqrt{2\mu}-\kappa\bigr)\bigl(\sqrt{2\mu}+\kappa\bigr)
    <p<2\mu.
\end{equation}

Suppose for contradiction that $b-a\geq\sqrt7$. For fixed product $p$, the endpoints of the interval with product $p$ and width $d$ are $\frac12\bigl(\sqrt{d^2+4p}\mp d\bigr)$, the first decreasing and the second increasing in $d$. Hence the interval $[\alpha,\beta]$ determined by
\[
    \beta-\alpha=\sqrt7,
    \qquad
    \alpha\beta=p,
\]
satisfies $[\alpha,\beta]\subset[a,b]$, and $\alpha>0$.

\emph{Step 1: reduction to a trial function.} Let $\lambda_1$ be the first eigenvalue of the Dirichlet problem
\begin{equation}\label{eq:width-eigen}
    -(\varrho\,\eta')'=\lambda\,\varrho\,\eta
    \quad\text{on }(\alpha,\beta),
    \qquad
    \eta(\alpha)=\eta(\beta)=0.
\end{equation}
Since $y>0$ on $(a,b)$ and solves $-(\varrho y')'=\varrho y$ with $y(a)=y(b)=0$, the first Dirichlet eigenvalue on $(a,b)$ is $1$. Hence domain monotonicity and $[\alpha,\beta]\subset[a,b]$ give $\lambda_1\geq1$. It therefore suffices to show $\lambda_1<1$.

The Liouville substitution $v=\varrho^{1/2}\eta$ transforms \eqref{eq:width-eigen} into the Dirichlet problem for $-v''+V_n\,v=\lambda v$ on $(\alpha,\beta)$, with
\begin{equation}\label{eq:width-potential}
    V_n(t)=\frac{t^2}{4}+\frac{\mu(\mu-1)}{t^2}-\mu-\frac12,
\end{equation}
so it suffices to exhibit $\psi\in H^1_0((\alpha,\beta))$ with
\begin{equation}\label{eq:width-rayleigh}
    \frac{\displaystyle\int_\alpha^\beta\bigl[(\psi')^2+V_n\,\psi^2\bigr]\,dt}
         {\displaystyle\int_\alpha^\beta\psi^2\,dt}<1 .
\end{equation}
Write
\[
    L:=\sqrt7,
    \qquad
    m:=\frac{\alpha+\beta}2=\sqrt{p+\frac74},
    \qquad
    t=m+x,
    \quad
    -\frac L2\leq x\leq\frac L2 .
\]

\emph{Step 2: the case $n\geq3$.} Take
\[
    \psi(x):=\cos\frac{\pi x}{L},
\]
and let $\langle F\rangle:=\int F\psi^2\,dx\big/\!\int\psi^2\,dx$ denote the normalized averages. Elementary integration gives
\begin{equation}\label{eq:width-cos-moments}
    \frac{\int(\psi')^2}{\int\psi^2}=\frac{\pi^2}{7},
    \qquad
    \langle t^2\rangle=m^2+\vartheta\,\frac{L^2}4=p+C,
    \qquad
    C:=\frac73-\frac7{2\pi^2},
    \qquad
    \vartheta:=\frac13-\frac2{\pi^2}.
\end{equation}
Since $|x|\leq\frac L2<m$, pairing $x$ with $-x$ gives the convergent expansion
\[
    \langle t^{-2}\rangle
    =\frac1{m^2}\biggl\{1+\sum_{j\geq1}(2j+1)\Bigl\langle\Bigl(\frac xm\Bigr)^{2j}\Bigr\rangle\biggr\},
\]
and $\langle x^{2j}\rangle\leq\bigl(\tfrac L2\bigr)^{2j-2}\langle x^2\rangle=\vartheta\bigl(\tfrac L2\bigr)^{2j}$ yields, after summing the resulting series,
\begin{equation}\label{eq:width-inverse-upper}
    \langle t^{-2}\rangle\leq J_\vartheta(p),
    \qquad
    J_\vartheta(p):=\frac4{4p+7}\left(1+\vartheta\,\frac{7(6p+7)}{8p^2}\right)
    =\frac{4(1-\vartheta)}{4p+7}+\frac\vartheta p+\frac{7\vartheta}{2p^2}.
\end{equation}
Since $\mu(\mu-1)\geq0$ for $n\geq3$, the Rayleigh quotient in \eqref{eq:width-rayleigh} is at most
\begin{equation}\label{eq:width-Q}
    \mathcal Q(p):=\frac{\pi^2}7+\frac{p+C}4+\mu(\mu-1)\,J_\vartheta(p)-\mu-\frac12 .
\end{equation}

We claim that $\mathcal Q$ is increasing throughout the window \eqref{eq:product-window}. The second form of $J_\vartheta$ in \eqref{eq:width-inverse-upper} shows $J_\vartheta''>0$, and $\pi^2<10$ gives $\vartheta<\frac2{15}$. Since $\partial_\vartheta J_\vartheta'(p)=\frac{16}{(4p+7)^2}-\frac1{p^2}-\frac7{p^3}<0$,
\[
    4\,\mathcal Q'(p)
    =1+4\mu(\mu-1)\,J_\vartheta'(p)
    \geq 1+(n-1)(n-3)\,J_{2/15}'(p),
\]
with strict inequality when $n\geq 4$, and the right-hand side is increasing in $p$. For $n=3$ it equals $1$. For $n\geq4$, set $B:=n-4\geq0$; clearing denominators at the left endpoint $p=2\mu-\frac{23}{17}$ of the window gives
\[
    1+(n-1)(n-3)\,J_{2/15}'\!\left(2\mu-\frac{23}{17}\right)
    =\frac{P(B)}{15\,(17B+28)^3\,(68B+231)^2}>0,
\]
where
\[
    P(B):=475067448\,B^4+2882697837\,B^3+5070969134\,B^2+2143250690\,B+10362030 .
\]
Hence $\mathcal Q(p)<\mathcal Q(2\mu)$ throughout the window, by the upper bound in \eqref{eq:product-window}. Finally, since $\partial_\vartheta J_\vartheta=\frac1p+\frac7{2p^2}-\frac4{4p+7}>0$, the bounds $\vartheta<\frac2{15}$ and $\pi^2<10$ give
\[
    4\bigl(1-\mathcal Q(2\mu)\bigr)
    >n+5-\frac{3233}{420}-(n-1)(n-3)\,J_{2/15}(n-1)
    =\frac{2272\,n^2-8555\,n+9027}{420\,(n-1)(4n+3)}>0,
\]
where the quadratic in the numerator has negative discriminant. Thus the Rayleigh quotient of $\psi$ is at most $\mathcal Q(p)<\mathcal Q(2\mu)<1$, and Step~1 applies.

\emph{Step 3: the case $n=2$.} Here $\mu=\frac12$, and two features of Step~2 change. First, the coefficient $\mu(\mu-1)=-\frac14$ of the inverse-square term in \eqref{eq:width-potential} is now negative, so an upper bound such as \eqref{eq:width-inverse-upper} no longer controls the Rayleigh quotient, and we need a lower bound on the average of $t^{-2}$ instead. Second, the lower bound in \eqref{eq:product-window} becomes vacuous, since $2\mu-\kappa^2<0$; the only property of the product $p$ we use is $0<p<2\mu=1$. Put $k:=\pi/L$ and take the two-mode trial function
\[
    \psi(x):=\cos(kx)-\frac1{10}\sin(2kx),
\]
which vanishes at $x=\pm\frac L2$. Let
\[
    K:=\frac{\int(\psi')^2}{\int\psi^2},
    \qquad
    T:=\frac{\int t^2\psi^2}{\int\psi^2},
    \qquad
    S:=\frac{\int t^{-2}\psi^2}{\int\psi^2}.
\]
Orthogonality and direct integration give
\begin{equation}\label{eq:width-two-mode-KT}
    K=\frac{104}{101}\cdot\frac{\pi^2}7,
    \qquad
    T=m^2+\frac7{12}-\frac{2807}{808\,\pi^2}-\frac{640\,mL}{909\,\pi^2}.
\end{equation}
Since $\pi^2>9$ and $L<3$ give $\frac{640L}{909\pi^2}<1<2m$, the expression for $T$ is increasing in $m$; as $m^2=p+\frac74\leq\frac{11}4$, we may evaluate it at $m^2=\frac{11}4$, where $mL=\frac{\sqrt{77}}2$. Using $\pi<\frac{22}7$ and $\sqrt{77}>\frac{877}{100}$,
\[
    K=\frac{104}{707}\,\pi^2<\frac{104}{707}\cdot\frac{484}{49}=\frac{50336}{34643}<\frac{1453}{1000},
    \qquad
    T<\frac{11}4+\frac7{12}-\frac{49}{484}\left(\frac{2807}{808}+\frac{14032}{4545}\right)<\frac{267}{100}.
\]
For a lower bound on $S$, write
\[
    E(x):=\cos^2(kx)+\frac1{100}\sin^2(2kx),
\]
so that $\psi^2=E-\frac15\cos(kx)\sin(2kx)$ and $\int\psi^2=\int E$. The omitted cross term is odd; on $\bigl(0,\frac L2\bigr)$ it is nonpositive, while $(m+x)^{-2}-(m-x)^{-2}<0$ there, so the cross term contributes nonnegatively to $\int t^{-2}\psi^2$. Pairing $x$ with $-x$ as in Step~2 and keeping the first three (positive) terms of the expansion therefore gives
\begin{equation}\label{eq:width-two-mode-S}
    S\geq\frac1{m^2}+\frac{3M_2}{m^4}+\frac{5M_4}{m^6},
    \qquad
    M_2:=\frac{\int x^2E}{\int E},
    \qquad
    M_4:=\frac{\int x^4E}{\int E},
\end{equation}
where direct integration gives
\[
    M_2=\frac7{12}-\frac{2807}{808\,\pi^2},
    \qquad
    M_4=49\left(\frac1{80}-\frac{401}{1616\,\pi^2}+\frac{4803}{3232\,\pi^4}\right).
\]
Both $M_2$ and $M_4$ are positive, so the right-hand side of \eqref{eq:width-two-mode-S} is decreasing in $m^2$ and may be evaluated at $m^2=\frac{11}4$, where it equals
\[
    \frac4{11}+\frac{48}{121}\,M_2+\frac{320}{1331}\,M_4 .
\]
As a function of $y:=\pi^2$, this expression has derivative $\frac{14\,(41303\,y-336210)}{134431\,y^3}$, which is positive for $y\geq9$; substituting the lower bound $\pi>\frac{333}{106}$ therefore yields
\[
    S>\frac{267896812791500}{551004452874117}>\frac{243}{500}.
\]
For $n=2$ the Rayleigh quotient in \eqref{eq:width-rayleigh} is
\[
    K+\frac T4-\frac S4-1
    <\frac{1453}{1000}+\frac{267}{400}-\frac{243}{2000}-1
    =\frac{999}{1000}<1,
\]
and Step~1 applies.
\end{proof}

The next result is the outward bias, Theorem \ref{thm:annulus-geometry} \ref{it:geom-bias}, in the rescaled variables. Together with the width bound, it is the geometric information needed in the study of the $\ell=3$ mode in Section~\ref{sec:instability}. Write
\begin{equation}\label{def:asymmetry}
    m:=\frac{a+b}2,
    \qquad
    d:=\frac{b-a}2,
    \qquad
    R:=m^2-2\mu.
\end{equation}
The proof extracts an identity for the excess $R$ from the mismatch between the even boundary data \eqref{eq:annular-y-data} and the asymmetric equation \eqref{eq:annular-y-equation}; the strategy is outlined in Section \ref{ssec:main-ideas}.

\begin{lemma}\label{lem:annular-midpoint}
For every $n\geq2$,
\[
    0<R<d^2
    \qquad\text{and}\qquad
    R>\gamma_d:=\frac{\int_0^d x^4e^{-x^2}\,dx}{\int_0^d x^2e^{-x^2}\,dx}\,.
\]
Moreover, $d\mapsto\gamma_d$ is strictly increasing.
\end{lemma}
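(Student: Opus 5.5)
Following the outline in Section~\ref{ssec:main-ideas}, I would recenter at the midpoint, $t=m+x$ with $x\in[-d,d]$, and split $y$ into even and odd parts, $y_e(x)=\tfrac12(y(m+x)+y(m-x))$ and $y_o(x)=\tfrac12(y(m+x)-y(m-x))$. The data \eqref{eq:annular-y-data} are symmetric, so $y_o(\pm d)=0$ and $y_o'(\pm d)=0$. Write $2h(m+x)=(m+x)-\frac{2\mu}{m+x}$ and decompose it into even and odd parts in $x$. Its odd part is
\[
x+\frac{2\mu x}{m^2-x^2}=\frac{x(m^2+2\mu-x^2)}{m^2-x^2},
\]
and its even part is
\[
m-\frac{2\mu m}{m^2-x^2}=\frac{m\,(m^2-x^2-2\mu)}{m^2-x^2}=\frac{m(R-x^2)}{m^2-x^2}.
\]
The odd part of \eqref{eq:annular-y-equation} then reads
\[
y_o''-h_o\,y_o'+y_o=\frac{m(R-x^2)}{m^2-x^2}\,y_e' ,
\]
where $h_o$ denotes the odd part of $2h$. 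The right-hand side is odd, since $y_e'$ is odd, and it is forced by the product of $R-x^2$ with a weight. Because $y_e$ is concave (Claim~\ref{cl:concave}) with maximum at $0$, we have $-y_e'(x)>0$ for $x>0$.

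\textbf{Step 1: compatibility identity.} Let $z$ be the odd solution of the homogeneous adjoint equation on $(0,d)$; in divergence form this is $(\varpi z')'+\varpi z=0$ with $\varpi=\exp(-\int h_o)$ even. Multiply the odd equation by $\varpi z$, integrate over $(0,d)$, and use $y_o(0)=y_o(d)=y_o'(d)=0$ together with $z(0)=0$. All boundary terms drop out, which gives
\[
\int_0^d \varpi\,z\,\frac{m(R-x^2)}{m^2-x^2}\,(-y_e')\,dx=0 .
\]
For this I need $z>0$ on $(0,d]$, i.e.\ $d$ is below the first positive zero of $z$. I would compare with the equation that $y_e'$ satisfies, using Sturm comparison and the width bound of Proposition~\ref{prop:annulus-width}. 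The identity expresses $R$ as the average of $x^2$ against the positive weight $\omega(x)=\varpi z\,m(-y_e')/(m^2-x^2)$ on $(0,d)$. Since $\omega>0$, this immediately gives $0<R<d^2$.

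\textbf{Step 2: lower bound $R>\gamma_d$.} I would show that $\omega(x)/(x^2e^{-x^2})$ is increasing on $(0,d)$. Chebyshev's correlation inequality, applied with the increasing function $x^2$, then gives
\[
\frac{\int x^2\omega}{\int\omega}\;\geq\;\frac{\int x^4e^{-x^2}}{\int x^2e^{-x^2}},
\]
with strict inequality unless the ratio is constant. Near $0$, $\omega\sim cx^2$ because $z$ and $-y_e'$ are odd, and the Gaussian factor matches $\varpi\approx e^{-x^2/2}$, since $h_o\approx 2x$ and $\varpi=e^{-x^2-\dots}$. So the content is that the remaining factor, essentially $z(x)\,(-y_e'(x))\,e^{x^2}/(x^2\cdots)$, is increasing. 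This should reduce to convexity in $x$ of the curvature $-y''$, whose equation \eqref{eq:y-2ndder} has zeroth-order coefficient $-3$. I would prove that convexity by applying the maximum principle twice: first to $K=-y''$, and then to $K''$, using that differentiating \eqref{eq:y-2ndder} again gives an equation of the same sign structure with endpoint values controlled by the Bernoulli data. This step is the main obstacle, because exact bookkeeping of the drift corrections $O(x^2/m)$ is needed to keep every monotonicity strict for all $n\geq2$.

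\textbf{Step 3: monotonicity of $\gamma_d$.} Differentiating,
\[
\gamma_d'=\frac{d^2e^{-d^2}\,(d^2-\gamma_d)}{\int_0^d x^2e^{-x^2}\,dx}.
\]
This is positive because $\gamma_d<d^2$, as $\gamma_d$ is an average of $x^2$ over $(0,d)$.
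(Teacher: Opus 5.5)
Your setup is the paper's. Your $-y_e'(x)=\frac12\int_{m-x}^{m+x}K$ is its $P$, your $\varpi=e^{-x^2/2}(1-x^2/m^2)^{\mu}$ is its $\sigma$, your $z$ is its $\chi$, and your $\omega$ is $m$ times its weight $W$. The compatibility identity, the bound $0<R<d^2$, the Chebyshev step and the formula for $\gamma_d'$ all match the paper.

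The gap is the step you flag as the main obstacle: you never show that $\omega/(x^2e^{-x^2})$ is increasing. Your heuristic cannot give strict monotonicity on all of $(0,d)$ uniformly down to $n=2$. It relies on matching near $x=0$ (where in fact $\varpi\approx e^{-x^2}$ for large $n$, not $e^{-x^2/2}$) and on controlling $O(x^2/m)$ corrections. No asymptotics are needed, because the ratio factors exactly:
\[
\frac{\omega}{x^2e^{-x^2}}=m\cdot\frac{-y_e'(x)}{x}\cdot\frac{z(x)}{x}\cdot\frac{\varpi(x)}{(m^2-x^2)e^{-x^2}} .
\]
Each factor is increasing for its own reason.
\begin{itemize}
\item The first factor is the mean of $K=-y''$ over $[m-x,m+x]$. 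It increases in $x$ by the Hermite--Hadamard inequality once $K$ is strictly convex. This is the only place convexity enters.
\item For the second, one computes $(\varpi(xz'-z))'=\frac{2\mu x}{m^2-x^2}\varpi z>0$, so $xz'>z$.
\item For the third, the logarithmic derivative is $\frac{x(R+2-x^2)}{m^2-x^2}>0$, using $R>0$ and $d^2<7/4$.
\end{itemize}
The factor $(m^2-x^2)^{-1}$ inherited from the forcing is essential in the third item. Without it, $\varpi e^{x^2}$ has logarithmic derivative $\frac{x(R-x^2)}{m^2-x^2}$, which changes sign at $x=\sqrt R\in(0,d)$.

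Two further steps need repair.

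\emph{Positivity of $z$ on $(0,d]$.} There is no homogeneous equation for $y_e'$ to compare with: the even part satisfies $y_e''-h_oy_e'+y_e=\frac{m(R-x^2)}{m^2-x^2}\,y_o'$, which involves $y_o$. Compare instead with $\sin x$. One has $(z'\sin x-z\cos x)'=-\frac{\varpi'}{\varpi}z'\sin x>0$ as long as $z'>0$. This pushes the first critical point of $z$ beyond $\pi/2$, and $\pi/2>\sqrt7/2>d$ by Proposition~\ref{prop:annulus-width}.

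\emph{Convexity of $K$.} Apply the maximum principle to $K'$, not to $K''$.
\begin{itemize}
\item Differentiating \eqref{eq:y-2ndder} once gives $J''+BJ'+(B'-3)J=0$ for $J=K'$, with $B=\frac{n+1}{t}-t$ and $B'-3<0$.
\item The Bernoulli data give $K'(a)<0<K'(b)$.
\item Excluding interior positive maxima and negative minima forces $K'$ to be nondecreasing.
\item At an interior zero of $K''$, either the equation makes it a strict extremum of $K'$ or ODE uniqueness gives $K'\equiv0$. Both are impossible, so $K''>0$.
\end{itemize}
The equation for $K''$ itself carries the sign-changing forcing $-B''K'$, so the maximum principle does not apply to it directly. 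The paper argues differently: it shows $K'$ has a single zero $t_0$, then uses $L'+BL=(3-B')K'$ for $L=K''$ on either side of $t_0$, starting from $L(t_0)=3K(t_0)>0$.
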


\begin{proof}
The monotonicity of $\gamma_d$ is immediate: $\gamma_d<d^2$, and
\[
    \gamma_d'
    =\frac{d^2e^{-d^2}\,(d^2-\gamma_d)}{\int_0^d x^2e^{-x^2}\,dx}>0.
\]

\emph{Step 1: the curvature is strictly convex.} Let $K:=-y''$, which is positive by Claim \ref{cl:concave} and satisfies the equation \eqref{eq:y-2ndder} 
\[
    K''+\left(\frac{n+1}{t}-t\right)K'-3K=0;
\]
while the endpoint data give
\[
    K'(a)=-\left(a-\frac{2\mu}{a}\right)^2-\frac{2\mu}{a^2}<0,
    \qquad
    K'(b)=\left(b-\frac{2\mu}{b}\right)^2+\frac{2\mu}{b^2}>0.
\]
At every zero of $K'$ one has $K''=3K>0$, so $K'$ has exactly one zero $t_0$. Put $B(t):=\frac{n+1}{t}-t$. Let $L:=K''$. Differentiating the equation for $K$ gives
\[
    L'+BL+(B'-3)K'=0.
\]
At a zero of $L$,
\[
    L'=(3-B')K'=\left(4+\frac{n+1}{t^2}\right)K'.
\]
Since $L(t_0)=3K(t_0)>0$, a last zero of $L$ before $t_0$ would have nonnegative derivative, contradicting $K'<0$ there. Likewise, a first zero after $t_0$ would have nonpositive derivative, contradicting $K'>0$. Hence $L>0$ on $(a,b)$, so $K$ is strictly convex.

\emph{Step 2: the midpoint identity.} For $0<x<d$ set
\[
    P(x):=\frac12\int_{m-x}^{m+x}K(t)\,dt>0,
    \qquad
    \Delta:=m^2-x^2,
    \qquad
    \sigma(x):=e^{-x^2/2}\left(1-\frac{x^2}{m^2}\right)^{\mu}.
\]
Taking the odd part of \eqref{eq:annular-y-equation} at the reflected points $m\pm x$ shows that $\mathcal O(x):=\tfrac12\bigl(y(m+x)-y(m-x)\bigr)$ satisfies
\begin{equation}\label{eq:annular-odd-part}
    (\sigma\mathcal O')'+\sigma\mathcal O
    =-m\frac{R-x^2}{\Delta}\,\sigma P.
\end{equation}
Consider $\chi$, the solution of $(\sigma\chi')'+\sigma\chi=0$ with $\chi(0)=0$, $\chi'(0)=1$.

Until the first critical point of $\chi$,
\[
    (\chi'\sin x-\chi\cos x)'=-\frac{\sigma'}{\sigma}\,\chi'\sin x>0,
\]
so that critical point lies beyond $\pi/2$. Indeed, if \(x_*\leq\pi/2\) were the first critical point of \(\chi\), then \(\chi'>0\) and \(\chi>0\) on \((0,x_*)\), so \(\mathcal W:=\chi'\sin x-\chi\cos x\), with \(\mathcal W(0)=0\), would satisfy \(\mathcal W(x_*)>0\). However, \(\chi'(x_*)=0\) gives \(\mathcal W(x_*)=-\chi(x_*)\cos x_*\leq0\), a contradiction. 

Since $d<\sqrt7/2<\pi/2$ by Proposition~\ref{prop:annulus-width}, $\chi>0$ on $(0,d]$.
Multiplying the equation for $\mathcal O$ by $\chi$ and subtracting
$\mathcal O$ times the equation for $\chi$ gives
\[
    \bigl[\sigma(\chi\mathcal O'-\mathcal O\chi')\bigr]'
    =-m(R-x^2)\frac{\sigma\chi P}{\Delta}.
\]
The boundary term vanishes at $x=0$ and $x=d$, because
$\chi(0)=\mathcal O(0)=0$ and $\mathcal O(d)=\mathcal O'(d)=0$.
Integrating the preceding identity therefore gives
\[
    \int_0^d (R-x^2)\frac{\sigma\chi P}{\Delta}\,dx=0,
\]
and hence
\begin{equation}\label{eq:annular-midpoint}
    R=\frac{\int_0^d x^2\,W(x)\,dx}{\int_0^d W(x)\,dx},
    \qquad
    \text{ where } W:=\frac{\sigma\chi P}{\Delta}>0.
\end{equation}
In particular, since $0<x^2<d^2$ on $(0,d)$, this identity gives $0<R<d^2$.

\emph{Step 3: comparison.} The ratio $P(x)/x$ is increasing, because the symmetric average $P(x)/x$ of the strictly convex positive function $K$ is increasing in the radius. Also $\chi/x$ is increasing: indeed, if $z:=x\chi'-\chi$, then
\[
    (\sigma z)'=\frac{2\mu x}{\Delta}\,\sigma\chi>0.
\]
Thus $z>0$, and hence $(\chi/x)'>0$.
Finally,
\[
    \frac{d}{dx}\log\frac{\sigma/\Delta}{e^{-x^2}}
    =\frac{x(R+2-x^2)}{\Delta}>0.
\]
Thus
\[
    \frac{W}{x^2e^{-x^2}}
    =\frac{P}{x}\,\frac{\chi}{x}\,
      \frac{\sigma/\Delta}{e^{-x^2}}
\]
is strictly increasing, and the Chebyshev's integral inequality  in \eqref{eq:annular-midpoint} gives $R>\gamma_d$.
\end{proof}

\section{Positivity of the moment}\label{sec:l3-details}
This appendix proves the moment inequality \eqref{eq:l3-moment}, in the rescaled variables \eqref{eq:annular-scaled-notation}. 
The discussion in Subsection \ref{ssec:l3} indicates the sign of the moment integral depends on the length of the interval and the outward bias of the annulus. In the rescaled variables, these parameters are denoted as 
\begin{equation}\label{eq:l3-mdr}
    m:=\frac{a+b}{2},\qquad d:=\frac{b-a}{2},
    \qquad R:=m^2-(n-1),
\end{equation}
see \eqref{def:asymmetry}. Thus $a=m-d$, $b=m+d$, and $m^2=n-1+R$. 
The change of variable $r=\sqrt2\,t$ turns the
moment integral in \eqref{eq:l3-moment} into a positive multiple of
\begin{equation}\label{eq:l3-moment-scaled}
    \mathcal D_n(R,d):=
    \int_{\sqrt{n-1+R}-d}^{\sqrt{n-1+R}+d}
    \bigl(t^2-(n+2)\bigr)\,t^{n+3}e^{-t^2/2}\,dt.
\end{equation}
Thus it suffices to prove $\mathcal D_n(R,d)>0$ in every dimension.

\begin{proposition}\label{prop:l3-moment-positive}
For every $n\geq2$, the parameters $R$ and $d$ defined in
\eqref{eq:l3-mdr} satisfy $\mathcal D_n(R,d)>0$.
\end{proposition}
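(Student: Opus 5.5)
The plan follows the outline already sketched in Section~\ref{ssec:l3}, splitting into a large-dimension analytic regime and a finite range handled by certified computation. Two ingredients are assumed throughout. The first is a monotonicity statement: for the relevant parameter ranges, $\mathcal D_n(R,d)$ is increasing in both $R$ and $d$. This is the forthcoming Lemma~\ref{lem:l3-monotone}, which I would prove by differentiating \eqref{eq:l3-moment-scaled}. Writing $g(t):=(t^2-(n+2))t^{n+3}e^{-t^2/2}$, one has $\partial_d\mathcal D_n=g(m+d)+g(m-d)$, and $\partial_R\mathcal D_n=\bigl(g(m+d)-g(m-d)\bigr)/(2m)$. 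The sign of each is checked using the localization $a<\sqrt{n-1}<\sqrt{n+2}<b$ from Proposition~\ref{prop:annulus-containment}, together with the explicit form of $g$. The second ingredient is the set of lower bounds on $(R,d)$. From Proposition~\ref{prop:annulus-localization} and Lemma~\ref{lem:kappa-bounds}, $d>\kappa>2/\sqrt3$. From Lemma~\ref{lem:annular-midpoint} together with the monotonicity of $\gamma_d$, $R>\gamma_d>\gamma_{2/\sqrt3}$; note that this $R$ agrees with \eqref{def:asymmetry} since $2\mu=n-1$. These give an explicit corner $(R_0,d_0)$, for instance $R_0=3/5$ and $d_0=8/7$ after rationalizing. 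By monotonicity, it then suffices to show $\mathcal D_n(R_0,d_0)>0$ for large $n$.

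\textbf{Large $n$.} Recenter at $m=\sqrt{n-1+R_0}$ and pair $t=m\pm s$. This is the computation giving \eqref{tmp:integral-asymptotics}, after rescaling $r=\sqrt2 t$. Normalizing by $g(m)$, the odd leading term cancels. The $O(1)$ term is the explicit one-variable integral
\[
J_0(R_0,d_0)=2\int_0^{d_0}e^{-s^2/2}\Bigl(R_0-6+(9-R_0)s^2+\tfrac{s^4}{3}\Bigr)\,ds,
\]
written in the scaled variables. I would show $J_0>0$ by rational enclosures of the Gaussian moments, which is the analogue of \eqref{eq:l3-J0}. Next, Lemma~\ref{lem:l3-corner} should control the remainder: the $O(m^{-1})$ terms are odd in $s$ and cancel, so the error is $O(m^{-2})$ with an explicit constant. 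I would obtain that constant from a Taylor expansion of $(n+3)\log(1+s/m)$ with Lagrange remainder, and a bound on $|e^{x}-1-x|$ for the small exponent on $|s|\le d_0$. Choosing the threshold $n\ge 4001$ so that this constant divided by $m^2$ is below $J_0$ then closes the large-$n$ case (Lemma~\ref{lem:l3-tail}).

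\textbf{Finite range $2\le n\le 4000$.} Here the margins are too thin, so I would use interval arithmetic. For each $n$, bracket $a$ and $b$ rigorously via Corollary~\ref{cor:shooting-bracket}. Integrate \eqref{eq:annular-y-equation} with validated ODE enclosures from trial starting points $A$ and $B$, check the slope conditions, and so certify $A<a$ and $B<b$. Combined with $ab<n-1$ from Lemma~\ref{lem:annular-phase}, this yields two-sided enclosures, or at least lower bounds, for $R$ and $d$. Positivity of $\mathcal D_n$ then follows from monotonicity and a certified quadrature at the lower corner. For $n=2$ the lower bound on $a$ from the barrier is unavailable, so the shooting bracket must supply it directly.

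The main obstacle is the quantitative remainder bound near the threshold: $J_0$ at the corner is a small positive number, and the $O(n^{-1})$ constant must be tracked sharply enough. I would try first to keep the exact normalized weight and expand only in $1/m$ with fully explicit remainders, rather than bounding term by term crudely. I would also verify the monotonicity of Lemma~\ref{lem:l3-monotone} carefully at the endpoints, since $g(m-d)$ can be negative.
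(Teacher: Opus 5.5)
Your treatment of $n\ge4001$ is essentially the paper's (Lemmas~\ref{lem:l3-monotone}, \ref{lem:l3-corner}, \ref{lem:l3-tail}). The finite range $2\le n\le4000$, however, has a genuine gap.

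Corollary~\ref{cor:shooting-bracket} certifies only lower bounds $A_n<a$ and $B_n<b$. To turn these into a lower corner $(\underline R,\underline d)$ you need an \emph{upper} bound on $a$, and you take it from $ab<n-1$. Since $ab=n-1+R-d^2$, this costs at least $(d^2-R)/(2b)$ in $\underline d$, which is not small in low dimensions.

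For $n=2$, a rough numerical integration of \eqref{eq:annular-y-equation} gives $a\approx0.2$ and $b\approx2.62$, so $ab\approx0.52$. The product bound then yields only $a<0.38$ and $\underline d\approx1.12<8/7$. The moment over the corner interval $[\underline m-\underline d,\underline m+\underline d]\approx[0.29,2.53]$ comes out negative (about $-0.8$), whereas the true value is about $+0.2$. So the certified quadrature at your corner fails, however sharp the brackets are.

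The missing observation is that in this range you should not parametrize by $(R,d)$ at all. The integrand $(t^2-(n+2))t^{n+3}e^{-t^2/2}$ changes sign exactly once, at $\sqrt{n+2}$. Hence the moment increases when either endpoint moves right, provided the left endpoint stays below $\sqrt{n+2}$ and the right one above it. Because $a^2<ab<n-1$, the two lower brackets alone, together with a certified $B_n^2>n+2$, give
$\mathcal D_n(R,d)\ge\int_{A_n}^{B_n}(t^2-(n+2))t^{n+3}e^{-t^2/2}\,dt$.
Only this integral needs to be certified, which is exactly what Lemma~\ref{lem:l3-finite} does. Certifying an upper bound on $a$ by shooting, via the other direction of \eqref{eq:annular-shooting-monotonicity}, would also repair your route, but it is unnecessary.

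Three slips in the large-$n$ part also need fixing.
\begin{enumerate}
\item Proposition~\ref{prop:annulus-localization} does not give $d>\kappa$. It gives $b>\sqrt{n-1}+\kappa$ and $a<(n-1)/(\sqrt{n-1}+\kappa)$, hence only $d>\kappa-\kappa^2/\bigl(2(\sqrt{n-1}+\kappa)\bigr)$. This still exceeds $8/7$ for $n\ge4001$, so use $R>\gamma_{8/7}>3/5$ rather than $\gamma_{2/\sqrt3}$.
\item Your $J_0$ is the unscaled expression \eqref{tmp:integral-leading-order}, which is valid for $(\bar R,\bar d)=(2R,\sqrt2\,d)$. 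In the scaled variables the paired integrand is $e^{-x^2}\bigl(R_0-3+(9-2R_0)x^2+\tfrac23x^4\bigr)$. Inserting $(3/5,8/7)$ into your formula as written gives a negative number.
\item $\partial_d\mathcal D_n>0$ does not follow from the localization. Also, Proposition~\ref{prop:annulus-containment} gives only $b^2>n$, not $b^2>n+2$. Write the integrand as $(t^2-(n+2))g_0$ with $g_0(t)=t^{n+3}e^{-t^2/2}$. The needed input is the weight asymmetry $m\bigl(g_0(m+x)-g_0(m-x)\bigr)>x\bigl(g_0(m+x)+g_0(m-x)\bigr)$, which holds because $m^2<n+2$. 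It bounds the sum of the integrand at $m\pm d$ from below by $(R+3d^2-3)\bigl(g_0(m+d)+g_0(m-d)\bigr)$. The positivity of this bound is where the lower bounds on $R$ and $d$ enter.
\end{enumerate}
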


The following elementary bound is used twice below: for every $z>0$,
\begin{equation}\label{eq:l3-T}
    T(z):=\int_0^1e^{z(1-u^2)}\,du
    \geq
    1+\frac23z+\frac4{15}z^2+\frac8{105}z^3+\frac{16}{945}z^4,
\end{equation}
since every term of the expansion of $e^{z(1-u^2)}$ is positive and $\int_0^1(1-u^2)^j\,du=\frac23,\frac8{15},\frac{16}{35},\frac{128}{315}$ for $j=1,\dots,4$.

\begin{lemma}\label{lem:l3-monotone}
Let $n\geq2$, $\frac35\leq R<3$, and $d\geq\frac87$ with
$\sqrt{n-1+R}>d$. Then
\[
    \partial_R\mathcal D_n(R,d)>0,
    \qquad
    \partial_d\mathcal D_n(R,d)>0.
\]
\end{lemma}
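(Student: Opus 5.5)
The plan is to differentiate under the integral sign and reduce both claims to sign information about the integrand at the two endpoints. Write
\[
    f(t):=\bigl(t^2-(n+2)\bigr)t^{n+3}e^{-t^2/2},
    \qquad
    m:=\sqrt{n-1+R}.
\]
Since $\partial_R m=\frac1{2m}$, we have
\[
    \partial_R\mathcal D_n=\frac{f(m+d)-f(m-d)}{2m},
    \qquad
    \partial_d\mathcal D_n=f(m+d)+f(m-d).
\]
So it suffices to show two things: $f(m-d)<0<f(m+d)$, which gives $\partial_R\mathcal D_n>0$ at once, and the stronger domination $f(m+d)>|f(m-d)|$, which gives $\partial_d\mathcal D_n>0$.

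\textbf{Endpoint signs.} These come directly from the hypotheses $\frac35\le R<3$, $d\ge\frac87$ and $m>d$. Expanding, we get $(m\mp d)^2-(n+2)=R-3\mp2md+d^2$.
\begin{itemize}
    \item At the left endpoint, $d^2-2md=d(d-2m)<0$ and $R-3<0$, so $f(m-d)<0$.
    \item At the right endpoint, $2md+d^2>3d^2\ge\frac{192}{49}>3>3-R$, so $f(m+d)>0$.
\end{itemize}

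\textbf{Domination.} I would compare logarithms. With $x:=d/m\in(0,1)$,
\[
    \log\frac{f(m+d)}{|f(m-d)|}
    =(n+3)\log\frac{1+x}{1-x}-2md+\log\frac{2md+d^2-(3-R)}{2md-d^2+(3-R)}.
\]
The key algebraic point is the identity $n+3=m^2+4-R$. Combined with $\log\frac{1+x}{1-x}\ge 2x+\frac23x^3$, it cancels the $-2md$ term exactly and leaves
\[
    \frac{2d(4-R)}{m}+\frac{2(n+3)d^3}{3m^3}+\log\frac{2md+d^2-(3-R)}{2md-d^2+(3-R)},
\]
whose first two terms are positive.

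The last logarithm is nonnegative when $d^2\ge3-R$. Otherwise it is negative, but only of size $O(1/(md))$. I would bound it from below by $\log(1-u)\ge -u/(1-u)$, with
\[
    u=\frac{2(3-R-d^2)}{2md-d^2+3-R}.
\]
Then I would check that the sum stays positive, using $4-R\ge1$, $R\ge\frac35$ (so $3-R-d^2\le\frac{12}5-\frac{64}{49}$) and $d\ge\frac87$. In the regime where $m$ is large compared with $d$, the term $\frac{2d(4-R)}{m}$ alone should dominate, since $2d^2(4-R)$ exceeds roughly $2(3-R-d^2)$ there.

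\textbf{Main obstacle.} I expect the main obstacle to be the moderate regime, where $m$ is only slightly larger than $d$ (small $n$) and $d^2<3-R$, because there the crude cubic truncation of $\log\frac{1+x}{1-x}$ may not suffice. In that regime I would instead use the full series, or rewrite $f(m+d)/|f(m-d)|$ as an exponential of an integral and invoke the lower bound \eqref{eq:l3-T} for $T(z)$ with a suitable $z\sim md$. This reduces the claim to a polynomial inequality in $(m,d,R)$ on the compact admissible region, to be verified by monotonicity in each variable.
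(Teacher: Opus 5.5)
Your plan is correct, and it takes a different route from the paper's. The step you left open closes at once, so the obstacle you anticipate in the moderate regime does not occur. In the case $\epsilon:=3-R-d^2>0$, your bound $\log(1-u)\geq -u/(1-u)$ with $u/(1-u)=2\epsilon/(2md-\epsilon)$ reduces the claim to $\frac{2d(4-R)}{m}\geq\frac{2\epsilon}{2md-\epsilon}$, i.e.
\[
m\bigl(2d^2(4-R)-\epsilon\bigr)\geq d\,\epsilon\,(4-R).
\]
Since $m>d$, it suffices that $(4-R)(2d^2-\epsilon)\geq\epsilon$. This holds uniformly, because $4-R>1+d^2\geq\frac{113}{49}$, $2d^2-\epsilon=R+3d^2-3\geq\frac{372}{245}$, and $\epsilon\leq\frac{268}{245}$. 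So even the linear bound $\log\frac{1+x}{1-x}\geq 2x$ suffices for all $m>d$. The detour through the full series or $T(z)$ is unnecessary. So is the reduction to a ``compact'' admissible region, which in any case is not compact, since $m$ is unbounded as $n$ varies.

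The paper argues additively and needs no case split. It proves $(m-x)g(m+x)>(m+x)g(m-x)$ for $g(t)=t^{n+3}e^{-t^2/2}$. The logarithm of this ratio vanishes at $x=0$ and has derivative $2m\bigl(\frac{n+2}{m^2-x^2}-1\bigr)>0$, because $m^2<n+2$. In the paired integrand, this bounds the cross term $2md\bigl(g(m+d)-g(m-d)\bigr)$ from below by $2d^2\bigl(g(m+d)+g(m-d)\bigr)$. It then gives in one line $\partial_d\mathcal D_n>(R+3d^2-3)\bigl(g(m+d)+g(m-d)\bigr)>0$, from which $F(m+d)>0$ and hence $\partial_R\mathcal D_n>0$ also follow.

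The factor $R+3d^2-3$ is exactly the quantity that decides your bad case, so both routes rest on the same numerical fact $\frac35+3\cdot\frac{64}{49}>3$. Your approach, via the identity $n+3=m^2+4-R$, yields an explicit multiplicative margin for $f(m+d)/|f(m-d)|$. The paper's linear pairing is shorter, avoids the logarithmic inequality and the case distinction, and directly produces a quantitative additive lower bound.
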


\begin{proof}
Set $m:=\sqrt{n-1+R}$, and write
$g(t)=t^{n+3}e^{-t^2/2}$ and $F(t)=\bigl(t^2-(n+2)\bigr)g(t)$ for the
integrand of \eqref{eq:l3-moment-scaled}, so that
$\mathcal D_n(R,d)=\int_{m-d}^{m+d}F(t)\,dt$. Then $F(m-d)<0$, since
$(m-d)^2<m^2=n-1+R<n+2$.

For $0<x\leq d$, direct pairing gives
\begin{equation}\label{eq:l3-paired-integrand}
\begin{aligned}
    F(m+x)+F(m-x)
    &=(R+x^2-3)\bigl(g(m+x)+g(m-x)\bigr)\\
    &\quad+2mx\bigl(g(m+x)-g(m-x)\bigr).
\end{aligned}
\end{equation}
We claim that the second term satisfies
\begin{equation}\label{eq:l3-weight-asymmetry}
    m\bigl(g(m+x)-g(m-x)\bigr)
    >x\bigl(g(m+x)+g(m-x)\bigr).
\end{equation}
Indeed,
\[
\begin{aligned}
    \frac{d}{dx}\log\frac{(m-x)g(m+x)}{(m+x)g(m-x)}
    &=(n+2)\left(\frac1{m+x}+\frac1{m-x}\right)-2m\\
    &=2m\left(\frac{n+2}{m^2-x^2}-1\right)>0,
\end{aligned}
\]
because $m^2=n-1+R<n+2$ by \eqref{eq:l3-mdr} and the assumption $R<3$.
The logarithm vanishes at $x=0$, so
$(m-x)g(m+x)>(m+x)g(m-x)$, which is
\eqref{eq:l3-weight-asymmetry}.
Combining \eqref{eq:l3-paired-integrand} and
\eqref{eq:l3-weight-asymmetry} at $x=d$, we obtain
\[
    F(m+d)+F(m-d)
    >\bigl(R+3d^2-3\bigr)\bigl(g(m+d)+g(m-d)\bigr)>0,
\]
because
\[
    R+3d^2-3\geq\frac35+3\left(\frac87\right)^2-3
    =\frac{372}{245}>0.
\]
Together with $F(m-d)<0$, this also gives $F(m+d)>0$. Consequently
\[
 \partial_R\mathcal D_n=\frac{F(m+d)-F(m-d)}{2m}>0,\qquad
 \partial_d\mathcal D_n=F(m-d)+F(m+d)>0.
\]
\end{proof}

\begin{lemma}\label{lem:l3-corner}
For every $n\geq4001$,
\[
    \mathcal D_n\left(\frac35,\frac87\right)>0.
\]
\end{lemma}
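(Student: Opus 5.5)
We prove $\mathcal D_n(\tfrac35,\tfrac87)>0$ by expanding the integrand around the midpoint, exactly as in the heuristic expansion \eqref{tmp:integral-asymptotics}, but now with explicit remainders. Set $R=\frac35$, $d=\frac87$, $m=\sqrt{n-1+R}$, and pair $t=m\pm x$ for $x\in(0,d)$ as in \eqref{eq:l3-paired-integrand}. Normalizing by $g(m)$, we have
\[
\frac{\mathcal D_n}{g(m)}=\int_0^d\Bigl[(R+x^2-3)\bigl(G_+ +G_-\bigr)+2mx\bigl(G_+-G_-\bigr)\Bigr]dx,
\qquad G_\pm:=\frac{g(m\pm x)}{g(m)}.
\]
Write $\log G_\pm=(n+3)\log(1\pm x/m)\mp mx-x^2/2$. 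With $n+3=m^2+4-R$, the odd part in $x$ is $\frac{x(4-R)}{m}+\frac{x^3}{3m}(-1)\cdots$ and the even part is $-x^2-\frac{(4-R)x^2}{2m^2}+\cdots$. Hence $G_\pm=e^{-x^2/2}e^{E(x)}e^{\pm O(x)}$, with $O(x)=\frac{x(4-R)}{m}-\frac{x^3}{3m}+O(m^{-3})$ and $E=O(m^{-2})$. Then $G_++G_-=2e^{-x^2/2+E}\cosh O$ and $G_+-G_-=2e^{-x^2/2+E}\sinh O$.

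The first step is to bound both combinations with rigorous two-sided Taylor estimates. The series for $\log(1\pm u)$ with $u=x/m\le d/\sqrt{4000}$ are alternating or geometric, so their tails are explicitly controlled. From these we extract
\[
2mx\sinh O\ \ge\ 2x^2(4-R)-\tfrac23x^4-C_1/m^2,
\qquad
\cosh O\le 1+C_2/m^2,
\qquad
|e^{E}-1|\le C_3/m^2,
\]
with explicit constants $C_i$ depending only on $d$ and $R$, since $x\le d$ is bounded. These give the pointwise lower bound
\[
\text{integrand}\ \ge\ 2e^{-x^2/2}\Bigl(R-3+(9-R)x^2-\tfrac{x^4}{3}\Bigr)-\frac{C}{m^2}\qquad\text{(check sign of the $x^4$ term)}.
\]
The sign in that bound needs care. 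The cubic term of the odd part, combined with $2mx$, produces the $x^4$ coefficient, which must match \eqref{tmp:integral-leading-order}. There the constant is $R-6$ from $(R-3)\cdot 2$, the $x^2$ coefficient is $9-R$, and the $x^4$ coefficient is $\frac13$. I would recompute this carefully rather than trust the heuristic sign.

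The second step is to evaluate the leading term
\[
J_0:=2\int_0^{8/7}e^{-x^2/2}\Bigl(R-6\ \text{or}\ R-3\ \ldots\Bigr)dx
\]
at $R=\frac35$. All terms reduce to integrals $\int_0^d x^{2k}e^{-x^2/2}dx$, which by repeated integration by parts become combinations of $e^{-d^2/2}$ and $\int_0^d e^{-x^2/2}$. We bound the latter rigorously, from both sides as needed, either through \eqref{eq:l3-T} after the substitution $x=du$, giving $\int_0^d e^{-x^2/2}dx=d\,e^{-d^2/2}T(d^2/2)$, or through truncated alternating series. This yields an explicit rational lower bound $J_0\ge c_0>0$.

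The third step is to absorb the remainder. Its total contribution is at most $C d/m^2\le C d/(4000)$, so we only need $Cd/4000<c_0$. This is a finite rational check.

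The main obstacle is that the margin $c_0$ is expected to be small, because the problem is borderline. The remainder constants therefore have to be tracked sharply, in particular the even $O(m^{-2})$ corrections. Crude tail bounds could swamp $c_0$, which would force either a larger threshold on $n$ or a second-order expansion. If that happens, I would first try keeping the exact $m^{-2}$ term explicitly and bounding only the $m^{-4}$ tail.
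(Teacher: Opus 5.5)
Your architecture is the paper's: normalize by $g(m)$, pair $m\pm x$, extract an $O(1)$ leading integral, and use that the paired integrand is even in $1/m$, so the remainder is $O(m^{-2})$. But the content of this lemma is precisely the explicit leading term and its margin, and your expansion gets that term wrong in three places.

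First, $n+3=m^2+c$ with $c=4-R=\frac{17}{5}$. Hence exactly $\log G_\pm=-x^2+x^2h(\pm x/m)+c\log(1\pm x/m)$, with $h(w)=(\log(1+w)-w+w^2/2)/w^2$. The Gaussian factor is therefore $e^{-x^2}$, not $e^{-x^2/2}$. Your own even part $-x^2$ already says this; the $e^{-s^2/2}$ in \eqref{tmp:integral-asymptotics} is written in the $r$-variable $s=\sqrt2\,x$.

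Second, the cubic odd term has a plus sign. The factor $m^2\log(1\pm x/m)$ contributes $\pm x^3/(3m)$, so your odd part equals $\frac{cx+x^3/3}{m}$ up to an error of order $m^{-3}$. Consequently $2mx\sinh O=2cx^2+\frac23x^4+O(m^{-2})$.

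Third, in the $t$-variables the heuristic's $\bar R$ equals $2R$ (and $s=\sqrt2x$). So its constant $\bar R-6$ is $2(R-3)$, and the $x^2$-coefficient is $1+2c=9-2R$, not $9-R$.

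In the paper's normalization $J:=\mathcal D_n/(2g(m))$, the correct leading integrand at $R=\frac35$ is
\[
    e^{-x^2}\Bigl(-\frac{12}{5}+\frac{39}{5}x^2+\frac23x^4\Bigr).
\]
Gaussian integration by parts turns its integral over $(0,\frac87)$ into $2d_0e^{-z}\bigl[T(z)-\frac{11}{5}-\frac z6\bigr]$, with $z=d_0^2=\frac{64}{49}$. This is about $0.088$. The paper certifies it exceeds $\frac7{100}$ using only the lower bound \eqref{eq:l3-T} for $T$ and an alternating-series lower bound for $e^{-z}$.

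These are not cosmetic slips, because the quartic term carries the sign. Your $-\frac23x^4$ is a true but lossy lower bound for $2mx\sinh O$; in fact $2mx\sinh O\geq 2cx^2+\frac23x^4$ exactly, via $\operatorname{artanh}u\geq u+u^3/3$. With your sign, the leading integral becomes about $-0.125$. Even with no quartic term at all it is about $-0.019$. So keeping your sign, the argument fails. The polynomial you actually display (with $e^{-x^2/2}$ and $9-R$) is not a valid lower bound for the integrand, and it overstates the margin roughly sixfold.

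Once the leading term is fixed, your remainder plan must produce a constant $C$ with $C\eta<0.088$, where $\eta=1/(n-1+\frac35)<\frac1{4000}$. That is, $C$ must be below about $350$ in this normalization.

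The paper gets $184$ compactly. With $s=1/m$ and $G_x(s):=\exp\{x^2h(xs)+c\log(1+xs)\}$, it writes $\frac xs\bigl(G_x(s)-G_x(-s)\bigr)=x\int_{-1}^1G_x'(s\tau)\,d\tau$. This makes the paired integrand a smooth even function of $s$. Second-order Taylor with $|G_x''|<39$ and $|G_x'''|<298$ on $|xs|<\frac1{50}$ then gives $|J(\eta)-J(0)|<184\eta<\frac{23}{500}<\frac7{100}$.

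Your $\cosh O$, $\sinh O$, $e^{E}$ bookkeeping is equivalent and can be made to work. But until the leading polynomial is corrected and $c_0$ and $C$ are actually computed, the proposal does not prove the lemma.
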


\begin{proof}
Set
\begin{equation}\label{eq:l3-corner-parameters}
    R_0:=\frac35,\qquad d_0:=\frac87,\qquad
    c:=4-R_0=\frac{17}{5},\qquad
    \eta:=\frac1{n-1+R_0}<\frac1{4000},\qquad s:=\sqrt\eta.
\end{equation}
Also write $g(t):=t^{n+3}e^{-t^2/2}$ for the Gaussian factor of the
integrand of \eqref{eq:l3-moment-scaled}, so that
$\eta^{-1/2}=\sqrt{n-1+R_0}$ is the midpoint of the interval of
integration in \eqref{eq:l3-moment-scaled}. Pairing the points
$\eta^{-1/2}\pm x$ there gives
\begin{equation}\label{eq:l3-paired-normal-form}
 J(\eta):=\frac{\mathcal D_n(R_0,d_0)}{2g(\eta^{-1/2})}
 =\int_0^{d_0}i(\eta,x)\,dx,
\end{equation}
where
\[
 i(\eta,x)=\frac12\sum_{\pm}
 \left(R_0-3+x^2\pm\frac{2x}{s}\right)
 (1\pm xs)^{1/s^2+c}e^{\mp x/s-x^2/2}.
\]

To identify the limit as $\eta\downarrow0$, define
\[
 h(w):=\frac{\log(1+w)-w+w^2/2}{w^2}
 =\frac w3-\frac{w^2}4+\cdots,\qquad
 G_x(s):=\exp\{x^2h(xs)+c\log(1+xs)\},
\]
where $h(0):=0$. Then $i(\eta,x)=e^{-x^2}\Phi_x(s)$, where
\begin{equation}\label{eq:l3-Phi}
 \Phi_x(s):=(R_0-3+x^2)\frac{G_x(s)+G_x(-s)}2
 +\frac{x}{s}\bigl(G_x(s)-G_x(-s)\bigr).
\end{equation}
The right-hand side extends smoothly and evenly across $s=0$. Since
$G_x(0)=1$ and $G_x'(0)=x(c+x^2/3)$,
\[
 i(0,x)=e^{-x^2}\left(-\frac{12}{5}+\frac{39}{5}x^2
 +\frac23x^4\right).
\]
Let $z:=d_0^2=64/49$ and $E:=e^{-z}$. Gaussian integration by parts gives
\[
 J(0)=2d_0E\left[T(z)-\frac{11}{5}-\frac z6\right].
\]
Substitution in \eqref{eq:l3-T} gives
$T(z)-\frac{11}{5}-\frac z6>\frac{49}{400}$, while the alternating
series for $e^{-z}$ gives $E>1/4$. Therefore
\begin{equation}\label{eq:l3-J0}
    J(0)>2\cdot\frac87\cdot\frac14\cdot\frac{49}{400}
    =\frac7{100}.
\end{equation}

It remains to control the error. By \eqref{eq:l3-corner-parameters} and
\eqref{eq:l3-paired-normal-form}, $s<1/63$ and $0\leq x\leq8/7$, so
we have $|xs|<1/50$. The power series for $h$ gives
\begin{equation}\label{eq:l3-h-bounds}
 |h|<\frac1{100},\qquad |h'|<\frac38,\qquad
 |h''|<\frac58,\qquad |h'''|<2
 \qquad (|w|<1/50).
\end{equation}
Put $L_x:=\log G_x$. Using \eqref{eq:l3-corner-parameters},
\eqref{eq:l3-h-bounds}, $x<6/5$, $c<7/2$, and
$(1+xs)^{-1}<50/49$, direct differentiation gives
\[
 |L_x|<\frac1{10},\qquad |L_x'|<5,\qquad
 |L_x''|<7,\qquad |L_x'''|<18.
\]
In particular, $0<G_x<6/5$, and the identities
\[
 G_x''=G_x\bigl(L_x''+(L_x')^2\bigr),\qquad
 G_x'''=G_x\bigl(L_x'''+3L_x'L_x''+(L_x')^3\bigr)
\]
yield
\begin{equation}\label{eq:l3-G-derivative-bounds}
 |G_x''|<39,\qquad |G_x'''|<298.
\end{equation}

Finally, rewrite the second term in \eqref{eq:l3-Phi} as
\begin{equation}\label{eq:l3-symmetric-difference}
 \frac{x}{s}\bigl(G_x(s)-G_x(-s)\bigr)
 =x\int_{-1}^1G_x'(s\tau)\,d\tau.
\end{equation}
Using \eqref{eq:l3-Phi} and \eqref{eq:l3-symmetric-difference}, the evenness
of $\Phi_x$, Taylor's theorem, \eqref{eq:l3-G-derivative-bounds}, and
$|R_0-3+x^2|\leq3-R_0$ imply
\begin{align}
 |J(\eta)-J(0)|
 &\leq d_0\left((3-R_0)\frac{39}{2}
 +d_0\frac{298}{3}\right)\eta\notag\\
 &=\frac{134672}{735}\eta<184\eta<\frac{23}{500}.
 \label{eq:l3-error-bound}
\end{align}
Combining \eqref{eq:l3-J0} and \eqref{eq:l3-error-bound}, we obtain
\[
    J(\eta)>\frac7{100}-\frac{23}{500}
    =\frac3{125}>0,
\]
and hence $\mathcal D_n(R_0,d_0)>0$ by
\eqref{eq:l3-paired-normal-form}.
\end{proof}

\begin{lemma}\label{lem:l3-tail}
$\mathcal D_n(R,d)>0$ for every $n\geq4001$.
\end{lemma}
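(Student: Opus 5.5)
The plan is to reduce Lemma~\ref{lem:l3-tail} to the corner estimate of Lemma~\ref{lem:l3-corner} via the monotonicity of Lemma~\ref{lem:l3-monotone}. Concretely, with $R,d$ the actual parameters of the annulus in \eqref{eq:l3-mdr}, I will show that for $n\geq4001$ one has $R\geq\frac35$, $d\geq\frac87$, $R<3$, and $\sqrt{n-1+R}>d$. Then the rectangle $[\frac35,R]\times[\frac87,d]$ lies in the region where Lemma~\ref{lem:l3-monotone} applies. That region is convex in $(R,d)$, and the condition $\sqrt{n-1+R'}>d'$ is preserved for $R'\geq\frac35$, $d'\leq d$. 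Integrating $\partial_R\mathcal D_n>0$ and then $\partial_d\mathcal D_n>0$ along the two edges gives $\mathcal D_n(R,d)\geq\mathcal D_n(\frac35,\frac87)>0$.

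For the bound on $d$, I use Proposition~\ref{prop:annulus-localization} in scaled form: $a<\frac{2\mu}{\sqrt{2\mu}+\kappa}$ and $b>\sqrt{2\mu}+\kappa$, with $2\mu=n-1$. This gives
\[
2d=b-a>\sqrt{2\mu}+\kappa-\sqrt{2\mu}+\frac{\kappa\sqrt{2\mu}}{\sqrt{2\mu}+\kappa}=\kappa+\frac{\kappa\sqrt{2\mu}}{\sqrt{2\mu}+\kappa}.
\]
Lemma~\ref{lem:kappa-bounds} gives $\kappa>2/\sqrt3>1.1547$, and $\sqrt{2\mu}\geq\sqrt{4000}>63$. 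So $2d>\kappa(2-\kappa/(\sqrt{2\mu}+\kappa))>1.1547\cdot(2-0.019)>2.28>\frac{16}{7}$, that is, $d>\frac87$. The condition $\sqrt{n-1+R}=m>d$ is immediate since $m-d=a>0$. The upper bound $R<d^2<\frac74<3$ follows from Lemma~\ref{lem:annular-midpoint} and Proposition~\ref{prop:annulus-width}, since $d<\sqrt7/2$.

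The main obstacle is the lower bound $R\geq\frac35$. Here I use Lemma~\ref{lem:annular-midpoint}: $R>\gamma_d$, with $\gamma_d$ strictly increasing, so $R>\gamma_{8/7}$. It remains to check $\gamma_{8/7}\geq\frac35$, i.e. $\int_0^{d_0}x^4e^{-x^2}\,dx\geq\frac35\int_0^{d_0}x^2e^{-x^2}\,dx$ with $d_0=8/7$. I would do this with elementary rational bounds. One route is integration by parts: $\int_0^{d}x^4e^{-x^2}=\frac32\int_0^dx^2e^{-x^2}-\frac{d^3}{2}e^{-d^2}$. The inequality then becomes $\frac{9}{10}\int_0^d x^2e^{-x^2}\,dx\geq\frac{d^3}{2}e^{-d^2}$. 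Rewriting via the substitution $x=du$, it reads $\frac95\int_0^1u^2e^{z(1-u^2)}\,du\geq1$ with $z=d_0^2=64/49$. Truncated positive power series in the spirit of \eqref{eq:l3-T} bound the left side from below: $\int_0^1u^2(1-u^2)^j\,du=\frac13,\frac2{15},\frac8{105},\dots$, so the sum is at least $\frac95(\frac13+\frac2{15}z+\frac4{105}z^2+\cdots)$, which I expect to exceed $1$ with a modest margin at $z\approx1.306$. If this margin turned out too thin, the fallback is to use the larger lower bound on $d$ obtained above, e.g. $d\geq1.14$, together with the monotonicity of $\gamma_d$.

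With all four conditions verified, the monotone-path argument together with Lemma~\ref{lem:l3-corner} concludes the proof.
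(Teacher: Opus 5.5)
Your argument is the paper's proof essentially step for step. It uses the same lower bound on $d$ from Proposition~\ref{prop:annulus-localization} and Lemma~\ref{lem:kappa-bounds}, and the same reduction of $\gamma_{8/7}>\frac35$ by integration by parts. Your condition $\frac95\int_0^1u^2e^{z(1-u^2)}\,du\geq1$ is exactly the paper's $T(z)>1+\frac{10}{9}z$, since $\int_0^1u^2e^{z(1-u^2)}\,du=\frac{T(z)-1}{2z}$. The bound $R<d^2<\frac74$ and the appeal to Lemmas~\ref{lem:l3-monotone} and~\ref{lem:l3-corner} also match.

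One arithmetic slip needs fixing. The chain $2d>2.28>\frac{16}{7}$ is false as written, because $\frac{16}{7}\approx2.2857>2.28$. Your unrounded value $1.1547\cdot1.981\approx2.2875$ does exceed $\frac{16}{7}$, so $d>\frac87$ still holds. The margin is below $0.002$, though, so keep this step in exact form; the paper writes $d>\frac{15}{13}-\frac{23}{34\cdot63}=\frac87+\frac1{3978}$.

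Your three-term series gives $\frac95\bigl(\frac13+\frac2{15}z+\frac4{105}z^2\bigr)\approx1.030$ at $z=\frac{64}{49}$, so the $R$ bound closes. The fallback is therefore not needed, and its $d\geq1.14$ is in any case below $\frac87$.
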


\begin{proof}
By Proposition~\ref{prop:annulus-localization}, $b>\sqrt{n-1}+\kappa$ and
$a<(n-1)/(\sqrt{n-1}+\kappa)$. Using $d=(b-a)/2$ from
\eqref{eq:l3-mdr}, we obtain
\[
    d=\frac{b-a}2>\frac{(\sqrt{n-1}+\kappa)^2-(n-1)}{2(\sqrt{n-1}+\kappa)}
    =\kappa-\frac{\kappa^2}{2(\sqrt{n-1}+\kappa)}.
\]
By Lemma~\ref{lem:kappa-bounds},
$\kappa>\frac2{\sqrt3}>\frac{15}{13}$ and
$\kappa^2<\frac{23}{17}$. Since $\sqrt{n-1}>63$, it follows that
\begin{equation}\label{eq:l3-tail-d-lower}
    d>\frac{15}{13}-\frac{23}{34\cdot63}
    =\frac87+\frac1{3978}>\frac87.
\end{equation}
Next, \eqref{eq:l3-tail-d-lower} and Lemma~\ref{lem:annular-midpoint} give
$R>\gamma_d>\gamma_{8/7}$. Integration by parts shows that, for
$z=64/49$,
\[
    \gamma_{8/7}>\frac35
    \quad\Longleftrightarrow\quad
    T(z)>1+\frac{10}{9}z.
\]
The latter follows from the terms through order three in \eqref{eq:l3-T},
since $6z^2+21z-35=6397/2401>0$. Thus
\begin{equation}\label{eq:l3-tail-R-lower}
    R>\frac35.
\end{equation}

Finally, $R<d^2<\frac74$ by Lemma~\ref{lem:annular-midpoint} and
Proposition~\ref{prop:annulus-width}. Moreover,
$\sqrt{n-1+R}=m>d$, since $a=m-d>0$ by \eqref{eq:l3-mdr}.
Together with \eqref{eq:l3-tail-d-lower} and \eqref{eq:l3-tail-R-lower},
these bounds verify all the hypotheses of Lemma~\ref{lem:l3-monotone}, and
Lemma~\ref{lem:l3-corner} then gives
\[
    \mathcal D_n(R,d)
    >\mathcal D_n\left(\frac35,\frac87\right)>0. \qedhere
\]
\end{proof}

\begin{lemma}\label{lem:l3-finite}
$\mathcal D_n(R,d)>0$ for every $2\leq n\leq4000$.
\end{lemma}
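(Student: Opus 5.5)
The plan is a rigorous interval-arithmetic certification, one dimension at a time, organized so that the true (unknown) radii never have to be computed exactly. Only an enclosure $(R,d)\in[\underline R,\overline R]\times[\underline d,\overline d]$ is needed, together with monotonicity of $\mathcal D_n$ in $R$ and $d$, to reduce positivity to evaluating $\mathcal D_n$ at a single corner.

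For each $n$ we first produce certified brackets for the scaled radii $a,b$ using Corollary~\ref{cor:shooting-bracket}. We choose a trial value $A$ slightly below the expected $a$. Starting from $w(A)=0$, $w'(A)=1$, we integrate $w''-2h(t)w'+w=0$ with a validated ODE solver (Taylor models in Arb). We certify a simple second zero $z$ together with $w'(z)>-1$, which gives $A<a$. Symmetrically, shooting backward from $B$ with slope $-1$ and certifying $w'(\hat A)>1$ gives $B<b$. Upper bounds for $a$ and $b$ come from the analogous reversed-inequality statements. Those follow from the same shooting monotonicity \eqref{eq:annular-shooting-monotonicity}: if $w'(z)<-1$ then $A>a$, and similarly for $b$. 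This yields narrow intervals for $a$ and $b$, hence for $m$, $d$ and $R=m^2-(n-1)$.

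With these enclosures in hand, we check $\mathcal D_n>0$ in one of two ways. If the lower endpoints satisfy the hypotheses of Lemma~\ref{lem:l3-monotone} ($\underline R\geq 3/5$, $\underline d\geq 8/7$, $R<3$), it suffices to evaluate $\mathcal D_n(\underline R,\underline d)$ by validated quadrature and certify that it is positive. Otherwise, for small $n$ the hypotheses may fail, since $d$ and $R$ are genuinely smaller there. In that case we evaluate $\mathcal D_n$ directly over the interval box $[\underline R,\overline R]\times[\underline d,\overline d]$, using a first-order enclosure (mean-value form with the explicit partial derivatives $\partial_d\mathcal D_n=F(m+d)+F(m-d)$ and $\partial_R\mathcal D_n=(F(m+d)-F(m-d))/(2m)$), and bisect the box if needed.

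To avoid overflow, the integrand is normalized by $g(m)$ as in \eqref{eq:l3-paired-normal-form} and paired about $m$. This also removes the large cancelling odd term, which would otherwise destroy precision for large $n$.

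The main obstacle is precision, not logic. The moment is an $O(1)$ quantity extracted from terms of size $O(\sqrt n)$ that cancel, and for moderate $n$ the margin can be small. The radius brackets must therefore be tight, roughly $10^{-8}$ or better, which requires validated ODE integration over $[a,b]$ for each of the 3999 dimensions. The weight $t^{n-1}e^{-t^2/2}$ is handled by working with the damped-oscillator form \eqref{eq:annular-y-equation}, which has no large weights. The small cases $n=2,3$, where $h$ has a strong $1/t$ singularity near $a$, may need extra care with step sizes.
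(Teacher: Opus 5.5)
Your proposal is correct. It follows the paper's overall computer-assisted strategy: per-dimension validated Taylor integration of the two shooting problems, radius brackets from Corollary~\ref{cor:shooting-bracket}, and a validated quadrature of the moment. The endgame, however, differs and is heavier than it needs to be.

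\textbf{The paper's endgame.} The paper never encloses $(R,d)$. It certifies only \emph{lower} bounds $A_n<a$ and $B_n<b$, chosen with $A_n^2<n+2<B_n^2$, and then certifies
\[
\int_{A_n}^{B_n}\bigl(t^2-(n+2)\bigr)t^{n+3}e^{-t^2/2}\,dt>0.
\]
The integrand is negative on $(0,\sqrt{n+2})$ and positive beyond it, and $a^2<ab<n-1$ by Lemma~\ref{lem:annular-phase}. So replacing $A_n$ by $a$ removes a negative piece, and replacing $B_n$ by $b$ adds a positive piece; hence $\mathcal D_n(R,d)$ dominates the certified integral. This sign-pattern argument makes upper bounds for $a$ and $b$ unnecessary. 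It also dispenses with Lemma~\ref{lem:l3-monotone}, with the mean-value and bisection enclosure, and with half of your certified shootings.

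\textbf{What your route requires.} You need the reversed form of Corollary~\ref{cor:shooting-bracket}. This does follow from \eqref{eq:annular-shooting-monotonicity}: a forward return slope below $-1$ gives $A>a$, and a backward return slope in $(0,1)$ gives $B>b$. In the monotone branch you must also check that the rectangle between $(\underline R,\underline d)$ and $(R,d)$ stays in the domain of Lemma~\ref{lem:l3-monotone}, including the condition $\sqrt{n-1+R'}>d'$. This is automatic for $n\geq3$, since $d<\sqrt7/2$, but must be checked for $n=2$.

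\textbf{Trade-off.} Your approach yields a genuine two-sided enclosure of $(R,d)$ that does not depend on the sign structure of the integrand. The paper's approach yields a much leaner certificate.

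Your precision estimate of $10^{-8}$ also looks pessimistic. The paper's certificates close with margins of $\tfrac{23}{10000}$ in the return slope and $\tfrac{3}{400}$ in the normalized moment.
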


\begin{proof}
The verifier and its captured output accompany the paper as the ancillary
files \path{verify_lambda31.py} and
\path{verify_lambda31_output.txt}. It uses python-flint~0.9.0
(FLINT~3.6.0, which incorporates the Arb library \cite{arb}) at $160$-bit
working precision. Every quantity entering the computation is rational, and every computed ball contains
the exact value of the expression it represents. For each
$2\leq n\leq4000$, the verifier selects rational numbers
$A_n<B_n$ and certifies the following three facts:
\begin{enumerate}[label=\textup{(\roman*)}]
    \item $A_n^2<n+2<B_n^2$;
    \item the forward solution from $(A_n,0,1)$ returns to zero with slope
    greater than $-1+\frac{23}{10000}$, while the backward solution from
    $(B_n,0,-1)$ returns to zero with slope greater than
    $1+\frac{3}{2000}$;
    \item
    \[
        \frac{1}{B_n^{n+3}e^{-B_n^2/2}}
        \int_{A_n}^{B_n}\bigl(t^2-(n+2)\bigr)
        t^{n+3}e^{-t^2/2}\,dt>\frac3{400}.
    \]
\end{enumerate}
The first-return property in (ii), together with
Corollary~\ref{cor:shooting-bracket}, implies $A_n<a$ and $B_n<b$.
We describe how these inequalities are certified.

The two shooting problems for \eqref{eq:annular-y-equation} are integrated
by Taylor expansions of degree $14$ on steps of length $1/100$. If
$t+\tau-(n-1)/(t+\tau)=\sum_jp_j\tau^j$ at the expansion point $t$, the Taylor coefficients
are generated by the exact recurrence
\begin{equation}\label{eq:l3-taylor-recurrence}
\begin{aligned}
    y_{j+1}=\frac{v_j}{j+1},
    \qquad
    v_{j+1}=\frac{\sum_{\ell=0}^jp_\ell v_{j-\ell}-y_j}{j+1}.
\end{aligned}
\end{equation}
On a step of length $\Delta t$, with $L=1+\sup|t-(n-1)/t|$, the a priori bound
\begin{equation}\label{eq:l3-taylor-apriori}
    \max\{|y|,|v|\}
    \leq
    \frac{\max\{|y(0)|,|v(0)|\}}{1-|\Delta t|L}
\end{equation}
holds at every point of the step. Rerunning \eqref{eq:l3-taylor-recurrence}
from the enclosure \eqref{eq:l3-taylor-apriori}, with each $p_\ell$ enclosed
over the whole step, encloses the Taylor coefficient of order $15$ at every
intermediate point, and hence the remainder in Lagrange form. The verification
therefore relies on no numerical routine beyond ball arithmetic itself.

The solution is certified positive at every grid point before its sign
change and remains in the region $100t^2\geq n-1$. In this region the
normal form $u''+Qu=0$, $u=we^{-\int h}$, with
$h(t)=\frac12(t-(n-1)/t)$, has
\[
    Q=1-h^2+h'\leq\frac32+\frac{n-1}{2t^2}\leq\frac{103}{2},
\]
so any two zeros of $w$ are more than $\pi\sqrt{2/103}>\frac13$ apart.
Since the grid step is $1/100$, the certified sign change is therefore the
first return to zero. This proves (ii).

Finally, consider the system
\begin{equation}\label{eq:l3-moment-system}
\begin{aligned}
    w'=\left(\frac{n+3}{t}-t\right)w,
    \qquad
    M'=(t^2-n-2)\,w,
    \qquad
    w(B_n)=1,\quad M(B_n)=0,
\end{aligned}
\end{equation}
Applying the same Taylor scheme backwards from $B_n$ to
\eqref{eq:l3-moment-system} gives (iii), because
\[
    w(t)=\frac{t^{n+3}e^{-t^2/2}}{B_n^{n+3}e^{-B_n^2/2}},
    \qquad
    -M(A_n)=\frac{1}{B_n^{n+3}e^{-B_n^2/2}}
    \int_{A_n}^{B_n}\bigl(t^2-(n+2)\bigr)
    t^{n+3}e^{-t^2/2}\,dt.
\]

It remains to compare the endpoints. The integrand of
\eqref{eq:l3-moment-scaled} is negative below $\sqrt{n+2}$ and positive
above it, while $a^2<ab<n-1$ by Lemma~\ref{lem:annular-phase} and
$b>B_n>\sqrt{n+2}$. Increasing the endpoints from $(A_n,B_n)$ to $(a,b)$ therefore increases the moment:
\[
    \mathcal D_n(R,d)\geq
    \int_{A_n}^{B_n}\bigl(t^2-(n+2)\bigr)
    t^{n+3}e^{-t^2/2}\,dt>0. \qedhere
\]
\end{proof}

\begin{proof}[Proof of Proposition~\ref{prop:l3-moment-positive}]
Lemmas~\ref{lem:l3-tail} and~\ref{lem:l3-finite} give
$\mathcal D_n(R,d)>0$ in every dimension.
\end{proof}

\end{document}